\documentclass[a4paper,12pt]{article}
\usepackage{ucs}
\usepackage{amsfonts, amssymb, amsmath, amsthm, amscd}
\usepackage{graphicx}
\usepackage{cite}
\ifx\undefined \pdfgentounicode \else
\input{glyphtounicode} \pdfgentounicode=1
\fi

\author{A.A. Vasil'eva\footnote{Faculty of Mechanics and Mathematics, Lomonosov Moscow State University; Moscow Center for Fundamental and Applied Mathematics.}}
\title{Kolmogorov widths of an intersection of Besov classes with dominating mixed smoothness in a Besov space \footnote{The work was supported by the Ministry of Education and Science of the Russian Federation as part of the program of the Moscow Center for Fundamental and Applied Mathematics under the Agreement No. 075-15-2025-345.}}
\date{}
\begin{document}

\maketitle

\newenvironment{Biblio}{%
                  \renewcommand{\refname}{\footnotesize REFERENCES}%
                  \begin{thebibliography}{99}%
                  \itemsep -0.2em}%
                  {\end{thebibliography}}

\def\inff{\mathop{\smash\inf\vphantom\sup}}
\renewcommand{\le}{\leqslant}
\renewcommand{\ge}{\geqslant}
\newcommand{\sgn}{\mathrm {sgn}\,}
\newcommand{\inter}{\mathrm {int}\,}
\newcommand{\dist}{\mathrm {dist}}
\newcommand{\supp}{\mathrm {supp}\,}
\newcommand{\R}{\mathbb{R}}
\newcommand{\Z}{\mathbb{Z}}
\newcommand{\N}{\mathbb{N}}
\newcommand{\Q}{\mathbb{Q}}
\theoremstyle{plain}
\newtheorem{Trm}{Theorem}
\newtheorem{trma}{Theorem}

\newtheorem{Cor}{Corollary}
\newtheorem{Lem}{Lemma}
\newtheorem{Sta}{Proposition}

\theoremstyle{definition}
\newtheorem{Def}{Definition}
\newtheorem{Rem}{Remark}

\newtheorem{Sup}{Assumption}
\newtheorem{Supp}{Assumption}
\newtheorem{Not}{Notation}
\newtheorem{Exa}{Example}
\renewcommand{\proofname}{\bf Proof}
\renewcommand{\thetrma}{\Alph{trma}}
\renewcommand{\theSupp}{\Alph{Supp}}

\begin{abstract}
In this paper, we obtain order estimates for the Kolmogorov widths of an intersection of a finite family of Besov classes $SB_{p_j,\theta_j}^{\overline{r}_j}(\mathbb{T}^d)$ with dominating mixed smoothness in a Besov space $B_{q,\sigma}^{\overline{l}}(\mathbb{T}^d)$ in the case $2<q, \, \sigma <\infty$ when the parameters satisfy certain conditions of general position.
\end{abstract}

{\bf Keywords:} Besov spaces, dominating mixed smoothness, Kolmogorov widths, intersections of balls.

\section{Introduction}

In \cite{galeev1, galeev2, vas_int_sob} the problem of estimating the Kolmogorov widths of an intersection of Sobolev classes in $L_q$ on a one‐dimensional torus was studied. Here we consider the similar problem of estimating the Kolmogorov widths of an intersection of periodic Besov classes with dominating mixed smoothness $SB^{\overline{r}_j}_{p_j,\theta_j}(\mathbb{T}^d)$ in $B^{\overline{l}}_{q,\sigma}(\mathbb{T}^d)$ for $2<q, \, \sigma <\infty$. The proof employs the recent result \cite{vas_mix_sev} about the estimates for the Kolmogorov widths of an intersection of finite‐dimensional balls in a mixed norm.

We give necessary definitions and notation.

Let $d\in \N$, $\mathbb{T}^d= [0, \, 2\pi]^d$ (the endpoints of the intervals $[0, \, 2\pi]$ are ``glued''). 
We denote by $(\cdot, \, \cdot)$ the standard inner product on $\R^d$. Given a function $f\in L_1(\mathbb{T}^d)$, we write its Fourier series: $$f \mapsto \sum \limits _{\overline{k}\in \Z^d} c_{\overline{k}}(f) e^{i(\overline{k}, \, \cdot)}.$$

Let $\eta_0: \R \rightarrow [0, \, 1]$ be an infinitely smooth function such that $\eta_0|_{[‐1, \, 1]} = 1$, $\eta_0|_{\R \backslash [‐3/2, \, 3/2]} = 0$. Given $l\in \N$, we set $\eta_l(t) = \eta_0(2^{‐l}t) ‐ \eta_0(2^{‐l+1}t)$, $t\in \R$.

For $\overline{m}=(m_1, \, \dots, \, m_d)\in \N^d$, $\overline{\xi} = (\xi_1, \, \dots, \, \xi_d)$ we denote $\eta _{\overline{m}}(\overline{\xi}) = \prod _{j=1}^d \eta_{m_j}(\xi_j)$. Let
$$
\Delta_{\overline{m}}f(\cdot) = \sum \limits _{\overline{k} \in \Z^d} \eta _{\overline{m}}(\overline{k}) c_{\overline{k}}(f) e^{i (\overline{k}, \, \cdot)}, \quad f\in L_1(\mathbb{T}^d).
$$

\begin{Def}
\label{besov_sp_def}
Let $\overline{r}\in (0, \, \infty)^d$, $1\le p\le \infty$, $1\le \theta \le \infty$. Given $x\in L_p(\mathbb{T}^d)$, we set $$\|x\|_{B^{\overline{r}}_{p,\theta}(\mathbb{T}^d)} = \Bigl\|\bigl\{ 2^{(\overline{m}, \, \overline{r})}\|\Delta _{\overline{m}}x\|_{L_p(\mathbb{T}^d)}\bigr\}_{\overline{m}\in \N^d}\Bigr\|_{l_\theta}.$$ The Besov space with dominating mixed smoothness $B^{\overline{r}}_{p,\theta}(\mathbb{T}^d)$ is defined by the formula
$$
B^{\overline{r}}_{p,\theta}(\mathbb{T}^d) = \{x\in L_p(\mathbb{T}^d):\, \|x\|_{B^{\overline{r}}_{p,\theta}(\mathbb{T}^d)} < \infty\};
$$
it is equipped with the norm $\|\cdot\|_{B^{\overline{r}}_{p,\theta}(\mathbb{T}^d)}$. The unit ball is denoted by $SB^{\overline{r}}_{p,\theta}(\mathbb{T}^d)$.
\end{Def}

The Besov spaces with dominating mixed smoothness first appeared in the paper of Amanov \cite{amanov} (see also \cite{amanov1}). These spaces were defined in terms of differences, similarly to the classical Besov spaces \cite{besov_space}. In \cite{itogi_nt, liz_nik, tr_scm} various definitions of the Besov spaces in terms of Fourier series were given; see also, e.g., \cite{romanyuk, galeev_besov, bazarkh2}. In \cite{tr_scm, bazarkh2} the above definition \ref{besov_sp_def} or its analogue for Besov spaces on $\R^d$ was used. (To be more precise, in \cite{bazarkh2}  the more general class of spaces was considered; it included the classical Besov spaces and the Besov spaces with dominating mixed smoothness.) In \cite{itogi_nt, galeev_besov}, instead of $\Delta_{\overline{m}}$, the operators $\delta_{\overline{m}}$ defined by formula $$\delta_{\overline{m}} f(\cdot)= \sum \limits _{\overline{k}\in \square_{\overline{m}}} c_{\overline{k}}(f)e^{i(\overline{k},\, \cdot)}, \quad \square_{\overline{m}}=\{k\in \Z^d:\; 2^{m_j‐1}\le |k_j|<2^{m_j}, \; 1\le j\le d\}$$ were considered. It is well‐known that for $1<p<\infty$ this definition is equivalent to Definition \ref{besov_sp_def}. For $p=1$ and $p=\infty$, instead of $\delta_{\overline{m}}$ the convolution with the de la Vall\'ee Poussin kernel was used (see \cite{romanyuk}).

We also mention the paper \cite{haroske_skr}, where the weighted Besov spaces on $\R^d$ with Muckenhoupt weights were studied.

In what follows, given $\overline{m}=(m_1, \, \dots, \, m_d)$, we denote
$$
m= m_1+\dots + m_d.
$$

From \cite[Theorem 3.1]{bazarkh2} it follows that, for $\overline{r}\in (0, \, \infty)^d$, the space $B^{\overline{r}}_{p,\theta}(\mathbb{T}^d)$ is isomorphic to the sequence space
$$
b^{\overline{r}}_{p,\theta} = \{x=(x_{\overline{m},j})_{\overline{m}\in \Z_+^d, \, j\in J_{\overline{m}}}:\; \|x\|_{b^{\overline{r}}_{p,\theta}}<\infty\},
$$
where the index sets $J_{\overline{m}}$ satisfy the inequalities
\begin{align}
\label{card_j_m}
c_1(d)2^m\le \# J_{\overline{m}} \le c_2(d) 2^m
\end{align}
for some $c_1(d)>0$, $c_2(d)>0$, and the
norm $\|\cdot\|_{b^{\overline{r}}_{p,\theta}}$ is defined by
\begin{align}
\label{x_b_norm}
\|x\|_{b^{\overline{r}}_{p,\theta}} = \Bigl( \sum \limits _{\overline{m}\in \Z_+^d} 2^{\theta(\overline{r}, \, \overline{m})‐m\theta/p} \Bigl( \sum \limits _{j\in J_{\overline{m}}} |x_{\overline{m},j}| ^p\Bigr)^{\theta/p}\Bigr)^{1/\theta}
\end{align}
for finite $p$, $\theta$; if $p=\infty$ or $\theta=\infty$, the definition of the norm is naturally modified. This isomorphism is constructed in terms of wavelet decompositions of functions (here we denote the system of wavelets by $\{\psi_{\overline{m},j}\}_{\overline{m}\in \Z_+^d, \, j\in J_{\overline{m}}}$; for details, see \cite{bazarkh2}); more precisely, it is defined as follows: $f\mapsto (\langle f, \, \psi_{\overline{m},j}\rangle)_{\overline{m}\in \Z_+^d, \, j\in J_{\overline{m}}}$ (here $\langle \cdot, \, \cdot\rangle$ is the standard inner product in $L_2(\mathbb{T}^d)$). We see that the formula does not depend on $\overline{r}$, $p$ and $\theta$.

We also notice that, for $1<p<\infty$, in order to construct the isomorphism of the Besov space and the sequence space we can use the equivalent definition with the operators $\delta_{\overline{m}}$ and apply Marcinkiewicz multipliers theorem (see \cite[Ch. 2, section 2.3, Theorem 18]{itogi_nt}, \cite[Chapter III, section 15.3]{besov_iljin_nik}, \cite[section 1.5.3]{nikolski_sm}) and Marcinkiewicz discretization theorem (see \cite[Theorem B]{galeev2}, \cite[Vol. 2, Ch. X, Theorem 7.5]{zigmund}).

Now we give the definition of the Kolmogorov widths.
\begin{Def}
Let $X$ be a normed space, and let $M\subset X$, $n\in \Z_+$. The Kolmogorov $n$-width of the set $M$ in the space $X$ is defined by
$$
d_n(M, \, X) = \inf _{L\in {\cal L}_n(X)} \sup _{x\in M} \inf
_{y\in L} \|x-y\|;
$$
here ${\cal L}_n(X)$ is the family of all subspaces in $X$ of dimension at most $n$.
\end{Def}

The problem of estimating the widths of Sobolev and Nikolskii classes was studied in \cite{babenko, mityagin, makovoz, ismagilov, majorov, bib_kashin, kashin_sma, kulanin1, kulanin2, galeev85, galeev87, teml3, teml4, mal25, tikh60} (the paper \cite{mal25} is a recent result, which completely solves the Sobolev width problem on an interval; the critical case of $W^1_1$ in $L_q$, $2<q<\infty$, is considered). For more details, see \cite{itogi_nt, kniga_pinkusa, teml_book, alimov_tsarkov}.

The study of the problem of the Kolmogorov widths of periodic Besov classes with dominating mixed smoothness in the space $L_q(\mathbb{T}^d)$ was pioneered by Galeev \cite{galeev0} and Romanyuk \cite{romanyuk0} and was continued in \cite{romanyuk, galeev_besov, bazarkh1}; see also \cite{galeev11}. In \cite{mal21} order estimates for  $d_n(B^1_{1,\theta}[0, \, 1], \, L_q[0, \, 1])$ were obtained. In \cite{stasyuk} the problem of Kolmogorov widths of classes $MB^{\Omega}_{p,\theta}$ in $L_q$ was studied. In \cite{akishev} instead of the Lebesgue space the Lorentz space was considered, and the Besov classes were anisotropic. In \cite{nguen, vkn_b} the problems of estimating the Gelfand and Bernstein widths of Besov classes with dominating mixed smoothness were studied.

The problem on widths of Besov classes with dominated mixed smoothness in the Besov space 
is simpler, from the point of view of discretization, than the 
problem on $L_q$-wdiths,
but the solution of the former problem should be based on the knowledge of widths 
of finite-dimensional balls in the mixed norm. A~more detailed account of estimates of widths of 
the finite-dimensional balls $B_{p,\theta}^{t,k}$ in the space $l_{q,\sigma}^{t,k}$ 
will be given in~\S 3 (see Theorem~ \ref{1mixed} and Remark~\ref{eeee_rem}). In \cite{vas_mix_sev} estimates for the widths of intersections of finite‐dimensional balls in a mixed norms were obtained for $2\le q, \, \sigma<\infty$; thus, we will consider the more general problem of the widths of intersections of Besov classes.

The problem of Kolmogorov widths of an intersection of Sobolev classes in $L_q$ was studied in \cite{galeev1, galeev2, vas_int_sob}.

Now we formulate the main results of this paper.

Let $s\in \N$, $1\le p_j\le \infty$, $1\le \theta_j\le \infty$, 
$\overline{r}_j=(r_j, \, \dots, \, r_j)\in (0, \, \infty)^d$, $j=1, \, \dots, \, s$, $\overline{l}=(l, \, \dots, \, l)\in (0, \, \infty)^d$. Suppose that $2< q, \, \sigma<\infty$. We consider the problem of estimating the Kolmogorov widths of an intersection of Besov classes\footnote{We need the conditions $r_j>0$, $l>0$ in order to apply the result of \cite{bazarkh2} about the isomorphisms of Besov spaces and the sequence spaces. D.B. Bazarkhanov reported that this result holds for arbitrary smoothness, but this assertion is not published yet. Hence we formulate the main results only for the spaces of strictly positive smoothness.}
$$
d_n(\cap _{j=1}^s SB^{\overline{r}_j}_{p_j,\theta_j}(\mathbb{T}^d), \, B^{\overline{l}}_{q, \sigma}(\mathbb{T}^d)).
$$

We give notation for order equalities and inequalities. Let $X$, $Y$ be sets, and let $f_1$, $f_2:\ X\times Y\rightarrow \mathbb{R}_+$. We write $f_1(x, \, y)\underset{y}{\lesssim} f_2(x, \, y)$ (or $f_2(x, \, y) \underset{y}{\gtrsim} f_1(x, \, y)$) if for each $y\in Y$ there is $c(y)>0$ such that $f_1(x, \, y)\le c(y)f_2(x, \, y)$ for all $x\in X$; $f_1(x, \, y)\underset{y}{\asymp} f_2(x, \, y)$ if $f_1(x, \, y)
\underset{y}{\lesssim} f_2(x, \, y)$ and $f_2(x, \,
y)\underset{y}{\lesssim} f_1(x, \, y)$.

We denote $\mathfrak{Z} = (d, \, s, \, r_1, \, \dots, \, r_s, \, l, \, q, \, \sigma, \, p_1, \, \dots, \, p_s, \, \theta_1, \, \dots, \, \theta_s)$.

By $Sb^{\overline{r}}_{p,\theta}$, we denote the unit ball in the space $b^{\overline{r}}_{p,\theta}$ (see \eqref{x_b_norm}).

It was mentioned above that the formulas defining the isomorphism of the spaces $B^{\overline{r}}_{p,\theta}(\mathbb{T}^d)$ and $b^{\overline{r}}_{p,\theta}$ are the same for all $\overline{r}$, $p$, $\theta$; therefore,
\begin{align}
\label{discr_eq} d_n(\cap _{j=1}^s SB^{\overline{r}_j}_{p_j,\theta_j}(\mathbb{T}^d), \, B^{\overline{l}}_{q, \sigma}(\mathbb{T}^d)) \underset{\mathfrak{Z}}{\asymp} d_n(\cap _{j=1}^s Sb^{\overline{r}_j}_{p_j,\theta_j}, \, b^{\overline{l}}_{q, \sigma})
\end{align}
if the left‐ or the right‐hand side of \eqref{discr_eq} is finite for $n=0$.

We set $\overline{\alpha}_j = (\alpha_j, \, \dots, \, \alpha_j) \in \R^d$, where $\alpha_j= r_j‐l$.

Throughout this paper we suppose that the following conditions hold:
\begin{align}
\label{non_emb} \alpha_1<\alpha_2<\dots < \alpha_s, \quad \alpha_1 ‐ \frac{1}{p_1} > \alpha_2 ‐ \frac{1}{p_2} > \dots > \alpha_s ‐ \frac{1}{p_s};
\end{align}
the analogue of these conditions appeared in \cite{vas_int_sob}, where the problem of estimating of intersections of periodic Sobolev classes on $\mathbb{T}^1$ and intersections of Sobolev classes on a John domain was considered.

\begin{Rem}
Suppose that all values $\alpha_j$ are different, as well as all values $\alpha_j‐\frac{1}{p_j}$ are different $(j=1, \, \dots, \, s)$. Then the first chain of the inequalities in \eqref{non_emb} can be obtained by the appropriate numeration. If the second chain of the inequalities in \eqref{non_emb} fails, then $\alpha_i>\alpha_j$, $\alpha_i‐\frac{1}{p_i} > \alpha_j ‐ \frac{1}{p_j}$ for some $i>j$. Applying the isomorphisms of the spaces $B^{\overline{r}}_{p,\theta}(\mathbb{T}^d)$ and $b^{\overline{r}}_{p,\theta}$, we see that $SB^{\overline{r}_i}_{p_i,\theta_i}(\mathbb{T}^d) \subset M\cdot SB^{\overline{r}_j}_{p_j,\theta_j}(\mathbb{T}^d)$, where $M=M(\mathfrak{Z})\in (0, \, \infty)$. Hence the widths of the intersection of the balls over $k\in\{1, \, \dots, \, s\}$ have the same order as the widths of the intersection over $k\in \{1, \, \dots, \, s\}\backslash \{j\}$. Therefore, we can exclude ``unnecessary'' balls and reduce the problem to the case when \eqref{non_emb} holds. Thus, only the ``degenerate'' case will not be studied in the present paper, when $\alpha_i=\alpha_j$ or $\alpha_i‐\frac{1}{p_i} = \alpha_j‐\frac{1}{p_j}$ for some $i\ne j$.
\end{Rem}

From \eqref{non_emb} it follows that
\begin{align}
\label{p1ps} p_1>p_2> \dots >p_s.
\end{align}

Given $2< q<\infty$, $1\le p\le \infty$, we set 
\begin{align}
\label{om_pq}
\omega_{p,q} = \begin{cases} 0 & \text{for }p> q, \\ \frac{1/p-1/q}{1/2-1/q} & \text{for }2<p\le q, \\ 1 & \text{for }1\le p\le 2.\end{cases} 
\end{align}

In what follows we denote $\log x := \log_2 x$.

In Theorems \ref{main1}‐‐\ref{main5}, \ref{main6} the order estimate for the widths will be given by the formula
\begin{align}
\label{teor_dn_est} d_n(\cap _{j=1}^s SB^{\overline{r}_j}_{p_j,\theta_j}(\mathbb{T}^d), \, B^{\overline{l}}_{q, \sigma}(\mathbb{T}^d)) \underset{\mathfrak{Z}}{\asymp} n^{‐\alpha_*}(\log n)^{(d‐1)\beta_*};
\end{align}
the values $\alpha_*$ and $\beta_*$ will be written explicitly.

First we consider two simple cases, when $p_j\ge q$ for all $j=1, \, \dots, \, s$ or $p_j\le 2$ for all $j=1, \, \dots, \, s$.
\begin{Trm}
\label{main1} Let $d\in \N$, $s\in \N$, $2< q, \, \sigma<\infty$, $l>0$, $r_j>0$, $q\le p_j\le \infty$, $1\le \theta_j\le \infty$, $\alpha_j:=r_j‐l$, $j=1, \, \dots, \, s$, $\alpha_s>0$. Suppose that \eqref{non_emb} holds. In the case $1\le \theta_s<\sigma$ we also assume that $\alpha_s \ne \frac{\omega_{\theta_s,\sigma}}{2}$. We set
$$
\alpha_*=\alpha_s, \quad \beta_* = \begin{cases} \alpha_* +\frac{1}{\sigma}‐\frac{1}{\max\{\theta_s, \, 2\}} & \text{ if }\alpha_s > \frac{\omega_{\theta_s,\sigma}}{2}, \\ \frac{2\alpha_*}{\sigma} & \text{ if } \alpha_s < \frac{\omega_{\theta_s,\sigma}}{2}.\end{cases}
$$
Then \eqref{teor_dn_est} holds.
\end{Trm}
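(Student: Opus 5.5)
By \eqref{discr_eq} it suffices to estimate $d_n(\cap_{j=1}^s Sb^{\overline{r}_j}_{p_j,\theta_j},\, b^{\overline{l}}_{q,\sigma})$. In the case of Theorem \ref{main1} ($p_j\ge q$ for all $j$) I expect the intersection to behave like the single ball with index $s$, because the extremal blocks sit at the highest dyadic levels, where the largest $\alpha_j$ governs.

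\textbf{Upper bound.} Since the intersection lies in the single ball $Sb^{\overline{r}_s}_{p_s,\theta_s}$, it is enough to bound $d_n(Sb^{\overline{r}_s}_{p_s,\theta_s}, b^{\overline{l}}_{q,\sigma})$.

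1. Split the sequence space into the levels $\nu=m\in\Z_+$. Level $\nu$ has about $\nu^{d-1}$ blocks $\overline{m}$ with $|\overline{m}|=\nu$, and each block has dimension $\asymp 2^\nu$ by \eqref{card_j_m}.
2. On level $\nu$ the unit ball of $b^{\overline r_s}_{p_s,\theta_s}$ restricted to this level is, after rescaling by $2^{-\alpha_s\nu}$ and the normalisations $2^{-\nu/p_s}$, $2^{\nu/q}$, the finite-dimensional ball $B^{t,k}_{p_s,\theta_s}$ with $t=\nu^{d-1}$ blocks and $k\asymp 2^\nu$. The target norm is $l^{t,k}_{q,\sigma}$.
3. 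Since $p_s\ge q$, use $\|\cdot\|_{l_q^k}\le k^{1/q-1/p_s}\|\cdot\|_{l_{p_s}^k}$ inside each block. The normalisations cancel this factor exactly, so the problem reduces to $B^{t,k}_{\infty,\theta_s}$ (effectively $p=q$) in $l^{t,k}_{q,\sigma}$, with the factor $2^{-\alpha_s\nu}$.
4. Allocate dimensions $n_\nu$: take $n_\nu=\nu^{d-1}2^\nu$ (full) for $\nu\le\nu_0$, where $2^{\nu_0}\nu_0^{d-1}\asymp n$. For $\nu>\nu_0$ use decaying dimensions $n_\nu\asymp n2^{-\varepsilon(\nu-\nu_0)}$, or $n_\nu=0$ for $\nu$ beyond some $\nu_1$.
5. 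Apply Theorem \ref{1mixed} (the widths of $B^{t,k}_{p,\theta}$ in $l^{t,k}_{q,\sigma}$) to each level, and combine levels by the triangle inequality in $l_\sigma$ over $\nu$. The lower-order levels are summed in $l_\sigma$, which produces the $\log$ powers.

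The two regimes in the statement come from the two regimes of the mixed-norm width for the $\theta$-part, i.e. $B^t_{\theta_s}$ in $l^t_\sigma$ with $t=\nu^{d-1}$ blocks.
(a) If $\alpha_s>\omega_{\theta_s,\sigma}/2$, the geometric decay in $\nu$ wins. The main contribution comes from $\nu\approx\nu_0$, where the ball $B^t_{\theta_s}\subset l^t_\sigma$ costs $t^{1/\sigma-1/\max\{\theta_s,2\}}$ (the $\max$ with $2$ reflects Kashin-type gains for $\theta_s<2$). This gives $\beta_*=\alpha_s+1/\sigma-1/\max\{\theta_s,2\}$ after converting $2^{-\alpha_s\nu_0}=(n/\nu_0^{d-1})^{-\alpha_s}$.
(b) If $\alpha_s<\omega_{\theta_s,\sigma}/2$, the optimal allocation spreads over levels up to $\nu_1$ with $2^{\nu_1}\asymp n^{2/\ldots}$. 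The width of the $t$-block structure with $n$ dimensions behaves like $(t/n)^{\ldots}$, and optimising gives the exponent $2\alpha_*/\sigma$ of $\log n$.

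\textbf{Lower bound.} Choose one level $\nu$ (namely $\nu_0$ in case (a), a suitably chosen level in case (b)) and embed a scaled finite-dimensional set into the intersection. Take vectors constant in modulus on a block structure of level $\nu$, i.e. a scaled $B^{t,k}_{\infty,\theta_s}$-type ball, or a scaled intersection handled by the result of \cite{vas_mix_sev}. Check membership in each $Sb^{\overline r_j}_{p_j,\theta_j}$. By \eqref{non_emb}, with $p_j\ge q$ and at large $\nu$, the $j=s$ constraint dominates: the factors $2^{-\alpha_j\nu}$ with $\alpha_j<\alpha_s$ are larger, so the other constraints are slack up to constants. Then the lower bounds in Theorem \ref{1mixed} transfer, giving the same order.

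\textbf{Main obstacle.} The main obstacle is the precise use of the mixed-norm width estimates of Theorem \ref{1mixed} in the block-count regime $t=\nu^{d-1}$, $k=2^\nu$. In particular, one must identify the correct extremal level in case (b) and check that summing over levels loses no extra logarithm; this is where the hypothesis $\alpha_s\ne\omega_{\theta_s,\sigma}/2$ is needed. I would first carry out the level-wise optimisation explicitly with the two-regime formula for $d_n(B^{t,k}_{q,\theta},l^{t,k}_{q,\sigma})$.
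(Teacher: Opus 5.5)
\textbf{Gap: case (b) is asserted, not derived.} Your upper-bound scheme is the paper's: pass to the $s$-th ball, split into levels, and apply Theorem \ref{1mixed} levelwise with geometrically decaying dimensions. Case (a) comes out right. Case (b), $\alpha_s<\omega_{\theta_s,\sigma}/2$, does not. Its exponents are left as placeholders, and the allocation of your step 4 is centred at the wrong level. You centre at $\nu_0$ with $2^{\nu_0}\nu_0^{d-1}\asymp n$. The extremal level is instead $m_0$ defined by $2^{m_0}m_0^{2(d-1)/\sigma}=n$. This is the threshold $n=2^mk_m^{2/\sigma}$ in \eqref{phi2}, below which the width of the level ball has the order of $d_0=2^{-\alpha_s m}$.
\begin{itemize}
\item For $m<m_0$ the level terms $2^{-\alpha_s m}(n^{-1/2}2^{m/2}k_m^{1/\sigma})^{\omega_{\theta_s,\sigma}}$ increase, because $\alpha_s<\omega_{\theta_s,\sigma}/2$.
\item For $m>m_0$ the terms $2^{-\alpha_s m}$ decrease.
\item Hence, with dimensions $\asymp n2^{-\varepsilon|m-m_0|}$ centred at \emph{this} $m_0$, the sum is $\asymp 2^{-\alpha_s m_0}\asymp n^{-\alpha_s}(\log n)^{2(d-1)\alpha_s/\sigma}$.
\end{itemize}
Nothing at a larger power of $n$ enters: $2^{m_0}\asymp n(\log n)^{-2(d-1)/\sigma}$ differs from $2^{\nu_0}$ only by a power of $\log n$.

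With your allocation $n2^{-\varepsilon(\nu-\nu_0)}$, look at the crossover level $\mu$ where $n_\mu=2^\mu\mu^{2(d-1)/\sigma}$. It satisfies $2^\mu\asymp 2^{m_0}(\log n)^{-c\varepsilon}$ with $c>0$, so your bound is off by a factor $(\log n)^{c\alpha_s\varepsilon}$ and does not give the sharp order. The lower bound in (b) must likewise be taken at level $m_0$. For $d\ge 2$ and large $n$ the condition $2^{m_0}k_{m_0}\ge 2n$ holds automatically, and Theorem \ref{1mixed} gives $d_n\asymp 2^{-\alpha_s m_0}$ there. The hypothesis $\alpha_s\ne\omega_{\theta_s,\sigma}/2$ is exactly what makes the terms on both sides of $m_0$ geometric.

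\textbf{Lower bound: a valid alternative route, but fix the justification.} The paper applies the intersection theorem \ref{inters_ball_dn} at one level and shows in Lemma \ref{och9} that every competing term is $\gtrsim n^{-\alpha_*+c}$. It uses this machinery because the same argument serves Theorems \ref{main3}--\ref{main6}. Your flat-vector idea avoids it here. Take $W=2^{-m(\alpha_s+1/q)}B^{2^m,k_m}_{\infty,\theta_s}$.
\begin{itemize}
\item For $x\in W$, the $j$-th norm is at most $2^{m/p_j}k_m^{(1/\theta_j-1/\theta_s)_+}2^{-m(\alpha_s+1/q)}\le 2^{-m(\alpha_j+1/q-1/p_j)}$ for large $m$, since $\alpha_j<\alpha_s$. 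So $W$ lies in the intersection.
\item For $p\ge q$, the value $\Phi$ in \eqref{phi1}, \eqref{phi2} depends on $p$ only through the factor $t^{1/q-1/p}$. So $d_n(W)$ has the same order as the width of the $s$-th level ball.
\end{itemize}
However, the slackness holds only for such flat vectors, not for the $s$-th ball itself. By the second chain in \eqref{non_emb}, a single coordinate of height $2^{-m(\alpha_s+1/q-1/p_s)}$ violates the $j$-th constraint by the factor $2^{m[(\alpha_j-1/p_j)-(\alpha_s-1/p_s)]}$. The argument must therefore be phrased for $W$.

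\textbf{Minor points.}
\begin{itemize}
\item The inclusion in your step 3 yields $2^{-\nu\alpha_s}B_{q,\theta_s}$, not a $B_{\infty,\theta_s}$ ball; the latter is useful only for the lower bound.
\item In Theorem \ref{1mixed}, $t=2^m$ is the block length and $k=k_m$ the number of blocks, the opposite of your labelling.
\item The logarithms come from $k_m=m^{d-1}$ and from the relation between $2^{m_0}$ and $n$. They do not come from summing levels in $l_\sigma$; that sum is geometric.
\end{itemize}
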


\begin{Trm}
\label{main2} Let $d\in \N$, $s\in \N$, $2< q, \, \sigma<\infty$, $l>0$, $r_j>0$, $1\le p_j\le 2$, $1\le \theta_j\le \infty$, $\alpha_j:=r_j‐l$, $j=1, \, \dots, \, s$. Let $\alpha_1+\frac 1q‐\frac{1}{p_1}>0$, $\alpha_1 \ne \frac{1}{p_1}$. Suppose that \eqref{non_emb} holds.
In the case $2<\theta_1<\sigma$ we also assume that $\alpha_1+\frac 1q‐\frac{1}{p_1} \ne \frac {\omega _{\theta_1,\sigma}}{q}$.
We set
$$
\alpha_* = \begin{cases} \alpha_1+\frac 12‐\frac{1}{p_1} & \text{ if }\alpha_1 > \frac{1}{p_1}, \\ \frac{q}{2}\Bigl(\alpha_1+\frac 1q‐\frac{1}{p_1}\Bigr) & \text{ if }\alpha_1 < \frac{1}{p_1},\end{cases}
$$
$$
\beta_* = \begin{cases} \alpha_*+\frac{1}{\sigma}‐ \frac{1}{\max\{\theta_1, \, 2\}}, & \alpha_1> \frac{1}{p_1}, \\ \alpha_* + \frac{1}{\sigma} ‐ \frac{1}{\theta_1}, & \alpha_1<\frac{1}{p_1}, \;  \alpha_1+\frac 1q‐\frac{1}{p_1} > \frac{\omega_{\theta_1,\sigma}}{q}, \\
\frac{2\alpha_*}{\sigma}, & \alpha_1< \frac{1}{p_1}, \; \alpha_1+\frac 1q‐\frac{1}{p_1} < \frac{\omega_{\theta_1,\sigma}}{q}.
\end{cases}
$$
Then \eqref{teor_dn_est} holds.
\end{Trm}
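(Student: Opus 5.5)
Via the equivalence \eqref{discr_eq}, Theorem~\ref{main2} reduces to order estimates for $d_n(\cap_{j=1}^s Sb^{\overline{r}_j}_{p_j,\theta_j},\, b^{\overline{l}}_{q,\sigma})$. I renormalize the coordinates as $y_{\overline m,i}=2^{(\overline l,\overline m)-m/q}x_{\overline m,i}$, so the target norm becomes the mixed norm $\bigl(\sum_{\overline m}\|y_{\overline m,\cdot}\|_{l_q}^\sigma\bigr)^{1/\sigma}$. The $j$-th constraint then reads $\bigl\|\{2^{m(\alpha_j-1/p_j+1/q)}\|y_{\overline m,\cdot}\|_{l_{p_j}}\}\bigr\|_{l_{\theta_j}}\le 1$. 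Group the blocks by level $\nu=m$. There are $\asymp \nu^{d-1}$ blocks $\overline m$ with $m=\nu$, each of dimension $\asymp 2^\nu$ by \eqref{card_j_m}. At level $\nu$ the intersection therefore sits inside (and, up to constants, contains) the intersection of mixed balls $2^{-\nu(\alpha_j-1/p_j+1/q)}B^{\nu^{d-1},2^\nu}_{p_j,\theta_j}$ in $l^{\nu^{d-1},2^\nu}_{q,\sigma}$.

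\textbf{Upper bound.} Since $p_j\le 2$ for all $j$ and \eqref{non_emb} holds, the first ball dominates in the relevant regime. Intersecting with more balls only decreases widths, so it suffices to bound $d_n(Sb^{\overline r_1}_{p_1,\theta_1},b^{\overline l}_{q,\sigma})$. That is, I prove the $s=1$ bound with $\alpha_*,\beta_*$ as stated. The procedure is as follows.

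First, split the levels at $\nu_n$ with $2^{\nu_n}\nu_n^{d-1}\asymp n$. Below $\nu_n$, include all coordinates in the approximating subspace. Above $\nu_n$, allocate dimensions $n_\nu$ decaying geometrically in $|\nu-\nu_n|$, up to a level $\nu_n'$ where no dimension is used. In the case $\alpha_1<1/p_1$, the choice is $\nu_n'\approx \frac q2\nu_n$, which produces the exponent $\frac q2(\alpha_1+\frac1q-\frac1{p_1})$.

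Second, at each level apply the estimates for widths of mixed-norm balls, namely Theorem~\ref{1mixed} (Galeev-type estimates for $d_n(B^{t,k}_{p,\theta},l^{t,k}_{q,\sigma})$ with $2\le q,\sigma<\infty$).

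Third, sum the resulting level contributions in $l_\sigma$ across levels. Here the logarithmic exponent $\beta_*$ comes out of the interplay between $\theta_1$ and $\sigma$. When $\theta_1\ge\sigma$ or the smoothness term exceeds $\omega_{\theta_1,\sigma}/q$, the sum is dominated by the level $\nu_n$ (respectively $\nu_n'$), giving $\alpha_*+1/\sigma-1/\theta_1$ (or $-1/\max\{\theta_1,2\}$ when $\alpha_1>1/p_1$). Otherwise the extremal vectors spread over $\nu^{d-1}$ blocks in the way that gives $2\alpha_*/\sigma$. The general-position assumptions exclude equality of the competing exponents and thus avoid extra $\log\log$ or power-of-$\log$ factors.

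\textbf{Lower bound.} Fix a single level $\nu$: $\nu=\nu_n$ if $\alpha_1>1/p_1$, and $\nu=\nu_n'$ if $\alpha_1<1/p_1$. Restrict to the finite-dimensional subspace of that level. There the intersection contains a multiple of an intersection of mixed balls $\cap_j 2^{-\nu(\alpha_j-1/p_j+1/q)}B^{k,N}_{p_j,\theta_j}$. I invoke the result of \cite{vas_mix_sev} on widths of intersections of finite-dimensional mixed balls. Because of \eqref{non_emb} and $p_1>\dots>p_s$, at the chosen level the constraint from $j=1$ is the binding one. The other balls contain the relevant extremal set, which is a set of vectors with support of size comparable to $2^\nu\nu^{d-1}/$(a power), or a suitable $B^{k,N}_{\theta,2}$-type set. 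Therefore the lower bound equals the single-ball lower bound, and it matches the upper bound.

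The main obstacle is the lower bound. It requires verifying precisely that, at the extremal level chosen, the ``extremal'' vectors of \cite{vas_mix_sev} for ball $1$ satisfy all other constraints $j\ge2$ up to constants. In the case $\alpha_1<1/p_1$ this involves comparing exponents $\alpha_j-1/p_j+1/q$ at level $\frac q2\nu_n$ with sparse vectors. I would first check this by computing the $l_{p_j}$ norms of the extremal vectors explicitly and using the second chain in \eqref{non_emb}.
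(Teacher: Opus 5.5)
There is a genuine gap in your upper bound in the small-smoothness case $\alpha_1<1/p_1$. You centre the geometric allocation at $\nu_n$ (where $2^{\nu_n}\nu_n^{d-1}\asymp n$) and only move the cutoff up to $\nu_n'\approx\frac q2\nu_n$.

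When $\alpha_1<1/p_1$, the level error for a fixed budget \emph{increases} with $\nu$ until the minimum saturates. For example, for $\theta_1\le 2$ it is $2^{-(\alpha_1+1/q-1/p_1)\nu}\min\{1,\,n_\nu^{-1/2}2^{\nu/q}\nu^{(d-1)/\sigma}\}$. So the total is governed by the top levels, which is exactly where your budget has already decayed.

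With $n_\nu=n2^{-\varepsilon(\nu-\nu_n)}$, saturation occurs at $2^\nu\approx n^{\frac q2\cdot\frac{1+\varepsilon}{1+\varepsilon q/2}}$. By Theorem \ref{1mixed}, the level error there is $n^{-\alpha_*\frac{1+\varepsilon}{1+\varepsilon q/2}}$, which loses a power of $n$ for every fixed $\varepsilon>0$. Taking $\varepsilon=0$ instead spends $\asymp n\log n$ dimensions and destroys the logarithmic exponent.

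The paper avoids this by centring the allocation at the top critical level $m_0(n)$, where $2^{m_0(n)}\asymp n^{q/2}(\log n)^{\kappa}$:
$\nu_m=\min\{2^mk_m,\lfloor n2^{-\varepsilon|m-m_0(n)|}\rfloor\}$, with $\nu_m=0$ beyond it (see \eqref{nu_m_def}). For $\varepsilon<2(1/p_1-\alpha_1)$, the level errors then increase geometrically up to $m_0(n)$. Because $\alpha_1+1/q-1/p_1>0$, they decrease geometrically after it, so the whole sum is of the order of the $m_0(n)$-term. Your centring at $\nu_n$ is correct only for $\alpha_1>1/p_1$.

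The critical level must also be fixed to logarithmic precision, both for the allocation and for the single level used in the lower bound. Writing $\nu_n'\approx\frac q2\nu_n$ does not determine $\beta_*$. The paper takes $2^{2m_0/q}m_0^{2(d-1)/\sigma}=n$ when $\beta_*=2\alpha_*/\sigma$, and $2^{2m_0/q}m_0^{d-1}=n$ when $\beta_*=\alpha_*+1/\sigma-1/\theta_1$. For instance, for $\theta_1\le 2$ the level $2^\nu=n^{q/2}(\log n)^{-(d-1)q/2}$ yields only $n^{-\alpha_*}(\log n)^{(d-1)(\alpha_*+1/\sigma-1/2)}$. This is strictly below the claimed order, since $\alpha_*<1/2$ in that case.

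Also check your indices. The inner $l_p$ norm runs over the $\asymp 2^\nu$ coefficients of one block, and the outer $l_\theta$ norm over the $\asymp\nu^{d-1}$ blocks. In the paper's notation this is $B^{2^\nu,\nu^{d-1}}_{p,\theta}$; with the superscripts swapped, Theorem \ref{1mixed} gives wrong rates.

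With the level chosen correctly, your lower-bound plan works, and it is simpler than the paper's route through Theorem \ref{inters_ball_dn} and Lemma \ref{och10}. For $p\le 2$, $\Phi$ depends on $p$ only through $\omega_{p,q}=1$. Hence $2^{-\nu(\alpha_1+1/q-1/p_1)}B^{2^\nu,k_\nu}_{1,\theta_1}$ has the same width as the first ball. This set lies in every ball $2^{-\nu(\alpha_j+1/q-1/p_j)}B^{2^\nu,k_\nu}_{p_j,\theta_j}$ for large $\nu$, because the power gap $(\alpha_1-1/p_1)-(\alpha_j-1/p_j)>0$ from \eqref{non_emb} absorbs the factor $k_\nu^{(1/\theta_j-1/\theta_1)_+}$.
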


Now we consider the more complicated case, when 
\begin{align}
\label{ijk_ne_i_k}
\{j\in \overline{1, \, s}:\; p_j<q\}\ne \varnothing, \; \{j\in \overline{1, \, s}:\; p_j>2\}\ne \varnothing.
\end{align}
In addition, we suppose that
\begin{gather}
\label{p_ne_2q} p_j\notin \{2, \, q\}, \; j=1, \, \dots, \, s, \\
\label{frac_aj} \frac{\alpha_{i_1}‐\alpha_{j_1}}{1/p_{i_1}‐1/p_{j_1}} \ne \frac{\alpha_{i_2}‐\alpha_{j_2}}{1/p_{i_2}‐1/p_{j_2}}, \; i_1>j_1, \; i_2>j_2, \; (i_1,\, j_1) \ne (i_2, \, j_2).
\end{gather}
From \eqref{p_ne_2q} it follows that $\{1, \, \dots, \, s\} = I\sqcup J\sqcup K$, where
\begin{align}
\label{ijk_def}
I = \{j\in \overline{1, \, s}:\; p_j>q\}, \; J = \{j\in \overline{1, \, s}:\; 2<p_j<q\}, \; K= \{j\in \overline{1, \, s}:\; p_j<2\}.
\end{align}

As in \cite{vas_int_sob}, we use the following notation.

We define the numbers $\lambda_{i,j}$ ($i\in I$, $j\in J\sqcup K$) and $\tilde \lambda_{i,j}$ ($i\in I\sqcup J$, $j\in K$) by the equations
\begin{align}
\label{lam_ij_def} \frac 1q = \frac{1‐\lambda_{i,j}}{p_i} + \frac{\lambda_{i,j}}{p_j}, \quad \frac 12 = \frac{1‐\tilde\lambda_{i,j}}{p_i} + \frac{\tilde\lambda_{i,j}}{p_j}.
\end{align}
The numbers $i_0\in I$, $j_0\in J\sqcup K$ (for $I\ne \varnothing$) and $i_1\in I\sqcup J$, $j_1\in K$ (for $K\ne \varnothing$) are defined by
\begin{align}
\label{i01j01}
\begin{array}{c}
(i_0, \, j_0) = \operatorname{argmax} _{i\in I, \, j\in J\sqcup K}\{(1‐\lambda_{i,j})\alpha_i + \lambda_{i,j} \alpha_j\}, \\
(i_1, \, j_1) = \operatorname{argmax} _{i\in I\sqcup J, \, j\in K}\{(1‐\tilde\lambda_{i,j})\alpha_i + \tilde\lambda_{i,j} \alpha_j\}.
\end{array}
\end{align}
The functions $h_0$, $h_1$, $h_2:[1, \, q/2] \rightarrow \R\cup\{‐\infty\}$ are defined as follows:
\begin{align}
\label{h0_def}
h_0(t) = \begin{cases}t((1‐\lambda_{i_0,j_0})\alpha_{i_0} + \lambda _{i_0,j_0} \alpha_{j_0})&\text{if}\; I\ne \varnothing, \\ ‐\infty &\text{if}\; I= \varnothing,\end{cases}
\end{align}
\begin{align}
\label{h1_def}
h_1(t) = \begin{cases}t\Bigl((1‐\tilde\lambda_{i_1,j_1})\alpha_{i_1} + \tilde\lambda _{i_1,j_1} \alpha_{j_1} ‐\frac 12\Bigr)+\frac 12& \text{if}\; K\ne \varnothing, \\ ‐\infty & \text{if}\; K= \varnothing,\end{cases}
\end{align}
\begin{align}
\label{h2_def}
h_2(t) = \begin{cases}\max _{j\in J} \varphi_j(t) & \text{if}\; J\ne \varnothing, \\ ‐\infty & \text{if}\; J= \varnothing,\end{cases}
\end{align}
where 
\begin{align}
\label{phi_j_def}
\varphi_j(t) = t\left(\alpha_j ‐\frac 12 \cdot \frac{1/p_j‐1/q}{1/2‐1/q}\right) +\frac 12 \cdot \frac{1/p_j‐1/q}{1/2‐1/q}, \quad j=1, \, \dots, \, s.
\end{align}
We set 
\begin{align}
\label{h_def_012}
h(t) = \max \{h_0(t), \, h_1(t), \, h_2(t)\}, \quad t\in [1, \, q/2].
\end{align}

In \cite{vas_int_sob} it was proved that if the function $h$ has the unique minimum point $t_*\in [1, \, q/2]$ and $h(t_*)>0$, then $d_n(\cap _{j=1}^s W^{\alpha_j}_{p_j}(\mathbb{T}), \, L_q(\mathbb{T}))$ has the order $n^{‐h(t_*)}$. Considering the problem of the widths $d_n(\cap _{j=1}^s SB^{\overline{r}_j}_{p_j,\theta_j}(\mathbb{T}^d), \, B^{\overline{l}}_{q, \sigma}(\mathbb{T}^d))$ we need to study the function $h$ in more details. We will prove the following assertion.

\begin{Sta}
\label{h_prop} Let $2<q<\infty$ and let the conditions \eqref{non_emb}, \eqref{ijk_ne_i_k}, \eqref{p_ne_2q}, \eqref{frac_aj} hold. Let $1<t_1<t_2<\dots<t_{L‐1}<\frac q2$ be the corner points of $h$, $t_0:=1$, $t_L:=q/2$. Then the following assertions hold:
\begin{enumerate}
\item if $2\le l\le L‐1$, then $h|_{[t_{l‐1}, \, t_l]} = \varphi_{j(l)}|_{[t_{l‐1}, \, t_l]}$ for some $j(l)\in J$;

\item if $K = \varnothing$, then $h|_{[t_0, \, t_1]} = \varphi_{j(1)}|_{[t_0, \, t_1]}$, where $j(1)=s\in J$;

\item if $K\ne \varnothing$, then $h|_{[t_0, \, t_1]} = h_1|_{[t_0, \, t_1]}$; if, in addition, $i_1\in J$, then $h(t)=\varphi_{i_1}(t)$ in a right semi‐neighborhood of the point $t_1$; if $i_1\in I$, then $L=2$, $(i_0, \, j_0)= (i_1, \, j_1)$ and $h|_{[t_1, \, t_2]} = h_0|_{[t_1, \, t_2]}$;

\item if $I=\varnothing$, then $h|_{[t_{L‐1}, \, t_L]} = \varphi_{j(L)}|_{[t_{L‐1}, \, t_L]}$, where $j(L)=1\in J$;

\item if $I\ne \varnothing$, then $h|_{[t_{L‐1}, \, t_L]} = h_0|_{[t_{L‐1}, \, t_L]}$; if, in addition, $j_0\in J$, then $h(t)=\varphi_{j_0}(t)$ in a left semi‐neighborhood of the point $t_{L‐1}$; if $j_0\in K$, then $L=2$, $(i_1, \, j_1) = (i_0, \, j_0)$ and $h|_{[t_0, \, t_1]} = h_1|_{[t_0, \, t_1]}$;

\item the sequence $\{j(l)\}_l$ strictly decreases in $l$.
\end{enumerate}
\end{Sta}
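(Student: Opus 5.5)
The plan is to turn $h$ into the support function of a planar polygon and read all six assertions off the geometry of its upper boundary. For each $j$ put $P_j=(x_j,y_j)=(1/p_j,\alpha_j)\in\R^2$. For $t\in[1,q/2]$ define the affine functional
$$
F_t(x,y)=ty-\frac{t-1}{1-2/q}\Bigl(x-\frac1q\Bigr).
$$
A direct check against \eqref{phi_j_def} gives $\varphi_j(t)=F_t(P_j)$, since $\tfrac12\cdot\frac{1/p_j-1/q}{1/2-1/q}=\frac{x_j-1/q}{1-2/q}$.

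\emph{Step 1: rewrite $h_0$, $h_1$, $h_2$ through $F_t$.} On the line $x=1/q$ we have $F_t(x,y)=ty$. On the line $x=1/2$ we have $F_t(x,y)=t(y-\tfrac12)+\tfrac12$. By \eqref{lam_ij_def}, the point $(1-\lambda_{i,j})P_i+\lambda_{i,j}P_j$ is the intersection of the segment $[P_i,P_j]$ with $\{x=1/q\}$, and the point $(1-\tilde\lambda_{i,j})P_i+\tilde\lambda_{i,j}P_j$ is its intersection with $\{x=1/2\}$. Hence
\begin{itemize}
\item $h_0(t)=F_t(A)$, where $A$ is the highest point of $Q:=\operatorname{conv}\{P_1,\dots,P_s\}$ on $\{x=1/q\}$;
\item $h_1(t)=F_t(B)$, where $B$ is the highest point of $Q$ on $\{x=1/2\}$.
\end{itemize}
The maxima over pairs in \eqref{i01j01} are attained on hull edges, because the highest point of $Q$ on a vertical line lies on an edge of the upper hull joining a point with $x<1/q$ to one with $x>1/q$ (and similarly for $1/2$). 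By \eqref{ijk_ne_i_k} and \eqref{p_ne_2q}, the set $Q':=Q\cap\{1/q\le x\le 1/2\}$ is a nonempty polygon. Its vertices are the $P_j$ with $j\in J$, together with $A$ (when $I\ne\varnothing$) and $B$ (when $K\ne\varnothing$), plus lower points that are irrelevant here. A linear functional on a polygon is maximized at a vertex, so
$$
h(t)=\max_{Q'}F_t .
$$

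\emph{Step 2: the rotation argument.} The gradient of $F_t$ is proportional to $\bigl(-\frac{t-1}{t(1-2/q)},\,1\bigr)$. At $t=1$ this is $(0,1)$, so $F_1$ maximizes $y$. At $t=q/2$ it is $(-1,1)$, so $F_{q/2}$ maximizes $y-x$. The first coordinate $-\frac{t-1}{t(1-2/q)}$ is strictly monotone in $t$, so the direction rotates monotonically from the first to the second.

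By \eqref{non_emb} and \eqref{p1ps}, $x_j$ and $y_j$ both increase with $j$, while $y_j-x_j$ decreases. Hence every edge joining two of the $P_j$ has slope in $(0,1)$, and the upper chain of $Q'$ is a concave graph whose edge slopes lie in $(0,1)$.

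As the direction rotates, the maximizing vertex therefore moves monotonically leftward along this upper chain, and $h$ (a maximum of finitely many affine functions) has a corner exactly when the direction is normal to an edge. The genericity conditions \eqref{frac_aj} and \eqref{p_ne_2q} ensure two things: different edges have different slopes, and no $P_j$ lies on the boundary lines. So the corners $t_1<\dots<t_{L-1}$ are interior points and the maximizer is unique between them.

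\emph{Step 3: read off the assertions.}
\begin{itemize}
\item At $t=1$ the maximizer is the highest point of $Q'$. This is $P_s$ when $K=\varnothing$ (then $s\in J$, since $x_s$ is the largest), giving assertion 2. It is $B$ when $K\ne\varnothing$, giving the first part of assertion 3.
\item Symmetrically, at $t=q/2$ the maximizer is $P_1$ when $I=\varnothing$ (assertion 4) and $A$ when $I\ne\varnothing$ (first part of assertion 5).
\item Interior pieces correspond to interior vertices of the chain, which can only be $P_j$ with $j\in J$ (assertion 1). Their indices strictly decrease because the vertices move leftward and $x_j$ increases with $j$ (assertion 6).
\item The neighbour of $B$ on the upper chain lies on the hull edge $[P_{i_1},P_{j_1}]$. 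If $i_1\in J$, that neighbour is $P_{i_1}$, which gives the claim about a right semi-neighbourhood of $t_1$. If $i_1\in I$, the hull edge crosses both lines, so the upper chain of $Q'$ is the single segment $[A,B]$. Then $(i_0,j_0)=(i_1,j_1)$ and $L=2$. Assertion 5 follows the same way.
\end{itemize}

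The main obstacle I expect is the careful justification in Step 1 that the argmax over pairs in \eqref{i01j01} really corresponds to the hull points $A$, $B$. A maximizing pair need not a priori be a hull edge, so I would argue via the concavity of the upper hull: the maximal crossing height over all segments equals the height of the upper hull on the line. The genericity condition \eqref{frac_aj} then makes this maximizing pair unique, which makes the identifications in assertions 3 and 5 well defined.
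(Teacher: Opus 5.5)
Your argument is correct, and it takes a genuinely different route from the paper's. The paper never passes to a convex hull. It first proves Lemma~\ref{h_kriterii}, which characterizes when $\varphi_{j_*}$, $h_0$ or $h_1$ is active at a given $t$ by the linear inequalities \eqref{alp_jj}--\eqref{alp_jj1}. Necessity there is elementary, but sufficiency is obtained indirectly: the paper compares the order $n^{-h(t')}$ of the widths of $\cap_j 2^{-m_n(\alpha_j+1/q-1/p_j)}B^{2^{m_n}}_{p_j}$ in $l_q^{2^{m_n}}$ (Theorem~\ref{fin_dim_inters}) with the order $n^{-\varphi_{j_*}(t')}$ or $n^{-h_0(t')}$ supplied by Theorem~\ref{expl}. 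Uniqueness of $(i_0,j_0)$ and $(i_1,j_1)$ is a separate algebraic Lemma~\ref{ij01}. The proposition is then assembled by local analysis near $t=1$, near $t=q/2$, and at the endpoints $t_*$, $t_{**}$ of the sets $\{h=h_1\}$ and $\{h=h_0\}$, with \eqref{frac_aj} excluding triple coincidences. Assertion 6 comes from each $\varphi_i-\varphi_j$ changing sign exactly once on $[1,q/2]$.

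Your identification $h(t)=\max_{Q'}F_t$ replaces all of this with the geometry of a concave chain with slopes in $(0,1)$, swept by a monotonically rotating direction. Pieces of $h$ are vertices of the chain, corners are its edges, and the six assertions can be read off directly. What you gain is a short, elementary, self-contained proof that does not invoke deep widths estimates to establish a fact about a maximum of affine functions. You also get the criteria of Lemma~\ref{h_kriterii} for free, since a vertex is active exactly when the direction lies in its normal cone. What the paper gains is that it stays inside the widths machinery it already has and records these criteria as explicit inequalities in $t$.

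Two small points to tighten when you write it out.
\begin{itemize}
\item You attribute too much to \eqref{frac_aj}. The distinctness of the chain's edge slopes and the interiority of the corners already follow from convexity and \eqref{non_emb}. Condition \eqref{frac_aj} is needed only to make the argmax pairs unique, i.e.\ to exclude three collinear $P_j$. Then any maximizing segment through $A$, which lies on a supporting line of $Q$ and has $A$ in its relative interior, must coincide with the hull edge.
\item You should justify that every change of maximizing vertex is a genuine corner of $h$. The affine function $t\mapsto F_t(V)$ determines $V$: its value at $t=0$ fixes $x_V$, and then its slope fixes $y_V$. So adjacent vertices give different affine pieces. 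The lower points of $Q'$ on the lines $x=1/q$ and $x=1/2$ are harmless, since $\partial_y F_t=t>0$.
\end{itemize}
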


The indices $j(l)$ can be found by the algorithm from \cite[the end of \S 2]{vas_int_sob}. They are defined for $l\in \{2, \, 3, \, \dots, \, L‐1\}$, as well as for $l=1$ in the case $K=\varnothing$ and for $l=L$ in the case $I=\varnothing$.

\begin{Rem}
\label{i1j2_j0jl1}
In assertion 3 of Proposition \ref{h_prop}, for $i_1\in J$, we get $j(2)=i_1$ (for $L\ge 3$, see assertion 1, and for $L=2$, see assertions 4, 5). In assertion 5, for $j_0\in J$, we get $j(L‐1) = j_0$ (for $L\ge 3$, see assertion 1, for $L=2$, see assertions 2, 3). 
\end{Rem}

We first consider the cases when the strict minimum of the function $h$ is attained at the endpoint of $[1, \, q/2]$.

In Theorems \ref{main3}, \ref{main4} the minimum of $h$ is attained at $t=1$.

\begin{Trm}
\label{main3} Let $d\in \N$, $s\in \N$, $2< q, \, \sigma<\infty$, $l>0$, $r_j>0$, $\alpha_j:=r_j‐l$, $1\le p_j\le\infty$, $1\le \theta_j\le \infty$, $j=1, \, \dots, \, s$. Suppose that \eqref{non_emb}, \eqref{ijk_ne_i_k}, \eqref{p_ne_2q}, \eqref{frac_aj} hold. Let $K = \varnothing$, $\alpha_s > \frac{\omega_{p_s,q}}{2}$. In the case $\omega_{p_s,q}<\omega_{\theta_s,\sigma}$ we also assume that $\alpha_s\ne \frac{\omega_{\theta_s, \sigma}}{2}$. We set
$$
\alpha_*=\alpha_s, \quad \beta_* = \begin{cases} \alpha_*+ \frac{1}{\sigma}‐ \frac{1}{\max\{\theta_s, \, 2\}} & \text{if }\alpha_s >\frac{\max\{\omega_{\theta_s,\sigma}, \, \omega_{p_s,q}\}}{2}, \\ \frac{2\alpha_*}{\sigma} & \text{if } \alpha_s <\frac{\max\{\omega_{\theta_s,\sigma}, \, \omega_{p_s,q}\}}{2}. \end{cases}
$$
Then \eqref{teor_dn_est} holds.
\end{Trm}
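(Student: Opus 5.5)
By \eqref{discr_eq} I will work with the sequence spaces and estimate $d_n(\cap_{j=1}^s Sb^{\overline{r}_j}_{p_j,\theta_j}, \, b^{\overline{l}}_{q,\sigma})$. After the rescaling $x_{\overline{m},j}\mapsto 2^{(\overline{l},\overline{m})-m/q}x_{\overline{m},j}$, the target norm becomes a mixed norm $l_\sigma(l_q)$ over the blocks $\overline{m}$. Each block $\overline{m}$ with $m=\nu$ contributes a ball $2^{-\nu(\alpha_j+1/q-1/p_j)}B_{p_j}^{\#J_{\overline m}}$. Grouping the blocks by $\nu$, the $\nu$-th layer is an intersection of balls $B^{t,k}_{p_j,\theta_j}$ in $l^{t,k}_{q,\sigma}$, with $k\asymp 2^\nu$ (the block size) and $t\asymp \nu^{d-1}$ (the number of blocks), each scaled by $2^{-\nu(\alpha_j+1/q-1/p_j)}$.

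\textbf{Upper bound.} I split $x=\sum_\nu x_\nu$ and use the subadditivity $d_n(\cdot)\le\sum_\nu d_{n_\nu}(\cdot)$ with $\sum n_\nu\lesssim n$. The choice of $n_\nu$ is as follows.

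For $2^\nu\lesssim n/(\log n)^{d-1}$ (roughly $\nu\le\nu_0$), I take the full layer, $n_\nu=k t$.

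For $\nu>\nu_0$, I use only the ball $j=s$, because of the condition $\alpha_s>\omega_{p_s,q}/2$ with $K=\varnothing$: since $p_s>2$ is the smallest $p_j$, we have $h(1)=\varphi_s(1)=\alpha_s$, and $h$ attains its strict minimum at $t=1$. I apply the one-ball estimate of Theorem~\ref{1mixed} (and Remark~\ref{eeee_rem}) for $B^{t,k}_{p_s,\theta_s}$ in $l^{t,k}_{q,\sigma}$. This gives widths of order $(n/k)^{-\omega_{p_s,q}/2}$-type factors times terms in $t$ governed by $\omega_{\theta_s,\sigma}$. The $n_\nu$ decay geometrically, like $n 2^{-\varepsilon(\nu-\nu_0)}$, and the layers summed over $\nu$ form a geometric series dominated by $\nu=\nu_0$.

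The condition $\alpha_s>\omega_{p_s,q}/2$ is exactly what makes this series converge in $\nu$. The dichotomy in $\beta_*$ comes from the $t$-dependence. If $\alpha_s>\omega_{\theta_s,\sigma}/2$, the $\theta_s\to\sigma$ part behaves like an $l_{\max(\theta_s,2)}\to l_\sigma$ embedding and contributes $(\log n)^{(d-1)(1/\sigma-1/\max\{\theta_s,2\})}$. Otherwise, the critical level shifts to where $t$-Kashin–Gluskin-type estimates balance, which yields $\beta_*=2\alpha_*/\sigma$. The balancing computation mirrors the proof of Theorem~\ref{main1}, with $\omega_{p_s,q}$ now entering the maximum.

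I also need the layers with $\nu\le\nu_0$ to be correctly bounded by the other balls, but these are fully captured by the subspace there, so only the tail matters. Consequently the balls $j<s$ are not needed for the upper bound at all.

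\textbf{Lower bound.} Since $\alpha_s$ is the largest and $\alpha_s-1/p_s$ the smallest, a single layer at level $\nu\approx\nu_0$ lies, after scaling, inside all the balls. A suitable subset of $B^{t,k}_{p_s,\theta_s}\cap B^{t,k}_{\infty,\infty}$-type sets, scaled by $2^{-\nu\alpha_s}\cdot$(normalization), lies in every $Sb^{\overline{r}_j}_{p_j,\theta_j}$. For the ordered case this uses \eqref{non_emb}: the $l_\infty$-type constraint dominates the $p_j$ constraints for $j<s$ with smaller losses when $2^\nu\lesssim n$.

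I then apply the lower estimate for widths of intersections of finite-dimensional balls in the mixed norm from \cite{vas_mix_sev}, choosing $k=2^\nu$ and $t\asymp\nu^{d-1}$ in the first case. In the case $\alpha_s<\max\{\cdot\}/2$ I instead choose a level where the number of active blocks is reduced, giving the $2\alpha_*/\sigma$ exponent.

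\textbf{Main obstacle.} The hardest step is the case analysis matching the maximum $\max\{\omega_{\theta_s,\sigma},\omega_{p_s,q}\}$: showing that when $\omega_{p_s,q}\ge\omega_{\theta_s,\sigma}$, the $\omega_{p_s,q}$-dependence never produces the $2\alpha_*/\sigma$ regime, given that $\alpha_s>\omega_{p_s,q}/2$. I would handle this by verifying, in the intersection estimate of \cite{vas_mix_sev}, which of the competing terms is minimal at the optimal $(k,t)$.
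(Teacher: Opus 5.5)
Your upper bound is the paper's argument: discretize with Corollary~\ref{upper_est}, keep only the ball with index $s$, and sum the single-ball estimates of Theorem~\ref{1mixed} with geometrically decaying budgets. Two points need fixing when you write it out.

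First, in the paper's notation $l^{t,k}_{q,\sigma}$ the inner ($l_p$) index is $t=2^\nu$ and the outer ($l_\theta$) index is $k=k_\nu\asymp\nu^{d-1}$. You swapped them, and \eqref{phi1}--\eqref{phi6} are not symmetric in $(t,k)$. The correct factor is $(n^{-1/2}2^{\nu/q}k_\nu^{1/2})^{\omega_{p_s,q}}$, not $(n/2^\nu)^{-\omega_{p_s,q}/2}$. Only with $2^{\nu/q}$ does the tail converge exactly when $\alpha_s>\omega_{p_s,q}/2$.

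Second, the budgets must be centred at the true critical level $m_0(n)$ and decay on both sides, as in \eqref{nu_m_def}. When $\alpha_s<\omega_{\theta_s,\sigma}/2$, the critical level is given by $2^{m_0}m_0^{2(d-1)/\sigma}=n$. That lies about $(d-1)(1-2/\sigma)\log\log n$ levels above your $\nu_0$, and a budget $n2^{-\varepsilon(\nu-\nu_0)}$ there loses a factor $(\log n)^{c\varepsilon}$.

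For the lower bound you take a genuinely different route from the paper. It works once the subset is pinned down.

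Your opening sentence, read literally, is false. The spike $2^{-\nu(\alpha_s+1/q-1/p_s)}e_1$ in the scaled $s$-th layer ball violates the $j$-th constraint for $j<s$, because $\alpha_j-1/p_j>\alpha_s-1/p_s$.

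Take instead $S_\nu=2^{-\nu(\alpha_s+1/q)}B^{2^\nu,k_\nu}_{\infty,\theta_s}$. For $x\in S_\nu$ you get $\|x\|_{l^{2^\nu,k_\nu}_{p_j,\theta_j}}\lesssim 2^{-\nu(\alpha_s+1/q-1/p_j)}k_\nu^{\max\{0,\,1/\theta_j-1/\theta_s\}}$. This is $\le 2^{-\nu(\alpha_j+1/q-1/p_j)}$ for large $\nu$ because $\alpha_j<\alpha_s$, and for $j=s$ it matches the radius up to a constant.

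By \eqref{phi1}, \eqref{phi2} with $p=\infty$:
\begin{itemize}
\item if $\theta_s\ge\sigma$, then $d_n(S_\nu,l^{2^\nu,k_\nu}_{q,\sigma})\asymp 2^{-\nu\alpha_s}k_\nu^{1/\sigma-1/\theta_s}$;
\item if $\theta_s\le\sigma$, then $d_n(S_\nu,l^{2^\nu,k_\nu}_{q,\sigma})\asymp 2^{-\nu\alpha_s}\min\{1,(n^{-1/2}2^{\nu/2}k_\nu^{1/\sigma})^{\omega_{\theta_s,\sigma}}\}$.
\end{itemize}
At the level $2^\nu k_\nu\asymp n$, or $2^\nu k_\nu^{2/\sigma}\asymp n$ when $\alpha_s<\omega_{\theta_s,\sigma}/2$, this equals $n^{-\alpha_s}(\log n)^{(d-1)\beta_*}$. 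A plain cube $B_{\infty,\infty}$ scaled to fit the $s$-ball would lose powers of $\log n$ when $\theta_s<2$ or in the $2\alpha_*/\sigma$ regime, so the choice of subset matters.

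This is exactly the computation behind Theorem~\ref{main1}. Since $\alpha_s>\omega_{p_s,q}/2$, the condition $\alpha_s<\max\{\omega_{\theta_s,\sigma},\omega_{p_s,q}\}/2$ is equivalent to $\alpha_s<\omega_{\theta_s,\sigma}/2$. So the ``main obstacle'' you name is vacuous: $\omega_{p_s,q}$ enters only through convergence of the tail in the upper bound.

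The paper instead applies Theorem~\ref{inters_ball_dn} to the full intersection at the critical layer. In Lemma~\ref{och5} it proves, via \eqref{anis_is} and Propositions~\ref{h_prop}, \ref{strict_ineq}, that every competing term except the $s$-term is $\gtrsim n^{-h(1)+c}$ with $h(1)=\alpha_s$. It then reads off the $s$-term from Remark~\ref{rem_t3}.

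That route is uniform with the machinery the paper needs anyway for Theorems~\ref{main4} and \ref{main6}, where the answer comes from interpolating two balls. Your route is more elementary for Theorem~\ref{main3}. It also shows that the lower bound uses only $\alpha_j<\alpha_s$ for $j<s$, not \eqref{p_ne_2q}, \eqref{frac_aj} or the second chain of \eqref{non_emb}.
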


\begin{Trm}
\label{main4} Let $d\in \N$, $s\in \N$, $2< q, \, \sigma<\infty$, $l>0$, $r_j>0$, $\alpha_j:=r_j‐l$, $1\le p_j\le\infty$, $1\le \theta_j\le \infty$, $j=1, \, \dots, \, s$. Suppose that \eqref{non_emb}, \eqref{ijk_ne_i_k}, \eqref{p_ne_2q}, \eqref{frac_aj} hold. Let $K \ne \varnothing$. We define the numbers $\tilde{\lambda}_{i,j}$ by \eqref{lam_ij_def}, and the indices $i_1$, $j_1$, by \eqref{i01j01}. Let 
\begin{align}
\label{3al_st}
\alpha_*:= (1‐\tilde \lambda _{i_1,j_1})\alpha_{i_1} + \tilde \lambda _{i_1,j_1} \alpha_{j_1}> \frac 12.
\end{align}
We denote the numbers $\theta_*$ and $\beta_*$ by 
\begin{align}
\label{3bet_st}
\frac{1}{\theta_*} = \frac{1‐\tilde \lambda _{i_1,j_1}}{\theta_{i_1}} + \frac{\tilde \lambda _{i_1,j_1}}{\theta_{j_1}},
\quad
\beta_* = \alpha_*+ \frac{1}{\sigma}‐ \frac{1}{\max\{\theta_*, \, 2\}}.
\end{align}
Then \eqref{teor_dn_est} holds.
\end{Trm}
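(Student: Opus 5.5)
By \eqref{discr_eq}, it suffices to estimate $d_n\bigl(\cap_{j=1}^s Sb^{\overline{r}_j}_{p_j,\theta_j}, \, b^{\overline{l}}_{q,\sigma}\bigr)$. We split a sequence $x$ into dyadic blocks $x_\nu=(x_{\overline m,j})_{m=\nu,\,j\in J_{\overline m}}$. Each block lives in a mixed-norm space $l^{t_\nu,k_\nu}_{q,\sigma}$ with $t_\nu\asymp \nu^{d-1}$ outer coordinates (the vectors $\overline m$ with $m=\nu$) and $k_\nu\asymp 2^\nu$ inner coordinates (the set $J_{\overline m}$). After the normalization \eqref{x_b_norm}, the constraint $x\in Sb^{\overline r_j}_{p_j,\theta_j}$ says that $x_\nu$ lies in $2^{-\nu(r_j-1/p_j)}B^{t_\nu,k_\nu}_{p_j,\theta_j}$. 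The target weight is $2^{\nu(l-1/q)}$. Hence the $\nu$-th block of the intersection, measured in $b^{\overline l}_{q,\sigma}$, is the intersection of balls $\cap_j 2^{-\nu(\alpha_j+1/q-1/p_j)}B^{t_\nu,k_\nu}_{p_j,\theta_j}$ in $l^{t_\nu,k_\nu}_{q,\sigma}$.

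\textbf{Upper bound.} We choose a threshold $\nu_n$ with $2^{\nu_n}\nu_n^{d-1}\asymp n$ and distribute dimensions $n_\nu$ over the blocks: full dimension for $\nu<\nu_n$, and $n_\nu\asymp n\,2^{-\varepsilon(\nu-\nu_n)}$ for $\nu\ge\nu_n$ up to some $\nu'_n$. The result of \cite{vas_mix_sev} (Theorem~\ref{1mixed} and Remark~\ref{eeee_rem} of \S3) gives the width of each block. The key point is that, for $K\neq\varnothing$ and the endpoint case $t=1$, the optimal interpolation on the block uses only the pair $(i_1,j_1)$. By Hölder/interpolation with $\frac12=\frac{1-\tilde\lambda}{p_{i_1}}+\frac{\tilde\lambda}{p_{j_1}}$ and $\frac1{\theta_*}=\frac{1-\tilde\lambda}{\theta_{i_1}}+\frac{\tilde\lambda}{\theta_{j_1}}$, the block intersection is contained in $2^{-\nu(\alpha_*+1/q-1/2)}B^{t,k}_{2,\theta_*}$. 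This is an $l_2$-type ball, whose widths in $l_{q,\sigma}$ are Kashin-type. Summing over $\nu$ gives $n^{-\alpha_*}(\log n)^{(d-1)(\alpha_*+1/\sigma-1/\max\{\theta_*,2\})}$: the factor $\nu^{(d-1)(1/\sigma-1/\theta_*)}$ comes from comparing $l_{\theta_*}$ and $l_\sigma$ over $t_\nu$ outer coordinates, with the $\max\{\theta_*,2\}$ arising because for $\theta_*<2$ the outer $l_2$ estimate is better. The condition $\alpha_*>\frac12$, i.e.\ $h(1)$ being the strict minimum (via Proposition~\ref{h_prop}, assertion 3, with $h=h_1$ near $t=1$), guarantees that the series over $\nu>\nu_n$ converges geometrically with the weights from $t=1$, so no "interior" $t$ contributes.

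\textbf{Lower bound.} We pick a single block $\nu=\nu_n$ (with $2^{\nu}\nu^{d-1}\asymp n$), take the set $V$ of vectors supported on it with $\theta_*$-type outer structure, and show $V\subset C\cap_j 2^{-\nu(\ldots)}B_{p_j,\theta_j}$. The candidate set is $2^{-\nu(\alpha_*+1/q-1/2)}\cdot$(suitably normalized) $B^{t,k}_{2,\theta_*}$ restricted to "flat" vectors. For $\theta_*<2$ one uses a single outer coordinate; otherwise one uses all $t_\nu$. We then apply the lower bound of \cite{vas_mix_sev} for $d_n$ of $B_{2,\cdot}$ in $l_{q,\sigma}$ with $n\asymp tk/2$. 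For the embedding into each $p_j$-ball with $j\ne i_1,j_1$, the argmax definition of $(i_1,j_1)$ in \eqref{i01j01} together with \eqref{non_emb} gives the needed inequalities between exponents; the inequalities for $\theta_j$ require matching the outer count.

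\textbf{Main obstacle.} The hardest step is the lower bound, specifically verifying simultaneously that the test set lies in all $s$ balls. The logarithmic exponent involving $\theta_*$ needs the outer structure to be chosen consistently with both $\theta_{i_1}$ and $\theta_{j_1}$. I expect to use the intersection-of-balls lower bound from \cite{vas_mix_sev} directly, taking its extremal configuration at the point corresponding to $t=1$, and then checking the remaining indices via the maximality in \eqref{i01j01}.
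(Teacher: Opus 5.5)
Your upper bound is the paper's argument: Theorem~\ref{emb_s} places each block in $2^{-m(\alpha_*+1/q-1/2)}B^{2^m,k_m}_{2,\theta_*}$, Theorem~\ref{1mixed} gives the block widths, and with $2^{m_0}m_0^{d-1}=n$ the sum is $n^{-\alpha_*}(\log n)^{(d-1)\beta_*}$. One slip: in $l^{t,k}_{p,\theta}$ the first index $t$ is the inner, $p$-summed one, so the blocks are $B^{2^m,k_m}$. You have $t$ and $k$ swapped, which would garble a literal use of \eqref{phi1}--\eqref{phi6}.

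\textbf{The gap is in the lower bound.} You build a test set $V$ inside the intersection and then ``apply the lower bound of \cite{vas_mix_sev} for $d_n$ of $B_{2,\cdot}$''. That inference fails: a lower bound for the width of the ball $\rho B^{2^m,k_m}_{2,\theta_*}$, with $\rho=2^{-m(\alpha_*+1/q-1/2)}$, says nothing about a proper subset of it. And $V$ must be a proper subset. With $\nu_j=2^{-m(\alpha_j+1/q-1/p_j)}$, \eqref{non_emb} gives $\nu_{i_1}<\nu_{j_1}$, hence $\rho=\nu_{i_1}^{1-\tilde\lambda}\nu_{j_1}^{\tilde\lambda}>\nu_{i_1}$. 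So a $1$-sparse vector of norm $\rho$ already violates the $i_1$-th constraint. Only flat vectors of one specific inner sparsity $\varkappa$ are admissible: the sparsity that makes the $i_1$- and $j_1$-constraints tight simultaneously, which is $2^{\gamma m}$ with $0<\gamma<1$ up to factors polynomial in $m$. The width of this $V$ must be computed, and your proposal gives no argument for it.

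\textbf{How to repair your route.} Once repaired, it is a genuine and shorter alternative to the paper's. You have $d_n(V)=d_n(\operatorname{conv}V)$, and $\operatorname{conv}V$ is, up to a factor $2$, $a\varkappa B_{1,\theta}\cap aB_{\infty,\theta}$. Take $\theta=1$ when each vector lives in one outer block (the case $\theta_*\le2$), and $\theta=\infty$ when the vectors are flat over all outer blocks (the case $\theta_*>2$). For these two balls only $\Psi_0,\Psi_1,\Psi_3$ in Theorem~\ref{inters_ball_dn} are nonempty, and it yields $d_n\asymp a\varkappa^{1/2}\Phi(2,\theta,q,\sigma,2^m,k_m,n)$. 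For $n\asymp 2^mk_m$ this is of order $\rho\,\Phi(2,\theta_*,q,\sigma,2^m,k_m,n)$, as required. Containment of $V$ in the $j$-th ball for $j\ne i_1,j_1$ does follow, at the level of powers of $2^m$, from the maximality in \eqref{i01j01} together with \eqref{non_emb}. In the outer-flat case, however, you need strict inequalities to absorb the factors $k_m^{1/\theta_j}$; these come from the uniqueness in Lemma~\ref{ij01} and from $p_j\ne2$.

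\textbf{On your fallback.} Feeding the $s$-ball intersection into Theorem~\ref{inters_ball_dn} is the paper's route, but the work there is not a maximality check. You must show that every term of $\min_j\Psi_j$ other than $\Psi_3$ at $(i_1,j_1)$ is $\gtrsim n^{-\alpha_*+c}$. This covers single balls, all other pairs at every interpolation point, the pair $(i_1,j_1)$ itself at points with $\tilde p\ne2$, and the triples in $\Psi_6,\Psi_7$. The paper does this in Lemma~\ref{och6} using three ingredients:
\begin{itemize}
\item the reduction \eqref{anis_is} to scalar widths, which loses only a power of $\log n$;
\item log-linear interpolation between the breakpoints $p_i,q,2,p_j$;
\item the strict inequalities of Proposition~\ref{strict_ineq}, which rely on Proposition~\ref{h_prop} (for example, $h_{i_0,j_0}(1)<h(1)$ because $h=h_1$ near $1$ and $h=h_0$ near $q/2$).
\end{itemize}
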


In Theorems \ref{main5}, \ref{main_non_emb} the minimum of the function $h$ is attained at $t=q/2$.

\begin{Trm}
\label{main5} Let $d\in \N$, $s\in \N$, $2< q, \, \sigma<\infty$, $l>0$, $r_j>0$, $\alpha_j:=r_j‐l$,  $1\le p_j\le\infty$, $1\le \theta_j\le \infty$, $j=1, \, \dots, \, s$. Suppose that \eqref{non_emb}, \eqref{ijk_ne_i_k}, \eqref{p_ne_2q}, \eqref{frac_aj} hold. Let $I=\varnothing$, $\frac{1}{p_1}‐\frac 1q<\alpha_1< \frac{\omega_{p_1,q}}{2}$. In the case $\omega_{p_1,q} > \omega_{\theta_1,\sigma}$ we also suppose that $\alpha_1 +\frac 1q ‐ \frac{1}{p_1} \ne \frac{\omega_{\theta_1,\sigma}}{q}$. We set $$\alpha_* = \frac q2 \Bigl(\alpha_1+\frac 1q‐\frac{1}{p_1}\Bigr),$$
$$
\beta_* = \begin{cases} \alpha_*+\frac{1}{\sigma} ‐ \frac{1}{\theta_1}, & \alpha_1+ \frac 1q ‐ \frac{1}{p_1}>\frac{\min\{\omega_{p_1,q}, \, \omega_{\theta_1,\sigma}\}}{q}, \\ \frac{2\alpha_*}{\sigma}, & \alpha_1 + \frac 1q ‐ \frac{1}{p_1} <\frac{\min\{\omega_{p_1,q}, \, \omega_{\theta_1,\sigma}\}}{q}. \end{cases}
$$
Then \eqref{teor_dn_est} holds.
\end{Trm}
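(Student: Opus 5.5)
Throughout we pass to sequence spaces via \eqref{discr_eq}: it suffices to estimate $d_n(\cap_{j=1}^s Sb^{\overline{r}_j}_{p_j,\theta_j}, \, b^{\overline{l}}_{q,\sigma})$. The space $b^{\overline{l}}_{q,\sigma}$ splits into blocks indexed by $\overline{m}$, each of dimension $\asymp 2^m$. On the block $\overline{m}$ the constraint from the $j$-th ball reads $2^{m(\alpha_j-1/p_j+1/q)}\|x_{\overline{m}}\|_{l_{p_j}}\lesssim$ (a weight depending on $\theta_j$). We group the blocks by levels $m=\nu$. There are $\asymp \nu^{d-1}$ blocks at level $\nu$, so the level-$\nu$ part of the intersection is an intersection of balls in a mixed norm $l_{\theta_j}^{\nu^{d-1}}(l_{p_j}^{2^\nu})$, and the target is $l_\sigma^{\nu^{d-1}}(l_q^{2^\nu})$. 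For these mixed-norm intersections we use the estimates of \cite{vas_mix_sev} (Theorem \ref{1mixed} and the later results of \S 3), valid since $2<q,\sigma<\infty$.

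\textbf{Upper bound.} Write $x=\sum_\nu x^{(\nu)}$. Fix $n$ and let $\nu_*$ satisfy $2^{\nu_*}\nu_*^{d-1}\asymp n$. The right scale is $t=q/2$: since $I=\varnothing$, Proposition \ref{h_prop} (assertion 4) gives $h=\varphi_1$ near $q/2$, and the assumption $\alpha_1<\omega_{p_1,q}/2$ makes $t=q/2$ the minimum. So only the first ball is relevant at high levels. The levels split into three ranges. For $\nu\le\nu_*$ we approximate exactly, using dimension $\asymp n$. For $\nu_*<\nu\le \frac q2\nu_*$ (roughly) we allocate $n_\nu\asymp n 2^{-\varepsilon(\nu-\nu_*)}$ dimensions and apply the finite-dimensional width estimate for the first ball in $l^{\nu^{d-1},2^\nu}_{\theta_1,p_1}$ versus $l^{\nu^{d-1},2^\nu}_{\sigma,q}$ in the regime $n_\nu\ll 2^\nu$, where the Kashin–Gluskin-type bound $(2^\nu)^{1/q}n_\nu^{-1/2}$ is available. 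For the tail we use the trivial embedding bound. The remaining balls with $j\ge2$ (all $p_j<q$, and $p_j>2$ or $p_j<2$) enter only through the intersection estimate of \cite{vas_mix_sev}; by \eqref{non_emb} and the minimality of $h$ at $q/2$ they do not improve on the first ball in the critical range.

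The power exponent $\alpha_*=\frac q2(\alpha_1+\frac1q-\frac1{p_1})$ comes from the critical level $\nu\approx\frac q2\nu_*$, where $n\asymp 2^{2\nu/q}$. The logarithmic exponent comes from the outer mixed norm $\theta_1\to\sigma$ over $\nu^{d-1}$ blocks. When $\alpha_1+\frac1q-\frac1{p_1}>\min\{\omega_{p_1,q},\omega_{\theta_1,\sigma}\}/q$, the sum over levels is dominated by the critical level, and Hölder in $\theta_1$ versus $\sigma$ gives $\nu^{(d-1)(1/\sigma-1/\theta_1)}$. Together with the normalization $2^{\nu}\asymp n(\log n)^{-(d-1)}$, this yields $\beta_*=\alpha_*+\frac1\sigma-\frac1{\theta_1}$. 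In the opposite case the mixed-norm width saturates: the outer index is effectively treated like $q$ and $\sigma$, through the width of $B^{k}_{\theta}$ in $l^k_\sigma$, which gives $k^{1/\sigma}n^{-1/2}$-type bounds. This produces $\beta_*=2\alpha_*/\sigma$. The general-position assumption excludes the equality case.

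\textbf{Lower bound.} We pick a single level $\nu$ with $2^{2\nu/q}\asymp n$, together with the appropriate number of blocks. In the first case we take all $\nu^{d-1}$ blocks. In the second case we take a subfamily of blocks, and possibly a sparse-vector structure within a block, chosen to match the extremal set in \cite{vas_mix_sev}. We insert a scaled copy of the finite-dimensional intersection and apply the lower estimate for intersections of mixed-norm balls. We must check that the scaling makes all $s$ constraints hold simultaneously, and that the first constraint is the active one; \eqref{non_emb} and $\alpha_1>\frac1{p_1}-\frac1q$ are used here.

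\textbf{Main obstacle.} The hard step is the bookkeeping of the logarithmic factor. We must verify that the finite-dimensional intersection estimate of \cite{vas_mix_sev}, at $k=\nu^{d-1}$ outer blocks, lands in the regime dictated by comparing $\alpha_1+\frac1q-\frac1{p_1}$ with $\omega_{p_1,q}/q$ and with $\omega_{\theta_1,\sigma}/q$. We must also check that the geometric summation over $\nu$ really concentrates at the critical level in each regime. To do this, we would first write down the explicit order of $d_{n_\nu}$ for the level-$\nu$ intersection as a function of $(\nu,n_\nu)$, and then optimize over $n_\nu$.
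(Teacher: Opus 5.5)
\textbf{Overall.} Your architecture is the paper's: discretize via \eqref{discr_eq}, bound the sum of level widths using the first ball alone, and for the lower bound take one level and apply Theorem~\ref{inters_ball_dn}. However, you do not use the critical level consistently. Your allocation is centred at the wrong level, and you drop the critical level's logarithmic correction, which is exactly what produces $\beta_*$. As written, both bounds fail.

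\textbf{Upper bound.} Put $\gamma=\alpha_1+\frac1q-\frac1{p_1}>0$, so that $\alpha_*=\frac q2\gamma$, and $\alpha_1<\frac{\omega_{p_1,q}}2$ means $\gamma<\frac{\omega_{p_1,q}}q$.

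Your weights $n_\nu\asymp n2^{-\varepsilon(\nu-\nu_*)}$ peak at $\nu_*$ (where $2^{\nu_*}\approx n$), while the sum is dominated by levels near $m_0\approx\frac q2\nu_*$. With these weights $n_\nu$ already drops below $2^{2\nu/q}$ at $2^{\nu'}\approx n^{(1+\varepsilon)/(2/q+\varepsilon)}$. At that level the width is its $d_0$-value (Remark~\ref{dn_d0_phi}), i.e.\ $2^{-\gamma\nu'}$ up to powers of $\log n$. This exceeds $n^{-\alpha_*}$ by the factor $n^{\gamma\varepsilon(q/2-1)/(2/q+\varepsilon)}$, and $\varepsilon$ cannot tend to $0$ without blowing the dimension budget. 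The geometric decay must be centred at the critical level, as in \eqref{nu_m_def}.

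That level also carries a logarithmic correction. It solves $2^{2m_0/q}m_0^{d-1}=n$ in the first case of $\beta_*$, and $2^{2m_0/q}m_0^{2(d-1)/\sigma}=n$ in the second. These are the thresholds $n=t^{2/q}k$ and $n=t^{2/q}k^{2/\sigma}$ in \eqref{phi3}--\eqref{phi5}. Between them the level terms behave like $2^{m(\omega_{\theta_1,\sigma}/q-\gamma)}$ under \eqref{phi5}, and like $2^{m(\omega_{p_1,q}/q-\gamma)}$ (always increasing) under \eqref{phi4}; the sign of this exponent is what separates the two cases.

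The summand $\alpha_*$ (resp.\ the whole of $2\alpha_*/\sigma$) in $\beta_*$ comes from $2^{-\gamma m_0}\asymp n^{-\alpha_*}(\log n)^{(d-1)\alpha_*}$ (resp.\ $n^{-\alpha_*}(\log n)^{2(d-1)\alpha_*/\sigma}$). The normalization $2^\nu\asymp n(\log n)^{-(d-1)}$ that you invoke belongs to the $t=1$ theorems and does not produce it.

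\textbf{Lower bound: choice of level.} A level with $2^{2\nu/q}\asymp n$ is too high. There the first ball alone has width $\asymp n^{-\alpha_*}(\log n)^{(d-1)\max\{0,\,1/\sigma-1/\theta_1\}}$ (its $d_0$-value). This is below the target by a positive power of $\log n$ in both cases. Passing to a subfamily of blocks makes this worse, since fewer blocks raise the saturation level. The paper keeps all $k_{m'}$ blocks at the level $m'(n)$ nearest to the log-corrected $m_0(n)$.

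\textbf{Lower bound: the other balls.} Showing that the first constraint is the active one is the real content of the lower bound. It does not follow from \eqref{non_emb} and $\alpha_1>\frac1{p_1}-\frac1q$; the latter only gives $\alpha_*>0$. In the paper, every competing term of Theorem~\ref{inters_ball_dn} is treated: balls $i\ne 1$; pairs interpolated to $p=2$, to $\theta\in\{2,\sigma\}$, or to $\omega'_{p,q}=\omega'_{\theta,\sigma}$; and triples. Each is reduced, at the cost of a power of $\log n$, to $l_p^N\to l_q^N$ widths with $N\approx n^{q/2}$, i.e.\ to $n^{-\tilde\varphi_i(q/2)}$, $n^{-\tilde h_{i,j}(q/2)}$, or geometric means of these.

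Proposition~\ref{strict_ineq} at $t_L=q/2$ (resting on \eqref{frac_aj} and \eqref{p_ne_2q} through Lemma~\ref{ij01}), together with $h=\varphi_1$ near $q/2$ from Proposition~\ref{h_prop}, puts each such exponent strictly below $h(q/2)=\varphi_1(q/2)=\alpha_*$. The resulting gain $n^{c}$ is what absorbs the logarithmic losses. Without it you cannot separate the competitors at the precision $(\log n)^{(d-1)\beta_*}$.
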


\begin{Trm}
\label{main_non_emb} Let $d\in \N$, $s\in \N$, $2< q, \, \sigma<\infty$, $l>0$, $r_j>0$, $\alpha_j:=r_j‐l$, $1\le p_j\le\infty$, $1\le \theta_j\le \infty$, $j=1, \, \dots, \, s$. Suppose that \eqref{non_emb}, \eqref{ijk_ne_i_k}, \eqref{p_ne_2q}, \eqref{frac_aj} hold. Let $I \ne \varnothing$. We define the numbers $\lambda_{i,j}$ by \eqref{lam_ij_def}, and the indices $i_0$, $j_0$, by  \eqref{i01j01}. Let 
$(1‐\lambda _{i_0,j_0})\alpha_{i_0} + \lambda _{i_0,j_0} \alpha_{j_0}< 0$.
Then
$$
d_n(\cap _{j=1}^s SB^{\overline{r}_j}_{p_j,\theta_j}(\mathbb{T}^d), \, B^{\overline{l}}_{q, \sigma} (\mathbb{T}^d)) =\infty.
$$
\end{Trm}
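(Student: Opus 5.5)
By \eqref{discr_eq}, it is enough to show that $d_0$ of the intersection of balls is already infinite. The plan is to exhibit elements of $\cap_{j=1}^s Sb^{\overline{r}_j}_{p_j,\theta_j}$ whose $b^{\overline{l}}_{q,\sigma}$-norm is unbounded. Then $d_n=\infty$ for every $n$, because removing a subspace of dimension $n$ from a set that is unbounded along infinitely many independent blocks cannot make the set bounded. So we work on a single dyadic block $\overline{m}$ with $m=m_1+\dots+m_d=\nu$, and we let $\nu\to\infty$.

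Inside the block we take the vector $x$ with $x_{\overline{m},j}=a$ for $j$ in a subset $E\subset J_{\overline{m}}$ of cardinality $\#E\asymp 2^{\nu\lambda}$, where $0\le\lambda\le 1$, and $x=0$ elsewhere. By \eqref{x_b_norm} and \eqref{card_j_m} we get
$$\|x\|_{b^{\overline{r}_j}_{p_j,\theta_j}}\asymp a\,2^{\nu(r_j-1/p_j)}2^{\nu\lambda/p_j},\qquad \|x\|_{b^{\overline{l}}_{q,\sigma}}\asymp a\,2^{\nu(l-1/q)}2^{\nu\lambda/q}.$$
We choose $a$ so that the maximum of the first quantities over $j$ equals $1$. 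Writing $\mu=1-\lambda\in[0,1]$, the target norm then has logarithmic order
$$\nu\Bigl(\min_j\bigl[-\alpha_j+\mu(1/p_j)\bigr]-\mu/q\Bigr)=-\nu\max_j\bigl[\alpha_j+\mu(1/q-1/p_j)\bigr].$$
Hence we need some $\mu\in[0,1]$ with $g(\mu):=\max_j[\alpha_j-\mu(1/p_j-1/q)]<0$. Each term is affine in $\mu$, so $g$ is convex and piecewise linear.

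The key step is to choose $\mu$ so that the two affine functions with indices $i_0$ and $j_0$ take the same value. From $\alpha_{i_0}-\mu(1/p_{i_0}-1/q)=\alpha_{j_0}-\mu(1/p_{j_0}-1/q)$ we solve
$$\mu=\frac{\alpha_{j_0}-\alpha_{i_0}}{1/p_{j_0}-1/p_{i_0}},$$
which is positive because $j_0>i_0$ by \eqref{p1ps} and \eqref{non_emb}. A short computation using \eqref{lam_ij_def} shows that the common value is exactly $(1-\lambda_{i_0,j_0})\alpha_{i_0}+\lambda_{i_0,j_0}\alpha_{j_0}<0$. Two things remain to check.

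(a) $\mu\le 1$. This should follow from the hypotheses. If $\mu>1$, then at $\mu=1$ the term for $j_0$ would be larger than the term for $i_0$. Using the same interpolation together with \eqref{non_emb}, I expect this either gives $g(1)<0$ directly, or we can simply use $\mu=\min\{\mu,1\}$, since $\max(\text{term}_{i_0},\text{term}_{j_0})$ is already negative there by monotonicity on the appropriate side. In either case the conclusion is unaffected.

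(b) The other terms $j\ne i_0,j_0$ are also negative at this $\mu$. This is the main obstacle. Here we use the maximality in \eqref{i01j01}. The maximum of $(1-\lambda_{i,j})\alpha_i+\lambda_{i,j}\alpha_j$ over $i\in I$, $j\in J\sqcup K$ is the value at the "vertex" of the upper envelope where the slope of $\max_j[\alpha_j-\mu(1/p_j-1/q)]$ passes from positive (for $p_j>q$) to negative. More precisely, consider $\min_\mu g(\mu)$. Terms with $p_j>q$ increase in $\mu$ and terms with $p_j<q$ decrease, so the minimum of $g$ over $\mu\ge 0$ equals $\max_{i\in I,\,j\in J\sqcup K}$ of the value at the crossing of the $i$-th and $j$-th lines. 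This is the standard minimax/LP duality for a maximum of lines: a minimum of an upper envelope is attained at a crossing of an increasing and a decreasing line, and there it equals the largest such crossing value. That value equals the quantity in the hypothesis, so $\min g<0$. If $I\ne\varnothing$ but the minimizer lies outside $[0,1]$, the constraint $\lambda\in[0,1]$ must be handled; I would allow $\#E$ up to $\#J_{\overline{m}}$, which is exactly $\mu\ge0$, and use that the lines with $p_j<q$ decrease, so that for $\mu>1$ we may still take $\mu=1$ as in (a).

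Finally, we fix $\overline{m}$ with $m=\nu$ and let $\nu\to\infty$. The norms in $b^{\overline{l}}_{q,\sigma}$ grow like $2^{-\nu g(\mu)}\to\infty$, while the vectors stay in the intersection of the unit balls. For infinitely many disjoint blocks, any $n$-dimensional subspace nearly annihilates, via a norm-one coordinate-block projection, all but finitely many of them. Combined with the block-diagonal structure of the norm, this shows the distance to the subspace is also unbounded, so $d_n=\infty$.
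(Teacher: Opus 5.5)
Your argument is correct in substance, but it takes a different route from the paper.

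\textbf{The paper's route.} The paper obtains Theorem~\ref{main_non_emb} as a by-product of its general lower-bound scheme:
\begin{itemize}
\item Corollary~\ref{lower_est} discretizes at level $m'(n)\approx\frac q2\log n$.
\item Theorem~\ref{inters_ball_dn} (the intersection-of-balls theorem of \cite{vas_mix_sev}) writes the finite-dimensional width as a minimum of $\Psi_0,\dots,\Psi_7$.
\item The analogue of Lemma~\ref{och}, via Proposition~\ref{strict_ineq}, shows that every term except the $(i_0,j_0)$ one with $\tilde p=q$ is $\gtrsim n^{-h(q/2)+c}$, where $h(q/2)=\frac q2\alpha_*<0$.
\item The remaining term $2^{-m'(n)\alpha_*}d_n(B_{q,\theta_*}^{2^{m'(n)},k_{m'(n)}},\,l_{q,\sigma}^{2^{m'(n)},k_{m'(n)}})$ also blows up. Since $d_n$ is nonincreasing in $n$, a lower bound tending to $\infty$ forces $d_n=\infty$ for every $n$.
\end{itemize}

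\textbf{Your route.} You prove unboundedness directly with explicit flat vectors, one block per level $\nu$, and locate the obstruction by an elementary envelope-of-lines (minimax) argument. Then you pass to $d_n$ by a soft argument for each fixed subspace. The crossing value of your $i_0$- and $j_0$-lines is exactly the exponent interpolated at $\frac1q=\frac{1-\lambda_{i_0,j_0}}{p_{i_0}}+\frac{\lambda_{i_0,j_0}}{p_{j_0}}$, which is the term the paper isolates. Your route is self-contained: it needs neither the width estimates for intersections of mixed-norm balls nor the structure results on $h$. The paper's route costs nothing extra once that machinery is in place for the other theorems, and it quantifies how fast the discretized widths grow.

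\textbf{Three details to fix.}

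First, step (a) need not be left as an expectation. The second chain in \eqref{non_emb} with $i_0<j_0$ gives $\alpha_{j_0}-\alpha_{i_0}<\frac{1}{p_{j_0}}-\frac{1}{p_{i_0}}$. Hence your crossing point $\mu^*=\frac{\alpha_{j_0}-\alpha_{i_0}}{1/p_{j_0}-1/p_{i_0}}$ lies in $(0,1)$ directly. Your fallback ``use $\mu=1$'' (repeated at the end of (b)) would actually fail: for $\mu<\mu^*$ the decreasing $j_0$-line lies strictly above $\alpha_*$, so nothing makes it negative at $\mu=1$. That case simply never occurs.

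Second, your opening claim that $d_0=\infty$ suffices is false in general; an unbounded subset of a line has $d_1=0$. What your last paragraph must show is that $\sup_{x\in M}\mathrm{dist}(x,L)=\infty$ for each fixed $L$ with $\dim L\le n$. To make this precise, let $P_\nu$ be the coordinate projection onto the block carrying $x_\nu$; it has norm one in $b^{\overline{l}}_{q,\sigma}$. Since $\sigma<\infty$ and $L$ is finite-dimensional, $\varepsilon_\nu:=\sup\{\|P_\nu y\|:\ y\in L,\ \|y\|\le 1\}\to 0$. Then for every $y\in L$,
\[
\|x_\nu-y\|\ge\max\bigl\{\|y\|-\|x_\nu\|,\ \|x_\nu\|-\varepsilon_\nu\|y\|\bigr\}\ge\frac{1-\varepsilon_\nu}{1+\varepsilon_\nu}\,\|x_\nu\|\to\infty .
\]

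Third, \eqref{discr_eq} is asserted only when one side is finite for $n=0$, so do not invoke it here. You only need the parameter-independent isomorphism between $B$ and $b$, applied to your finitely supported vectors and to $L$.
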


\begin{Rem}
The formulas for estimates of the widths $d_n(SB_{p,\theta}^{\overline{r}}(\mathbb{T}^d), \, B_{q,\sigma}^{\overline{l}}(\mathbb{T}^d))$ for $2<q, \, \sigma<\infty$ are the particular cases of Theorems \ref{main1}, \ref{main2}, \ref{main3}, \ref{main5} for $s=1$.
\end{Rem}

Now, let the strict minimum of the function $h$ be attained at the point $t_{l_*}$ for some $1\le l_*\le L‐1$ (see Proposition \ref{h_prop}). We define the numbers $\hat \alpha_i$, $\hat p_i$, $\hat \theta_i$, $i=1, \, 2$, as follows:
\begin{enumerate}
\item if $h|_{[t_{l_*‐1}, \, t_{l_*}]} = \varphi_{j(l_*)}|_{[t_{l_*‐1}, \, t_{l_*}]}$, $j(l_*)\in J$, then $\hat \alpha_1=\alpha_{j(l_*)}$, $\hat p_1 = p_{j(l_*)}$, $\hat \theta_1 = \theta_{j(l_*)}$;
\item if $l_*=1$, $h|_{[t_0, \, t_1]} = h_1|_{[t_0, \, t_1]}$, then $\hat \alpha_1 = (1‐\tilde \lambda_{i_1,j_1})\alpha _{i_1} + \tilde \lambda_{i_1,j_1} \alpha _{j_1}$, $\hat p_1 = 2$, $\frac{1}{\hat \theta_1} = \frac{1‐\tilde \lambda_{i_1,j_1}}{\theta_{i_1}} + \frac{\tilde \lambda_{i_1,j_1}}{\theta _{j_1}}$;
\item if $h|_{[t_{l_*}, \, t_{l_*+1}]} = \varphi_{j(l_*+1)}|_{[t_{l_*}, \, t_{l_*+1}]}$, $j(l_*+1)\in J$, then $\hat \alpha_2=\alpha_{j(l_*+1)}$, $\hat p_2 = p_{j(l_*+1)}$, $\hat \theta_2 = \theta_{j(l_*+1)}$;
\item if $l_* = L‐1$, $h|_{[t_{L‐1}, \, t_L]} = h_0 |_{[t_{L‐1}, \, t_L]}$, then $\hat \alpha_2 = (1‐ \lambda_{i_0,j_0})\alpha _{i_0} + \lambda_{i_0,j_0} \alpha _{j_0}$, $\hat p_2 = q$, $\frac{1}{\hat \theta_2} = \frac{1‐\lambda_{i_0,j_0}}{\theta_{i_0}} + \frac{ \lambda _{i_0,j_0}} {\theta _{j_0}}$.
\end{enumerate}

\begin{Rem}
\label{rem_pi_hat}
We have $\hat p_i\in [2, \, q]$, $i=1, \, 2$, and $\hat p_1< \hat p_2$ (see assertion 6 of Proposition \ref{h_prop} and \eqref{p1ps}).
\end{Rem}

We set 
\begin{gather}
\label{al_st_def} \alpha_* = \frac{\hat \alpha_2\omega_{\hat p_1,q} ‐\hat \alpha_1 \omega_{\hat p_2,q}}{2(\hat \alpha_2 ‐ \hat \alpha_1) + \omega_{\hat p_1,q}‐\omega_{\hat p_2,q}}, \\
\label{a1_def} \begin{array}{c}A_1 = \frac{\hat \alpha_2 \omega_{\hat p_1,q}}{2} ‐ \frac{\hat \alpha_1 \omega_{\hat p_2,q}}{2} + \\ + \Bigl(\Bigl(\hat \alpha_1‐\frac{\omega_{\hat p_1,q}}{2}\Bigr)\Bigl(\frac{1}{\hat \theta_2}‐\frac{1}{\sigma}\Bigr) ‐ \Bigl(\hat \alpha_2‐\frac{\omega_{\hat p_2,q}}{2}\Bigr)\Bigl(\frac{1}{\hat \theta_1}‐ \frac{1}{\sigma} \Bigr)\Bigr), \end{array}
\\
\label{a2_def} A_2 = (\hat \alpha_2\omega_{\hat p_1,q} ‐ \hat \alpha_1\omega_{\hat p_2,q})/\sigma, \\
\label{b_def} B = \hat \alpha_2‐\hat \alpha_1 ‐ \frac{\omega_{\hat p_2,q}}{2} + \frac{\omega_{\hat p_1,q}}{2}, \\
\label{beta_st_i_def}\beta_*^1 = A_1/B, \quad \beta_*^2 = A_2/B,
\end{gather}
$$\omega'_{p,q} = \frac{1/p‐1/q}{1/2‐1/q}.$$

\begin{Trm}
\label{main6} Let $d\in \N$, $s\in \N$, $2< q, \, \sigma<\infty$, $l>0$, $r_j>0$, $\alpha_j := r_j‐l$, $1\le p_j\le \infty$, $1\le \theta_j\le \infty$, $j=1, \, \dots, \, s$. Suppose that \eqref{non_emb}, \eqref{ijk_ne_i_k}, \eqref{p_ne_2q}, \eqref{frac_aj} hold. Let $1\le l_*\le L‐1$, and let $t_{l_*}$ be the strict minimum point of $h$. Let the numbers $\alpha_*$, $\beta^1_*$, $\beta^2_*$ be defined by formulas \eqref{al_st_def}‐‐\eqref{beta_st_i_def}.
\begin{enumerate}
\item If $\omega'_{\hat p_1,q} \ge \omega'_{\hat \theta_1,\sigma}$, $\omega'_{\hat p_2,q} \ge \omega'_{\hat \theta_2,\sigma}$, we set $\beta_*= \beta_*^1$.

\item If $\omega'_{\hat p_1,q} \le \omega'_{\hat \theta_1,\sigma}$, $\omega'_{\hat p_2,q} \le \omega'_{\hat \theta_2,\sigma}$, we set $\beta_*= \beta_*^2$.

\item Let $(\omega'_{\hat p_1,q} ‐ \omega'_{\hat \theta_1,\sigma})(\omega'_{\hat p_2,q} ‐ \omega'_{\hat \theta_2,\sigma})<0$. We define the number $\hat\lambda$ by the equation $\omega'_{\hat p,q} = \omega' _{\hat \theta, \sigma}$, where 
\begin{align}
\label{hat_p_th_def}
\frac{1}{\hat p} = \frac{1‐\hat \lambda}{\hat p_1} + \frac{\hat \lambda}{\hat p_2}, \quad \frac{1}{\hat \theta} = \frac{1‐\hat \lambda}{\hat \theta_1} + \frac{\hat \lambda}{\hat \theta_2}.
\end{align}
We set 
\begin{align}
\label{hat_alp_def}
\hat \alpha = (1‐\hat \lambda) \hat \alpha_1 + \hat \lambda \hat \alpha_2.
\end{align}
Suppose that 
\begin{align}
\label{zeta_def}
\zeta:=\hat \alpha ‐\frac{\omega'_{\hat p,q}}{2} \ne 0.
\end{align}
We set $\beta_*=\beta_*^1$ if $(\omega'_{\hat p_1,q} ‐ \omega'_{\hat \theta_1,\sigma})\zeta>0$, $\beta_*=\beta_*^2$ if $(\omega'_{\hat p_1,q} ‐ \omega'_{\hat \theta_1,\sigma})\zeta<0$.
\end{enumerate}
Then \eqref{teor_dn_est} holds.
\end{Trm}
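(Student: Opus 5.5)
The plan is to reduce everything to finite-dimensional problems via \eqref{discr_eq} and then use the estimates of \cite{vas_mix_sev} for widths of intersections of balls in mixed norms. First I will cut the sequence space into dyadic shells. For $\nu\in\Z_+$ let $x^{(\nu)}$ be the restriction of $x$ to the indices with $m=\nu$. The number of such $\overline{m}$ is $\asymp \nu^{d-1}$, and $\#J_{\overline{m}}\asymp 2^\nu$ by \eqref{card_j_m}. So the $\nu$-th block is the space $l_{q,\sigma}^{2^\nu,\nu^{d-1}}$ (inner $l_q$ over $j$, outer $l_\sigma$ over $\overline{m}$), with weight $2^{\nu(l-1/q)}$. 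The trace of $Sb^{\overline{r}_j}_{p_j,\theta_j}$ on this block is $2^{-\nu(r_j-1/p_j)}B^{2^\nu,\nu^{d-1}}_{p_j,\theta_j}$. After normalising, the $\nu$-th block contributes
$$
2^{-\nu(\alpha_j+1/q-1/p_j)}\,\bigcap_{j=1}^s 2^{\nu(\dots)}B_{p_j,\theta_j}^{2^\nu,\nu^{d-1}}
$$
in $l_{q,\sigma}^{2^\nu,\nu^{d-1}}$. For the upper bound I take $\sum_\nu d_{n_\nu}$ of these block intersections, using the triangle inequality in $l_\sigma$ over $\nu$ (or simply $l_1$ up to constants for geometric sums). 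For the lower bound a single well-chosen block, or a block with a single $\overline{m}$-layer, suffices.

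Next comes the finite-dimensional estimate. The result of \cite{vas_mix_sev} gives, for $2\le q,\sigma<\infty$, the order of $d_n(\cap_j \nu_j B^{k,t}_{p_j,\theta_j}, l^{k,t}_{q,\sigma})$ as a minimum over interpolation-type expressions. The relevant ones are those where two balls are combined into an effective ball with parameters $(\hat p,\hat\theta)$ obtained by convex combination of $1/p$, $1/\theta$, exactly as in \eqref{hat_p_th_def}. The minimum point $t_{l_*}$ of $h$ determines which pair of constraints is active near the critical level: $\hat\alpha_1,\hat p_1,\hat\theta_1$ and $\hat\alpha_2,\hat p_2,\hat\theta_2$ given by Proposition~\ref{h_prop}. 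When a piece of $h$ is $h_0$ or $h_1$, I first replace the pair $(i_0,j_0)$ or $(i_1,j_1)$ by a single interpolated ball with $\hat p=q$ or $2$. This is justified by Hölder: $\|x\|_{l_{\hat p,\hat\theta}}\le\|x\|^{1-\lambda}_{l_{p_i,\theta_i}}\|x\|^{\lambda}_{l_{p_j,\theta_j}}$ gives the inclusion for the upper bound. For the lower bound, I restrict to vectors of "flat" structure with $2^{\nu}t^{?}$-sparse supports, on which all norms are computed explicitly.

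Then I optimise. For $n\asymp 2^{\mu}\mu^{d-1}$, I choose $n_\nu$ equal to the full block dimension for $\nu\le\mu$. For $\nu>\mu$, I choose $n_\nu$ decaying geometrically, as $2^{\nu/t}$-type sizes matched to the slope parameter $t\in[1,q/2]$ of $h$. Summing the block errors yields a geometric series in $\nu$ dominated by the critical level $\nu\approx\mu t_{l_*}$; the strictness of the minimum ensures geometric decay on both sides. This produces $n^{-\alpha_*}$ with $\alpha_*$ from \eqref{al_st_def}, the intersection point of the two linear pieces.

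The logarithmic exponent comes from the outer $l_\sigma$ versus $l_{\hat\theta}$ behaviour in the $\nu^{d-1}$ directions. When $\omega'_{\hat p_i,q}\ge\omega'_{\hat\theta_i,\sigma}$, the extremal vectors are spread over all $\overline{m}$ in the block, giving $\beta^1_*$. In the opposite case they concentrate on few layers, giving $\beta^2_*=A_2/B$. In case 3, the effective pair is interpolated once more via $\hat\lambda$, and the sign of $\zeta$ decides which regime dominates.

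The lower bounds will be obtained by exhibiting in the critical block the test set $\cap_j$ (scaled ball) that contains a scaled copy of the extremal body from \cite{vas_mix_sev}. I expect the main obstacle to be the bookkeeping of the $\log$ factors: matching the exponents $A_1/B$ and $A_2/B$ requires tracking how $\nu^{d-1}$ enters both the dimension count $n\asymp 2^\mu\mu^{d-1}$ and the mixed-norm width formula at level $\nu\approx t_{l_*}\mu$. I would handle this by first computing with $d=1$ to fix the power exponent, then redoing the computation keeping $t=\nu^{d-1}$ symbolic.
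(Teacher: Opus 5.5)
Your plan has the paper's overall architecture: dyadic blocks, H\"older inclusions for the upper bound, the mixed-norm width formulas of \cite{vas_mix_sev}, and a single critical block for the lower bound. There is a fixable flaw in the upper bound and a genuine gap in the lower bound.

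\textbf{Upper bound.} The skeleton matches the paper. You discretize by levels (Corollary~\ref{upper_est}), replace the intersection in each block by the two balls $2^{-(\hat\alpha_i+1/q-1/\hat p_i)m}B^{2^m,k_m}_{\hat p_i,\hat\theta_i}$ via Theorem~\ref{emb_s} (adding, in case~3, the ball with parameters $(\hat p,\hat\theta,\hat\alpha)$), and balance the single-ball widths of Theorem~\ref{1mixed}.

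The dimension allocation you describe does not give the sharp bound. The sum is dominated by the level $m_0(n)$ solving \eqref{m0n1} or \eqref{m0n2}, where $2^{m_0(n)}\asymp n^{t_{l_*}}(\log n)^\kappa$ with $t_{l_*}>1$, and that level needs $\asymp n$ dimensions. As written, $n_\nu$ decays geometrically already from $\mu$ (where $2^\mu\mu^{d-1}\asymp n$). Then the critical level receives only $n^{1-c}$ dimensions, and the bound is inflated by a positive power of $n$. The paper uses \eqref{nu_m_def}: $\min\{2^mk_m,\,n2^{-\varepsilon|m-m_0(n)|}\}$ dimensions with small $\varepsilon$, centred at $m_0(n)$, and none at levels with $2^{2m/q}k_m^{2/\sigma}>n$.

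In case~3, ``the sign of $\zeta$ decides'' has to be derived. It follows from the fact that the two balancing points satisfy $m_1(n)<m_2(n)$ (Remark~\ref{m1m2}), which identifies which of the three terms is active. Your plan does not actually compute $\beta_*^1$ or $\beta_*^2$.

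\textbf{Lower bound: the real gap.} Taking one block $m'(n)$ near $m_0(n)$ is right. A single $\overline m$-layer is not: on it every $l_{p,\theta}$-norm collapses to an $l_p$-norm, so the dependence of $\beta_*$ on $\hat\theta_1,\hat\theta_2,\sigma$ is lost.

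The set in that block is the intersection of all $s$ balls, and you never explain why the constraints other than the active pair do not cut down the width. This is the main work of the paper's \S4.
\begin{itemize}
\item Lemma~\ref{och}, using Theorem~\ref{inters_ball_dn}, shows that every other term is $\gtrsim n^{-h(t_{l_*})+c}$ with $c>0$. This covers terms generated by a single ball $i\notin J\cap\{i_*,j_*\}$, by a pair $(i,j)\ne(i_*,j_*)$, by the active pair with $\tilde p\notin[2,q]$, and by a triple.
\item Proposition~\ref{sta_min} reduces the remaining terms, along the segment joining $(1/\hat p_1,1/\hat\theta_1)$ and $(1/\hat p_2,1/\hat\theta_2)$, to $\varkappa_n(0)$, $\varkappa_n(1)$, $\varkappa_n(\hat\lambda)$. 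In case~3 it shows $\varkappa_n(1)\gtrsim\varkappa_n(\hat\lambda)$, again via $m_1(n)<m_2(n)$ and \eqref{str_min_h}.
\end{itemize}

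The polynomial margin $n^c$ is indispensable. The comparison passes through unmixed $l_q$-widths and loses factors $(\log n)^{-\beta}$, so any constraint that is slack only up to logarithms would change $\beta_*$. This margin comes from the strict inequalities of Proposition~\ref{strict_ineq}, i.e.\ ultimately from the general-position hypothesis \eqref{frac_aj} via Lemma~\ref{ij01}.

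Your proposal never uses \eqref{frac_aj}, and the test set is left unspecified (``$2^\nu t^{?}$-sparse''). So ``a test set in $\cap_j$ containing a copy of the extremal body'' restates what must be proved rather than arguing it. Whether you go through Theorem~\ref{inters_ball_dn} or through explicit flat vectors, you need exactly these strict inequalities with a power margin. With flat vectors, this is the check that they lie in each of the $s$ scaled balls.
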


\begin{Rem}
\label{hat_lam_0_1} In the part 3 of Theorem \ref{main6} we have $\hat \lambda \in (0, \, 1)$.
\end{Rem}

\begin{Rem}
The number $\alpha_*$ is positive since $t_{l_*}$ is the strict minimum point of $h$; see \eqref{str_min_h}.
\end{Rem}

In the case  $q=2$, $\sigma \in [2, \,\infty)$ or $\sigma=2$, $q\in [2, \,\infty)$, estimates can also be obtained under certain conditions of general position (since estimates of widths of intersections of finite-dimensional balls in the mixed norm for $(q, \, \sigma)\in [2, \,\infty)^2$) are available; 
this  should be less  technically challenging than the one considered here, but this case is not considered due to the large volume of the paper.

The paper is organized as follows. In \S 2 we prove Proposition \ref{h_prop} and some auxilliary assertions which will be used in the proof of the lower estimates for the widths. In \S 3 we prove the upper estimates in Theorems \ref{main1}‐‐\ref{main5} and \ref{main6} (in Theorem \ref{main_non_emb} the estimate is trivial). In \S 4 we prove the lower estimates.

\section{The proof of Proposition \ref{h_prop}}

Let $N\in \N$, $1\le p\le \infty$. We denote by $l_p^N$ the space $\R^N$ with the norm $\|(x_1, \, \dots, \, x_N)\|_{l_p^N} = \Bigl( \sum \limits _{j=1}^N |x_j|^p\Bigr)^{1/p}$ for $p<\infty$, $\|(x_1, \, \dots, \, x_N)\| _{l_p^N} = \max _{1\le j\le N} |x_j|$ for $p=\infty$. Let $B_p^N$ be the unit ball in the space $l_p^N$.

The estimates of the widths $d_n(B_p^N, \, l_q^N)$ were obtained in \cite{gluskin1, bib_gluskin, garn_glus, pietsch1, stesin}; here we formulate these results for the cases that will be considered below.

\begin{trma}
\label{glus} {\rm (see \cite{bib_gluskin}).} Let $1\le p\le q<\infty$, $q>2$, $0\le n\le N/2$. Then
$$
d_n(B_p^N, \, l_q^N) \underset{q}{\asymp} \min \{1, \,
n^{-1/2}N^{1/q}\} ^{\omega_{p,q}}.
$$
\end{trma}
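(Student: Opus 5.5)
This is Gluskin's theorem, quoted from \cite{bib_gluskin}. I would reduce everything to two endpoint estimates, \(p\le 2\) (where \(\omega_{p,q}=1\)) and \(p=q\) (where \(\omega_{q,q}=0\)), and then treat \(2<p<q\) by interpolating an upper bound and using a symmetrization argument for the lower bound. Throughout put \(\varepsilon_n:=\min\{1,\,n^{-1/2}N^{1/q}\}\).

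\textbf{Case \(p\le 2\).} The trivial estimate \(d_n(M,l_q^N)\le \sup_{x\in M}\|x\|_{l_q^N}\) gives the bound \(1\) for every \(p\le q\), including \(p=q\).

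For the upper estimate when \(p\le2\), I would use \(B_p^N\subset B_2^N\). It then suffices to find an \(n\)-dimensional subspace \(L\) and a linear map \(A\) into \(L\) with
\[
\|I-A\|_{l_2^N\to l_q^N}\lesssim n^{-1/2}N^{1/q}.
\]
I would follow Kashin--Gluskin: take \(L\) to be a random \(n\)-dimensional subspace (for example, spanned by a Gaussian or Bernoulli \(N\times n\) matrix) and \(A\) a suitable orthogonal-type projection onto it. Concentration of measure, through a Dvoretzky-type bound for the \(l_q\)-norm on the orthogonal complement, gives the estimate with positive probability.

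For the lower estimate when \(p\le2\), \(B_1^N\subset B_p^N\) reduces matters to \(d_n(B_1^N,l_q^N)\gtrsim \varepsilon_n\) for \(n\le N/2\). I would argue by duality, \(d_n(B_1^N,l_q^N)=\max_j \operatorname{dist}_{l_q}(e_j,L)\) for the optimal \(L\). Let \(P\) be any rank-\(n\) map onto \(L\). Averaging \(\|e_j-Pe_j\|_q\) over \(j\) and using \(\operatorname{tr}P=n\le N/2\) forces many diagonal entries to be small. One then compares \(l_q\) with \(l_2\) on the \(n\)-dimensional range, using the Lewis/John-type bound \(\sum_j\|Pe_j\|_2^2\le \ldots\). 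This is Gluskin's original volume/averaging argument, and I would essentially reproduce it.

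\textbf{Case \(2<p<q\), upper estimate.} Let \(A\) be the operator from the \(p\le2\) step. If in addition \(\|I-A\|_{l_q^N\to l_q^N}\lesssim 1\), Riesz--Thorin interpolation between \(l_2\) and \(l_q\) applies. With \(\frac1p=\frac{\omega}{2}+\frac{1-\omega}{q}\), i.e. \(\omega=\omega_{p,q}\), it yields
\[
d_n(B_p^N,l_q^N)\le \|I-A\|_{p\to q}\lesssim (n^{-1/2}N^{1/q})^{\omega_{p,q}},
\]
since interpolating the domain side between \(l_2\) and \(l_q\) with the target fixed is legitimate.

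Combining with the trivial bound gives \(\varepsilon_n^{\omega_{p,q}}\). If a bounded-in-\(l_q\) version of \(A\) is unavailable, I would fall back on a decomposition. Split \(x\in B_p^N\) into the part with coordinates \(|x_i|>\delta\), which has at most \(\delta^{-p}\) entries, and the remainder. The remainder lies in \(\delta^{1-p/2}B_2^N\) and is handled by the \(l_2\)-estimate. The large-coordinate part has small support and is approximated through a further dyadic splitting, after which \(\delta\) is optimized.

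\textbf{Case \(2<p<q\), lower estimate.} For \(k\le N\), the set \(B_p^N\) contains \(k^{-1/p}V_k\), where \(V_k\) is the convex hull of the vectors \(\sum_{i\in S}\pm e_i\) with \(|S|=k\). I would prove
\[
d_n(V_k,l_q^N)\gtrsim k^{1/2}\min\{1,\,n^{-1/2}N^{1/q}\}
\]
for a suitable range of \(k\). The method is to average over random signs and supports, which reduces to an \(l_2\)-type estimate of \(\operatorname{tr}(I-P)\) as in the \(p\le2\) lower bound. Then choose \(k\asymp (n N^{-2/q})^{q/(q-2)}\), i.e. the value balancing \(k^{1/2-1/p}\) against \(k^{1/q-1/p}\). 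This reproduces exactly the exponent \(\omega_{p,q}\).

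\textbf{Main obstacle.} I expect the lower bounds, that is Gluskin's averaging argument and its extension to \(V_k\), to be the hardest step. The boundedness of \(I-A\) on \(l_q^N\), needed for the interpolation, is the delicate point of the upper bound.
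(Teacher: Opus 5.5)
The paper gives no proof of this statement; it is quoted from Gluskin. Judged on its own, your lower-bound part is sound, but the upper-bound part has a genuine gap.

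\textbf{The linear route cannot work.} Your upper estimate for $p\le 2$, and the whole Riesz--Thorin route for $2<p<q$, rest on a rank-$n$ \emph{linear} map $A$ with $\|I-A\|_{l_2^N\to l_q^N}\lesssim n^{-1/2}N^{1/q}$. No such map exists. For any linear $A$ of rank at most $n$, put $M=A^{T}\R^N$. Then
\[
\|I-A\|_{l_2^N\to l_q^N}=\|I-A^{T}\|_{l_{q'}^N\to l_2^N}\ge \max_{j}\|e_j-A^{T}e_j\|_{2}\ge \max_j \mathrm{dist}_{l_2}(e_j,\,M)\ge \Bigl(\frac{N-n}{N}\Bigr)^{1/2}\ge \frac{1}{\sqrt 2},
\]
because $\sum_j \mathrm{dist}_{l_2}(e_j,M)^2=N-\dim M$. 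Since $B_2^N\subset B_p^N$ for $p\ge 2$, the same bound holds for $\|I-A\|_{l_p^N\to l_q^N}$ when $2\le p\le q$. So the linear widths are of order $1$ for $n\le N/2$, whereas $n^{-1/2}N^{1/q}$ is small once $n\gg N^{2/q}$. The obstruction is therefore not the $l_q\to l_q$ boundedness you flagged but the $l_2\to l_q$ estimate itself, and there is nothing to interpolate. The Kashin--Gluskin bound $d_n(B_2^N,l_q^N)\lesssim n^{-1/2}N^{1/q}$ is intrinsically nonlinear. It comes from $d_n(B_2^N,l_q^N)=d^n(B_{q'}^N,l_2^N)$: a random subspace $E$ of codimension $n$ on which $\|x\|_2\lesssim N^{1/q}n^{-1/2}\|x\|_{q'}$, the approximating space $E^{\perp}$, and best $l_q$-approximation from it. You should take that result as the input for $p\le 2$.

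\textbf{The fallback splitting has its roles reversed.} For $p>2$ and $|x_i|\le\delta$ one has $|x_i|^2\ge\delta^{2-p}|x_i|^p$, not $\le$. So the small-coordinate part does not lie in $\delta^{1-p/2}B_2^N$; for example, $x=N^{-1/p}(1,\dots,1)$ with $\delta=1/2$ has $\|x\|_2=N^{1/2-1/p}$. Swap the roles:
\begin{itemize}
\item The large part $x^{(1)}$ (coordinates $|x_i|>\delta$) satisfies $\|x^{(1)}\|_2^2\le\delta^{2-p}\sum_i|x_i|^p\le\delta^{2-p}$, so it goes to the $l_2$ estimate.
\item The small part satisfies $\|x^{(2)}\|_q^q\le\delta^{q-p}$, so it is approximated by $0$.
\end{itemize}
No dyadic splitting is needed. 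Kolmogorov widths tolerate nonlinear decompositions, so $d_n(B_p^N,l_q^N)\le\delta^{1-p/2}d_n(B_2^N,l_q^N)+\delta^{1-p/q}$. Choosing $\delta^{p(1/2-1/q)}=n^{-1/2}N^{1/q}$ (when this is at most $1$) gives $(n^{-1/2}N^{1/q})^{\omega_{p,q}}$.

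\textbf{The lower bounds work and are elementary.} Let $y_v\in L$ with $\|v-y_v\|_q\le\varepsilon$, and let $P$ be the orthogonal projection onto $L$. Then $k-\varepsilon k^{1-1/q}\le\langle y_v,v\rangle\le\|Pv\|_2^2+\varepsilon N^{1/2-1/q}\|Pv\|_2$. Averaging over the vertices of $V_k$, where $\mathbb{E}\|Pv\|_2^2=kn/N$, gives $\varepsilon\gtrsim\min\{k^{1/q},\,k^{1/2}N^{1/q}n^{-1/2}\}$. The case $k=1$ covers $p\le 2$. So the deep step is the upper bound, not the lower one as you expected.
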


\begin{trma}
\label{p_s} {\rm (see \cite{pietsch1, stesin}).} Let $1\le q\le p\le
\infty$, $0\le n\le N$. Then
$$
d_n(B_p^N, \, l_q^N) = (N-n)^{1/q-1/p}.
$$
\end{trma}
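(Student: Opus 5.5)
The plan is to prove the two bounds separately: the upper bound with an explicit coordinate subspace, and the lower bound by duality combined with a vertex argument. Write $m:=N-n$; if $m=0$ both sides vanish, so assume $m\ge 1$.

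\emph{Upper bound.} Take $L=\mathrm{span}\{e_1,\dots,e_n\}$. For $x\in B_p^N$, approximate $x$ by its first $n$ coordinates. The error is the tail $(x_{n+1},\dots,x_N)$, which lies in $B_p^m$. Since $q\le p$, H\"older's inequality on $m$ coordinates gives $\|y\|_{l_q^m}\le m^{1/q-1/p}\|y\|_{l_p^m}$. Hence $d_n(B_p^N,\,l_q^N)\le (N-n)^{1/q-1/p}$.

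\emph{Lower bound.} Fix any subspace $L$ with $\dim L\le n$; enlarging $L$ only decreases distances, so we may assume $\dim L=n$. By Hahn--Banach duality, with $1/p+1/p'=1$ and $1/q+1/q'=1$,
$$
\sup_{x\in B_p^N}\dist_{l_q}(x,\,L)=\sup_{x\in B_p^N}\ \sup_{y\in L^\perp,\ \|y\|_{q'}\le 1}\langle x,\,y\rangle=\sup_{y\in L^\perp,\ \|y\|_{q'}\le 1}\|y\|_{p'} .
$$
So it suffices to find $0\ne y\in L^\perp$ with $\|y\|_{p'}\ge m^{1/q-1/p}\|y\|_{q'}$. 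Note that $p'\le q'$, so this is a reverse H\"older inequality, and it must exploit the structure of $L^\perp$.

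\emph{Key step: a vertex with many equal maximal coordinates.} The set $P=L^\perp\cap B_\infty^N$ is a compact $m$-dimensional convex polytope in the $m$-dimensional space $L^\perp$. It is cut out by the $2N$ half-spaces $\pm y_i\le 1$. Any vertex $y$ of $P$ must have at least $m$ linearly independent active constraints, so $|y_i|=1=\|y\|_\infty$ for at least $k\ge m$ indices $i$.

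For such a $y$, put $M=\|y\|_\infty$. Then:
\begin{itemize}
\item Since $|y_i|\le M$ and $q'\ge p'$, we have $|y_i|^{q'}\le M^{q'-p'}|y_i|^{p'}$ for every $i$, so $\|y\|_{q'}^{q'}\le M^{q'-p'}\|y\|_{p'}^{p'}$.
\item Since $M$ is attained at $k$ coordinates, $kM^{p'}\le\|y\|_{p'}^{p'}$, i.e. $M\le k^{-1/p'}\|y\|_{p'}$.
\end{itemize}
Substituting the second bound into the first gives $\|y\|_{q'}\le k^{1/q'-1/p'}\|y\|_{p'}$. Since $1/p'-1/q'=1/q-1/p$, this means
$$
\|y\|_{p'}\ge k^{1/q-1/p}\|y\|_{q'}\ge m^{1/q-1/p}\|y\|_{q'},
$$
where the last step uses $k\ge m$ and $1/q-1/p\ge 0$. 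This yields the lower bound $d_n(B_p^N,\,l_q^N)\ge (N-n)^{1/q-1/p}$ and finishes the proof.

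The main obstacle is the lower bound: plain H\"older goes the wrong way, so everything hinges on finding a vector in $L^\perp$ whose supremum norm is attained at $\ge N-n$ coordinates. I expect the vertex argument to handle this cleanly. The endpoint cases $p=\infty$ (so $p'=1$) and $q'=\infty$ need only the usual conventions in the inequalities above.
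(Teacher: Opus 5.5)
The paper gives no proof of this statement. It imports it as a known result (Theorem~B, from Pietsch and Stesin) and uses it only as a black box in the discretized estimates.

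Your argument is correct and is a complete, self-contained proof of the exact equality, not just of the order. The upper bound via the coordinate subspace and H\"older's inequality is fine. The lower bound also holds up, step by step:
\begin{itemize}
\item The reduction to $\dim L=n$ is valid, since enlarging $L$ only decreases distances.
\item The duality identity $\sup_{x\in B_p^N}\dist_{l_q}(x,L)=\sup\{\|y\|_{p'}:\ y\in L^\perp,\ \|y\|_{q'}\le 1\}$ is correct.
\item $P=L^\perp\cap B_\infty^N$ is a bounded polyhedron with nonempty interior in the $m$-dimensional space $L^\perp$, so it has a vertex.
\item A vertex has $m$ linearly independent active constraints. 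These come from distinct indices, because $y_i\le 1$ and $-y_i\le 1$ cannot both be active. Hence $|y_i|=\|y\|_\infty=1$ on at least $m$ coordinates.
\item Your two-line interpolation then gives $\|y\|_{p'}\ge m^{1/q-1/p}\|y\|_{q'}$ with the sharp constant, including the endpoint cases $p'=1$ and $q'=\infty$.
\end{itemize}

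Compared with the bare citation, your route buys an elementary proof with the exact constant and no restriction such as $n\le N/2$. Its only nontrivial ingredient is the classical lemma that every $m$-dimensional subspace of $\R^N$ contains a nonzero vector whose sup norm is attained at at least $m$ coordinates.

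One cosmetic point: when $m=0$ and $p=q$, the right-hand side is $0^0$, so your remark that ``both sides vanish'' relies on the convention $0^0=0$. This is a degeneracy of the statement itself, not a flaw in your proof.
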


Let us formulate the theorem about the estimates for the Kolmogorov widths of an intersection of a family of finite‐dimensional balls in the case $2<q<\infty$.

\begin{trma}
\label{fin_dim_inters} {\rm (see \cite{vas_ball_inters}, \cite[Proposition 1]{vas_int_sob}).}
Let $A$ be a finite non‐empty set, and let $1\le p_\alpha\le \infty$, $\nu_\alpha>0$, $\alpha \in A$, $2<q<\infty$,
\begin{align}
\label{m_set_def}
M_0 = \cap _{\alpha \in A} \nu _\alpha B_{p_\alpha}^N, 
\end{align}
$N\ge 2n$. We define the numbers $\lambda_{\alpha,\beta}$ and $\tilde \lambda_{\alpha,\beta}$ by the equations
\begin{gather}
\label{q_l_ab}
\frac{1}{q}= \frac{1‐\lambda_{\alpha,\beta}}{p_\alpha} + \frac{\lambda_{\alpha,\beta}}{p_\beta}, \quad p_\alpha > q, \; p_\beta < q,
\\
\label{2_tl_ab}
\frac{1}{2}= \frac{1‐\tilde\lambda_{\alpha,\beta}}{p_\alpha} + \frac{\tilde\lambda_{\alpha,\beta}}{p_\beta}, \quad p_\alpha > 2, \; p_\beta < 2.
\end{gather}
Then  
\begin{align}
\label{dnm0q2}
\begin{array}{c}
d_n(M_0, \, l_q^N) \underset{q}{\asymp} \min \left\{ \min _{\alpha \in A}d_n(\nu_\alpha B_{p_\alpha}^N, \, l_q^N), \, \min _{p_\alpha>q, \, p_\beta< q} \nu_\alpha ^{1-\lambda_{\alpha,\beta}}\nu_\beta^{\lambda_{\alpha,\beta}},\right. \\ \left. \min _{p_\alpha> 2, \, p_\beta< 2} \nu_\alpha ^{1-\tilde\lambda_{\alpha,\beta}}\nu_\beta^{\tilde\lambda_{\alpha,\beta}}d_n(B_2^N, \, l_q^N)\right\}.
\end{array}
\end{align}
If $\{\alpha:\; p_\alpha<q\}\ne \varnothing$, $\{\alpha:\; p_\alpha>2\}\ne \varnothing$, $1\le \frac{\nu_\alpha}{\nu_\beta} \le N^{1/p_{\alpha} ‐ 1/p_{\beta}}$ for all $\alpha$, $\beta \in A$ such that $p_\alpha \le p_\beta$, then in \eqref{dnm0q2} the value $\min _{\alpha \in A}d_n(\nu_\alpha B_{p_\alpha}^N, \, l_q^N)$ can be replaced by $\min _{\alpha \in A:\; 2\le p_\alpha \le q}d_n(\nu_\alpha B_{p_\alpha}^N, \, l_q^N)$.
\end{trma}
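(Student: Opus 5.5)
The plan is to prove matching upper and lower bounds for $d_n(M_0, \, l_q^N)$, working throughout in logarithmic variables, where all the quantities in \eqref{dnm0q2} become piecewise linear.

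\textbf{Upper bound.} It is elementary and gives each term of \eqref{dnm0q2} separately.
\begin{enumerate}
\item Since $M_0\subset \nu_\alpha B_{p_\alpha}^N$ for every $\alpha$, we get $d_n(M_0, \, l_q^N)\le d_n(\nu_\alpha B_{p_\alpha}^N, \, l_q^N)$.
\item If $p_\alpha>q>p_\beta$, Hölder's inequality $\|x\|_q\le \|x\|_{p_\alpha}^{1-\lambda_{\alpha,\beta}}\|x\|_{p_\beta}^{\lambda_{\alpha,\beta}}$ with $\lambda_{\alpha,\beta}$ from \eqref{q_l_ab} gives $M_0\subset \nu_\alpha^{1-\lambda_{\alpha,\beta}}\nu_\beta^{\lambda_{\alpha,\beta}}B_q^N$. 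Approximating by the zero subspace then bounds the width by $\nu_\alpha^{1-\lambda_{\alpha,\beta}}\nu_\beta^{\lambda_{\alpha,\beta}}$.
\item If $p_\alpha>2>p_\beta$, the same Hölder argument with $\tilde\lambda_{\alpha,\beta}$ from \eqref{2_tl_ab} gives $M_0\subset \nu_\alpha^{1-\tilde\lambda_{\alpha,\beta}}\nu_\beta^{\tilde\lambda_{\alpha,\beta}}B_2^N$. This yields the last term, via $d_n(B_2^N, \, l_q^N)$ from Theorem \ref{glus}.
\end{enumerate}

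\textbf{Lower bound: test sets.} For $1\le k\le N$, let $V_k$ be the set of vectors with exactly $k$ nonzero coordinates, each equal to $\pm1$. We have $tV_k\subset \nu_\alpha B_{p_\alpha}^N$ iff $tk^{1/p_\alpha}\le \nu_\alpha$. Hence, with
$$
\tau_k:=\min_{\alpha\in A}\nu_\alpha k^{-1/p_\alpha},
$$
we get $\tau_k\,\mathrm{conv}\,V_k\subset M_0$, so that
$$
d_n(M_0, \, l_q^N)\ge \max_{1\le k\le N}\tau_k\, d_n(V_k, \, l_q^N).
$$
I then invoke Gluskin's estimate for the widths of these sparse sign sets (the same averaging and volume argument that underlies Theorem \ref{glus}), for $n\le N/2$:
$$
d_n(V_k, \, l_q^N)\underset{q}{\gtrsim} k^{1/q}\min\{1, \, k^{1/2-1/q}\,n^{-1/2}N^{1/q}\}.
$$
The two regimes correspond to $V_k$ behaving like $k^{1/q}B_\infty$ for small $k$ and like $k^{1/2}B_2$ for large $k$.

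\textbf{Lower bound: matching the formula.} In the variable $u=\log k\in[0, \, \log N]$, the quantity $\log(\tau_k d_n(V_k, \, l_q^N))$ is a minimum of finitely many affine functions of $u$, and hence concave. The main step is to show that its maximum over $u$ equals, up to constants, the logarithm of the right-hand side of \eqref{dnm0q2}. I would do this by finite-dimensional linear-programming duality, i.e. a minimax argument for piecewise linear concave functions. The maximum of a concave piecewise linear function on an interval is attained at an endpoint or at a breakpoint, which is the intersection of two affine pieces.
\begin{itemize}
\item At $u=0$ or $u=\log N$ the value reproduces $d_n(\nu_\alpha B_{p_\alpha}^N, \, l_q^N)$ for the active $\alpha$. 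This uses Theorems \ref{glus} and \ref{p_s}; the latter handles $p_\alpha\ge q$, where the optimal $k$ is $N$.
\item At an intersection of the constraints coming from $\alpha$ with $p_\alpha>q$ and $\beta$ with $p_\beta<q$, in the regime where the minimum in Gluskin's bound equals $1$, the value is exactly $\nu_\alpha^{1-\lambda_{\alpha,\beta}}\nu_\beta^{\lambda_{\alpha,\beta}}$.
\item In the regime where $V_k$ behaves like $k^{1/2}B_2$, the analogous intersection gives $\nu_\alpha^{1-\tilde\lambda_{\alpha,\beta}}\nu_\beta^{\tilde\lambda_{\alpha,\beta}}\,n^{-1/2}N^{1/q}$.
\end{itemize}
Duality, in the form ``the max over $u$ of the min over pieces equals the min over convex combinations of two pieces'', shows that the maximum is never smaller than the smallest of these candidate values. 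This matches the upper bound.

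\textbf{Refined statement.} For the last assertion of the theorem, under the normalization $1\le\nu_\alpha/\nu_\beta\le N^{1/p_\alpha-1/p_\beta}$, I would check that the terms $d_n(\nu_\alpha B_{p_\alpha}^N, \, l_q^N)$ with $p_\alpha>q$ or $p_\alpha<2$ are dominated by an interpolated term. The idea is that the interpolation parameters then lie in $[0,1]$ in the admissible range, so these endpoint terms are never the minimum.

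\textbf{Main obstacle.} I expect the hardest step to be the careful case analysis of this duality: tracking which regime of Gluskin's bound is active at each breakpoint, and making sure no spurious breakpoint produces a smaller value than the formula.
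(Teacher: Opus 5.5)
Your argument is correct. The paper gives no proof of this statement; it quotes it from \cite{vas_ball_inters} and \cite[Proposition 1]{vas_int_sob}. Your route is the standard one: H\"older inclusions for the upper bound, and Gluskin's estimate for $d_n(V_k,l_q^N)$ combined with a one-dimensional max--min in $u=\log k$ for the lower bound. For the refinement, the ratio condition gives $\nu_\alpha^{1-\lambda_{\alpha,\beta}}\nu_\beta^{\lambda_{\alpha,\beta}}\le \nu_\alpha N^{1/q-1/p_\alpha}$ and $\nu_\alpha^{1-\tilde\lambda_{\alpha,\beta}}\nu_\beta^{\tilde\lambda_{\alpha,\beta}}\le \nu_\beta$, which is the domination you describe.

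One correction to your breakpoint bookkeeping. For $2<p_\alpha<q$ and $n>N^{2/q}$, the term $d_n(\nu_\alpha B_{p_\alpha}^N,l_q^N)\asymp \nu_\alpha (n^{-1/2}N^{1/q})^{\omega_{p_\alpha,q}}$ does not arise at $u\in\{0,\log N\}$. It arises at the interior breakpoint $k\asymp (n^{1/2}N^{-1/q})^{1/(1/2-1/q)}$, where the two regimes of Gluskin's bound switch. This is also the only point where an active piece of one regime can meet an active piece of the other; there all four pieces of the two indices coincide. So such mixed crossings reduce either to this case or to a same-regime pair.
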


\begin{Rem}
If we replace minimum by infimum, this result will be also true for infinite $A$; the reduction of the case of arbitrary set $A$ to the case of the finite one is similar to \cite[\S 5]{vas_mix_sev}.
\end{Rem}

The next result refines Theorem \ref{fin_dim_inters} under the condition $p_\alpha \notin \{2, \, q\}$, $\alpha \in A$.
\begin{trma}
\label{expl} {\rm (see \cite[\S 3, Lemma 2]{vas_dif_der}).} Let $A$ be a finite non‐empty set, let $2< q<\infty$, $p_\alpha\notin \{2, \, q\}$ for each $\alpha\in A$, $N^{2/q}\le n \le N/2$, and let the set $M_0$ be given by formula \eqref{m_set_def}.
\begin{enumerate}
\item Let $p_{\alpha_*} < 2$, $\nu_{\alpha_*} \le \nu_\beta$ for all $\beta \in A$. Then $d_n(M_0, \, l_q^N) \underset{q}{\asymp} \nu_{\alpha_*}n^{‐1/2}N^{1/q}$.

\item Let $p_{\alpha_*} > q$, $\nu_{\alpha_*} N^{1/p_\beta ‐1/p_{\alpha_*}}\le \nu_\beta$ for all $\beta \in A$. Then $d_n(M_0, \, l_q^N) \underset{q}{\asymp} \nu_{\alpha_*} N^{1/q‐1/p_{\alpha_*}}$.

\item Let $2<p_{\alpha_*}<q$, 
\begin{align}
\label{nuan12n1q}
\nu_{\alpha_*}(n^{1/2}N^{‐1/q})^{\frac{1/p_\beta‐1/p_{\alpha_*}}{1/2‐1/q}} \le \nu_\beta
\end{align}
for all $\beta \in A$. Then 
\begin{align}
\label{dn_m0_j}
d_n(M_0, \, l_q^N) \underset{q}{\asymp} \nu_{\alpha_*} (n^{‐1/2}N^{1/q})^{\frac{1/p_{\alpha_*}‐1/q}{1/2‐1/q}}.
\end{align}

\item Let $p_{\alpha_*} > q$, $p_{\beta_*}< q$. Suppose that 
\begin{align}
\label{nu_ab}
\begin{array}{c}
\nu_{\alpha_*} ^{1‐\lambda_{\alpha_*,\beta_*}}\nu_{\beta_*}^{\lambda_{\alpha_*,\beta_*}} \le \nu_{\alpha_*} ^{1‐\lambda_{\alpha_*,\gamma}}\nu_\gamma^{\lambda_{\alpha_*,\gamma}}, \quad \gamma \in A, \; p_\gamma < q,
\\
\nu_{\alpha_*} ^{1‐\lambda_{\alpha_*,\beta_*}}\nu_{\beta_*}^{\lambda_{\alpha_*,\beta_*}} \le \nu_\gamma ^{1‐\lambda_{\gamma,\beta_*}}\nu_{\beta_*}^{\lambda_{\gamma,\beta_*}}, \quad \gamma \in A, \; p_\gamma > q,
\end{array}
\end{align}
\begin{align}
\label{nu_ab1_lem2}
\nu_{\alpha_*}  \le \nu_{\beta_*}(n^{1/2}N^{‐1/q})^{\frac{1/p_{\alpha_*}‐1/p_{\beta_*}}{1/2‐1/q}}, \quad \nu_{\alpha_*}\ge \nu_{\beta_*} N^{1/p_{\alpha_*}‐1/p_{\beta_*}}.
\end{align}
Then
$d_n(M_0, \, l_q^N) \underset{q}{\asymp} \nu_{\alpha_*} ^{1‐\lambda_{\alpha_*,\beta_*}}\nu_{\beta_*}^{\lambda_{\alpha_*,\beta_*}}$.

\item Let $p_{\alpha_*} > 2$, $p_{\beta_*}< 2$. Suppose that
\begin{align}
\label{nu_ab_lem2}
\begin{array}{c}
\nu_{\alpha_*} ^{1‐\tilde\lambda_{\alpha_*,\beta_*}}\nu_{\beta_*}^{\tilde\lambda_{\alpha_*,\beta_*}} \le \nu_{\alpha_*} ^{1‐\tilde\lambda_{\alpha_*,\gamma}}\nu_\gamma^{\tilde\lambda_{\alpha_*,\gamma}}, \quad \gamma \in A, \; p_\gamma < 2,
\\
\nu_{\alpha_*} ^{1‐\tilde\lambda_{\alpha_*,\beta_*}}\nu_{\beta_*}^{\tilde\lambda_{\alpha_*,\beta_*}} \le \nu_\gamma ^{1‐\tilde\lambda_{\gamma,\beta_*}}\nu_{\beta_*}^{\tilde\lambda_{\gamma,\beta_*}}, \quad \gamma \in A, \; p_\gamma > 2,
\end{array}
\end{align}
\begin{align}
\label{nu_ab2_lem2}
\nu_{\alpha_*}  \le \nu_{\beta_*}, \quad \nu_{\alpha_*}\ge \nu_{\beta_*} (n^{1/2}N^{‐1/q})^{\frac{1/p_{\alpha_*}‐1/p_{\beta_*}}{1/2‐1/q}}.
\end{align}
Then
$d_n(M_0, \, l_q^N) \underset{q}{\asymp} \nu_{\alpha_*} ^{1‐\tilde\lambda_{\alpha_*,\beta_*}}\nu_{\beta_*}^{\tilde \lambda_{\alpha_*,\beta_*}}n^{‐1/2}N^{1/q}$.
\end{enumerate}
\end{trma}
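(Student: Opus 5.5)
The plan is to derive Theorem \ref{expl} from Theorem \ref{fin_dim_inters} by evaluating every term of the minimum in \eqref{dnm0q2} explicitly and checking that, under the hypotheses of each item, the claimed term is the smallest up to constants depending on $q$. Put $a:=n^{1/2}N^{-1/q}$. Since $N^{2/q}\le n\le N/2$, we have $1\le a$ and $a\le N^{1/2-1/q}$. Theorem \ref{glus} gives $d_n(B_p^N,l_q^N)\asymp a^{-\omega_{p,q}}$ for $p<q$. Theorem \ref{p_s} with $n\le N/2$ gives $d_n(B_p^N,l_q^N)\asymp N^{1/q-1/p}$ for $p>q$. By homogeneity, the single-ball terms are therefore $\nu_\beta a^{-1}$ for $p_\beta<2$, $\nu_\beta a^{-\omega'_{p_\beta,q}}$ for $2<p_\beta<q$ (where $\omega'_{p,q}=\frac{1/p-1/q}{1/2-1/q}$), and $\nu_\beta N^{1/q-1/p_\beta}$ for $p_\beta>q$. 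The mixed terms are $\nu_\alpha^{1-\lambda}\nu_\beta^{\lambda}$ and $\nu_\alpha^{1-\tilde\lambda}\nu_\beta^{\tilde\lambda}a^{-1}$. The upper bound in every item follows by exhibiting the relevant term in \eqref{dnm0q2}. All the work is in the lower bound, i.e. in showing that no other term is smaller.

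It is convenient to pass to logarithmic coordinates. Write $\nu_\beta=\nu_{\alpha_*}a^{u_\beta}$ (or use $N$ as the base where convenient), and regard each term as $\nu_{\alpha_*}$ times a power of $a$ and $N$ whose exponent is affine in $u_\beta$ and in $1/p_\beta$. The parameters $\lambda,\tilde\lambda$ are defined by linear interpolation in $1/p$, see \eqref{q_l_ab} and \eqref{2_tl_ab}. Hence a mixed term $\nu_\alpha^{1-\lambda}\nu_\beta^\lambda$ is exactly the value at $1/q$ of the affine interpolation of $\log\nu$ between the points $1/p_\alpha$ and $1/p_\beta$. The same holds at $1/2$ for $\tilde\lambda$. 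In these coordinates each item becomes the claim that the point $(1/p_{\alpha_*},\log\nu_{\alpha_*})$ lies on a supporting line of the finite set $\{(1/p_\beta,\log\nu_\beta)\}$. Each hypothesis says that all the other points lie above a certain line, or on the correct side of it.

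Items 1--3 are then direct. In item 1, $\nu_\beta\ge\nu_{\alpha_*}$, $a\ge1$ and $N^{1/q-1/p}\ge 1\ge a^{-1}$, so every single-ball term is at least $\nu_{\alpha_*}a^{-1}$. Every mixed term is a geometric mean of quantities $\ge\nu_{\alpha_*}$, so it is $\ge\nu_{\alpha_*}\ge\nu_{\alpha_*}a^{-1}$, and the $\tilde\lambda$-terms carry the factor $a^{-1}$ as well. Item 2 is analogous, using $N^{1/q-1/p_\beta}\nu_\beta\ge\nu_{\alpha_*}N^{1/q-1/p_{\alpha_*}}$, together with $a^{-\omega'}\ge N^{-(1/p-1/q)}$, which follows from $a\le N^{1/2-1/q}$. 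In item 3, hypothesis \eqref{nuan12n1q} says that $\log_a\nu_\beta$ lies above the line of slope $1/(1/2-1/q)$ in $1/p_\beta$ through $\alpha_*$. Substituting this into each term and using the same slope interpolation shows every term is $\gtrsim\nu_{\alpha_*}a^{-\omega'_{p_{\alpha_*},q}}$. For $p_\beta>q$ this step also needs $a\le N^{1/2-1/q}$.

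Items 4 and 5 I expect to be the main obstacle, because the candidate term is itself a mixed term. The first half of \eqref{nu_ab} (resp.\ \eqref{nu_ab_lem2}) handles mixed terms sharing an index with the pair $(\alpha_*,\beta_*)$. Then one has to deduce, by a convexity argument, that the point $(1/p_\gamma,\log\nu_\gamma)$ lies above the line through the two points indexed by $\alpha_*$ and $\beta_*$. First I would show this for $p_\gamma<q$ and for $p_\gamma>q$ separately, using the two displayed inequalities of \eqref{nu_ab}. This also controls the mixed terms with two foreign indices, since interpolation of points lying above a line stays above it. Next, the bounds \eqref{nu_ab1_lem2} (resp.\ \eqref{nu_ab2_lem2}) bound the slope of this line between $-(1/2-1/q)^{-1}\log a$ and $-\log N$ (resp.\ between $0$ and $-(1/2-1/q)^{-1}\log a$). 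This slope bound is exactly what is needed to compare the single-ball terms and the terms of the other mixed type (the $\tilde\lambda$-terms in item 4, the $\lambda$-terms in item 5) with the claimed quantity. That comparison is a one-line check once everything is written as affine functions of $1/p$, and I would do it last.
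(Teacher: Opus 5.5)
Your plan is correct. The paper gives no proof of this statement (it is simply imported from \cite{vas_dif_der}), and your route is the natural one: deduce it from the minimum formula \eqref{dnm0q2} of Theorem~\ref{fin_dim_inters}, evaluate each term via Theorems~\ref{glus} and \ref{p_s} in terms of $a=n^{1/2}N^{-1/q}\in[1,\,N^{1/2-1/q}]$, and compare by the supporting-line argument in the $(1/p,\log\nu)$-plane. All five items go through.

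The only slip is a sign. Let $s$ be the slope of $\log\nu$ as a function of $1/p$ along the line through the points indexed by $\alpha_*$ and $\beta_*$. Hypothesis \eqref{nu_ab1_lem2} gives $\frac{\log a}{1/2-1/q}\le s\le\log N$, and \eqref{nu_ab2_lem2} gives $0\le s\le\frac{\log a}{1/2-1/q}$; so $s$ is nonnegative, not negative as you wrote. These are exactly the bounds needed for the final single-ball and cross-type comparisons in items 4 and 5.
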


\begin{Lem}
\label{h_kriterii}
Suppose that \eqref{non_emb}, \eqref{ijk_ne_i_k}, \eqref{p_ne_2q}, \eqref{frac_aj} hold. Let the function $h$ be defined by \eqref{h0_def}‐‐\eqref{h_def_012}. Then the following assertions hold.
\begin{enumerate}
\item Let $j_*\in J$, $t\in [1, \, q/2]$. Then $h(t) = \varphi_{j_*}(t)$ if and only if 
\begin{align}
\label{alp_jj}
(\alpha_{j_*} ‐ \alpha_j)t \ge \frac 12 \cdot \frac{1/p_{j_*} ‐ 1/p_j}{1/2 ‐ 1/q}(t‐1), \quad j=1, \, \dots, \, s.
\end{align}
\item Let $t\in [1, \, q/2]$, $I\ne \varnothing$. Then $h(t) = h_0(t)$ if and only if
\begin{align}
\label{alp_jj0}
(\alpha_{i_0} ‐ \alpha_{j_0}) t\ge \frac 12 \cdot \frac{1/p_{i_0} ‐ 1/p_{j_0}}{1/2‐1/q}(t‐1).
\end{align}
\item Let $t\in [1, \, q/2]$, $K \ne \varnothing$. Then $h(t) = h_1(t)$ if and only if
\begin{align}
\label{alp_jj1}
(\alpha_{i_1} ‐ \alpha_{j_1}) t\le \frac 12 \cdot \frac{1/p_{i_1} ‐ 1/p_{j_1}}{1/2‐1/q}(t‐1).
\end{align}
\item Let $i\in I$. Then $h(t)>\alpha_i t$ for all $t\in [1, \, q/2]$.

\item Let $j\in K$. Then $h(t)>(\alpha_j‐1/p_j)t + 1/2$ for all $t\in [1, \, q/2]$.
\end{enumerate}
\end{Lem}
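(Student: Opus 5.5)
The plan is to read every function in the definition of $h$ as one and the same linear functional, evaluated at points of the plane. To each index $j$ attach the point $P_j=(1/p_j,\,\alpha_j)\in\R^2$. For $t\in[1,\,q/2]$ put
$$
c(t)=\frac{t-1}{2(1/2-1/q)}\ge 0,\qquad L_t(x,\,a)=ta-c(t)\Bigl(x-\frac1q\Bigr).
$$
Then $\varphi_j(t)=L_t(P_j)$. For $i\in I$, $j\in J\sqcup K$, the point $Q_{i,j}=(1-\lambda_{i,j})P_i+\lambda_{i,j}P_j$ lies on the line $x=1/q$, so $L_t(Q_{i,j})=t((1-\lambda_{i,j})\alpha_i+\lambda_{i,j}\alpha_j)$. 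Similarly $\tilde Q_{i,j}$ lies on $x=1/2$, and $L_t(\tilde Q_{i,j})=t(\tilde\alpha-1/2)+1/2$, where $\tilde\alpha$ is its second coordinate.

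Since $t>0$ multiplies $a$, the point $Q_{i_0,j_0}$ is the top point of $\Pi=\mathrm{conv}\{P_1,\dots,P_s\}$ on the line $x=1/q$, and $\tilde Q_{i_1,j_1}$ is its top point on $x=1/2$. Condition \eqref{frac_aj} makes the crossing edges unique, so the argmax in \eqref{i01j01} is well defined. Let $\Pi_0=\Pi\cap\{1/q\le x\le 1/2\}$. Its vertices with maximal $a$-coordinate over each $x$ are the $P_j$, $j\in J$, and these two top points. A linear functional whose $a$-coefficient is positive attains its maximum over a polygon at an upper vertex, so $h(t)=\max_{\Pi_0}L_t$. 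Everything then reduces to standard optimality criteria for a linear functional at a vertex.

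For assertion 1, note that \eqref{alp_jj} is exactly $L_t(P_{j_*})\ge L_t(P_j)$ for all $j$. If this holds, then $P_{j_*}$ maximizes $L_t$ over $\Pi\supset\Pi_0$, so $h(t)=\varphi_{j_*}(t)$. Conversely, suppose $P_{j_*}$ maximizes $L_t$ over $\Pi_0$ but $L_t(P_j)>L_t(P_{j_*})$ for some $j$. Then $L_t$ strictly increases along the segment $[P_{j_*},P_j]\subset\Pi$. Because $1/q<1/p_{j_*}<1/2$ by \eqref{p_ne_2q}, an initial piece of that segment stays in $\Pi_0$, which is a contradiction.

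For assertion 2, $L_t$ attains its maximum over $\Pi_0$ at the vertex $Q_{i_0,j_0}$ iff it does not increase along the two edges of $\Pi_0$ issuing from it. The vertical edge points downward, and $L_t$ decreases there since $t>0$. The other edge is directed along $P_{j_0}-P_{i_0}$, into the strip. Hence the criterion is $L_t(P_{j_0})\le L_t(P_{i_0})$, which rewrites as \eqref{alp_jj0}. Assertion 3 is symmetric at $\tilde Q_{i_1,j_1}$: now the edge into the strip points toward $P_{i_1}$, which reverses the inequality and gives \eqref{alp_jj1}.

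For assertion 4, \eqref{non_emb} and \eqref{p1ps} show that indices in $I$ precede those in $J\sqcup K$, so $\alpha_i<\alpha_j$ for $j\in J\sqcup K$. Pick any such $j$, which exists by \eqref{ijk_ne_i_k}. Then $\lambda_{i,j}\in(0,1)$, so $h(t)\ge h_0(t)\ge L_t(Q_{i,j})>t\alpha_i$.

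For assertion 5, take $j\in K$ and any $i\in I\sqcup J$. The function $a-x$ is linear along $[P_i,P_j]$. It equals $\alpha_j-1/p_j$ at $P_j$ and is strictly larger at $P_i$ by \eqref{non_emb}. At the interior point $\tilde Q_{i,j}$ it therefore exceeds $\alpha_j-1/p_j$, which gives $h(t)\ge h_1(t)\ge L_t(\tilde Q_{i,j})>t(\alpha_j-1/p_j)+1/2$.

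The main obstacle is the geometric identification $h=\max_{\Pi_0}L_t$. In particular one must check that the maxima defining $h_0$ and $h_1$ pick out exactly the upper boundary points of $\Pi$ at $x=1/q$ and $x=1/2$. One must also describe the edges of $\Pi_0$ at these vertices; this is where \eqref{frac_aj} and \eqref{p_ne_2q} enter.
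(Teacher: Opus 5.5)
Your argument is correct, and for the sufficiency directions of parts 1--3 it takes a genuinely different route from the paper. Parts 4, 5 and the necessity directions are essentially the paper's argument. There the author uses $\varphi_{j_*}(t)\ge h_0(t)\ge h_{j,j_*}(t)$, $h_0(t)\ge h_1(t)\ge\tilde h_{i_0,j_0}(t)$, etc. This is your ``move along the segment into the strip'' step written algebraically, since $h_{i,j}=(1-\lambda_{i,j})\varphi_i+\lambda_{i,j}\varphi_j$ and $\tilde h_{i,j}=(1-\tilde\lambda_{i,j})\varphi_i+\tilde\lambda_{i,j}\varphi_j$, with $\varphi_k$ given by \eqref{phi_j_def} for every $k$.

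For sufficiency the paper does not argue geometrically. It shows that $d_n\bigl(\cap_j 2^{-m_n(\alpha_j+1/q-1/p_j)}B_{p_j}^{2^{m_n}},\,l_q^{2^{m_n}}\bigr)$ has order $n^{-h(t')}$ by Theorem \ref{fin_dim_inters}. Then, assuming the criterion holds but $h>\varphi_{j_*}$ (resp.\ $h>h_0$, $h>h_1$) near $t$, it derives the contradictory order $n^{-\varphi_{j_*}(t')}$ (resp.\ $n^{-h_0(t')}$) from Theorem \ref{expl}. This needs an extra step showing that the criterion holds on a set with nonempty interior near $t$; that is where \eqref{frac_aj} enters.

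Your identification $h(t)=\max_{\Pi_0}L_t$ replaces all of this by the vertex-optimality criterion for a linear functional on a convex polygon.
\begin{itemize}
\item Sufficiency in part 1 becomes a one-liner: every term of $h$ is a convex combination of the values $\varphi_k(t)$.
\item In parts 2--3 the only input is that the line through $P_{i_0},P_{j_0}$ (resp.\ $P_{i_1},P_{j_1}$) supports $\Pi$ from above. This follows directly from the maximality in \eqref{i01j01}: a point $P_k$ strictly above that line would make $[P_k,P_{j_0}]$ or $[P_{i_0},P_k]$ cross $x=1/q$ strictly above $Q_{i_0,j_0}$.
\end{itemize}
So your route is self-contained and explains where the criteria come from. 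It does not even need uniqueness of the argmax for the sufficiency directions. The paper's route only has the advantage of reusing width estimates already at hand.

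When writing it up, fix three small points:
\begin{itemize}
\item The upper vertices of $\Pi_0$ are \emph{among} the $P_j$, $j\in J$, and the two top points; not every $P_j$ with $j\in J$ need be one.
\item $1/q<1/p_{j_*}<1/2$ is just the definition of $J$. The role of \eqref{p_ne_2q} is to keep vertices of $\Pi$ off the lines $x=1/q$ and $x=1/2$, so that boundary vertices of $\Pi_0$ are edge crossings.
\item Mention the degenerate case $s=2$, where $\Pi$ is a segment and the vertical edge at $Q_{i_0,j_0}$ is absent.
\end{itemize}
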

\begin{proof}
{\it Proof of necessity in parts 1--3.} In part 1, the inequality \eqref{alp_jj} follows from the relations $\varphi_{j_*}(t) \ge \varphi_j(t)$ for $j\in J$, $\varphi_{j_*}(t) \ge h_0(t) \ge ((1‐\lambda_{j,j_*})\alpha_j + \lambda_{j,j_*} \alpha_{j_*})t$ for $j\in I$, $\varphi_{j_*}(t) \ge h_1(t) \ge ((1‐\tilde\lambda_{j_*,j})\alpha_{j_*} + \tilde\lambda_{j_*,j} \alpha_j‐1/2)t + 1/2$ for $j\in K$ (see \eqref{i01j01}‐‐\eqref{h_def_012}).

In part 2, the inequality \eqref{alp_jj0} follows from the relations $h_0(t)\ge \varphi_{j_0}(t)$ in the case $j_0\in J$, $h_0(t) \ge h_1(t) \ge ((1‐\tilde\lambda_{i_0,j_0})\alpha_{i_0} + \tilde\lambda_{i_0,j_0} \alpha_{j_0}‐1/2)t + 1/2$ in the case $j_0\in K$.

In part 3, the inequality \eqref{alp_jj1} follows from the relations $h_1(t)\ge \varphi_{i_1}(t)$ in the case $i_1\in J$, $h_1(t) \ge h_0(t) \ge ((1‐\lambda_{i_1,j_1})\alpha_{i_1} + \lambda_{i_1,j_1} \alpha_{j_1})t$ in the case $i_1\in I$.

{\it Proof of part 4.} By \eqref{ijk_ne_i_k}, we have $J\sqcup K \ne \varnothing$. Let $j\in J\sqcup K$. Then $j>i$ (see \eqref{p1ps}, \eqref{ijk_def}), which implies $\alpha_j>\alpha_i$ (see \eqref{non_emb}). Hence
$$
h(t) \ge h_0(t) \stackrel{\eqref{i01j01}, \eqref{h0_def}}{\ge} ((1‐\lambda_{i,j})\alpha_i + \lambda_{i,j} \alpha_j)t > \alpha_it.
$$

{\it Proof of part 5.} By \eqref{ijk_ne_i_k}, we have $I\sqcup J \ne \varnothing$. Let $i\in I\sqcup J$. Then $j>i$ (see \eqref{p1ps}, \eqref{ijk_def}), which implies $\alpha_j‐1/p_j< \alpha_i ‐1/p_i$ (see \eqref{non_emb}). Therefore,
$$
h(t)\ge h_1(t) \stackrel{\eqref{i01j01}, \eqref{h1_def}}{\ge} ((1‐\tilde\lambda_{i,j})\alpha_i + \tilde\lambda_{i,j} \alpha_j‐1/2)t + 1/2 > (\alpha_j ‐1/p_j)t + 1/2.
$$

{\it Proof of sufficiency in parts 1--3.} Let $n\in \N$, $t'\in [1, \, q/2]$ be such that $m_n := t'\log n\in \N$, $2^{m_n}\ge 2n$. Consider the magnitude $d_n(\cap _{j=1}^s 2^{‐m_n(\alpha_j + 1/q ‐ 1/p_j)}B^{2^{m_n}}_{p_j}, \, l_q^{2^{m_n}})$. From \eqref{non_emb} it follows that 
$$
1\le \frac{2^{‐m_n(\alpha_j + 1/q ‐ 1/p_j)}}{2^{‐m_n(\alpha_i + 1/q ‐ 1/p_i)}} \le 2^{m_n(1/p_j‐1/p_i)}, \quad i\le j.
$$
From Theorems \ref{glus}, \ref{p_s}, \ref{fin_dim_inters} and \eqref{ijk_ne_i_k}, \eqref{p_ne_2q}, \eqref{lam_ij_def}‐‐\eqref{h_def_012} we get that
$$
d_n(\cap _{j=1}^s 2^{‐m_n(\alpha_j + 1/q ‐ 1/p_j)}B^{2^{m_n}}_{p_j}, \, l_q^{2^{m_n}}) \underset{q}{\asymp}
$$
$$
\asymp \min \Bigl\{ \min _{j\in J} 2^{‐m_n(\alpha_j + 1/q ‐ 1/p_j)} (n^{‐1/2}2^{m_n/q})^{\frac{1/p_j‐1/q}{1/2‐1/q}},
$$
$$
\min _{i\in I, \, j\in J\sqcup K} 2^{‐m_n((1‐\lambda_{i,j})\alpha _i + \lambda_{i,j}\alpha_j)}, \, \min _{i\in I\sqcup J, \, j\in K} 2^{‐m_n((1‐\tilde\lambda_{i,j})\alpha _i + \tilde\lambda_{i,j}\alpha_j‐1/2)}n^{‐1/2}\Bigr\}= n^{‐h(t')}.
$$
Thus,
\begin{align}
\label{dn_h} d_n(\cap _{j=1}^s 2^{‐m_n(\alpha_j + 1/q ‐ 1/p_j)}B^{2^{m_n}}_{p_j}, \, l_q^{2^{m_n}}) \underset{q}{\asymp} n^{‐h(t')}.
\end{align}

Let \eqref{alp_jj} hold for $j_*\in J$, but $h(t) > \varphi_{j_*}(t)$. Then there are a neighborhood $U$ of the point $t$ and the number $c>0$ such that 
\begin{align}
\label{hgjst}
h(t')> \varphi_{j_*}(t') + c, \quad t'\in U.
\end{align}

We show that the set $T$ of numbers $t'\in [1, \, q/2]$ such that 
\begin{align}
\label{ajt_pr}
(\alpha_{j_*} ‐ \alpha_j)t' \ge \frac 12 \cdot \frac{1/p_{j_*} ‐ 1/p_j}{1/2 ‐ 1/q}(t'‐1), \quad 1\le j\le s,
\end{align}
has the non‐empty interior. Assume on the contrary that the interior of $T$ is empty; then $T=\{t\}$.

If $t\in (1, \, q/2)$, then there are indices $i, \, k\notin\{j_*\}$, $i\ne k$, such that
$$
(\alpha_{j_*} ‐ \alpha_i)t = \frac 12 \cdot \frac{1/p_{j_*} ‐ 1/p_i}{1/2 ‐ 1/q}(t‐1), \quad (\alpha_{j_*} ‐ \alpha_k)t = \frac 12 \cdot \frac{1/p_{j_*} ‐ 1/p_k}{1/2 ‐ 1/q}(t‐1).
$$
This contradicts condition \eqref{frac_aj}.

Let $t=1$. Then \eqref{alp_jj} has the form $\alpha_{j_*} ‐\alpha _j\ge 0$ for any $j\in \{1, \, \dots, \, s\}$; hence, by \eqref{non_emb}, we have $j_*=s$ and $\alpha_{j_*} ‐\alpha _j> 0$ for all $j\in \{1, \, \dots, \, s\} \backslash \{j_*\}$. Therefore, \eqref{ajt_pr} holds in a right semi‐neighborhood of $t=1$.

Let $t=q/2$. Then \eqref{alp_jj} has the form $\alpha_{j_*} ‐\alpha _j\ge 1/p_{j_*}‐1/p_j$ for all $j\in \{1, \, \dots, \, s\}$; hence, by \eqref{non_emb}, we have $j_*=1$ and $\alpha_{j_*} ‐\alpha _j> 1/p_{j_*}‐1/p_j$ for all $j\in \{1, \, \dots, \, s\} \backslash \{j_*\}$. Therefore, \eqref{ajt_pr} holds in a left semi‐neighborhood of $t=q/2$.

If $n\in \N$ is sufficiently large, then there is $t'\in U$ such that $m_n=t'\log n\in \N$, $2n \le 2^{m_n} \le n^{q/2}$ and \eqref{ajt_pr} holds. This together with \eqref{dn_h}, \eqref{hgjst} implies that
\begin{align}
\label{dn_cap_c}
d_n(\cap _{j=1}^s 2^{‐m_n(\alpha_j + 1/q ‐ 1/p_j)}B^{2^{m_n}}_{p_j}, \, l_q^{2^{m_n}}) \underset{q}{\lesssim} n^{‐\varphi_{j_*}(t')‐c}.
\end{align}

Now we apply Theorem \ref{expl}. Inequality \eqref{ajt_pr} is equivalent to condition \eqref{nuan12n1q} for $\alpha_*=j_*$, $\beta=j$, $N = 2^{m_n}$; hence 
$$
d_n(\cap _{j=1}^s 2^{‐m_n(\alpha_j + 1/q ‐ 1/p_j)}B^{2^{m_n}}_{p_j}, \, l_q^{2^{m_n}}) \stackrel{\eqref{dn_m0_j}}{\underset{q}{\asymp}}$$$$ \asymp 2^{‐m_n(\alpha_{j_*}+1/q‐1/p_{j_*})} (n^{‐1/2} 2^{m_n/q})^{\frac{1/p_{j_*}‐1/q}{1/2‐1/q}} = n^{‐\varphi_{j_*}(t')}.
$$
We get $n^{‐\varphi_{j_*}(t')} \stackrel{\eqref{dn_cap_c}}{\underset{q}{\lesssim}} n^{‐\varphi_{j_*}(t')‐c}$ and arrive to a contradiction.

Suppose that $I\ne \varnothing$ and \eqref{alp_jj0} holds, but $h(t) > h_0(t)$. By \eqref{p1ps}, \eqref{ijk_def}, we have $i_0<j_0$; hence the inequality
\begin{align}
\label{ai0aj0tpr}
(\alpha_{i_0} ‐ \alpha_{j_0}) t'\ge \frac 12 \cdot \frac{1/p_{i_0} ‐ 1/p_{j_0}}{1/2‐1/q}(t'‐1)
\end{align}
holds in a left semi‐neighborhood of $q/2$ (see \eqref{non_emb}).

The numbers $t'$ and $m_n$ are defined as in the proof of part 1, but \eqref{ajt_pr} is replaced by \eqref{ai0aj0tpr}, and the neighborhood $U$ is such that $h(t')> h_0(t')+c$ for $t'\in U$. Instead of \eqref{dn_cap_c} we get
\begin{align}
\label{dn_cap_nh0c}
d_n(\cap _{j=1}^s 2^{‐m_n(\alpha_j + 1/q ‐ 1/p_j)}B^{2^{m_n}}_{p_j}, \, l_q^{2^{m_n}}) \underset{q}{\lesssim} n^{‐h_0(t')‐c}.
\end{align}

We check that the conditions of part 4 of Theorem \ref{expl} hold.

The first inequality \eqref{nu_ab1_lem2} with $\alpha_*=i_0$, $\beta_*=j_0$, $N=2^{m_n}$ follows from \eqref{ai0aj0tpr}, and the second one, from \eqref{non_emb}. Inequalities \eqref{nu_ab} follow from \eqref{i01j01}.

Hence
$$
d_n(\cap _{j=1}^s 2^{‐m_n(\alpha_j + 1/q ‐ 1/p_j)}B^{2^{m_n}}_{p_j}, \, l_q^{2^{m_n}}) \underset{q}{\asymp}$$$$ \asymp 2^{‐m_n((1‐\lambda_{i_0,j_0})\alpha_{i_0}+ \lambda_{i_0,j_0} \alpha_{j_0})}= n^{‐h_0(t')},
$$
which contradicts \eqref{dn_cap_nh0c}.

Part 3 of the lemma is similar to the previous one; the inequality
$$
(\alpha_{i_1} ‐ \alpha_{j_1}) t'\le \frac 12 \cdot \frac{1/p_{i_1} ‐ 1/p_{j_1}}{1/2‐1/q}(t'‐1)
$$
holds in the right semi‐neighborhood of $1$.
\end{proof}

\begin{Lem}
\label{ij01} Let conditions \eqref{ijk_ne_i_k}, \eqref{p_ne_2q}, \eqref{frac_aj} hold. Then in \eqref{i01j01} the pairs $(i_0, \, j_0)$ and $(i_1, \, j_1)$ are unique (if $I\ne \varnothing$ and $K\ne \varnothing$, respectively).
\end{Lem}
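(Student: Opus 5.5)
The plan is to turn the statement into a planar geometric fact about segments and then use the general position condition \eqref{frac_aj}. For each $k\in\{1,\dots,s\}$ consider the point $P_k=(1/p_k,\,\alpha_k)\in\R^2$. By \eqref{lam_ij_def}, for $i\in I$, $j\in J\sqcup K$ the quantity $(1-\lambda_{i,j})\alpha_i+\lambda_{i,j}\alpha_j$ is the ordinate of the segment $[P_i,P_j]$ at the abscissa $x=1/q$. Here $1/p_i<1/q<1/p_j$, so every such segment crosses the vertical line $x=1/q$. The same holds for $\tilde\lambda_{i,j}$ with the line $x=1/2$, where $i\in I\sqcup J$ and $j\in K$. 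I treat $(i_0,j_0)$; the argument for $(i_1,j_1)$ is identical with $1/q$ replaced by $1/2$. Recall from \eqref{p1ps} and \eqref{ijk_def} that $i<j$ whenever $i\in I$ and $j\in J\sqcup K$, so the slope of $[P_i,P_j]$ is exactly the quantity $\frac{\alpha_j-\alpha_i}{1/p_j-1/p_i}$ appearing in \eqref{frac_aj}, written for the ordered pair $(j,i)$ with $j>i$.

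Suppose two distinct pairs $(i,j)\ne(i',j')$ both attain the maximal value $v$. Then the point $Q=(1/q,v)$ lies on both segments. Let $s_1$ and $s_2$ be the slopes of $[P_i,P_j]$ and $[P_{i'},P_{j'}]$, so that $\alpha_k=v+s_1(1/p_k-1/q)$ for $k\in\{i,j\}$ and $\alpha_k=v+s_2(1/p_k-1/q)$ for $k\in\{i',j'\}$.

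\emph{Case of a shared endpoint.} If, say, $i=i'$ and $j\ne j'$, then $P_i$, $P_j$, $P_{j'}$ and $Q$ all lie on one line. Hence the slopes for the pairs $(j,i)$ and $(j',i)$ coincide, which contradicts \eqref{frac_aj}. The case $j=j'$, $i\ne i'$ is the same.

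\emph{Case of four distinct indices.} If $s_1=s_2$, then both segments lie on the same line through $Q$. Again two distinct pairs have equal slopes, which contradicts \eqref{frac_aj}. If $s_1\ne s_2$, I compare with the "swapped" pairs $(i,j')$ and $(i',j)$, both of which are admissible in \eqref{i01j01}. The ordinate of $[P_i,P_{j'}]$ at $x=1/q$ equals $w\alpha_i+(1-w)\alpha_{j'}$ for some $w\in(0,1)$ with
\[
w(1/p_i-1/q)+(1-w)(1/p_{j'}-1/q)=0 .
\]
Substituting the expressions for $\alpha_i$ and $\alpha_{j'}$ gives the value $v+c(s_2-s_1)$, where $c=(1-w)(1/p_{j'}-1/q)>0$. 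Symmetrically, the pair $(i',j)$ gives the value $v+c'(s_1-s_2)$ with $c'>0$. One of these two values exceeds $v$, which contradicts the maximality of $v$. Hence the maximizing pair is unique.

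The only delicate point is checking that the swapped pairs stay in the admissible index sets and that every slope coincidence really is an instance of \eqref{frac_aj} with distinct ordered pairs. Both follow from the ordering $i<j$ supplied by \eqref{p1ps} and \eqref{ijk_def}. The uniqueness of $(i_1,j_1)$ follows by the same argument, using the line $x=1/2$ and the index sets $I\sqcup J$ and $K$; again $i<j$ by \eqref{p1ps}.
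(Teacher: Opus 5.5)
Your proof is correct and takes essentially the paper's route: compare the two maximizing pairs with the swapped pairs, which forces an equality of slopes that \eqref{frac_aj} forbids. The paper avoids your case split by writing the four maximality inequalities as a single cyclic chain $s(i_0,j_0)\ge s(i_0,j)\ge s(i,j)\ge s(i,j_0)\ge s(i_0,j_0)$, where $s(a,b)=\frac{\alpha_b-\alpha_a}{1/p_b-1/p_a}$; this chain also covers the shared-endpoint case automatically.
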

\begin{proof}
We prove that $(i_0, \, j_0)$ is unique (for $(i_1, \, j_1)$, the proof is similar). Let $i\in I$, $j\in J\sqcup K$, $(i, \, j)\ne (i_0, \, j_0)$, $(1‐\lambda_{i_0,j_0})\alpha _{i_0} + \lambda_{i_0,j_0} \alpha _{j_0} = (1‐\lambda_{i,j})\alpha _i + \lambda_{i,j} \alpha _j$, $(i_0, \, j_0) \ne (i, \, j)$. Then
$$
(1‐\lambda_{i_0,j_0})\alpha _{i_0} + \lambda_{i_0,j_0} \alpha _{j_0}\ge (1‐\lambda_{i_0,j})\alpha _{i_0} + \lambda_{i_0,j} \alpha _j,
$$
$$
(1‐\lambda_{i_0,j_0})\alpha _{i_0} + \lambda_{i_0,j_0} \alpha _{j_0} \ge
(1‐\lambda_{i,j_0})\alpha _i + \lambda_{i,j_0} \alpha _{j_0}, 
$$
$$
(1‐\lambda_{i,j})\alpha _i + \lambda_{i,j} \alpha _j \ge (1‐\lambda_{i_0,j})\alpha _{i_0} + \lambda_{i_0,j} \alpha _j,
$$
$$
(1‐\lambda_{i,j})\alpha _i + \lambda_{i,j} \alpha _j \ge (1‐\lambda_{i,j_0})\alpha _i + \lambda_{i,j_0} \alpha _{j_0}.
$$
This together with \eqref{lam_ij_def} yields
$$
\frac{\alpha_{j_0} ‐ \alpha_{i_0}}{1/p_{j_0}‐1/p_{i_0}} \ge \frac{\alpha_j ‐ \alpha_{i_0}}{1/p_j‐1/p_{i_0}}, \quad \frac{\alpha_{j_0} ‐ \alpha_{i_0}}{1/p_{j_0}‐1/p_{i_0}} \le \frac{\alpha_{j_0} ‐ \alpha_i}{1/p_{j_0}‐1/p_i},
$$
$$
\frac{\alpha_j ‐ \alpha_i}{1/p_j‐1/p_i} \le \frac{\alpha_j ‐ \alpha_{i_0}}{1/p_j‐1/p_{i_0}}, \quad \frac{\alpha_j ‐ \alpha_i}{1/p_j‐1/p_i} \ge \frac{\alpha_{j_0} ‐ \alpha_i}{1/p_{j_0}‐1/p_i}.
$$
Hence $\frac{\alpha_j ‐ \alpha_i}{1/p_j‐1/p_i} = \frac{\alpha_{j_0} ‐ \alpha_{i_0}}{1/p_{j_0}‐1/p_{i_0}}$. This contradicts \eqref{frac_aj}.
\end{proof}

\renewcommand{\proofname}{\bf Proof of Proposition \ref{h_prop}}

\begin{proof}
First we prove assertion 2. Let $K= \varnothing$. Then $J\ne \varnothing$ (see \eqref{ijk_ne_i_k}). This together with \eqref{p1ps}, \eqref{ijk_def} yields that $s\in J$. The relation \eqref{alp_jj} for $j_*=s$ holds in a neighborhood of $t=1$ (see \eqref{non_emb}). By Lemma \ref{h_kriterii}, we get that $h(t) =\varphi_s(t)$ in a neighborhood of $1$.

Assertion 4 can be proved similarly: we get that $1\in J$ and in a neighborhood of $t=q/2$ relation \eqref{alp_jj} holds for $j_*=1$.

Now we prove assertions 3 and 5. Let $K\ne \varnothing$. We check that \eqref{alp_jj1} holds in a neighborhood of $t=1$. Indeed, since $p_{i_1}> 2 > p_{j_1}$ (see \eqref{ijk_def}, \eqref{i01j01}), we have $i_1\stackrel{\eqref{p1ps}}{<}j_1$ and $\alpha_{i_1} \stackrel{\eqref{non_emb}}{<} \alpha_{j_1}$. Hence \eqref{alp_jj1} holds for $t=1$, and, in addition, the inequality is strict.

By Lemma \ref{h_kriterii}, $h(t)=h_1(t)$ in a neighborhood of $t=1$.

It can be similarly proved that if $I\ne \varnothing$, then \eqref{alp_jj0} holds in a neighborhood of $t=q/2$; hence $h(t)=h_0(t)$ in a neighborhood of $t=q/2$.

We set $t_* = \max\{t\in [1, \, q/2]:\; \eqref{alp_jj1}\text{ holds at }t\}$ (in the case $K\ne \varnothing$), $t_{**} = \min\{t\in [1, \, q/2]:\;\eqref{alp_jj0}\text{ holds at }t\}$ (in the case $I\ne \varnothing$). From \eqref{non_emb} it follows that $t_*\in (1, \, q/2)$, $t_{**} \in (1, \, q/2)$.

Let $K\ne \varnothing$. By the definition of $t_*$ and the inequality $t_*<\frac q2$, we have 
\begin{align}
\label{ai1j112}
(\alpha_{i_1} ‐ \alpha _{j_1})t_* = \frac 12 \cdot \frac{1/p_{i_1} ‐ 1/p_{j_1}}{1/2‐1/q} (t_*‐1), 
\end{align}
which implies
\begin{align}
\label{h1tphii1}
h_1(t_*) = \varphi_{i_1}(t_*).
\end{align}

Let $i_1\in J$. We prove that the equality $h(t)= \varphi _{i_1}(t)$ holds in a right semi‐neighborhood of $t_*$.

We have two cases:
\begin{enumerate}
\item In a right semi‐neighborhood of $t_*$ the equality $h(t)= \varphi _j(t)$ holds for some $j\in J$. Then $\varphi_{i_1}(t_*)\stackrel{\eqref{h1tphii1}}{=}h_1(t_*) = \varphi_j(t_*)$. Hence
$$
(\alpha_{i_1} ‐ \alpha _j)t_* = \frac 12 \cdot \frac{1/p_{i_1} ‐ 1/p_j}{1/2‐1/q} (t_*‐1);
$$
this together with \eqref{ai1j112} and the condition $j_1\in K$ contradicts \eqref{frac_aj} for $j\ne i_1$.

\item In a right semi‐neighborhood of $t_*$ the equality  $h(t)= h_0(t)$ holds. Then $I\ne \varnothing$, $t_*=t_{**}$. This together with the definition of $t_{**}$ yields 
\begin{align}
\label{t_stst_ai0}
(\alpha_{i_0} ‐ \alpha _{j_0})t_* = \frac 12 \cdot \frac{1/p_{i_0} ‐ 1/p_{j_0}}{1/2‐1/q} (t_*‐1).
\end{align}
By \eqref{frac_aj}, the equality \eqref{ai1j112} together with \eqref{t_stst_ai0} is possible only if $(i_0, \, j_0) = (i_1, \, j_1)$. But $i_0\in I$, $i_1\in J$. We arrive to a contradiction.
\end{enumerate}

It can be similarly proved that if $I\ne \varnothing$, $j_0\in J$, then $h(t) = \varphi_{j_0}(t)$ in a left neighborhood of $t_{**}$.

Now let $K\ne \varnothing$, $i_1\in I$. We claim that $L=2$, $(i_0, \, j_0) = (i_1, \, j_1)$ and $h|_{[t_1, \, t_2]} = h_0|_{[t_1, \, t_2]}$. Indeed, from \eqref{ai1j112} it follows that $h(t_*) = h_{i_1,j_1}(t_*)$, where 
$$
h_{i_1,j_1}(t) = ((1‐\lambda_{i_1,j_1})\alpha_{i_1} + \lambda _{i_1,j_1}\alpha _{j_1})t.
$$
By Lemma \ref{ij01}, if $(i_0, \, j_0) \ne (i_1, \, j_1)$, then $h_0(t)> h_{i_1,j_1}(t)$, $t\in [1, \, q/2]$. Hence $h_0(t_*)>h_{i_1,j_1}(t_*) = h(t_*)$, which contradicts \eqref{h_def_012}.

Thus, $(i_0, \, j_0) = (i_1, \, j_1)$. From the definition of $t_*$ it follows that for $t\ge t_*$ we have $(\alpha_{i_1} ‐ \alpha _{j_1})t \ge \frac 12 \cdot \frac{1/p_{i_1} ‐ 1/p_{j_1}}{1/2‐1/q} (t‐1)$. By assertion 2 of Lemma \ref{h_kriterii}, we get $h(t) = h_0(t)$ for $t\in [t_*, \, q/2]$.

It can be similarly proved that if $I\ne \varnothing$, $j_0\in K$, then $L=2$, $(i_0, \, j_0) = (i_1, \, j_1)$ and $h|_{[t_0, \, t_1]} = h_1|_{[t_0, \, t_1]}$.

This completes the proof of assertions 3 and 5 of the present proposition.

Assertion 1 follows from assertions 3 and 5.

Assertion 6 follows from the inequalities $\varphi_i(1)\stackrel{\eqref{non_emb}}{>} \varphi_j(1)$, $\varphi_i(q/2)\stackrel{\eqref{non_emb}}{<} \varphi_j(q/2)$ for $i>j$, $i$, $j\in J$. 
\end{proof}

\renewcommand{\proofname}{\bf Proof}

Now we prove one more property of the function $h$; it will be used in the proof of the lower estimate for the widths.

Denote for $i\in I$, $j\in J\sqcup K$ and $i\in I\sqcup J$, $j\in K$, respectively,
\begin{align}
\label{hij_tilhij_def}
h_{i,j}(t) = ((1‐\lambda_{i,j})\alpha_i + \lambda _{i,j} \alpha_j)t, \quad \tilde h_{i,j}(t) = ((1‐\tilde\lambda_{i,j})\alpha_i + \tilde\lambda _{i,j} \alpha_j‐1/2)t + 1/2.
\end{align}

\begin{Sta}
\label{strict_ineq} Let the conditions \eqref{non_emb}, \eqref{ijk_ne_i_k}, \eqref{p_ne_2q}, \eqref{frac_aj} hold, and let $t_0<t_1<\dots<t_L$ be the partition of $[1, \, q/2]$ from Proposition {\rm \ref{h_prop}}, $0\le l\le L$. Then the following assertions hold.
\begin{enumerate}
\item If $i\in I$, $j\in J\sqcup K$, $(i, \, j)\ne (i_0, \, j_0)$, then $h_{i,j}(t_l)< h(t_l)$.

\item If $i\in I\sqcup J$, $j\in K$, $(i, \, j) \ne (i_1, \, j_1)$, then $\tilde h_{i,j}(t_l) < h(t_l)$.

\item If $i\in I$, then $\alpha_i t_l < h(t_l)$.

\item If $j\in K$, then $(\alpha_j‐1/p_j)t_l +1/2 < h(t_l)$.

\item If $$i\in \begin{cases}J \backslash \{j(l), \, j(l+1)\}, & 1\le l \le L‐1, \\ J \backslash \{s\}, & l=0,\; K=\varnothing, \\ J \backslash \{1\}, & l=L,\; I=\varnothing, \\ J, & l=0, \; K\ne \varnothing \text{ or } l = L, \; I\ne \varnothing,\end{cases}$$ then $\varphi_i(t_l) < h(t_l)$.
\end{enumerate}
\end{Sta}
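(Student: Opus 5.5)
We reduce every assertion to the fact that all the candidate functions are affine in $t$ and satisfy $\le h$ on $[1,\,q/2]$, and then use the genericity condition \eqref{frac_aj} to rule out equality at the corner points. By \eqref{i01j01} and \eqref{h0_def}--\eqref{h_def_012}, we have $h_{i,j}\le h_0\le h$ and $\tilde h_{i,j}\le h_1\le h$. Also $\varphi_i\le h_2\le h$ for $i\in J$. Parts 3 and 4 of Proposition \ref{strict_ineq} follow immediately from parts 4 and 5 of Lemma \ref{h_kriterii}, which already give strict inequalities for all $t$. So it remains to show that the remaining inequalities are strict at each $t_l$.

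\textbf{Parts 1 and 2.} Let $i\in I$, $j\in J\sqcup K$ with $(i,j)\ne(i_0,j_0)$. Lemma \ref{ij01} shows that the slope of $h_{i,j}$ is strictly less than that of $h_{i_0,j_0}=h_0$. Both functions are linear in $t$ with zero value at $t=0$, so $h_{i,j}(t)<h_0(t)\le h(t)$ for every $t>0$, and in particular at every $t_l$. The same argument applies to $\tilde h_{i,j}$. Here both functions equal $\tfrac12$ at $t=0$, and the strict inequality of slopes from Lemma \ref{ij01} gives $\tilde h_{i,j}(t)<h_1(t)$ for $t>0$.

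\textbf{Part 5.} This is the main step. Suppose $i\in J$ and $\varphi_i(t_l)=h(t_l)$. By part 1 of Lemma \ref{h_kriterii}, inequality \eqref{alp_jj} then holds at $t_l$ with $j_*=i$. We split into cases according to $l$.

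\emph{Case $1\le l\le L-1$.} Since $t_l$ is a corner point, at least two distinct affine pieces meet there: the left piece $g_-$ and the right piece $g_+$. By Proposition \ref{h_prop}, each of $g_\pm$ is either some $\varphi_{j(\cdot)}$, or $h_1$ (on the left end), or $h_0$ (on the right end). If $i\notin\{j(l),j(l+1)\}$, then $\varphi_i$ is distinct from both $g_-$ and $g_+$, and all three take the value $h(t_l)$ at $t_l$.

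We now translate each equality into a relation of the form
$$
(\alpha_a-\alpha_b)\,t_l=\tfrac12\cdot\frac{1/p_a-1/p_b}{1/2-1/q}\,(t_l-1).
$$
Equality $\varphi_i(t_l)=\varphi_j(t_l)$ gives this relation with the pair $(i,j)$. Equality $\varphi_i(t_l)=h_0(t_l)$ gives it with the pair $(i,j_0)$, or, when $j_0\in K$, as in the proof of Lemma \ref{h_kriterii}; equality with $h_1$ is handled similarly. Since $t_l\in(1,\,q/2)$, each relation says that the quotient $\frac{\alpha_a-\alpha_b}{1/p_a-1/p_b}$ equals the fixed number $\frac{1}{2(1/2-1/q)}\cdot\frac{t_l-1}{t_l}$.

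So we obtain two distinct pairs with equal quotients. One pair comes from $\varphi_i=g_-$, the other from $\varphi_i=g_+$, and they are distinct because $g_-\ne g_+$. This contradicts \eqref{frac_aj}. Care is needed to check that the pairs are really different in the sub-cases involving $h_0$ or $h_1$, and we handle this exactly as in the proof of Proposition \ref{h_prop}.

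\emph{Endpoint cases $l=0$ and $l=L$.} At $t=1$, by \eqref{non_emb}, $\varphi_i(1)=\alpha_i<\alpha_s$, which is strict for $i\ne s$. If $K\ne\varnothing$, then $h(1)=h_1(1)$ is at least $\tilde h_{i,j}(1)=(1-\tilde\lambda_{i,j})\alpha_i+\tilde\lambda_{i,j}\alpha_j$ for any $j\in K$. This is $>\alpha_i$, because $j>i$ and hence $\alpha_j>\alpha_i$.

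At $t=q/2$ we use instead $\varphi_i(q/2)=\tfrac q2(\alpha_i+1/q-1/p_i)$. The second chain in \eqref{non_emb} gives the strict inequality for $i\ne1$. If $I\ne\varnothing$, we compare with $h_{i',i}(q/2)$ for some $i'\in I$, using $\alpha_{i'}-1/p_{i'}>\alpha_i-1/p_i$.

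The expected main obstacle is the bookkeeping in the interior-corner case, namely verifying that the two coincidences produce distinct pairs whenever one of the pieces is $h_0$ or $h_1$.
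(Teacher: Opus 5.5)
\textbf{Verdict: genuine but easily repaired gap at interior corners involving $h_0$ or $h_1$.}

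\textbf{What is correct.} Parts 1--4 are right. Parts 3 and 4 do follow at once from parts 4 and 5 of Lemma~\ref{h_kriterii}. The endpoint cases of part 5 are also right. For $K\ne\varnothing$ and $I\ne\varnothing$ you use the direct comparisons $h(1)\ge\tilde h_{i,j}(1)>\alpha_i=\varphi_i(1)$ and $h(q/2)\ge h_{i',i}(q/2)>\varphi_i(q/2)$. These are a valid, slightly more direct substitute for the paper's argument via the values at $t=0$.

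\textbf{The gap.} It sits in the interior corners where one of the pieces is $h_0$ or $h_1$, which is exactly the step you deferred.

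You claim that $\varphi_i(t_l)=h_0(t_l)$ gives the relation for the pair $(i,j_0)$. This is not justified: by itself it only compares $\varphi_i$ with a convex combination of $\varphi_{i_0}$ and $\varphi_{j_0}$. If you add the fact $h_0(t_l)=\varphi_{j_0}(t_l)$ to make it true, the argument breaks elsewhere. In the sub-case $g_-=\varphi_{j_0}$, $g_+=h_0$, both equalities $\varphi_i=g_-$ and $\varphi_i=g_+$ then produce the same pair $(i,j_0)$. So ``distinct because $g_-\ne g_+$'' fails. The same happens for $g_-=h_1$, $g_+=\varphi_{i_1}$. When $L=2$ and $g_-=h_1$, $g_+=h_0$, both pieces come from the same pair $(i_0,j_0)=(i_1,j_1)$. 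So no pairing of the form ``one pair from $g_-$, one from $g_+$'' can work there.

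\textbf{The missing ingredient.} Apply \eqref{phi_j_def} to every index $j$; it is affine in $(\alpha_j,1/p_j)$. This gives
\[
h_0=(1-\lambda_{i_0,j_0})\varphi_{i_0}+\lambda_{i_0,j_0}\varphi_{j_0},\qquad h_1=(1-\tilde\lambda_{i_1,j_1})\varphi_{i_1}+\tilde\lambda_{i_1,j_1}\varphi_{j_1}.
\]
Use this at each kind of corner:
\begin{itemize}
\item If $h_0$ meets $\varphi_{j_0}$ at $t_l$, then $h_0-\varphi_{j_0}=(1-\lambda_{i_0,j_0})(\varphi_{i_0}-\varphi_{j_0})$ forces $\varphi_{i_0}(t_l)=\varphi_{j_0}(t_l)$.
\item If $h_1$ meets $\varphi_{i_1}$, then analogously $\varphi_{i_1}(t_l)=\varphi_{j_1}(t_l)$.
\item If $h_1$ meets $h_0$ with $(i_0,j_0)=(i_1,j_1)$, then $h_1-h_0=(\tilde\lambda_{i_0,j_0}-\lambda_{i_0,j_0})(\varphi_{j_0}-\varphi_{i_0})$. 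Since $\lambda_{i_0,j_0}\ne\tilde\lambda_{i_0,j_0}$, again $\varphi_{i_0}(t_l)=\varphi_{j_0}(t_l)$.
\end{itemize}
In each case the genuinely new pair is $(i_0,j_0)$ or $(i_1,j_1)$. Together with $\varphi_i(t_l)=h(t_l)$, you get three distinct indices whose $\varphi$'s coincide at $t_l\in(1,q/2)$. Distinctness holds because $i\in J$ while $i_0\in I$ and $j_1\in K$, and $i\ne j_0$ (resp.\ $i\ne i_1$) by Remark~\ref{i1j2_j0jl1}. This yields two distinct pairs with the same quotient, contradicting \eqref{frac_aj}.

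This is how the paper argues: every interior case is reduced to a triple coincidence $\varphi_i(t_l)=\varphi_a(t_l)=\varphi_b(t_l)$ with $i,a,b$ distinct.
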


\begin{proof}
The first two assertions follow from the inequalities $$h(t_l) \ge h_{i_0,j_0}(t_l) \stackrel{\eqref{i01j01}, \eqref{h0_def}}{>} h_{i,j}(t_l), \quad h(t_l) \ge \tilde h_{i_1,j_1}(t_l) \stackrel{\eqref{i01j01}, \eqref{h1_def}}{>} \tilde h_{i,j}(t_l);$$ the strict inequalities hold by Lemma \ref{ij01}.

Assertion 3 follows from the inequalities $$h(t_l)\ge h_0(t_l) \stackrel{\eqref{i01j01}, \eqref{h0_def}}{\ge} h_{i,j_0}(t_l) \stackrel{\eqref{non_emb},\eqref{p1ps}, \eqref{ijk_def}}{>} \alpha_i t_l,$$ and assertion 4, from the inequalities $$h(t_l) \ge h_1(t_l)\stackrel{\eqref{i01j01}, \eqref{h1_def}}{\ge} \tilde h_{i_1,j}(t_l) \stackrel{\eqref{non_emb},\eqref{p1ps}, \eqref{ijk_def}}{>} (\alpha_j ‐ 1/p_j)t_l + 1/2.$$

Now we prove assertion 5. The following cases are possible.
\begin{enumerate}
\item Let $1\le l \le L‐1$, $h|_{[t_{l‐1}, \, t_l]}=\varphi_{j(l)}|_{[t_{l‐1}, \, t_l]}$, $h|_{[t_l, \, t_{l+1}]}=\varphi_{j(l+1)}|_{[t_l, \, t_{l+1}]}$, $j(l)$, $j(l+1)\in J$. Then $h(t_l) = \varphi_{j(l)}(t_l) = \varphi_{j(l+1)}(t_l)$. If $i\in J \backslash \{j(l), \, j(l+1)\}$ and $\varphi_i(t_l)\ge h(t_l)$, then, by \eqref{h2_def}, \eqref{h_def_012}, we get $\varphi_i(t_l) = \varphi_{j(l)}(t_l) = \varphi_{j(l+1)}(t_l)$, which contradicts \eqref{frac_aj}.

\item Let $l=1$, $h|_{[t_0, \, t_1]} = h_1|_{[t_0, \, t_1]}$, $h|_{[t_1, \, t_2]} = \varphi_{i_1}|_{[t_1, \, t_2]}$, $i_1=j(2)\in J$ (see assertion 3 of Proposition \ref{h_prop} and Remark \ref{i1j2_j0jl1}). If $i\in J\backslash \{j(2)\}$ (the index $j(1)$ is undefined), we have $i\ne i_1$. It follows from the equality $h_1(t_l)= \varphi_{i_1}(t_l)$ that $\varphi_{i_1}(t_l) = \varphi_{j_1}(t_l)$. If $h(t_l)\le \varphi_i(t_l)$, then $\varphi_i(t_l) = \varphi_{i_1}(t_l) = \varphi_{j_1}(t_l)$. Since the indices $i$, $i_1$, $j_1$ are different, we arrive to a contradiction with \eqref{frac_aj}.

\item The case $l=L‐1$, $h|_{[t_{L‐1}, \, t_L]} = h_0|_{[t_{L‐1}, \, t_L]}$, $h|_{[t_{L‐2}, \, t_{L‐1}]} = \varphi_{j_0}|_{[t_{L‐2}, \, t_{L‐1}]}$, $j_0\in J$, $i\in J \backslash \{j(L‐1)\}$ can be considered similarly to the pevious one.

\item Let $L=2$, $l=1$, $h|_{[t_0, \, t_1]} = h_1|_{[t_0, \, t_1]}$, $h|_{[t_1, \, t_2]} = h_0|_{[t_1, \, t_2]}$. Then $(i_0, \, j_0) = (i_1, \, j_1)$ (see assertions 3, 5 of Proposition \ref{h_prop}). From the equality $h_{i_1,j_1}(t_1) = \tilde h_{i_1,j_1}(t_1)$ it follows that $\varphi_{i_1}(t_1) = \varphi _{j_1}(t_1) = h_{i_1,j_1}(t_1)$. If $h(t_1) \le \varphi_i(t_1)$, $i\in J$, then $\varphi_{i_1}(t_1) = \varphi _{j_1}(t_1) = \varphi_i(t_1)$. Since $i_1 = i_0\in I$, $j_1\in K$, $i\in J$, we get that the indices $i$, $i_1$, $j_1$ are different; hence we arrive to a contradiction with \eqref{frac_aj}.

\item Let $l=0$. If $K= \varnothing$, we have $h|_{[t_0, \, t_1]} = \varphi_s|_{[t_0, \, t_1]}$ (see assertion 2 of Proposition \ref{h_prop}). Hence, for any $i\in J\backslash \{s\}$, we get $\varphi_i(1) \stackrel{\eqref{non_emb}}{<} \varphi_s(1) = h(1)$.

Let $K\ne \varnothing$. Then $h|_{[t_0, \, t_1]} = h_1|_{[t_0, \, t_1]}$ (see assertion 3 of Proposition \ref{h_prop}). If $h(1)\le \varphi _i(1)$ for some $i\in J$, we get $h_1(1) = \varphi_i(1)$. We have $h_1(0)\stackrel{\eqref{h1_def}}{=}\frac 12 > \frac 12 \cdot \frac{1/p_i‐1/q}{1/2‐1/q} \stackrel{\eqref{phi_j_def}}{=} \varphi_i(0)$. Hence, for $t>1$, we get $h_1(t) < \varphi_i(t)$. Now we take $t\in (t_0, \, t_1]$ and arrive to a contradiction with \eqref{h2_def}, \eqref{h_def_012}.

\item Let $l=L$. If $I = \varnothing$, we have $h|_{[t_{L‐1}, \, t_L]} = \varphi_1|_{[t_{L‐1}, \, t_L]}$ (see assertion 4 of Proposition \ref{h_prop}). Hence, for any $i\in J\backslash \{1\}$, we get $\varphi_i(q/2) \stackrel{\eqref{non_emb}}{<} \varphi_1(q/2) = h(q/2)$.

Let $I\ne \varnothing$. Then $h|_{[t_{L‐1}, \, t_L]} = h_0|_{[t_{L‐1}, \, t_L]}$ (see assertion 5 of Proposition \ref{h_prop}). If $h(q/2)\le \varphi_i(q/2)$ for some $i \in J$, then $h_0(q/2) = \varphi_i(q/2)$. Since $h_0(0) \stackrel{\eqref{h0_def}}{=} 0 < \frac 12 \cdot \frac{1/p_i‐1/q}{1/2‐1/q}  \stackrel{\eqref{phi_j_def}}{=}\varphi_i(0)$, we get $h_0(t)< \varphi_i(t)$ for $t<q/2$. Now we take $t\in [t_{L‐1}, \, t_L)$ and arrive to a contradiction with \eqref{h2_def}, \eqref{h_def_012}.
\end{enumerate}
This completes the proof.
\end{proof}

\section{Proof of the upper estimates}

Let $t, \, k\in \N$, $1\le p<\infty$, $1\le \theta<\infty$. We denote by $l_{p,\theta}^{t,k}$ the space $\R^{tk}$ with the norm
$$
\|(x_{i,j})_{1\le i\le t, \, 1\le j\le k}\|_{l_{p,\theta}^{t,k}} = \Bigl(\sum \limits _{j=1}^k\Bigl(\sum \limits _{i=1}^t |x_{i,j}|^p\Bigr)^{\theta/p}\Bigr)^{1/\theta}
$$
for $p<\infty$, $\theta<\infty$; for $p=\infty$ or $\theta=\infty$ the definition is naturally modified.

In what follows we denote
\begin{align}
\label{km_md1} k_m = \max\{m^{d‐1}, \, 1\}.
\end{align}

From \eqref{card_j_m}, \eqref{x_b_norm}, \eqref{discr_eq} and the order equality
\begin{align}
\label{mmdm1} \# \{\overline{m}\in \Z_+^d:\; m_1+\dots + m_d=m\} \underset{d}{\asymp} \max\{m^{d‐1}, \, 1\}
\end{align}
we get
\begin{Cor}
\label{upper_est} Let $n\in \N$, $C\in \N$, $\nu_m\in \Z_+$ $(m\in \Z_+)$, $\sum \limits _{m\in \Z_+} \nu_m \le Cn$. Then there is a constant $C_1 = C_1(d)\in \N$ such that
\begin{align}
\label{up_estim} d_{C_1Cn}(\cap _{j=1}^s SB^{\overline{r}_j}_{p_j,\theta_j}(\mathbb{T}^d), \, B^{\overline{l}}_{q, \sigma}(\mathbb{T}^d)) \underset{\mathfrak{Z}}{\lesssim} \sum \limits _{m\in \Z_+} d_{\nu_m} (\cap _{j=1}^s 2^{‐m(\alpha_j + 1/q ‐1/p_j)} B_{p_j,\theta_j} ^{2^m,k_m}, \, l_{q,\sigma} ^{2^m,k_m}).
\end{align}
\end{Cor}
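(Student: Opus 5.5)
The corollary follows by discretizing via \eqref{discr_eq}, splitting the sequence space into dyadic layers in $m$, and regrouping each layer as a mixed-norm block. Approximating each layer with a subspace and summing the dimensions and errors then gives \eqref{up_estim}.

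First, by \eqref{discr_eq} it suffices to bound $d_{C_1Cn}(\cap_j Sb^{\overline{r}_j}_{p_j,\theta_j}, b^{\overline{l}}_{q,\sigma})$. We may assume the right-hand side of \eqref{up_estim} is finite; otherwise there is nothing to prove.

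For $x$ in the sequence space, write $x=\sum_{m\in\Z_+} P_m x$, where $P_m x$ keeps the coordinates $x_{\overline{m},j}$ with $m_1+\dots+m_d=m$ and sets the others to zero. Fix $m$. The coordinates in this layer are indexed by pairs $(\overline{m},j)$ with $|\overline{m}|=m$ and $j\in J_{\overline{m}}$. By \eqref{mmdm1} there are $\asymp_d k_m$ multi-indices $\overline{m}$, and by \eqref{card_j_m} each block has $\# J_{\overline{m}}\le c_2(d)2^m$.

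Embed each block into $\R^{c_2 2^m}$ by padding with zeros, and pad the set of multi-indices to $c_3(d)k_m$. This identifies the layer with a subspace of $l_{p,\theta}^{c_22^m,c_3k_m}$. Since $\overline{r}_j=(r_j,\dots,r_j)$, all weights in this layer are equal to $2^{r_jm-m/p_j}$. Hence
$$\|P_mx\|_{b^{\overline{r}_j}_{p_j,\theta_j}}=2^{m(r_j-1/p_j)}\|P_mx\|_{l_{p_j,\theta_j}},$$
and similarly the target norm equals $2^{m(l-1/q)}\|\cdot\|_{l_{q,\sigma}}$. Therefore $P_m(\cap_j Sb^{\overline{r}_j}_{p_j,\theta_j})$ is contained in the image of $\cap_j 2^{-m(r_j-1/p_j)}B_{p_j,\theta_j}$. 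After rescaling by the target weight $2^{m(l-1/q)}$, the width of this layer in $b^{\overline{l}}_{q,\sigma}$ equals
$$d_\nu\bigl(\cap_j 2^{-m(\alpha_j+1/q-1/p_j)}B_{p_j,\theta_j}^{c_22^m,c_3k_m},\, l_{q,\sigma}^{c_22^m,c_3k_m}\bigr).$$

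It remains to replace the dimensions $(c_22^m,c_3k_m)$ by $(2^m,k_m)$. The main technical nuisance is that $c_2,c_3$ need not be $1$. To handle it, I would split the index rectangle into at most $C_1(d)$ sub-rectangles of size at most $2^m\times k_m$. This partitions the coordinates into at most $C_1(d)$ parts, each an isometric copy of $l^{2^m,k_m}$, and restriction to a part maps the intersection of balls into the corresponding intersection of balls. I would approximate each part with a subspace of dimension $\nu_m$. The total dimension used on this layer is then $C_1\nu_m$, and the error in the $l_{q,\sigma}$ norm is at most $C_1$ times the width of a single part, since the norm of a sum of pieces is at most the sum of their norms.

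Finally, let $L_m$ be the subspace built for layer $m$, take $L=\bigoplus_m L_m$, and set $y=\sum_m y_m$ with $y_m\in L_m$ the chosen approximants. The dimension of $L$ is at most $C_1\sum_m\nu_m\le C_1Cn$. By the triangle inequality,
$$\|x-y\|_{b^{\overline{l}}_{q,\sigma}}\le\sum_m\|P_mx-y_m\|.$$
Each term is bounded by $C_1$ times the corresponding summand in \eqref{up_estim}, and summing over $m$ gives the corollary. Layers with $\nu_m=0$ are covered by the same bound, using the zero approximant.
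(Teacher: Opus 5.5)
Your proposal is correct and follows the same route as the paper, which compresses the argument into one line: discretize via \eqref{discr_eq}, split into layers $|\overline{m}|=m$ on which the weights $2^{(\overline{r}_j,\overline{m})}=2^{r_jm}$ are constant, and use \eqref{card_j_m} and \eqref{mmdm1} to fit each layer into $C_1(d)$ copies of $l^{2^m,k_m}_{p,\theta}$; you supply the details the paper omits. Two small points of precision: the width of the layer set is at most, not equal to, the width of the padded intersection (which is all you need), and you should note that finiteness of the right-hand side of \eqref{up_estim} gives the $n=0$ finiteness required to invoke \eqref{discr_eq}.
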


We define the value $\Phi = \Phi(p, \, \theta, \, q, \, \sigma, \, t, \, k, \, n)$ as follows:
\begin{enumerate}
\item for $p\ge q$, $\theta \ge \sigma$,
\begin{align}
\label{phi1} \Phi = t^{1/q-1/p}k^{1/\sigma-1/\theta};
\end{align}
\item for $p\ge q$, $\theta\le \sigma$,
\begin{align}
\label{phi2} \Phi = \min \left\{t^{1/q-1/p}, \, t^{1/q-1/p}(n^{-\frac 12}t^{\frac 12} k^{\frac{1}{\sigma}}) ^{\omega_{\theta,\sigma}}\right\};
\end{align}

\item for $\theta\ge \sigma$, $p\le q$,
\begin{align}
\label{phi3} \Phi = \min \left\{k^{1/\sigma-1/\theta}, \, k^{1/\sigma-1/\theta}(n^{-\frac 12}t^{\frac 1q} k^{\frac{1}{2}}) ^{\omega_{p,q}}\right\};
\end{align}

\item for $2\le p\le q$, $1\le \theta \le \sigma$, $\omega_{p,q}\le \omega_{\theta,\sigma}$,
\begin{align}
\label{phi4}
\Phi = \min \left\{1, \, (n^{-\frac 12}t^{\frac 1q} k^{\frac{1}{\sigma}})^{\omega_{p,q}}, \, t^{1/q-1/p}(n^{-\frac 12}t^{\frac 12} k^{\frac{1}{\sigma}})^{\omega_{\theta,\sigma}}\right\};
\end{align}

\item for $2\le \theta\le \sigma$, $1\le p \le q$, $\omega_{\theta,\sigma}\le \omega_{p,q}$,
\begin{align}
\label{phi5}
\Phi = \min \left\{1, \, (n^{-\frac 12}t^{\frac 1q} k^{\frac{1}{\sigma}})^{\omega_{\theta,\sigma}}, \, k^{1/\sigma-1/\theta}(n^{-\frac 12}t^{\frac 1q} k^{\frac{1}{2}})^{\omega_{p,q}}\right\};
\end{align}
\item for $1\le p\le 2$, $1\le \theta\le 2$,
\begin{align}
\label{phi6}
\Phi = \min \{1, \, n^{-\frac 12}t^{\frac 1q} k^{\frac{1}{\sigma}}\}.
\end{align}
\end{enumerate}

\begin{trma}
\label{1mixed}
{\rm (see \cite{vas_mix_sev, vas_besov}).} Let $2\le q<\infty$, $2\le \sigma < \infty$, $1\le p\le \infty$, $1\le \theta \le \infty$, $n\le \frac{tk}{2}$, the value $\Phi$ is defined by formulas \eqref{phi1}--\eqref{phi6}. Then 
$$d_n(B^{t,k}_{p,\theta}, \, l^{t,k}_{q,\sigma}) \underset{q,\sigma}{\asymp} \Phi.$$
\end{trma}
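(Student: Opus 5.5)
The plan is to prove the upper and the lower estimate in each of the six regimes \eqref{phi1}--\eqref{phi6} separately. The main tools are the symmetry of the mixed norms under permutations inside a block, permutations of blocks, and sign changes, together with Theorems \ref{glus} and \ref{p_s} applied along one of the two indices.

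\emph{Upper bounds.} The terms of the form $t^{1/q-1/p}k^{1/\sigma-1/\theta}$, $t^{1/q-1/p}$, $k^{1/\sigma-1/\theta}$ and $1$ come from the $0$-width. This is H\"older's inequality in the inner index (for $p\ge q$) and in the outer index (for $\theta\ge\sigma$), combined with the embeddings $l_p\subset l_q$ for $p\le q$ and $l_\theta\subset l_\sigma$ for $\theta\le\sigma$.

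The nontrivial terms all reduce to one basic estimate:
$$
d_n(B^{t,k}_{2,2}, \, l^{t,k}_{q,\sigma}) \lesssim n^{-1/2}t^{1/q}k^{1/\sigma},
$$
which I would prove by Kashin--Gluskin random subspaces. Take a random Gaussian (or Bernoulli) subspace of codimension $n$. Bound the $l_{q,\sigma}$-norm of the orthogonal projections of the unit vectors via the expectation of the mixed norm of a Gaussian vector, which is at most $\asymp t^{1/q}k^{1/\sigma}$ for $q,\sigma<\infty$.

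Each case then follows by interpolation and decomposition:
\begin{itemize}
\item Case \eqref{phi6}: use $B_{p,\theta}\subset B_{2,2}$ when $p,\theta\le 2$.
\item Exponents $\omega_{p,q}$ with $2<p<q$: split a vector of $B_{p,\theta}$ by levels of the size of its entries (the standard Gluskin decomposition). The large entries are few and are approximated via the $B_{2}$-estimate; the small entries are controlled in $l_q$ by H\"older. Balancing the threshold gives the power $(n^{-1/2}t^{1/q}k^{1/2})^{\omega_{p,q}}$ in \eqref{phi3}, or $(\cdot\, k^{1/\sigma})^{\omega_{p,q}}$ in \eqref{phi4}.
\item Exponents $\omega_{\theta,\sigma}$: apply the same level decomposition to the vector of block norms $\bigl(\|x_{\cdot,j}\|_{l_p^t}\bigr)_j$ instead of the entries.
\end{itemize}
In cases \eqref{phi4} and \eqref{phi5}, where both $\omega$'s occur, I would decompose simultaneously by entry size and by block norm. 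For each of the $O(1)$ resulting dyadic classes I would use whichever of the two estimates is better, then sum the widths with a geometric allocation of dimensions $\nu$.

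\emph{Lower bounds.} Each term in the minimum will be realized by a test set that is invariant under the symmetry group. I would use Gluskin's averaging argument: for a set $V$ invariant under the group, $d_n(V,X)$ is bounded below by a quantity computed from the average of the projections. The natural test sets are the sets $V_{a,b}$ of sign vectors supported on $a$ entries in each of $b$ blocks. Such a vector has $l_{p,\theta}$-norm $a^{1/p}b^{1/\theta}$ and $l_{q,\sigma}$-norm $a^{1/q}b^{1/\sigma}$.

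Gluskin-type lower bounds for $d_n(\operatorname{conv}V_{a,b}, \, l^{t,k}_{q,\sigma})$ would come from the dual estimate through $l_{q',\sigma'}$ and the lower bound for $d_n(B_1^{N}, \, l_2^N)$. Specific choices of $a$ and $b$ then give each branch:
\begin{itemize}
\item $a=1$, $b=k$: this isolates the outer index and yields the $\omega_{\theta,\sigma}$-terms.
\item $a=t$, $b=1$: this yields the $\omega_{p,q}$-terms.
\item $a\approx t^{\cdot}n^{\cdot}$ tuned to the critical threshold: this yields the remaining branches.
\end{itemize}

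I expect the main obstacle to be the mixed regimes \eqref{phi4} and \eqref{phi5}. There the minimum of three terms must be matched by a two-parameter family $V_{a,b}$, and the lower bound must hold uniformly in $a$ and $b$ with constants depending only on $q$ and $\sigma$. Proving this uniform Gluskin-type bound for $V_{a,b}$ in the mixed norm is what I would attack first, via an averaging/volume argument over the product symmetry group.
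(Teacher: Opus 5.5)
The paper gives no proof of this statement; it is quoted from \cite{vas_mix_sev, vas_besov}. Your sketch therefore has to stand on its own, and its upper-bound half has a genuine gap in the regimes \eqref{phi4}--\eqref{phi5}.

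\textbf{The gap in the upper bound.} Your simultaneous decomposition does not produce $O(1)$ classes. Let $y_j=\|x_{\cdot j}\|_{l_p^t}$, and let class $(\nu,\mu)$ consist of the blocks with $y_j\sim 2^{-\nu}$ and, inside them, the entries of size $\sim 2^{-\nu-\mu}$; there are of order $\log t\cdot\log k$ relevant pairs. Put $U=2^{\mu p(1/2-1/q)}$, $V=2^{\nu\theta(1/2-1/\sigma)}$ and $\rho=n^{-1/2}t^{1/q}k^{1/\sigma}$. The better of your two bounds for a class (H\"older, or the $B_2$-estimate even with all $n$ dimensions) is $U^{-\omega_{p,q}}V^{-\omega_{\theta,\sigma}}\min\{1,\,UV\rho\}$.
\begin{itemize}
\item For $2<p<q$, $2<\theta<\sigma$ and $\omega_{p,q}\ne\omega_{\theta,\sigma}$, this decays geometrically away from a single peak, so summation works. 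But the geometric-series constants blow up as $p$ or $\theta$ approach the ends of their ranges, or as $\omega_{p,q}\to\omega_{\theta,\sigma}$. The statement asserts constants depending on $q,\sigma$ only.
\item On the line $\omega_{p,q}=\omega_{\theta,\sigma}=\omega\in(0,1)$, which lies in both \eqref{phi4} and \eqref{phi5}, one has $\Phi=\min\{1,\rho^{\omega}\}$. Every class on the curve $UV\approx\rho^{-1}$ has bound exactly $\rho^{\omega}$, however many dimensions you allot to it. There are unboundedly many such classes (order $\log t$ for $k=t$, $n=t^{1+1/q+1/\sigma}$). Subadditivity of $d_n$ then yields only $\rho^{\omega}$ times a logarithm.
\end{itemize}

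\textbf{The missing idea.} Estimate all small parts at once, using $\sum_j y_j^{\theta}\le 1$ globally rather than the class-wise count $\#\{j:\,y_j\sim 2^{-\nu}\}\le 2^{\nu\theta}$. A single split with block-dependent thresholds does this.
\begin{itemize}
\item Set $s_{ij}=x_{ij}$ if $|x_{ij}|\le \tau y_j^{\gamma}$, $s_{ij}=0$ otherwise, and $\ell=x-s$. Then $\|s_{\cdot j}\|_q\le (\tau y_j^{\gamma})^{1-p/q}y_j^{p/q}$ and $\|\ell_{\cdot j}\|_2^2\le (\tau y_j^{\gamma})^{2-p}y_j^{p}$.
\item On the diagonal, take $\gamma=\theta(1/\sigma-1/q)/(1-2/q)$. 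This makes both resulting powers of $y_j$ equal to $\theta$, so $\|s\|_{l_{q,\sigma}}\le\tau^{1-p/q}$ and $\|\ell\|_{l_2}\le\tau^{1-p/2}$.
\item One application of the $B_2$-estimate with $\tau^{p/2-p/q}=\rho$ gives $\rho^{\omega}$, with constants depending only on $q,\sigma$.
\end{itemize}
A block-norm cut at the crossover level, combined with such entry thresholds, gives the remaining branches of \eqref{phi2}--\eqref{phi5} uniformly as well.

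\textbf{The lower bound.} Your plan is sound and easier than you fear. Let $v$ be uniform on $V_{a,b}$ and $P$ the orthogonal projection onto an $n$-dimensional $L$. Then $\mathbb{E}\|Pv\|_2^2=abn/(tk)\le ab/2$. Testing against $w=(I-P)v\in L^{\perp}$ in $l^{t,k}_{q',\sigma'}$ gives, for a positive proportion of $v$,
$\mathrm{dist}_{l_{q,\sigma}}(v,L)\ge \|(I-P)v\|_2^2/\|(I-P)v\|_{l_{q',\sigma'}}\gtrsim \min\{a^{1/q}b^{1/\sigma},\,(ab)^{1/2}n^{-1/2}t^{1/q}k^{1/\sigma}\}$,
with absolute constants.

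However, your parameter choices are wrong. The choice $a=t$, $b=1$ gives only the $\omega_{\theta,\sigma}=1$ endpoint of \eqref{phi2}, and $a=1$, $b=k$ gives only the $\omega_{p,q}=1$ endpoint of \eqref{phi3}; this is the opposite of what you assign them. The $\omega$-terms need one parameter at an extreme and the other tuned to the crossover:
\begin{itemize}
\item $a=t$ with $b$ tuned gives $t^{1/q-1/p}(n^{-1/2}t^{1/2}k^{1/\sigma})^{\omega_{\theta,\sigma}}$;
\item $a=1$ with $b$ tuned gives $(n^{-1/2}t^{1/q}k^{1/\sigma})^{\omega_{\theta,\sigma}}$;
\item $b=1$ with $a$ tuned gives $(n^{-1/2}t^{1/q}k^{1/\sigma})^{\omega_{p,q}}$;
\item $b=k$ with $a$ tuned gives $k^{1/\sigma-1/\theta}(n^{-1/2}t^{1/q}k^{1/2})^{\omega_{p,q}}$.
\end{itemize}
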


\begin{Rem}
\label{dn_d0_phi} If $n \le t^{2/q}k^{2/\sigma}$, we have $\Phi(p, \, \theta, \, q, \, \sigma, \, t, \, k, \, n) = \Phi(p, \, \theta, \, q, \, \sigma, \, t, \, k, \, 0)$. This together with Theorem \ref{inters_ball_dn} (see \S 4) yields that, for $\nu_1>0$, $\nu_2>0$, $1\le p_1, \, p_2, \, \theta_1, \, \theta_2\le \infty$, $2\le q, \, \sigma <\infty$, $n \le t^{2/q}k^{2/\sigma}$,
$$
d_n(\nu_1 B^{t,k}_{p_1,\theta_1} \cap \nu_2 B^{t,k}_{p_2,\theta_2}, \, l^{t,k}_{q,\sigma}) \underset{q,\sigma}{\asymp} d_0(\nu_1 B^{t,k}_{p_1,\theta_1} \cap \nu_2 B^{t,k}_{p_2,\theta_2}, \, l^{t,k}_{q,\sigma}).
$$
\end{Rem}

\begin{Rem}
\label{eeee_rem}
In addition to the domain  $2\le q, \, \sigma<\infty$, $1\le p, \, \theta\le \infty$, 
the widths  $d_n(B^{t,k}_{p,\theta}, \, l^{t,k}_{q,\sigma})$ were estimated also for some domains of 
the parameters  (see  \cite{mal_rjut, mal_rjut1, mal_rjut2, dir_ull}). 
However, there are domains of the parameters where order estimates are not know so far.
\end{Rem}

\begin{trma}
\label{emb_s} {\rm (see \cite{vas_mix_sev}).} Let $\nu_i>0$, $1\le p_i\le \infty$, $1\le \theta_i\le \infty$, $\tau_i\ge 0$, $1\le i\le s$, $\sum \limits _{i=1}^s \tau_i=1$. We define the numbers $p$, $\theta\in [1, \, \infty]$ by the equations
$\frac 1p = \sum \limits _{i=1}^s \frac{\tau_i}{p_i}$, $\frac{1}{\theta} = \sum \limits _{i=1}^s \frac{\tau_i}{\theta_i}$.
Then
$$
\cap _{i=1}^s \nu_iB_{p_i,\theta_i}^{t,k} \subset \nu_1^{\tau_1}\dots \nu_s^{\tau_s} B^{t,k} _{p,\theta}.
$$
\end{trma}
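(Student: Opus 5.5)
The inclusion follows from a pointwise factorization and two applications of the generalized H\"older inequality: first over the inner index $i$, then over the outer index $j$. Take $x=(x_{i,j})\in \cap_{k=1}^s \nu_k B^{t,k}_{p_k,\theta_k}$. (To avoid a clash with the dimension $k$, I index the balls by $k$ only inside this argument.) Indices with $\tau_k=0$ contribute nothing to $1/p$, $1/\theta$ or to the product $\prod_k\nu_k^{\tau_k}$, so I discard them and assume all $\tau_k>0$. Since $\sum_k\tau_k=1$, we have the pointwise factorization
$$|x_{i,j}|=\prod_{k}|x_{i,j}|^{\tau_k}.$$

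\emph{Step 1: inner norm.} Fix the column index $j$ and set $a_{j,k}=\bigl(\sum_{i}|x_{i,j}|^{p_k}\bigr)^{1/p_k}$, the $l_{p_k}^t$-norm of the $j$-th column. Assume first $p<\infty$. The exponents $r_k=\frac{p_k}{\tau_k p}$ satisfy $\sum_k 1/r_k=p\sum_k\tau_k/p_k=1$; any $p_k=\infty$ simply gives $r_k=\infty$. Apply the generalized H\"older inequality to $\sum_i\prod_k |x_{i,j}|^{\tau_k p}$ with these exponents. This gives
$$\Bigl(\sum_i |x_{i,j}|^p\Bigr)^{1/p}\le \prod_k a_{j,k}^{\tau_k}.$$
If $p=\infty$, then all $p_k=\infty$ and the bound is immediate from $\max_i\prod_k|x_{i,j}|^{\tau_k}\le\prod_k\max_i|x_{i,j}|^{\tau_k}$.

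\emph{Step 2: outer norm.} Apply the same argument to the nonnegative numbers $\prod_k a_{j,k}^{\tau_k}$, now summing over $j$. The exponents are $\frac{\theta_k}{\tau_k\theta}$, whose reciprocals again sum to $1$ by the definition of $\theta$. This gives
$$\|x\|_{l^{t,k}_{p,\theta}}\le \Bigl\|\Bigl\{\prod_k a_{j,k}^{\tau_k}\Bigr\}_j\Bigr\|_{l_\theta}\le \prod_k \bigl\|\{a_{j,k}\}_j\bigr\|_{l_{\theta_k}}^{\tau_k}=\prod_k\|x\|^{\tau_k}_{l^{t,k}_{p_k,\theta_k}}\le \prod_k\nu_k^{\tau_k}.$$
The case $\theta=\infty$ is handled as in Step~1. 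This is exactly the claimed inclusion.

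The only delicate point is bookkeeping in the degenerate cases: some $p_k$ or $\theta_k$ equal to $\infty$, or some $\tau_k=0$. H\"older's inequality with infinite exponents, after dropping the zero-weight indices, covers all of them. Nothing else is expected to cause difficulty.
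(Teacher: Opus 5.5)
Your proof is correct. After you discard the indices with $\tau_k=0$, the generalized H\"older inequality gives $\|x\|_{l^{t,k}_{p,\theta}}\le\prod_k\|x\|^{\tau_k}_{l^{t,k}_{p_k,\theta_k}}$: first over the rows with exponents $p_k/(\tau_k p)$, then over the columns with exponents $\theta_k/(\tau_k\theta)$ together with monotonicity of the $l_\theta$-norm. Your handling of infinite exponents is also fine.

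The paper gives no proof of this statement; it only quotes it from \cite{vas_mix_sev}. Your two-step H\"older argument is the standard proof of this kind of interpolation inclusion, presumably the one in the cited source.
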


We will apply Corollary \ref{upper_est} in proofs of the upper estimates. The numbers $\nu_m\in \Z_+$ are defined by the formula
\begin{align}
\label{nu_m_def}
\nu_m = \begin{cases} \min \{2^m k_m, \, \lfloor n\cdot 2^{‐\varepsilon |m‐m_0(n)|}\rfloor\}, & 2^{2m/q} k_m^{2/\sigma} \le n, \\ 0, & 2^{2m/q} k_m^{2/\sigma} > n;\end{cases}
\end{align}
the numbers $m_0(n)\in \R_+$ and $\varepsilon>0$ will be defined later (they depend on the set of parameters $\mathfrak{Z}$); we also suppose that 
\begin{align}
\label{22m0qkm02s}
2^{2m_0(n)/q} k_{m_0(n)}^{2/\sigma} \le n \le 2^{m_0(n)} k_{m_0(n)}.
\end{align}

We first obtain the estimates for the sums \eqref{up_estim} for $\varepsilon = 0$, i.e., for the magnitude
$$
\Sigma := \sum \limits _{m: \, 2^{2m/q} k_m^{2/\sigma} \le n\le 2^m k_m} d_n(\cap _{j=1}^s 2^{‐(\alpha_j+1/q‐1/p_j)m} B_{p_j,\theta_j}^{2^m, k_m}, \, l_{q,\sigma}^{2^m, k_m}) +$$$$+ \sum \limits _{2^{2m/q} k_m^{2/\sigma} > n} d_0(\cap _{j=1}^s 2^{‐(\alpha_j+1/q‐1/p_j)m} B_{p_j,\theta_j}^{2^m, k_m}, \, l_{q,\sigma}^{2^m, k_m}),
$$
and choose $m_0(n)$. From the proofs we will see that $\Sigma$ is estimated from above by a sum
$$
\sum \limits _{0\le m\le m_0(n)} 2^{a_1m}m^{b_1(d‐1)}n^{c_1} + \sum \limits _{m\ge m_0(n)} 2^{‐a_2m}m^{b_2(d‐1)}n^{c_2},
$$
where $a_1$, $a_2>0$, $b_1$, $b_2$, $c_1$, $c_2\in \R$, $$2^{a_1m_0(n)}[m_0(n)]^{b_1(d‐1)}n^{c_1} \underset{\mathfrak{Z}}{\asymp} 2^{‐a_2m_0(n)}[m_0(n)]^{b_2(d‐1)}n^{c_2} \underset{\mathfrak{Z}}{\asymp} n^{‐\alpha_*} (\log n)^{\beta_*(d‐1)}.$$ After that we argue as in \cite{vas_int_sob}, \cite{vas_dif_der}: we take sufficiently small $\varepsilon=\varepsilon(\mathfrak{Z}) >0$, apply Theorem \ref{1mixed} and obtain that, for $\nu_m$ defined by \eqref{nu_m_def}, the right‐hand side of \eqref{up_estim} can be estimated from above by
$$
\sum \limits _{0\le m\le m_0(n)} 2^{a_1m}m^{b_1(d‐1)}n^{c_1}\cdot 2^{\varepsilon\varkappa(m_0‐m)} +$$$$+ \sum \limits _{m\ge m_0(n)} 2^{‐a_2m}m^{b_2(d‐1)}n^{c_2}\cdot 2^{\varepsilon\varkappa(m‐m_0)}\underset{\mathfrak{Z}}{\asymp} n^{‐\alpha_*} (\log n)^{\beta_*(d‐1)}
$$
(here $\varkappa = \varkappa(\mathfrak{Z})>0$).

Thus, it remains to estimate from above the value $\Sigma$.

We will apply the following assertion (its proof is elementary; see also \cite[Lemmas 2, 3]{vas_width} for a more general case).
\begin{Lem}
\label{x_log_eq}
Let $\alpha>0$, $\beta \in \R$. Then, for sufficiently large $x>0$, the equation $y^\alpha (\log y)^\beta = x$ has the unique solution $y(x)\in [2, \, \infty)$; in addition, $y(x) \underset{\alpha,\beta}{\asymp} x^{1/\alpha} (\log x)^{‐\beta/\alpha}$, $\log y(x) \underset{\alpha,\beta}{\asymp} \log x$.
\end{Lem}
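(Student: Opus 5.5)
The statement is Lemma \ref{x_log_eq}. I would prove it by elementary monotonicity arguments and one iteration of the equation on the logarithmic scale.

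\emph{Existence and uniqueness.} Put $g(y)=y^\alpha(\log y)^\beta$ for $y\ge 2$. Its logarithmic derivative is
$$
\frac{g'(y)}{g(y)}=\frac{\alpha}{y}+\frac{\beta}{y\ln y}=\frac{1}{y}\Bigl(\alpha+\frac{\beta}{\ln y}\Bigr).
$$
This is positive once $\ln y>|\beta|/\alpha$, say for $y\ge y_0(\alpha,\beta)\ge 2$. So $g$ is continuous and strictly increasing on $[y_0,\infty)$, with $g(y)\to\infty$. On the compact interval $[2,y_0]$ the function $g$ is bounded by some $G(\alpha,\beta)$. For $x>\max\{G,\,g(y_0)\}$ the equation $g(y)=x$ has no solution in $[2,y_0]$ and, by the intermediate value theorem and monotonicity, exactly one solution $y(x)$ in $(y_0,\infty)$. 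Uniqueness in the whole of $[2,\infty)$ follows.

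\emph{Asymptotics of $\log y(x)$.} Take logarithms: $\alpha\log y+\beta\log\log y=\log x$. Since $\log\log y=o(\log y)$ as $y\to\infty$, and $y(x)\to\infty$ as $x\to\infty$ (by monotonicity), for large $x$ we get
$$
\frac{\alpha}{2}\log y\le \log x\le 2\alpha\log y.
$$
This gives $\log y(x)\underset{\alpha,\beta}{\asymp}\log x$.

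\emph{Asymptotics of $y(x)$.} From $y^\alpha=x(\log y)^{-\beta}$ we get $y=x^{1/\alpha}(\log y)^{-\beta/\alpha}$. Substituting $\log y\asymp\log x$, and noting that a two-sided bound on $\log y/\log x$ raised to the fixed power $-\beta/\alpha$ is still a two-sided bound, we obtain $y(x)\asymp x^{1/\alpha}(\log x)^{-\beta/\alpha}$. The constants in this estimate depend only on $\alpha$ and $\beta$.

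The only slightly delicate point is choosing the threshold ``sufficiently large $x$'' so that no spurious root lies near $y=2$. This matters when $\beta<0$, where $g$ may be non-monotone near $2$. The bound $G$ on $[2,y_0]$ above handles it.
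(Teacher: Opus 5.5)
Your proof is correct and complete. Monotonicity of $g$ beyond $y_0(\alpha,\beta)$, together with the bound $G$ on $[2,y_0]$, gives existence and uniqueness, and one passage to logarithms gives $\log y(x)\asymp\log x$ and then $y(x)\asymp x^{1/\alpha}(\log x)^{-\beta/\alpha}$. The paper gives no proof; it only calls the lemma elementary and points to \cite{vas_width} for a more general version, and your argument is exactly the kind of elementary verification intended.
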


\vskip 0.3cm

{\bf The upper estimate in Theorem \ref{main1}.} We have
$$
\Sigma \le \sum \limits _{m:\, 2^{2m/q} k_m^{2/\sigma}\le n\le 2^m k_m} d_n(2^{‐(\alpha_s+1/q‐1/p_s)m} B_{p_s,\theta_s}^{2^m, k_m}, \, l_{q,\sigma}^{2^m, k_m}) + $$$$ + \sum \limits _{2^{2m/q} k_m^{2/\sigma}> n} d_0(2^{‐(\alpha_s+1/q‐1/p_s)m} B_{p_s,\theta_s}^{2^m, k_m}, \, l_{q,\sigma}^{2^m, k_m})=:\Sigma_1.
$$

Recall that $k_m \stackrel{\eqref{km_md1}}{=} m^{d‐1}$ for $m\in \N$ and that $\alpha_s>0$. We apply Theorem \ref{1mixed}.

If $\theta_s\ge \sigma$, we have
$$
\Sigma_1 \stackrel{\eqref{phi1}}{\underset{q,\sigma}{\lesssim}} \sum \limits _{2^m k_m\ge n} 2^{‐(\alpha_s+1/q‐1/p_s)m} \cdot 2^{m(1/q‐1/p_s)} k_m^{1/\sigma ‐ 1/\theta_s} \underset{\mathfrak{Z}}{\lesssim} 2^{‐\alpha_s m_0(n)} m_0(n)^{(d‐1)(1/\sigma ‐ 1/\theta_s)} =: \Sigma_2,
$$
where $m_0(n)$ is defined by the equation $2^{m_0(n)} m_0(n)^{d‐1} = n$. Then, by Lemma \ref{x_log_eq}, we get
\begin{align}
\label{2m01}
2^{m_0(n)} \underset{d}{\asymp} n (\log n)^{‐d+1}, \quad m_0(n) \underset{d}{\asymp} \log n.
\end{align}
Hence
$$
\Sigma_2 \underset{\mathfrak{Z}}{\asymp} n^{‐\alpha_s} (\log n)^{(d‐1)(\alpha_s+1/\sigma ‐1/\theta_s)}.
$$

For $\theta_s<\sigma$,
$$
\Sigma_1 \stackrel{\eqref{phi2}}{\underset{q,\sigma}{\lesssim}} \sum \limits _{2^m k_m^{2/\sigma}>n} 2^{‐\alpha_s m} + \sum \limits _{m:\, 2^m k_m^{2/\sigma} \le n \le 2^m k_m} 2^{‐\alpha_s m} (n^{‐1/2} 2^{m/2} k_m^{1/\sigma}) ^{\omega_{\theta_s,\sigma}} =: \Sigma_2.
$$

If $\alpha_s > \frac{\omega_{\theta_s,\sigma}}{2}$, we define $m_0(n)$ by the equation $2^{m_0(n)} m_0(n)^{d‐1} = n$ and get
$$
\Sigma_2 \underset{\mathfrak{Z}}{\lesssim} 2^{‐\alpha_s m_0(n)} (n^{‐1/2} 2^{m_0(n)/2} k_{m_0(n)}^{1/\sigma}) ^{\omega_{\theta_s,\sigma}} \stackrel{\eqref{2m01}}{\underset{\mathfrak{Z}}{\asymp}} n^{‐\alpha_s} (\log n)^{(d‐1)(\alpha_s+ 1/\sigma‐1/\max\{\theta_s, \, 2\})}.
$$
If $\alpha_s< \frac{\omega_{\theta_s,\sigma}}{2}$, we define $m_0(n)$ by the equation $2^{m_0(n)} m_0(n)^{2(d‐1)/\sigma} = n$. Then $2^{m_0(n)} \underset{d,\sigma}{\asymp} n (\log n)^{‐2(d‐1)/\sigma}$ (see Lemma \ref{x_log_eq}) and
$$
\Sigma_2 \underset{\mathfrak{Z}}{\lesssim} 2^{‐\alpha_s m_0(n)} \underset{\mathfrak{Z}}{\asymp} n^{‐\alpha_s} (\log n) ^{(d‐1)\alpha_s \cdot 2/\sigma}.
$$

\begin{Rem}
\label{rem_t1} For all cases in Theorem \ref{main1}, inequalities \eqref{22m0qkm02s} hold; in addition, $2^{m_0(n)} \underset{\mathfrak{Z}}{\asymp} n(\log n)^{\kappa}$ for some $\kappa \in \R$ and $$2^{‐(\alpha_s+1/q‐1/p_s)m_0(n)} \Phi(p_s, \, \theta_s, \, q, \, \sigma, \, 2^{m_0(n)}, \, k_{m_0(n)}, \, n)\underset{\mathfrak{Z}}{\asymp} n^{‐\alpha_*}(\log n)^{\beta_*(d‐1)}.$$
\end{Rem}

\vskip 0.3cm

{\bf The upper estimate in Theorem \ref{main2}.} We have
$$
\Sigma \le \sum \limits _{m:\, 2^{2m/q}k_m^{2/\sigma} \le n \le 2^mk_m} d_n(2^{‐(\alpha_1 + 1/q ‐1/p_1)m} B_{p_1,\theta_1}^{2^m,k_m}, \, l_{q, \sigma} ^{2^m,k_m})+$$$$+ \sum \limits _{2^{2m/q}k_m^{2/\sigma}> n} d_0(2^{‐(\alpha_1 + 1/q ‐1/p_1)m} B_{p_1,\theta_1}^{2^m,k_m}, \, l_{q, \sigma} ^{2^m,k_m})=:\Sigma_1.
$$

Let $\theta_1 \le 2$. Then
$$
\Sigma_1 \stackrel{\eqref{phi6}}{\underset{q, \sigma}{\lesssim}} \sum \limits _{m:\, 2^{2m/q}k_m^{2/\sigma} \le n \le 2^m k_m} 2^{‐(\alpha_1 + 1/q ‐1/p_1)m} n^{‐1/2} 2^{m/q} k_m^{1/\sigma} + $$$$ +\sum \limits _{2^{2m/q}k_m^{2/\sigma}> n} 2^{‐(\alpha_1 + 1/q ‐1/p_1)m} =: \Sigma_2.
$$
If $\alpha_1 > 1/p_1$, we define $m_0(n)$ by the equation $2^{m_0(n)} m_0(n)^{d‐1} = n$. Then  
$$
\Sigma_2 \stackrel{\eqref{2m01}}{\underset{\mathfrak{Z}} {\lesssim}} n^{‐\alpha_1 ‐ 1/2 + 1/p_1} (\log n)^{(d‐1)(\alpha_1 +1/\sigma ‐ 1/p_1)}.
$$
If $\alpha_1< 1/p_1$, we define $m_0(n)$ by the equation $2^{2m_0(n)/q} m_0(n)^{2(d‐1)/\sigma} = n$. Then 
\begin{align}
\label{2m0q}
2^{m_0(n)} \underset{d,q,\sigma}{\asymp} n^{q/2} (\log n)^{‐(d‐1)q/\sigma},
\end{align}
$$
\Sigma_2 \underset{\mathfrak{Z}}{\lesssim} n^{‐\frac q2(\alpha_1 + 1/q ‐ 1/p_1)} (\log n)^{(d‐1)(\alpha_1 +1/q ‐ 1/p_1)q/\sigma}
$$
(recall that $\alpha_1+\frac 1q‐\frac{1}{p_1}>0$ by the conditions of the theorem).

Let now $2< \theta_1 < \sigma$. Then
$$
\Sigma_1 \stackrel{\eqref{phi5}}{\underset{q,\sigma}{\lesssim}} \sum \limits _{2^{2m/q} k_m^{2/\sigma}>n} 2^{‐(\alpha_1 + 1/q ‐ 1/p_1) m } + \sum \limits _{m:\, 2^{2m/q} k_m^{2/\sigma} \le n < 2^{2m/q} k_m} 2^{‐(\alpha_1 + 1/q ‐ 1/p_1) m} (n^{‐1/2} 2^{m/q} k_m^{1/\sigma}) ^{\omega _{\theta_1,\sigma}} + $$$$+\sum \limits _{m:\, 2^{2m/q} k_m \le n \le 2^m k_m} 2^{‐(\alpha_1 + 1/q ‐ 1/p_1) m} n^{‐1/2} 2^{m/q} k_m^{1/ \sigma ‐1/\theta_1 + 1/2} =: \Sigma_2.
$$
If $\alpha_1 > 1/p_1$, we define $m_0(n)$ by the equation $2^{m_0(n)} m_0(n)^{d‐1} = n$ and get
\begin{align}
\label{sigm2_1}
\Sigma_2 \stackrel{\eqref{2m01}}{\underset{\mathfrak{Z}}{\lesssim}} n^{‐\alpha_1 ‐1/2 + 1/p_1} (\log n)^{(d‐1) (\alpha_1 ‐1/p_1 + 1/\sigma ‐ 1/\theta_1 + 1/2)}.
\end{align}
If $\alpha_1 < 1/p_1$ and $\alpha_1 + \frac 1q ‐\frac{1}{p_1} > \frac{\omega_{\theta_1,\sigma}}{q}$, we define $m_0(n)$ by the equation $2^{2m_0(n)/q} m_0(n)^{d‐1} = n$ and get $2^{m_0(n)} \underset{q,d}{\asymp} n^{q/2} (\log n)^{(‐d+1)q/2}$, $m_0(n) \underset{q,d}{\asymp} \log n$,
\begin{align}
\label{sigm2_2}
\Sigma_2 \underset{\mathfrak{Z}}{\lesssim} n^{‐\frac q2(\alpha_1 +1/q ‐ 1/p_1)} (\log n) ^{(d‐1)[(\alpha_1 + 1/q‐1/p_1)q/2 + 1/\sigma ‐1/\theta_1]}.
\end{align}
If $\alpha_1 + \frac 1q ‐\frac{1}{p_1} < \frac{\omega_{\theta_1,\sigma}}{q}$, we define $m_0(n)$ by the equation $2^{2m_0(n)/q} m_0(n)^{2(d‐1)/\sigma} = n$, take into account that $\alpha_1 + \frac 1q ‐\frac{1}{p_1}>0$ by the conditions of the theorem and get
$$
\Sigma_2 \stackrel{\eqref{2m0q}}{\underset{\mathfrak{Z}}{\lesssim}} n^{‐\frac q2(\alpha_1 +1/q ‐ 1/p_1)} (\log n) ^{(d‐1)(\alpha_1 + 1/q ‐ 1/p_1) q/\sigma}.
$$

Finally, we consider the case $\theta_1\ge \sigma$. Then
$$
\Sigma_1 \stackrel{\eqref{phi3}}{\underset{q,\sigma}{\lesssim}} \sum \limits _{2^{2m/q} k_m\ge n} 2^{‐(\alpha_1 + 1/q ‐1/p_1)m} k_m ^{1/\sigma ‐ 1/\theta_1} + $$$$+\sum \limits _{m:\, 2^{2m/q} k_m < n \le 2^m k_m} 2^{‐(\alpha_1 + 1/q ‐1/p_1)m} n^{‐1/2} 2^{m/q} k_m^{1/\sigma ‐1/\theta_1 + 1/2} =: \Sigma_2.
$$
If $\alpha_1 > 1/p_1$, we define $m_0(n)$ by the equation $2^{m_0(n)} m_0(n)^{d‐1} = n$ and get
\eqref{sigm2_1}.

If $\alpha_1 < 1/p_1$, we define $m_0(n)$ by the equation $2^{2m_0(n)/q} m_0(n)^{d‐1} = n$ and get
\eqref{sigm2_2}.

\begin{Rem}
\label{rem_t2} In Theorem \ref{main2} for $\alpha_1>1/p_1$ we have $2^{m_0(n)} \underset{\mathfrak{Z}}{\asymp} n(\log n)^{\kappa}$, and, for $\alpha_1<1/p_1$, we have $2^{m_0(n)} \underset{\mathfrak{Z}}{\asymp} n^{q/2}(\log n)^{\kappa}$ for some $\kappa \in \R$. In both cases \eqref{22m0qkm02s} holds. In addition, $$2^{‐(\alpha_1+1/q‐1/p_1)m_0(n)} \Phi(p_1, \, \theta_1, \, q, \, \sigma, \, 2^{m_0(n)}, \, k_{m_0(n)}, \, n)\underset{\mathfrak{Z}}{\asymp} n^{‐\alpha_*}(\log n)^{\beta_*(d‐1)}.$$
\end{Rem}

\vskip 0.3cm

{\bf The upper estimate in Theorem \ref{main3}.} By assertion 2 of Proposition \ref{h_prop}, we have $s\in J$. Further,
$$
\Sigma \le \sum \limits _{m:\, 2^{2m/q} k_m^{2/\sigma} \le n \le 2^m k_m} 2^{‐(\alpha_s + 1/q ‐ 1/p_s)m} d_n(B_{p_s,\theta_s} ^{2^m,k_m}, \, l_{q, \sigma} ^{2^m, k_m}) + $$$$+ \sum \limits _{2^{2m/q} k_m^{2/\sigma} > n} 2^{‐(\alpha_s + 1/q ‐ 1/p_s)m} d_0(B_{p_s,\theta_s} ^{2^m,k_m}, \, l_{q, \sigma} ^{2^m, k_m}) =: \Sigma_1.
$$
If $\theta_s \ge \sigma$, then
$$
\Sigma_1 \stackrel{\eqref{phi3}}{\underset{q,\sigma}{\lesssim}} \sum \limits _{2^{2m/q} k_m\ge n} 2^{‐(\alpha_s + 1/q ‐1/p_s)m} k_m^{1/\sigma ‐1/\theta_s} +$$$$+ \sum \limits _{m:\, 2^{2m/q} k_m < n \le 2^mk_m} 2^{‐(\alpha_s + 1/q ‐1/p_s)m} k_m^{1/\sigma ‐ 1/\theta_s} (n^{‐1/2} 2^{m/q} k_m^{1/2}) ^{\omega_{p_s,q}} =: \Sigma_2.
$$
We define the number $m_0(n)$ by the equation $2^{m_0(n)} m_0(n) ^{d‐1} = n$. By conditions of the theorem, $\alpha_s> \frac{\omega_{p_s,q}}{2} = \frac 12\cdot \frac{1/p_s‐1/q}{1/2‐1/q}$ (the last equality holds since $s\in J$). Hence
\begin{align}
\label{sigma2_teor3}
\Sigma_2 \stackrel{\eqref{2m01}}{\underset{\mathfrak{Z}}{\lesssim}} n^{‐\alpha_s} (\log n)^{(d‐1) (\alpha_s + 1/\sigma ‐1/\theta_s)}.
\end{align}

Let $2< \theta_s<\sigma$, $\omega_{p_s,q} \ge \omega_{\theta_s,\sigma}$. Then
$$
\Sigma_1 \stackrel{\eqref{phi5}}{\underset{q, \sigma}{\lesssim}} \sum \limits _{2^{2m/q} k_m^{2/\sigma}> n} 2^{‐(\alpha_s + 1/q‐1/p_s) m} + \sum \limits _{m:\, 2^{2m/q} k_m^{2/\sigma} \le n \le 2^{2m/q} k_m} 2^{‐(\alpha_s + 1/q‐1/p_s) m} (n^{‐1/2} 2^{m/q} k_m ^{1/\sigma}) ^{\omega_{\theta_s,\sigma}} +
$$
$$
+ \sum \limits _{m:\, 2^{2m/q} k_m< n\le 2^m k_m} 2^{‐(\alpha_s + 1/q‐1/p_s) m} k_m^{1/\sigma ‐ 1/\theta_s} (n^{‐1/2} 2^{m/q} k_m ^{1/2}) ^{\omega_{p_s,q}}=:\Sigma_2.
$$
The number $m_0(n)$ as defined as in the previous case, and we get \eqref{sigma2_teor3}.

Let $1\le \theta_s < \sigma$, $\omega_{p_s,q} < \omega_{\theta_s,\sigma}$. Then
$$
\Sigma_1 \stackrel{\eqref{phi4}}{\underset{q,\sigma}{\lesssim}} \sum \limits _{2^{2m/q} k_m^{2/\sigma}> n} 2^{‐(\alpha_s + 1/q‐1/p_s)m} + \sum \limits _{m:\, 2^{2m/q} k_m^{2/\sigma} \le n \le 2^m k_m^{2/\sigma}} 2^{‐(\alpha_s + 1/q‐1/p_s)m} (n^{‐1/2} 2^{m/q} k_m^{1/\sigma}) ^{\omega _{p_s,q}} +
$$
$$
+ \sum \limits _{m:\, 2^m k_m^{2/\sigma}< n \le 2^m k_m} 2^{‐(\alpha_s + 1/q‐1/p_s)m} 2^{m(1/q ‐ 1/p_s)} (n^{‐1/2} 2^{m/2} k_m^{1/\sigma}) ^{\omega _{\theta_s, \sigma}} = \Sigma_2. 
$$
If $\alpha_s> \frac{\omega_{\theta_s,\sigma}}{2}$, we define $m_0(n)$ by the equation $2^{m_0(n)} m_0(n) ^{d‐1} = n$ and get
$$
\Sigma_2 \stackrel{\eqref{2m01}}{\underset{\mathfrak{Z}}{\lesssim}} n^{‐\alpha_s} (\log n)^{(d‐1)(\alpha_s + 1/\sigma ‐ 1/\max\{\theta_s,\, 2\})}.
$$
If $\alpha_s< \frac{\omega_{\theta_s,\sigma}}{2}$, we define $m_0(n)$ by the equation $2^{m_0(n)} m_0(n) ^{(d‐1)2/\sigma} = n$ and get $2^{m_0(n)} \underset{d,\sigma}{\asymp} n (\log n)^{2(‐d+1)/\sigma}$, 
$$
\Sigma_2 \underset{\mathfrak{Z}}{\lesssim} n^{‐\alpha_s}
(\log n)^{\frac{2(d‐1)\alpha_s}{\sigma}}
$$
(here we take into account that $\alpha_s> \frac{\omega_{p_s,q}}{2}$, $2<p_s<q$).

\begin{Rem}
\label{rem_t3} In Theorem \ref{main3} in all cases we get \eqref{22m0qkm02s} and $2^{m_0(n)} \underset{\mathfrak{Z}}{\asymp} n(\log n)^{\kappa}$ for some $\kappa \in \R$. In addition, $$2^{‐(\alpha_s+1/q‐1/p_s)m_0(n)} \Phi(p_s, \, \theta_s, \, q, \, \sigma, \, 2^{m_0(n)}, \, k_{m_0(n)}, \, n)\underset{\mathfrak{Z}}{\asymp} n^{‐\alpha_*}(\log n)^{\beta_*(d‐1)}.$$
\end{Rem}

\vskip 0.3cm

{\bf The upper estimate in Theorem \ref{main4}.} From Theorem \ref{emb_s} and \eqref{lam_ij_def}, \eqref{3al_st}, \eqref{3bet_st} it follows that $\cap _{j=1}^s 2^{‐(\alpha_j+1/q‐1/p_j)m} B_{p_j,\theta_j} ^{2^m,k_m} \subset 2^{‐(\alpha_*+1/q‐1/2)m} B^{2^m,k_m}_{2,\theta_*}$. Hence
$$
\Sigma \le \sum \limits _{m:\, 2^{2m/q} k_m^{2/\sigma} \le n \le 2^m k_m} 2^{‐(\alpha_* + 1/q ‐1/2) m} d_n(B_{2,\theta_*} ^{2^m,k_m}, \, l_{q,\sigma} ^{2^m,k_m})+$$$$+ \sum \limits _{2^{2m/q} k_m ^{2/\sigma} > n} 2^{‐(\alpha_* + 1/q ‐1/2) m} d_0(B_{2,\theta_*} ^{2^m,k_m}, \, l_{q,\sigma} ^{2^m,k_m}) =: \Sigma_1.
$$

If $\theta_*>2$, we can apply the formulas from the proof of the upper estimate in Theorem \ref{main3}, replacing $p_s$ by $2$, $\theta_s$ by $\theta_*$, $\alpha_s$ by $\alpha_*$. From the condition $\alpha_*> \frac 12$ we get that $\Sigma_1 \underset{\mathfrak{Z}}{\lesssim} n^{‐\alpha_*} (\log n)^{(d‐1)(\alpha_* + 1/\sigma ‐ 1/\theta_*)}$.

If $\theta_*\le 2$, we have
$$
\Sigma_1 \stackrel{\eqref{phi6}}{\underset{q,\sigma}{\lesssim}} \sum \limits _{m:\, 2^{2m/q} k_m^{2/\sigma} \le n \le 2^m k_m} 2^{‐(\alpha_* + 1/q ‐1/2) m} n^{‐1/2} 2^{m/q} k_m^{1/\sigma} + \sum \limits _{2^{2m/q} k_m ^{2/\sigma} > n} 2^{‐(\alpha_* + 1/q ‐1/2) m} =:\Sigma_2.
$$
We define the number $m_0(n)$ by the equation $2^{m_0(n)} m_0(n) ^{d‐1} = n$ and get
$$
\Sigma_2 \underset{\mathfrak{Z}}{\lesssim} n^{‐\alpha_*} (\log n)^{(d‐1)(\alpha_* + 1/\sigma ‐ 1/2)}.
$$

\begin{Rem}
\label{rem_t4} In Theorem \ref{main4} in all cases we have \eqref{22m0qkm02s} and $2^{m_0(n)} \underset{\mathfrak{Z}}{\asymp} n(\log n)^{\kappa}$ for some $\kappa \in \R$. In addition, $$2^{‐(\alpha_*+1/q‐1/2)m_0(n)} \Phi(2, \, \theta_*, \, q, \, \sigma, \, 2^{m_0(n)}, \, k_{m_0(n)}, \, n)\underset{\mathfrak{Z}}{\asymp} n^{‐\alpha_*}(\log n)^{\beta_*(d‐1)}.$$
\end{Rem}

\vskip 0.3cm

{\bf The upper estimate in Theorem \ref{main5}.} By assertion 4 of Proposition \ref{h_prop}, we have $1\in J$. Further,
$$
\Sigma \le \sum \limits _{m:\, 2^{2m/q} k_m^{2/\sigma} \le n \le 2^m k_m} 2^{‐(\alpha_1 + 1/q ‐1/p_1) m} d_n (B_{p_1,\theta_1} ^{2^m, k_m}, \, l_{q, \sigma}^{2^m,k_m}) +
$$
$$
+ \sum \limits _{2^{2m/q} k_m^{2/\sigma} > n} 2^{‐(\alpha_1 + 1/q ‐1/p_1) m} d_0 (B_{p_1,\theta_1} ^{2^m, k_m}, \, l_{q, \sigma}^{2^m,k_m}) =: \Sigma_1.
$$

Let $\theta_1 \ge \sigma$. Then
$$
\Sigma_1 \stackrel{\eqref{phi3}}{\underset{\mathfrak{Z}}{\lesssim}} \sum \limits _{2^{2m/q} k_m\ge n}2^{‐(\alpha_1 + 1/q ‐1/p_1) m} k_m ^{1/\sigma ‐1/\theta_1} + $$$$+\sum \limits _{m:\, 2^{2m/q} k_m < n\le 2^m k_m} 2^{‐(\alpha_1 + 1/q ‐1/p_1) m} k_m ^{1/\sigma ‐1/\theta_1} (n^{‐1/2} 2^{m/q} k_m^{1/2}) ^{\omega _{p_1,q}}=:\Sigma_2.
$$
We define the number $m_0(n)$ by the equation $2^{2m_0(n)/q} m_0(n)^{d‐1} = n$. Then $2^{m_0(n)} \underset{d,q}{\asymp} n^{q/2} (\log n)^{(‐d+1)q/2}$, $m_0(n) \underset{d,q}{\asymp} \log n$. By the conditions of the theorem, $\alpha_1 < \frac{\omega _{p_1,q}}{2}$, and, as it was mentioned above, $1\in J$. Therefore,
\begin{align}
\label{sigma2_teor4}
\Sigma_2 \underset{\mathfrak{Z}}{\lesssim} n^{‐\frac q2 (\alpha_1 + 1/q‐1/p_1)} (\log n) ^{(d‐1)[(\alpha_1+ 1/q ‐1/p_1)q/2 + 1/\sigma ‐1/\theta_1]}.
\end{align}

Let $2< \theta_1<\sigma$, $\omega_{p_1,q} \ge \omega _{\theta_1, \sigma}$. Then 
$$
\Sigma_1 \stackrel{\eqref{phi5}}{\underset{q,\sigma}{\lesssim}} \sum \limits _{2^{2m/q} k_m^{2/\sigma}> n} 2^{‐(\alpha_1 + 1/q ‐ 1/p_1)m} + \sum \limits _{m:\, 2^{2m/q} k_m^{2/\sigma} \le n \le 2^{2m/q} k_m} 2^{‐(\alpha_1 + 1/q ‐ 1/p_1)m} (n^{‐1/2} 2^{m/q} k_m^{1/\sigma}) ^{\omega_{\theta_1,\sigma}} +
$$
$$
+ \sum \limits _{m:\, 2^{2m/q} k_m < n \le 2^m k_m} 2^{‐(\alpha_1 + 1/q ‐ 1/p_1)m} k_m^{1/\sigma ‐1/\theta_1} (n^{‐1/2} 2^{m/q} k_m^{1/2}) ^{\omega_{p_1,q}} =: \Sigma_2.
$$

In the case $\alpha_1 + 1/q ‐1/p_1 > \frac{\omega_{\theta_1,\sigma}}{q}$, we define $m_0(n)$ by the equation $2^{2m_0(n)/q} m_0(n) ^{d‐1} = n$
and get \eqref{sigma2_teor4}.

In the case $\alpha_1 + 1/q ‐1/p_1 < \frac{\omega_{\theta_1,\sigma}}{q}$ we define the number $m_0(n)$ by the equation $2^{2m_0(n)/q} m_0(n)^{2(d‐1)/\sigma} = n$. Then $2^{m_0(n)} \underset{q,\sigma,d}{\asymp} n^{q/2} (\log n)^{(‐d+1)q/\sigma}$ and 
\begin{align}
\label{sigma2_teor4_1}
\Sigma_2 \underset{\mathfrak{Z}}{\lesssim} n^{‐\frac q2 (\alpha_1 + 1/q‐1/p_1)} (\log n)^{(d‐1)(\alpha_1 + 1/q ‐1/p_1) q/\sigma}.
\end{align}

Let $1\le \theta_1< \sigma$, $\omega _{p_1,q} < \omega _{\theta_1, \sigma}$. Then
$$
\Sigma_1 \stackrel{\eqref{phi4}}{\underset{q,\sigma}{\lesssim}} \sum \limits _{2^{2m/q} k_m^{2/\sigma}> n} 2^{‐(\alpha_1 + 1/q ‐1/p_1) m} + \sum \limits _{m:\, 2^{2m/q} k_m^{2/\sigma} \le n \le 2^{m} k_m^{2/\sigma}} 2^{‐(\alpha_1 + 1/q ‐1/p_1) m} (n^{‐1/2} 2^{m/q} k_m^{1/\sigma}) ^{\omega _{p_1,q}} +
$$
$$
+ \sum \limits _{m:\, 2^{m} k_m^{2/\sigma} < n \le 2^m k_m} 2^{‐(\alpha_1 + 1/q ‐1/p_1) m} \cdot 2^{m(1/q‐1/p_1)} (n^{‐1/2} 2^{m/2} k_m^{1/\sigma}) ^{\omega _{\theta_1,\sigma}} =: \Sigma_2.
$$
Recall that $\alpha_1 < \frac{\omega _{p_1,q}}{2}$. We define the number $m_0(n)$ by the equation $2^{2m_0(n)/q} m_0(n)^{2(d‐1)/\sigma} = n$ and obtain \eqref{sigma2_teor4_1}.

\begin{Rem}
\label{rem_t5} In Theorem \ref{main5} in all cases we have \eqref{22m0qkm02s} and $2^{m_0(n)} \underset{\mathfrak{Z}}{\asymp} n^{q/2}(\log n)^{\kappa}$ for some $\kappa \in \R$. In addition, $$2^{‐(\alpha_1+1/q‐1/p_1)m_0(n)} \Phi(p_1, \, \theta_1, \, q, \, \sigma, \, 2^{m_0(n)}, \, k_{m_0(n)}, \, n)\underset{\mathfrak{Z}}{\asymp} n^{‐\alpha_*}(\log n)^{\beta_*(d‐1)}.$$
\end{Rem}

\vskip 0.3cm

{\bf Upper estimate in Theorem \ref{main6}.} From the definition of the numbers $\hat{\alpha}_i$, $\hat{p}_i$, $\hat{\theta}_i$, \eqref{lam_ij_def} and Theorem \ref{emb_s} it follows that
$$
\cap _{j=1}^s 2^{‐m(\alpha_j+1/q‐1/p_j)} B^{2^m,k_m} _{p_j,\theta_j} \subset 2^{‐(\hat\alpha_i + 1/q ‐1/\hat p_i)m} B_{\hat p_i, \hat \theta _i}^{2^m, k_m}, \quad i=1, \, 2.
$$
Hence
$$
\Sigma \le \sum \limits _{m:\, 2^{2m/q}k_m^{2/\sigma} \le n \le 2^mk_m} d_n(2^{‐(\hat\alpha_1 + 1/q ‐1/\hat p_1)m} B_{\hat p_1, \hat \theta _1}^{2^m, k_m} \cap 2^{‐(\hat\alpha_2 + 1/q ‐1/\hat p_2)m} B_{\hat p_2, \hat \theta _2}^{2^m, k_m}, \, l_{q,\sigma}^{2^m, k_m})+
$$
$$
+\sum \limits _{2^{2m/q}k_m^{2/\sigma} > n} d_0(2^{‐(\hat\alpha_1 + 1/q ‐1/\hat p_1)m} B_{\hat p_1, \hat \theta _1}^{2^m, k_m} \cap 2^{‐(\hat\alpha_2 + 1/q ‐1/\hat p_2)m} B_{\hat p_2, \hat \theta _2}^{2^m, k_m}, \, l_{q,\sigma}^{2^m, k_m}) \underset{\mathfrak{Z}}{\asymp}
$$
$$
\asymp \sum \limits _{2^mk_m \ge n} d_n(2^{‐(\hat\alpha_1 + 1/q ‐1/\hat p_1)m} B_{\hat p_1, \hat \theta _1}^{2^m, k_m} \cap 2^{‐(\hat\alpha_2 + 1/q ‐1/\hat p_2)m} B_{\hat p_2, \hat \theta _2}^{2^m, k_m}, \, l_{q,\sigma}^{2^m, k_m}) =: \Sigma_1
$$
(the last order equality follows from Remark \ref{dn_d0_phi}).

By Remark \ref{rem_pi_hat}, we have $2\le \hat p_i\le q$; hence $\omega_{\hat p_i,q} = \omega'_{\hat p_i,q}$ ($i=1, \, 2$).

From \eqref{h0_def}, \eqref{h1_def}, \eqref{phi_j_def} and the definition of $\hat \alpha_i$, $\hat p_i$ for $i=1, \, 2$ it follows that
\begin{align}
\label{h_neighborhood}
h(t) = \begin{cases} t\Bigl(\hat\alpha_1 ‐\frac 12 \cdot \frac{1/\hat p_1‐1/q}{1/2‐1/q}\Bigr) +\frac 12 \cdot \frac{1/\hat p_1‐1/q}{1/2‐1/q}, & t \in [t_{l_*‐1}, \, t_{l_*}], \\ t\Bigl(\hat\alpha_2 ‐\frac 12 \cdot \frac{1/\hat p_2‐1/q}{1/2‐1/q}\Bigr) +\frac 12 \cdot \frac{1/\hat p_2‐1/q}{1/2‐1/q}, & t \in [t_{l_*}, \, t_{l_*+1}]. \end{cases}
\end{align}

Since $t_{l_*}\in (1, \, q/2)$ is the point of strict minimum of the function $h$, we have
\begin{align}
\label{str_min_h} \hat \alpha_1 ‐ \frac{\omega_{\hat p_1,q}}{2} < 0, \quad \hat \alpha_2 ‐ \frac{\omega_{\hat p_2,q}}{2} > 0.
\end{align}

Let $\omega' _{\hat p_1,q} \ge \omega' _{\hat \theta_1,\sigma}$, $\omega' _{\hat p_2,q} \ge \omega' _{\hat \theta_2,\sigma}$. Then
$$
\Sigma_1 \stackrel{\eqref{phi3}, \eqref{phi5}}{\underset{q,\sigma}{\lesssim}} \sum \limits _{2^m k_m \ge n} \min \{2^{‐(\hat \alpha_1 + 1/q ‐1/\hat p_1)m} k_m^{1/\sigma ‐1/\hat \theta_1} (n^{‐1/2}2^{m/q} k_m^{1/2}) ^{\omega _{\hat p_1,q}},$$$$ 2^{‐(\hat \alpha_2 + 1/q ‐1/\hat p_2)m} k_m^{1/\sigma ‐1/\hat \theta_2} (n^{‐1/2}2^{m/q} k_m^{1/2}) ^{\omega _{\hat p_2,q}}\} =: \Sigma_2.
$$
We define $m_0(n)$ as the solution for $m$ of the equation
\begin{align}
\label{m0n1}
\begin{array}{c}
2^{‐(\hat \alpha_1 + 1/q ‐1/\hat p_1)m} k_m^{1/\sigma ‐1/\hat \theta_1} (n^{‐1/2}2^{m/q} k_m^{1/2}) ^{\omega _{\hat p_1,q}} = \\ = 2^{‐(\hat \alpha_2 + 1/q ‐1/\hat p_2)m} k_m^{1/\sigma ‐1/\hat \theta_2} (n^{‐1/2}2^{m/q} k_m^{1/2}) ^{\omega _{\hat p_2,q}},
\end{array}
\end{align}
take into account \eqref{str_min_h} and get by direct calculations that $\Sigma _2 \underset{\mathfrak{Z}}{\lesssim} n^{‐\alpha_*}(\log n)^{(d-1)\beta_*^1}$. In addition, $2^{m_0(n)} \underset{\mathfrak{Z}}{\asymp} n^{t_{l_*}} (\log n)^\kappa$ for some $\kappa \in \R$ (see \eqref{h_neighborhood} and Lemma \ref{x_log_eq}); hence, for sufficiently large $n$, we have \eqref{22m0qkm02s}.

Let $\omega' _{\hat p_1,q} \le \omega' _{\hat \theta_1,\sigma}$, $\omega' _{\hat p_2,q} \le \omega' _{\hat \theta_2,\sigma}$. Then
$$
\Sigma_1 \stackrel{\eqref{phi4}}{\underset{q,\sigma}{\lesssim}} \sum \limits _{2^m k_m \ge n} \min \{2^{‐(\hat \alpha_1 + 1/q ‐1/\hat p_1)m} (n^{‐1/2}2^{m/q} k_m^{1/\sigma}) ^{\omega _{\hat p_1,q}},$$$$ 2^{‐(\hat \alpha_2 + 1/q ‐1/\hat p_2)m} (n^{‐1/2}2^{m/q} k_m^{1/\sigma}) ^{\omega _{\hat p_2,q}}\} =: \Sigma_2.
$$
We define the number $m_0(n)$ as the solution in $m$ of the equation
\begin{align}
\label{m0n2}
\begin{array}{c}
2^{‐(\hat \alpha_1 + 1/q ‐1/\hat p_1)m} (n^{‐1/2}2^{m/q} k_m^{1/\sigma}) ^{\omega _{\hat p_1,q}} = 
\\
= 2^{‐(\hat \alpha_2 + 1/q ‐1/\hat p_2)m} (n^{‐1/2}2^{m/q} k_m^{1/\sigma}) ^{\omega _{\hat p_2,q}},
\end{array}
\end{align}
apply \eqref{str_min_h} and get $\Sigma _2 \underset{\mathfrak{Z}}{\lesssim} n^{‐\alpha_*}(\log n)^{(d‐1)\beta_*^2}$. As in the previous case, we have \eqref{22m0qkm02s} for large $n$.

Let $\omega' _{\hat p_1,q} < \omega' _{\hat \theta_1, \sigma}$, $\omega' _{\hat p_2,q} > \omega' _{\hat \theta_2, \sigma}$. Then, by \eqref{hat_p_th_def}, \eqref{hat_alp_def}, Remark \ref{hat_lam_0_1} and Theorem \ref{emb_s}, we have
$$
2^{‐(\hat\alpha_1 + 1/q ‐1/\hat p_1)m} B_{\hat p_1, \hat \theta _1}^{2^m, k_m} \cap 2^{‐(\hat\alpha_2 + 1/q ‐1/\hat p_2)m} B_{\hat p_2, \hat \theta _2}^{2^m, k_m} \subset 2^{‐(\hat\alpha + 1/q ‐1/\hat p)m} B_{\hat p, \hat \theta}^{2^m, k_m}.
$$
This implies that
$$
\Sigma_1 \stackrel{\eqref{phi3}, \eqref{phi4}, \eqref{phi5}}{\underset{q,\sigma}{\lesssim}} \sum \limits _{2^m k_m \ge n} \min \{2^{‐(\hat \alpha_1 + 1/q ‐1/\hat p_1)m} (n^{‐1/2}2^{m/q} k_m^{1/\sigma}) ^{\omega _{\hat p_1,q}},$$$$ 2^{‐(\hat \alpha_2 + 1/q ‐1/\hat p_2)m} k_m^{1/\sigma ‐1/\hat \theta_2} (n^{‐1/2}2^{m/q} k_m^{1/2}) ^{\omega _{\hat p_2,q}}, $$$$
2^{‐(\hat \alpha + 1/q ‐1/\hat p)m} (n^{‐1/2}2^{m/q} k_m^{1/\sigma}) ^{\omega _{\hat p,q}}\} =: \Sigma_2.
$$
We also notice that from the equality $\omega'_{\hat p,q} = \omega'_{\hat \theta,\sigma}$ and the inequalities $2\le \hat p_1, \, \hat p_2\le q$ it follows that
\begin{align}
\label{www} 2^{‐(\hat \alpha + 1/q ‐1/\hat p)m} (n^{‐1/2}2^{m/q} k_m^{1/\sigma}) ^{\omega _{\hat p,q}} = 2^{‐(\hat \alpha + 1/q ‐1/\hat p)m} k_m^{1/\sigma ‐1/\hat \theta} (n^{‐1/2}2^{m/q} k_m^{1/2}) ^{\omega _{\hat p,q}}.
\end{align}

We define the number $m_1(n)$ as the solution of the equation
$$
2^{‐(\hat \alpha_1 + 1/q ‐1/\hat p_1)m} (n^{‐1/2}2^{m/q} k_m^{1/\sigma}) ^{\omega _{\hat p_1,q}} =
$$
$$
= 2^{‐(\hat \alpha + 1/q ‐1/\hat p)m} (n^{‐1/2}2^{m/q} k_m^{1/\sigma}) ^{\omega _{\hat p,q}},
$$
and $m_2(n)$, as the solution of the equation 
$$
2^{‐(\hat \alpha_2 + 1/q ‐1/\hat p_2)m} k_m^{1/\sigma ‐1/\hat \theta_2} (n^{‐1/2}2^{m/q} k_m^{1/2}) ^{\omega _{\hat p_2,q}} =
$$
$$
= 2^{‐(\hat \alpha + 1/q ‐1/\hat p)m} (n^{‐1/2}2^{m/q} k_m^{1/\sigma}) ^{\omega _{\hat p,q}}.
$$
From \eqref{hat_p_th_def}, \eqref{hat_alp_def}, \eqref{www} it follows that $m_1(n)$ is the solution of \eqref{m0n2}, and $m_2(n)$ is the solution of \eqref{m0n1}.

We claim that $m_1(n)< m_2(n)$ for sufficiently large $n$. Indeed, from \eqref{m0n1}, \eqref{m0n2} and Lemma \ref{x_log_eq} it follows that
$$
2^{m_1(n)} \underset{\mathfrak{Z}}{\asymp} n^{\tilde \alpha} (\log n)^{\tilde \beta_1}, \quad 2^{m_2(n)} \underset{\mathfrak{Z}}{\asymp} n^{\tilde \alpha} (\log n)^{\tilde \beta_2},
$$
where
$$
\tilde \alpha = \frac{\omega_{\hat p_1,q}‐\omega_{\hat p_2,q}}{2\gamma}, \quad \tilde \beta_1 = \frac{1‐d}{\gamma}\cdot \frac{\omega_{\hat p_1,q}‐\omega_{\hat p_2,q}}{\sigma},
$$
$$
\tilde \beta_2 = \frac{1‐d}{\gamma}\cdot (1/\hat \theta_2‐1/\hat \theta_1 +(\omega_{\hat p_1,q}‐\omega_{\hat p_2,q})/2), \quad \gamma = \hat \alpha_2‐\hat \alpha_1+ \frac{\omega_{\hat p_1,q} ‐ \omega_{\hat p_2,q}}{2}.
$$
From \eqref{str_min_h} we get that $\gamma>0$. Hence, by the condition $\omega' _{\hat p_1,q} < \omega' _{\hat \theta_1, \sigma}$, $\omega' _{\hat p_2,q} > \omega' _{\hat \theta_2, \sigma}$, we have $\tilde \beta_1<\tilde \beta_2$.

Recall that $\zeta = \hat \alpha ‐ \frac{\omega'_{\hat p,q}}{2} \ne 0$ (see \eqref{zeta_def}).
If $\zeta >0$, we set $m_0(n) = m_1(n)$; if $\zeta <0$, we set $m_0(n) = m_2(n)$. In both cases we apply \eqref{str_min_h} and get $\Sigma _2 \underset{\mathfrak{Z}}{\lesssim} n^{‐\alpha_*}(\log n)^{(d-1)\beta_*}$.

The case $\omega' _{\hat p_1,q} > \omega' _{\hat \theta_1, \sigma}$, $\omega' _{\hat p_2,q} < \omega' _{\hat \theta_2, \sigma}$ is similar; here $m_1(n)$ is the solution of \eqref{m0n1}, $m_2(n)$ is the solution of \eqref{m0n2}, $m_0(n)= m_1(n)$ for $\zeta>0$, $m_0(n) = m_2(n)$ for $\zeta<0$. We again get $m_1(n)<m_2(n)$ for sufficiently large $n$.

\begin{Rem}
\label{rem_t7} In case 1 of Theorem \ref{main6} the number $m_0(n)$ is the solution of \eqref{m0n1}, and in case 2, it is the solution of \eqref{m0n2}. In case 3 of Theorem \ref{main6}, $m_0(n)$ is the solution of \eqref{m0n1} for $(\omega' _{\hat p_1,q} ‐ \omega' _{\hat \theta_1, \sigma})\zeta>0$, and it is the solution of \eqref{m0n2} for $(\omega' _{\hat p_1,q} ‐ \omega' _{\hat \theta_1, \sigma})\zeta<0$.
If $m_0(n)$ is the solution of \eqref{m0n1}, the value of the left‐ and right‐hand sides of this equation at the point $m_0(n)$ equal to $n^{‐\alpha_*}(\log n)^{(d‐1)\beta_*^1}$. If $m_0(n)$ is the solution of \eqref{m0n2}, the value of both sides of this equation at the point $m_0(n)$ equals to $n^{‐\alpha_*}(\log n)^{(d‐1)\beta_*^2}$. In both cases $2^{m_0(n)} \underset{\mathfrak{Z}}{\asymp} n^{t_{l_*}} (\log n)^\kappa$ for some $\kappa \in \R$; in particular, \eqref{22m0qkm02s} holds.
\end{Rem}

\begin{Rem}
\label{rem_m0m1_7071} In case 3 of Theorem \ref{main6} we have $m_0(n)=m_1(n)$ for $\zeta>0$, $m_0(n) = m_2(n)$ for $\zeta<0$. If $m_1(n)$ is the solution of \eqref{m0n1}, then $m_2(n)$ is the solution of \eqref{m0n2}, and vice versa.
\end{Rem}

\begin{Rem}
\label{m1m2} It was shown that for $(\omega' _{\hat p_1,q} ‐ \omega' _{\hat \theta_1, \sigma})(\omega' _{\hat p_2,q} ‐ \omega' _{\hat \theta_2, \sigma})<0$ and for sufficiently large $n$ we have $m_1(n) < m_2(n)$.
\end{Rem}

\section{Proof of lower estimates}

From \eqref{card_j_m}, \eqref{x_b_norm}, \eqref{discr_eq} and \eqref{mmdm1} we get
\begin{Cor}
\label{lower_est} Let $n\in \Z_+$, $m\in \N$. Then
\begin{align}
\label{low_estim} d_n(\cap _{j=1}^s SB^{\overline{r}_j}_{p_j,\theta_j}(\mathbb{T}^d), \, B^{\overline{l}}_{q, \sigma}(\mathbb{T}^d)) \underset{\mathfrak{Z}}{\gtrsim} d_n (\cap _{j=1}^s 2^{‐m(\alpha_j + 1/q ‐1/p_j)} B_{p_j,\theta_j} ^{2^m,k_m}, \, l_{q,\sigma} ^{2^m,k_m}),
\end{align}
where $k_m=m^{d‐1}$ (see \eqref{km_md1}).
\end{Cor}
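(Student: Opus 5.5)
The plan is to restrict the intersection of Besov classes to the single dyadic level $m$ and transfer everything to the sequence side. By \eqref{discr_eq} it suffices to bound from below $d_n(\cap_{j=1}^s Sb^{\overline{r}_j}_{p_j,\theta_j}, \, b^{\overline{l}}_{q,\sigma})$. If the right-hand side of \eqref{discr_eq} is infinite for $n=0$, the inequality is trivial.

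Fix $m\in\N$ and consider the subspace $X_m\subset b^{\overline{l}}_{q,\sigma}$ of sequences supported on the blocks $\{(\overline{m}',j):\ m'_1+\dots+m'_d=m,\ j\in J_{\overline{m}'}\}$. By \eqref{mmdm1} the number of such $\overline{m}'$ is $\asymp k_m$. By \eqref{card_j_m} each block has between $c_1(d)2^m$ and $c_2(d)2^m$ coordinates. I will pick a subfamily of exactly $k'_m\asymp k_m$ indices $\overline{m}'$, where $k'_m\ge c k_m$ (and, passing to a smaller $\overline{m}$-family if needed, $k'_m\le k_m$). In each of these blocks I take exactly $t_m:=\lceil c_1 2^m\rceil$ coordinates, or $2^m$ after adjusting constants.

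On this coordinate subspace $\R^{t_m k'_m}$, formula \eqref{x_b_norm} shows that, since $(\overline{r},\overline{m}')=rm$ is constant there,
$$\|x\|_{b^{\overline{r}_j}_{p_j,\theta_j}}=2^{m(r_j-1/p_j)}\|x\|_{l^{t_m,k'_m}_{p_j,\theta_j}}, \qquad \|x\|_{b^{\overline{l}}_{q,\sigma}}=2^{m(l-1/q)}\|x\|_{l^{t_m,k'_m}_{q,\sigma}}.$$
Hence the intersection of the unit balls, restricted to this subspace and rescaled by $2^{-m(l-1/q)}$, equals $\cap_j 2^{-m(\alpha_j+1/q-1/p_j)}B^{t_m,k'_m}_{p_j,\theta_j}$ in $l^{t_m,k'_m}_{q,\sigma}$. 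The exact powers of $2^m$ are replaced by $\asymp$ factors, which changes nothing up to constants.

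Next I pass from the subspace to the whole space. The coordinate projection $P$ onto this subspace is a norm-one operator in $b^{\overline{l}}_{q,\sigma}$, being a restriction of the $l_\sigma(l_q)$ structure. Therefore $d_n(M\cap X,\,Y)\ge d_n(M\cap X,\,X)$ with $X=PY$: any $n$-dimensional approximating subspace $L$ can be replaced by $PL$ without increasing the error. This yields the widths of the finite-dimensional intersection.

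The remaining point is to replace $t_m,k'_m$ by $2^m,k_m$. The widths of intersections of mixed-norm balls are monotone under enlarging the index sets, again by the projection argument applied in the reverse direction. Conversely, if I take all coordinates with block sizes between $c_12^m$ and $c_22^m$, I can embed $l^{2^m,k_m}_{p,\theta}$ into $l^{c_2 2^m,k_m}_{p,\theta}$ by splitting or duplicating coordinates at the cost of constants depending only on $d$. Alternatively, it is enough to observe that the final estimates only use Theorem \ref{1mixed} and Theorem \ref{inters_ball_dn}, which are insensitive to changing $t,k$ by bounded factors.

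I expect this dimension-matching step to be the only mildly delicate point. I would handle it by working with $t=\lfloor c_12^m\rfloor$ and $k=\lfloor c k_m\rfloor$ directly and noting that the subsequent computations hold up to constants in $\mathfrak{Z}$.
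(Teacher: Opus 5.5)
Your reduction is the one the paper has in mind; its proof is just the appeal to \eqref{card_j_m}, \eqref{x_b_norm}, \eqref{discr_eq}, \eqref{mmdm1}. You pass to the sequence space, restrict to the blocks with $m_1'+\dots+m_d'=m$, read off the weights $2^{m(r_j-1/p_j)}$ and $2^{m(l-1/q)}$, and use that coordinate projections have norm one in $b^{\overline{l}}_{q,\sigma}$. Those steps are fine.

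The gap is the last step, which you flag but do not close. What you actually prove is a lower bound by the width of a $(t_m,k_m')$-configuration with $t_m\le 2^m$, $k_m'\le k_m$. Monotonicity in the index sets only says that this width is at most the width of the $(2^m,k_m)$-configuration, which is the wrong direction.

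Nor can the $(2^m,k_m)$-configuration be embedded into level $m$ ``by splitting or duplicating''. First, \eqref{card_j_m} allows blocks with fewer than $2^m$ coordinates. Second, for $d\ge 3$ the number of blocks is $\binom{m+d-1}{d-1}<m^{d-1}$ for large $m$. So $D:=\dim X_m$ may be smaller than $2^mk_m$. In that case the level-$m$ width is $0$ at $n=D$, while the right-hand side of \eqref{low_estim} is positive. Hence no argument confined to level $m$ can give the statement for all $n$.

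Your fallback, that Theorems \ref{1mixed} and \ref{inters_ball_dn} are insensitive to bounded changes of $t,k$, proves a different statement. It also holds only under $n\le tk/2$: for $n$ close to $tk$, shrinking $t$ by a constant factor can make the width zero.

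The missing idea is to move up a bounded number of levels. Pick $a=a(d)\in\Z_+$ with $c_1(d)2^a\ge (d-1)!$.
\begin{itemize}
\item Level $m+a$ has $\binom{m+a+d-1}{d-1}\ge m^{d-1}/(d-1)!$ blocks, each with at least $(d-1)!\,2^m$ coordinates.
\item So you can choose $k_m$ disjoint groups of exactly $2^m$ coordinates, with at most $(d-1)!$ groups inside each block. Let $V$ be their span.
\item On $V$, every level-$(m+a)$ mixed norm of type $(p,\theta)$ is within the factor $((d-1)!)^{|1/p-1/\theta|}\le (d-1)!$ of the $l^{2^m,k_m}_{p,\theta}$-norm. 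Inside one block you are only comparing the $l_p$- and $l_\theta$-norms of at most $(d-1)!$ numbers.
\item After rescaling, the weights are $2^{-(m+a)(\alpha_j+1/q-1/p_j)}$. These differ from $2^{-m(\alpha_j+1/q-1/p_j)}$ by factors depending only on $\mathfrak{Z}$.
\end{itemize}
Your norm-one projection argument applied to $V$ then gives \eqref{low_estim} exactly, for every $n\in\Z_+$.
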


We first formulate the theorem about estimates for the widths of an intersection of finite‐dimensional balls in a mixed norm. 

Let $A\ne \varnothing$ and let, for each $\alpha\in A$, the numbers $\nu_\alpha>0$, $p_\alpha\in [1, \, \infty]$ and $\theta_\alpha\in [1, \, \infty]$ be given. In \cite{vas_mix_sev} order estimates for the Kolmogorov $n$‐widths of $\cap _{\alpha\in A} \nu_\alpha B^{t,k}_{p_\alpha, \, \theta_\alpha}$ in $l^{t,k}_{q,\sigma}$ were obtained for $2\le q, \, \sigma<\infty$, $n\le \frac{tk}{2}$.
In order to formulate this result we need some notation.

Given $\alpha$, $\beta$, $\gamma \in A$, we denote by $\Delta_{\alpha,\beta,\gamma}$ the triangle with vertices $(1/p_\alpha, \, 1/\theta_\alpha)$, $(1/p_\beta, \, 1/\theta_\beta)$, $(1/p_\gamma, \, 1/\theta_\gamma)$. We write $\Delta_{\alpha,\beta,\gamma} \in {\cal R}$ if its vertices do not lie on the same line, i.e., they are affinely independent.

The value $\Phi(p, \, \theta, \, q, \, \sigma, \, t, \, k, \, n)$ was defined in the previous section.

We set $P=\{(p_\alpha, \, \theta_\alpha)\}_{\alpha\in A}$. The function $\nu:A \rightarrow (0, \, \infty)$ is defined by the formula $\alpha \stackrel{\nu}{\mapsto}\nu_\alpha$. The sets ${\cal N}_j={\cal N}_j(P; \, q, \, \sigma)$ ($1\le j\le 7$) and the magnitudes $\Psi_j= \Psi_j(P; \, q, \, \sigma; \, t, \, k, \, n; \, \nu)$ ($0\le j\le 7$) are defined as follows:
\begin{align}
\label{n1} \begin{array}{c} {\cal N}_1= \left\{ (\alpha, \, \beta)\in A\times A:\;p_\alpha \ne q, \; \exists \lambda _{\alpha,\beta} \in (0, \, 1):\; \frac 1q = \frac{1- \lambda _{\alpha,\beta}}{p_\alpha} + \frac{\lambda _{\alpha,\beta}}{p_\beta}\right\}, \\ \frac{1}{\theta_{\alpha,\beta}} := \frac{1-\lambda _{\alpha,\beta}}{\theta_\alpha} + \frac{\lambda _{\alpha,\beta}}{\theta_\beta}, \quad (\alpha, \, \beta) \in {\cal N}_1, \end{array}
\end{align}
\begin{align}
\label{n2} \begin{array}{c}{\cal N}_2 = \left\{ (\alpha, \, \beta)\in A\times A:\;\theta_\alpha \ne \sigma, \; \exists  \mu _{\alpha,\beta} \in (0, \, 1):\; \frac{1}{\sigma} = \frac{1-\mu _{\alpha,\beta}}{\theta_\alpha} + \frac{ \mu _{\alpha,\beta}}{\theta_\beta}\right\}, \\ \frac{1}{ p_{\alpha,\beta}} := \frac{1- \mu _{\alpha,\beta}}{p_\alpha} + \frac{\mu _{\alpha,\beta}}{p_\beta}, \quad (\alpha, \, \beta) \in {\cal N}_2, \end{array}
\end{align}
\begin{align}
\label{n3} \begin{array}{c}{\cal N}_3 = \left\{ (\alpha, \, \beta)\in A\times A:\; p_\alpha \ne 2, \; \exists \tilde \lambda _{\alpha,\beta} \in (0, \, 1):\; \frac 12 = \frac{1-\tilde \lambda _{\alpha,\beta}}{p_\alpha} + \frac{\tilde \lambda _{\alpha,\beta}}{p_\beta}\right\}, \\ \frac{1}{\tilde \theta_{\alpha,\beta}} := \frac{1-\tilde \lambda _{\alpha,\beta}}{\theta_\alpha} + \frac{\tilde \lambda _{\alpha,\beta}}{\theta_\beta}, \quad (\alpha, \, \beta) \in {\cal N}_3, \end{array}
\end{align}
\begin{align}
\label{n4} \begin{array}{c}{\cal N}_4 = \left\{ (\alpha, \, \beta)\in A\times A:\;\theta_\alpha \ne 2, \; \exists \tilde \mu _{\alpha,\beta} \in (0, \, 1):\; \frac{1}{2} = \frac{1-\tilde \mu _{\alpha,\beta}}{\theta_\alpha} + \frac{\tilde \mu _{\alpha,\beta}}{\theta_\beta}\right\}, \\ \frac{1}{\tilde p_{\alpha,\beta}} := \frac{1-\tilde \mu _{\alpha,\beta}}{p_\alpha} + \frac{\tilde \mu _{\alpha,\beta}}{p_\beta},\quad (\alpha, \, \beta) \in {\cal N}_4,  \end{array}
\end{align}
\begin{align}
\label{n5} \begin{array}{c}{\cal N}_5 = \left\{ (\alpha, \, \beta)\in A\times A:\; \exists \hat\lambda _{\alpha,\beta} \in (0, \, 1), \; \hat p_{\alpha,\beta}\in (2, \, q), \; \hat \theta_{\alpha,\beta} \in (2, \, \sigma):\right. \\ \left. \frac{1}{\hat p_{\alpha,\beta}} = \frac{1-\hat\lambda _{\alpha,\beta}}{p_\alpha} + \frac{\hat\lambda _{\alpha,\beta}}{p_\beta}, \; \frac{1}{\hat\theta_{\alpha,\beta}} = \frac{1-\hat\lambda _{\alpha,\beta}}{\theta_\alpha} + \frac{\hat\lambda _{\alpha,\beta}}{\theta_\beta}, \; \omega'_{\hat p_{\alpha,\beta},q}=\omega'_{\hat\theta_{\alpha,\beta},\sigma}\; \text{and}\; \omega'_{p_{\alpha},q} \ne \omega'_{\theta_{\alpha},\sigma}\right\}, \end{array}
\end{align}
\begin{align}
\label{n6} \begin{array}{c}{\cal N}_6 = \left\{ (\alpha, \, \beta, \, \gamma)\in A\times A \times A:\; \exists \tau_\alpha, \, \tau_\beta, \, \tau_\gamma > 0:\; \tau_\alpha+\tau_\beta+\tau_\gamma =1, \right. \\ \left.\frac{1}{q} = \frac{\tau_\alpha}{p_\alpha} + \frac{\tau_\beta}{p_\beta}+\frac{\tau_\gamma}{p_\gamma}, \; \frac{1}{\sigma} = \frac{\tau_\alpha}{\theta_\alpha} + \frac{\tau_\beta}{\theta_\beta}+\frac{\tau_\gamma}{\theta_\gamma}, \; \Delta _{\alpha,\beta,\gamma}\in {\cal R}\right\},\end{array}
\end{align}
\begin{align}
\label{n7} \begin{array}{c}{\cal N}_7 = \left\{ (\alpha, \, \beta, \, \gamma)\in A\times A \times A:\; \exists \overline{\tau}_\alpha, \, \overline{\tau}_\beta, \, \overline{\tau}_\gamma > 0:\; \overline{\tau}_\alpha+\overline{\tau}_\beta+\overline{\tau}_\gamma =1, \right. \\ \left.\frac{1}{2} = \frac{\overline{\tau}_\alpha}{p_\alpha} + \frac{\overline{\tau}_\beta}{p_\beta}+\frac{\overline{\tau}_\gamma}{p_\gamma}, \; \frac{1}{2} = \frac{\overline{\tau}_\alpha}{\theta_\alpha} + \frac{\overline{\tau}_\beta}{\theta_\beta}+\frac{\overline{\tau}_\gamma}{\theta_\gamma}, \; \Delta _{\alpha,\beta,\gamma}\in {\cal R}\right\},\end{array}
\end{align}
\begin{align}
\label{psi0} \Psi_0 = \inf _{\alpha \in A} \nu_\alpha \Phi(p_\alpha, \, \theta _\alpha, \, q, \, \sigma, \, t, \, k, \, n),
\end{align}
\begin{align}
\label{psi1} \Psi_1 = \inf _{(\alpha,\, \beta) \in {\cal N}_1} \nu_\alpha^{1-\lambda_{\alpha,\beta}} \nu_\beta ^{ \lambda_{\alpha,\beta}} \Phi(q, \, \theta _{\alpha,\beta}, \, q, \, \sigma, \, t, \, k, \, n),
\end{align}
\begin{align}
\label{psi2} \Psi_2 = \inf _{(\alpha,\, \beta) \in {\cal N}_2} \nu_\alpha^{1-\mu_{\alpha,\beta}} \nu_\beta ^{ \mu_{\alpha,\beta}} \Phi(p_{\alpha,\beta}, \, \sigma, \, q, \, \sigma, \, t, \, k, \, n),
\end{align}
\begin{align}
\label{psi3} \Psi_3 = \inf _{(\alpha,\, \beta) \in {\cal N}_3} \nu_\alpha^{1-\tilde \lambda_{\alpha,\beta}} \nu_\beta ^{\tilde \lambda_{\alpha,\beta}} \Phi(2, \, \tilde\theta _{\alpha,\beta}, \, q, \, \sigma, \, t, \, k, \, n),
\end{align}
\begin{align}
\label{psi4} \Psi_4 = \inf _{(\alpha,\, \beta) \in {\cal N}_4} \nu_\alpha^{1-\tilde \mu_{\alpha,\beta}} \nu_\beta ^{\tilde \mu_{\alpha,\beta}} \Phi(\tilde p_{\alpha,\beta}, \, 2, \, q, \, \sigma, \, t, \, k, \, n),
\end{align}
\begin{align}
\label{psi5} \Psi_5 = \inf _{(\alpha,\, \beta) \in {\cal N}_5} \nu_\alpha^{1-\hat\lambda_{\alpha,\beta}} \nu_\beta ^{\hat\lambda_{\alpha,\beta}} \Phi(\hat p_{\alpha,\beta}, \, \hat \theta_{\alpha,\beta}, \, q, \, \sigma, \, t, \, k, \, n),
\end{align}
\begin{align}
\label{psi6} \Psi_6 = \inf _{(\alpha,\, \beta,\, \gamma) \in {\cal N}_6} \nu_\alpha^{\tau_\alpha} \nu_\beta ^{\tau_\beta} \nu_\gamma^{\tau_\gamma} \Phi(q, \, \sigma, \, q, \, \sigma, \, t, \, k, \, n),
\end{align}
\begin{align}
\label{psi7} \Psi_7 = \inf _{(\alpha,\, \beta,\, \gamma) \in {\cal N}_7} \nu_\alpha^{\overline{\tau}_\alpha} \nu_\beta ^{\overline{\tau}_\beta} \nu_\gamma^{\overline{\tau}_\gamma} \Phi(2, \, 2, \, q, \, \sigma, \, t, \, k, \, n)
\end{align}
(the infimum of the empty set is $+\infty$); the numbers $\theta_{\alpha,\beta}$, $ \lambda_{\alpha,\beta}$, etc., are defined by (\ref{n1})--(\ref{n7}).

\begin{trma}
\label{inters_ball_dn} {\rm (see \cite{vas_mix_sev}).} Let $2\le q<\infty$, $2\le \sigma<\infty$, $t$, $k\in \N$, $n\in \Z_+$, $n \le \frac{tk}{2}$, and let $\Psi_j=\Psi_j(P; \, q, \, \sigma; \, t, \, k, \, n; \, \nu)$ be defined by \eqref{psi0}--\eqref{psi7}. Then
\begin{align}
\label{dn_main}
d_n(\cap _{\alpha\in A} \nu_\alpha B^{t,k}_{p_\alpha, \, \theta_\alpha}, \, l^{t,k}_{q,\sigma}) \underset{q,\sigma}{\asymp} \min _{0\le j\le 7} \Psi_j(P; \, q, \, \sigma; \, t, \, k, \, n; \, \nu).
\end{align}
\end{trma}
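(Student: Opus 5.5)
The plan is to prove the two inequalities in \eqref{dn_main} separately. First I would reduce to finite $A$, as in the Remark after Theorem \ref{fin_dim_inters}.

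\emph{Upper estimate.} Each $\Psi_j$ is obtained from one, two or three indices $\alpha,\beta,\gamma\in A$, and the corresponding $\tau$'s (or $\lambda_{\alpha,\beta}$, $\mu_{\alpha,\beta}$, etc.) are convex weights. Theorem \ref{emb_s} then gives
$$
\cap_{\alpha\in A}\nu_\alpha B^{t,k}_{p_\alpha,\theta_\alpha}\subset \nu_\alpha^{\tau_\alpha}\nu_\beta^{\tau_\beta}\nu_\gamma^{\tau_\gamma}B^{t,k}_{p,\theta},
$$
where $(1/p,1/\theta)$ is the same convex combination of the points $(1/p_\alpha,1/\theta_\alpha)$. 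By construction of ${\cal N}_1,\dots,{\cal N}_7$, this point lies on one of the lines $p=q$, $\theta=\sigma$, $p=2$, $\theta=2$, on the curve $\omega'_{p,q}=\omega'_{\theta,\sigma}$, or is one of the points $(1/q,1/\sigma)$, $(1/2,1/2)$. Applying Theorem \ref{1mixed} to this single ball bounds the width by the corresponding $\Psi_j$. Taking the minimum over all admissible tuples gives the upper bound $\lesssim\min_j\Psi_j$.

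\emph{Lower estimate.} I would use test sets. For $1\le a\le t$ and $1\le b\le k$, let $V_{a,b}$ be the set of vectors $(x_{i,j})$ with entries in $\{0,\pm1\}$ that have exactly $a$ nonzero entries in each of exactly $b$ blocks. Its $l^{t,k}_{p,\theta}$-norm is $a^{1/p}b^{1/\theta}$, so
$$
c_{a,b}\,\mathrm{conv}\,V_{a,b}\subset\cap_\alpha \nu_\alpha B^{t,k}_{p_\alpha,\theta_\alpha},\qquad c_{a,b}=\min_\alpha \nu_\alpha a^{-1/p_\alpha}b^{-1/\theta_\alpha}.
$$
Next I need a lower bound for $d_n(\mathrm{conv}\,V_{a,b},l^{t,k}_{q,\sigma})$ of order
$$
a^{1/q}b^{1/\sigma}\min\{1,\,n^{-1/2}(t/a)^{1/q}(k/b)^{1/\sigma}\cdot(\text{mixed factors})\}.
$$
I would obtain it by the Gluskin-type averaging argument underlying Theorem \ref{1mixed}: symmetrize over permutations and sign changes within blocks and across blocks, and compare with the widths of $B_1$- and $B_2$-type mixed balls. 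This yields, for each $(a,b)$,
$$
d_n\gtrsim c_{a,b}\,F(a,b),
$$
where $\log F$ is piecewise linear in $(x,y)=(\log a,\log b)$.

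\emph{Matching the two bounds.} It remains to show that $\max_{a,b} c_{a,b}F(a,b)\gtrsim\min_j\Psi_j$. Here $\log c_{a,b}$ is a minimum of affine functions of $(x,y)$, and so is $\log F$; hence we must maximize a concave piecewise linear function over the rectangle $[0,\log t]\times[0,\log k]$. By LP duality, the maximum equals the minimum over convex combinations of the active affine pieces. The active pieces are one, two or three of the $\nu_\alpha$-terms together with a piece of $\log F$ (the regions $p\gtrless q$, $p\gtrless 2$, $\theta\gtrless\sigma$, $\theta\gtrless2$, and the $\omega'$-balance). The dual optimum is attained at a vertex, and the vertices correspond exactly to the configurations ${\cal N}_1,\dots,{\cal N}_7$ and to single indices ($\Psi_0$). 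So the dual value is $\min_j\Psi_j$ up to constants, and the points $(a,b)$ are rounded to integers at the cost of a constant factor.

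I expect the main obstacle to be this combinatorial duality step. One must check that every vertex of the dual LP is one of the listed configurations, in particular the $\omega'$-balanced pairs in ${\cal N}_5$ and the non-degenerate triangles in ${\cal N}_6$, ${\cal N}_7$. One must also handle boundary effects where $a=t$ or $b=k$, and the restriction $n\le tk/2$. I would first settle the case of two balls and then pass to general finite $A$ by a Helly/Carathéodory-type reduction, since at most three constraints are active at a vertex in the plane.
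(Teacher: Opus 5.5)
The paper does not prove this statement; it is quoted from \cite{vas_mix_sev}, so your proposal has to stand on its own. Your upper estimate is correct: each $\Psi_j$ is $\prod\nu_\alpha^{\tau_\alpha}$ times $\Phi$ evaluated at a convex combination of the points $(1/p_\alpha,1/\theta_\alpha)$. Theorems~\ref{emb_s} and~\ref{1mixed} therefore give $d_n\lesssim\min_j\Psi_j$, wherever that point lies.

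The lower estimate has a genuine gap at the test-set step. What the matching actually needs is
\[
d_n(V_{a,b},\,l^{t,k}_{q,\sigma})\underset{q,\sigma}{\gtrsim}\min\bigl\{a^{1/q}b^{1/\sigma},\;(ab)^{1/2}n^{-1/2}t^{1/q}k^{1/\sigma}\bigr\},\qquad n\le tk/2 .
\]
Its right-hand side is also an upper bound, since $V_{a,b}\subset(ab)^{1/2}B^{t,k}_{2,2}$. You leave the decisive factor as unspecified ``mixed factors'', and the route you describe cannot produce it:
\begin{itemize}
\item Balls containing $V_{a,b}$ bound its widths only from above.
\item Symmetrization, i.e.\ averaging over groups of $a$ coordinates inside each block and over groups of $b$ blocks, does give a lower bound, since both are norm-one projections in $l^{t,k}_{q,\sigma}$. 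But it reduces $V_{a,b}$ to the vertices of $B^{t/a,k/b}_{1,1}$ and yields exactly your displayed expression $a^{1/q}b^{1/\sigma}\min\{1,n^{-1/2}(t/a)^{1/q}(k/b)^{1/\sigma}\}$, and only for $n\le tk/(2ab)$.
\item The inscribed ball $B^{t,k}_{1,1}$ does no better.
\end{itemize}
This is too small by the factor $(ab)^{1/2}$. For $a=t$, $b=k$ it gives nothing, whereas $d_n(B^{t,k}_{\infty,\infty},l^{t,k}_{q,\sigma})\asymp t^{1/q}k^{1/\sigma}$. With such an $F$, the optimization over $(a,b)$ would not return even the one-ball value $\Phi$. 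The displayed bound is a Gluskin-type theorem for the mixed norm, the analogue of Gluskin's estimate for $V_k^N$ in $l_q^N$. It must be imported as a separate ingredient and is not a consequence of Theorem~\ref{1mixed}.

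The second gap is the matching step, which you only assert, although this is where the families ${\cal N}_1,\dots,{\cal N}_7$ are decided. Your idea is right and can be organized cleanly.
\begin{itemize}
\item In $x=\log a$, $y=\log b$, the function $\min_\alpha(\log\nu_\alpha-x/p_\alpha-y/\theta_\alpha)+\log F$ is a minimum over $\alpha$ of functions concave in $(x,y)$ on the compact box $[0,\log t]\times[0,\log k]$.
\item Sion's minimax theorem then gives $\max_{a,b}c_{a,b}F(a,b)\asymp\inf_w\prod_\alpha\nu_\alpha^{w_\alpha}G(P_w,\Theta_w)$. Here $w$ ranges over probability vectors on $A$, $(1/P_w,1/\Theta_w)=\sum_\alpha w_\alpha(1/p_\alpha,1/\theta_\alpha)$, and $G(P,\Theta)=\max_{a,b}a^{-1/P}b^{-1/\Theta}F(a,b)$.
\item With the correct $F$ one checks that $G$ reproduces $\Phi$. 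So the lower bound meets the upper bound $\inf_w\prod_\alpha\nu_\alpha^{w_\alpha}\Phi(P_w,\Theta_w)$ given by Theorem~\ref{emb_s}.
\end{itemize}

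What remains, and what you do not do, is to show that this infimum is attained at the configurations \eqref{n1}--\eqref{n7}. Here $\log\Phi$ is convex and piecewise affine in $(1/p,1/\theta)$. Its possible kinks lie along $1/p=1/q$, $1/p=1/2$, $1/\theta=1/\sigma$, $1/\theta=1/2$ and the line $\omega'_{p,q}=\omega'_{\theta,\sigma}$. A minimizing $w$ can therefore be taken with at most one more nonzero weight than the number of kink lines through $(1/P_w,1/\Theta_w)$, so at most three. You must then show three things:
\begin{itemize}
\item The crossings $(1/q,1/2)$ and $(1/2,1/\sigma)$ create no extra triple terms. Near each of them, depending on whether $n$ exceeds $tk^{2/\sigma}$, resp.\ $t^{2/q}k$, at most one of the two lines is an actual kink.
\item The crossings $(1/q,1/\sigma)$ and $(1/2,1/2)$ do create triple terms, namely ${\cal N}_6$ and ${\cal N}_7$.
\item The line $\omega'_{p,q}=\omega'_{\theta,\sigma}$ matters only between these two points, which is ${\cal N}_5$ with $\hat p\in(2,q)$, $\hat\theta\in(2,\sigma)$.
\end{itemize}
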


Now we prove the lower estimates in Theorems \ref{main1}‐‐\ref{main6}.

We begin with the most complicated Theorem \ref{main6}. For the other theorems the arguments are similar, in the end of this section we write the analogues of Lemma \ref{och} (see below).

Recall that $t_{l_*}\in (1, \, q/2)$ is the minimum point of the function $h$.

The number $m_0(n)$ is defined as in the proof of the upper estimate in Theorem \ref{main6} (see Remark \ref{rem_t7}); let $m'(n)$ be the nearest to $m_0(n)$ natural number such that $2^{m'(n)} k_{m'(n)}\ge 2n$. Since 
\begin{align}
\label{2m0n_ntlsk}
2^{m_0(n)} \underset{\mathfrak{Z}}{\asymp} n^{t_{l_*}}(\log n)^\kappa
\end{align}
(see Remark \ref{rem_t7}) and $1<t_{l_*}<q/2$, we have
\begin{align}
\label{2m0n_2mprn}
2^{m_0(n)} \underset{\mathfrak{Z}}{\asymp} 2^{m'(n)}.
\end{align}
By \eqref{low_estim}, we get
$$
d_n(\cap _{j=1}^s SB^{\overline{r}_j}_{p_j,\theta_j}(\mathbb{T}^d), \, B^{\overline{l}}_{q, \sigma}(\mathbb{T}^d)) \underset{\mathfrak{Z}}{\gtrsim} d_n (\cap _{j=1}^s 2^{‐m'(n)(\alpha_j + 1/q ‐1/p_j)} B_{p_j,\theta_j} ^{2^{m'(n)},k_{m'(n)}}, \, l_{q,\sigma} ^{2^{m'(n)},k_{m'(n)}}).
$$

We apply Theorems \ref{1mixed}, \ref{inters_ball_dn}. The problem is reduced to estimating from below the widths of the form
$$
d_n(2^{‐m'(n)(\tilde \alpha +1/q‐1/\tilde p)}B_{\tilde p,\tilde \theta} ^{2^{m'(n)},k_{m'(n)}}, \, l_{q,\sigma} ^{2^{m'(n)},k_{m'(n)}}),
$$
where $\tilde \alpha$, $\tilde p$, $\tilde \sigma$ are as follows:

{\bf Case 1.} $\tilde \alpha = \alpha_i$, $\tilde p = p_i$, $\tilde \theta = \theta_i$ for some $i=1, \, \dots, \, s$ (see \eqref{psi0}).

{\bf Case 2.} $\tilde \alpha =(1‐\lambda) \alpha_i + \lambda \alpha_j$, $\frac{1}{\tilde p} = \frac{1‐\lambda}{p_i} + \frac{\lambda}{p_j}$, $\frac{1}{\tilde \theta} = \frac{1‐\lambda}{\theta_i} + \frac{\lambda}{\theta_j}$, $i< j$, $\lambda \in (0, \, 1)$ is such that $\tilde p \in \{2, \, q\}$, $\tilde \theta\in \{2, \, \sigma\}$ or $\omega' _{\tilde p, q} = \omega' _{\tilde \theta,\sigma} \in (0, \, 1)$, $\omega'_{p_i,q}\ne \omega'_{\theta_i,\sigma}$ (see \eqref{psi1}‐‐\eqref{psi5});

{\bf Case 3.} $\tilde \alpha = \lambda_i \alpha_i + \lambda_j \alpha_j + \lambda_k \alpha_k$, $\frac{1}{\tilde p} = \frac{\lambda_i}{p_i} + \frac{\lambda_j}{p_j} + \frac{\lambda_k}{p_k}$, $\frac{1}{\tilde \theta} = \frac{\lambda_i}{\theta_i} + \frac{\lambda_j}{\theta_j} + \frac{\lambda_k}{\theta_k}$, $\lambda_i>0$, $\lambda_j>0$, $\lambda_k>0$, $\lambda_i+\lambda_j + \lambda_k = 1$, $(\tilde p, \, \tilde \theta) = (q, \, \sigma)$ or $(\tilde p, \, \tilde \theta) = (2, \, 2)$, and the points $\{(1/p_i, \, 1/\theta_i), \, (1/p_j, \, 1/\theta_j), \, (1/p_k, \, 1/\theta_k)\}$ are affinely independent (see \eqref{psi6}‐‐\eqref{psi7}).

\vskip 0.3cm

We define the indices $i_*$, $j_*$ as follows (see Proposition \ref{h_prop}). If $h|_{[t_{l_*‐1}, \, t_{l_*}]} =\varphi_{j(l_*)}$, $h|_{[t_{l_*}, \, t_{l_*+1}]} = \varphi_{j(l_*+1)}$, then $i_*:=j(l_*+1)\in J$, $j_*:=j(l_*)\in J$; if $l_*=1$, $h|_{[t_0, \, t_1]} = h_1|_{[t_0, \, t_1]}$, then $i_*:=i_1\in I\sqcup J$, $j_*:=j_1\in K$; if $l_*=L‐1$, $h|_{[t_{L‐1}, \, t_L]} = h_0|_{[t_{L‐1}, \, t_L]}$, then $i_*:=i_0\in I$, $j_*:=j_0\in J\sqcup K$. 

\begin{Rem}
\label{abcde}
From assertion 6 of Proposition \ref{h_prop}, \eqref{p1ps} and \eqref{ijk_def} it follows that $i_*<j_*$.
\end{Rem}

\begin{Rem}
\label{jjljl1} The equality $J\backslash \{j(l_*), \, j(l_*+1)\} = J\backslash \{i_*, \, j_*\}$ holds (it follows from the definition of $i_*$ and $j_*$, assertions 3, 5 of Proposition \ref{h_prop} and Remark \ref{i1j2_j0jl1}).
\end{Rem}

\begin{Lem}
\label{och} There is a constant $c = c(\mathfrak{Z})>0$ such that {\rm 1)} in case {\rm 1} for $i\notin J\cap\{i_*, \, j_*\}$, {\rm 2)} in case {\rm 2} for $(i, \, j) \ne (i_*, \, j_*)$ or $(i, \, j) = (i_*, \, j_*)$, $\tilde p \notin [2, \, q]$, {\rm 3)} in case {\rm 3} we have
\begin{align}
\label{dddd}
d_n(2^{‐m'(n)(\tilde \alpha +1/q‐1/\tilde p)}B_{\tilde p,\tilde \theta} ^{2^{m'(n)},k_{m'(n)}}, \, l_{q,\sigma} ^{2^{m'(n)},k_{m'(n)}}) \underset{\mathfrak{Z}}{\gtrsim} n^{‐h(t_{l_*})+c}.
\end{align}
\end{Lem}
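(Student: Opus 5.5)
We compare each candidate lower bound with $n^{-h(t_{l_*})}$ at the scale $N=2^{m'(n)}\asymp n^{t_{l_*}}(\log n)^{\kappa}$ (see \eqref{2m0n_ntlsk}, \eqref{2m0n_2mprn}), with $k=k_{m'(n)}\asymp(\log n)^{d-1}$. Since all logarithmic factors are absorbed into an $n^{c/2}$ loss, it suffices to show that the exponent of $n$ in each width is strictly smaller than $-h(t_{l_*})$, i.e.\ that the ``power part'' of the width is $n^{-g}$ with $g<h(t_{l_*})$. By Theorem \ref{1mixed}, $d_n(B^{N,k}_{\tilde p,\tilde\theta},l^{N,k}_{q,\sigma})$ equals, up to powers of $k$, the quantity $d_n(B_{\tilde p}^N,l_q^N)$ computed via Theorems \ref{glus}, \ref{p_s}: namely $N^{1/q-1/\tilde p}$ for $\tilde p\ge q$, $(n^{-1/2}N^{1/q})^{\omega_{\tilde p,q}}$ for $2\le\tilde p\le q$, and $n^{-1/2}N^{1/q}$ for $\tilde p\le 2$. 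Substituting $N=n^{t}$, $t=t_{l_*}$, we get the exponent $g(\tilde\alpha,\tilde p;t)$, equal to $\tilde\alpha t$, $\tilde\alpha t-\omega'_{\tilde p,q}(t-1)/2$ (which is the $\varphi$-type expression), or $(\tilde\alpha-1/\tilde p)t+1/2$, respectively. The key observation is that $g$ is a concave-type function of $(1/\tilde p,\tilde\alpha)$: it is the minimum of three affine functions of $(1/\tilde p,\tilde\alpha)$. Hence for convex combinations (cases 2, 3) we have $g(\tilde\alpha,\tilde p)\le$ the corresponding combination of the affine piece active at $\tilde p$, and the problem reduces to comparing with the functions $h_{i,j}$, $\tilde h_{i,j}$, $\varphi_i$, $\alpha_i t$, $(\alpha_j-1/p_j)t+1/2$ evaluated at $t=t_{l_*}$.

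Case 1. If $i\in I$, then $g\le \alpha_i t_{l_*}<h(t_{l_*})$ by assertion 3 of Proposition \ref{strict_ineq}; if $i\in K$, then $g\le(\alpha_i-1/p_i)t_{l_*}+1/2<h(t_{l_*})$ by assertion 4; if $i\in J\setminus\{i_*,j_*\}$, then $g=\varphi_i(t_{l_*})<h(t_{l_*})$ by assertion 5 combined with Remark \ref{jjljl1}. Strictness plus finiteness of the index set gives a uniform $c>0$.

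Case 2. If $\tilde p=q$ with $i\in I$, $j\in J\sqcup K$, then $g=\tilde\alpha t=h_{i,j}(t_{l_*})$, which is $<h(t_{l_*})$ unless $(i,j)=(i_0,j_0)$ by assertion 1. If $\tilde p=2$, then $g=\tilde h_{i,j}(t_{l_*})$, and assertion 2 applies. If $\tilde p\in(2,q)$ is an interior point (the $\omega'$-equality case), then $g$ is linear in $\lambda$ along the segment joining $(1/p_i,\alpha_i)$ and $(1/p_j,\alpha_j)$ within the middle region. Extending the active affine piece $\ell(1/p,\alpha)=\alpha t-\omega'_{p,q}(t-1)/2$ to the endpoints, each endpoint value is bounded by $\max$ of the relevant pieces, and these in turn are $\le h(t_{l_*})$, with strictness unless both endpoints are $\{i_*,j_*\}$. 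When $\tilde p\notin[2,q]$, we compare with the piece active at $\tilde p$, and that piece reduces to one of the $h_{i,j}$ or $\tilde h_{i,j}$ type bounds above. For the excluded pair $(i_*,j_*)$ with $\tilde p\notin[2,q]$, we note that \eqref{non_emb} forces the relevant piece to be strictly below $h(t_{l_*})$, because $h(t_{l_*})$ is attained by the interpolation at $\hat p_i\in[2,q]$ only.

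Case 3 is handled the same way, with $(\tilde p,\tilde\theta)=(q,\sigma)$ or $(2,2)$. Here $\tilde\alpha$ is a three-term convex combination, and we can rewrite the point as a combination of two-point interpolants at level $q$ (resp.\ $2$). The bound then follows from assertions 1--4 and the affinity in $\tilde\alpha$. Strictness comes from \eqref{frac_aj} and affine independence: equality would force three of the points $(1/p,\alpha)$ onto the line determined by $h$ at $t_{l_*}$. The main obstacle is exactly this strictness bookkeeping in the mixed subcases of Cases 2 and 3. In those subcases an interpolation point lies in $(2,q)$ while its endpoints lie in different regions. We must show that the only configuration achieving $g=h(t_{l_*})$ is the pair $(i_*,j_*)$ with $\tilde p\in[2,q]$, and we would do this by using the uniqueness Lemma \ref{ij01} and the general position condition \eqref{frac_aj}.
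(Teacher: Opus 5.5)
\paragraph{Where the proof breaks.} Your frame is the right one, and it is the paper's:
\begin{itemize}
\item pass to $\ell_q^N$-widths at $N=2^{m'(n)}\asymp n^{t_{l_*}}(\log n)^{\kappa}$;
\item write the exponent as $g=\min\{A_1,A_2,A_3\}$, with $A_1=\alpha t$, $A_2=\alpha t-\omega'_{p,q}(t-1)/2$, $A_3=(\alpha-1/p)t+1/2$;
\item compare with Proposition~\ref{strict_ineq}.
\end{itemize}
Using Theorem~\ref{1mixed} instead of the norm comparison \eqref{anis_is} is fine: for $n\ll N\ll n^{q/2}$ every minimum in $\Phi$ resolves, and only powers of $k_{m'(n)}\asymp(\log n)^{d-1}$ are lost. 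Case~1 is correct. Case~3 is also fine once strictness is taken from the fact that one of the two pairs differs from $(i_0,j_0)$ (resp.\ $(i_1,j_1)$).

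The gap is in Case~2. Since $g$ is a \emph{minimum} of affine functions, the piece active at $1/\tilde p$, evaluated at an original endpoint lying in another region, \emph{overestimates} $g$ there. Proposition~\ref{strict_ineq}, however, controls only $g$ itself: at the endpoints (the values $\tilde\varphi_i(t_{l_*})$, assertions 3--5) and at the crossing points with $1/p=1/q$ or $1/p=1/2$ (the values $h_{i,j},\tilde h_{i,j}$, assertions 1--2). So your concavity remark does not give the claimed reduction. Two concrete failures:
\begin{itemize}
\item If $\tilde p>q$ and $j=j_*\in J$, extending $A_1$ gives $\alpha_{j_*}t_{l_*}=\varphi_{j_*}(t_{l_*})+\omega'_{p_{j_*},q}(t_{l_*}-1)/2>h(t_{l_*})$.
\item If $\tilde p\in(2,q)$ and $i\in I$, extending $A_2$ gives the formally extended $\varphi_i(t_{l_*})$, about which assertion~3 says nothing. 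Your ``max of the relevant pieces'' at such an endpoint is $A_3=(\alpha_i-1/p_i)t_{l_*}+1/2$, and this exceeds $h(t_{l_*})$, e.g.\ for $i=i_*\in I$, where $A_3>A_2=\varphi_{i_*}(t_{l_*})=h(t_{l_*})$.
\end{itemize}

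\paragraph{The missing step.} Use the paper's splitting. Cut $[1/p_i,1/p_j]$ at the breakpoints $1/q$ and $1/2$ of $g$, and let $[1/p^1_{ij},1/p^2_{ij}]$ be the piece containing $1/\tilde p$. On it $g$ is affine; equivalently, $d_n(B^N_p,l^N_q)$ is log-linear in $1/p$ there by Theorems~\ref{glus}, \ref{p_s}. Hence $g(\tilde\alpha,\tilde p)=(1-\tilde\lambda)g^1+\tilde\lambda g^2$, where $g^1,g^2$ are among $\tilde\varphi_i,h_{i,j},\tilde h_{i,j},\tilde\varphi_j$ evaluated at $t_{l_*}$, so each is $\le h(t_{l_*})$.

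You still owe the bookkeeping you postpone:
\begin{itemize}
\item For $(i,j)\ne(i_*,j_*)$, at least one of $g^1,g^2$ is strictly below $h(t_{l_*})$. For instance, $(i,j)=(i_0,j_0)\ne(i_*,j_*)$ forces $l_*<L-1$ by assertion~5 of Proposition~\ref{h_prop}, so $h_{i_0,j_0}(t_{l_*})<h(t_{l_*})$. For $J$-endpoints use Remark~\ref{jjljl1}.
\item If $\tilde\lambda\in\{0,1\}$, all the weight falls on a crossing point rather than an original endpoint, because $\lambda\in(0,1)$.
\item For $(i_*,j_*)$ with $\tilde p\notin[2,q]$, your justification (``\eqref{non_emb} forces\dots'') is not an argument. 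The strict gain comes from the sub-segment endpoint $p_{i_*}>q$ or $p_{j_*}<2$, which carries positive weight and where assertions~3--4 of Proposition~\ref{strict_ineq} apply. The other sub-segment endpoint contributes only $\le h(t_{l_*})$ by \eqref{d12_com}.
\end{itemize}

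Your extension route for $\tilde p\in(2,q)$ could be rescued. At $t_{l_*}$ one has $\varphi_{i_*}=\varphi_{j_*}=h$, so $h_{i,j_*}\le h$ and $\tilde h_{i_*,j}\le h$ give the extended $\varphi_i(t_{l_*})\le h(t_{l_*})$ for every $i$, with equality only for $i\in\{i_*,j_*\}$. That is an additional statement, though, and you neither formulate nor prove it.
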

\begin{proof}
Recall that $\hat p_1, \, \hat p_2\in [2, \, q]$ (see Remark \ref{rem_pi_hat}), and $m_0(n)$ in the proof of the upper estimate in Theorem \ref{main6} was the solution of \eqref{m0n1} or \eqref{m0n2} (see Remark \ref{rem_t7}).

Let $m_*(n)$ be defined by the equation
\begin{align}
\label{2mstn_nt}
2^{m_*(n)} = n^{t_{l_*}}, 
\end{align}
and let $m''(n)$ be the nearest to $m_*(n)$ natural number such that $2^{m''(n)} \ge 2n$. Then
\begin{align}
\label{2mbis_mst} 2^{m_*(n)}\underset{\mathfrak{Z}}{\asymp} 2^{m''(n)}.
\end{align}

From \eqref{2m0n_ntlsk}, \eqref{2m0n_2mprn}, \eqref{2mstn_nt}, \eqref{2mbis_mst}, the condition $t_{l_*}\in (1, \, q/2)$ and \eqref{km_md1} we get that there is $\gamma =\gamma(\mathfrak{Z}) > 0$ such that, for sufficiently large $n$,
\begin{align}
\label{a_a} 2n \le 2^{m'(n)}k_{m'(n)} \le n^{q/2}, \quad 2n \le 2^{m''(n)} \le n^{q/2},
\end{align}
\begin{align}
\label{2mpr_log} 
(\log n)^{‐\gamma} \le 2^{m_0(n)‐m_*(n)} \le (\log n)^\gamma , \quad
(\log n)^{‐\gamma} \le 2^{m'(n)‐m''(n)} \le (\log n)^\gamma.
\end{align}

Further,
\begin{align}
\label{anis_is}
\begin{array}{c}
d_n(2^{‐m'(n)(\tilde \alpha +1/q‐1/\tilde p)}B_{\tilde p,\tilde \theta} ^{2^{m'(n)},k_{m'(n)}}, \, l_{q,\sigma} ^{2^{m'(n)},k_{m'(n)}}) \underset{\mathfrak{Z}}{\gtrsim} \\ \gtrsim k_{m'(n)}^{‐2} d_n(2^{‐m'(n)(\tilde \alpha +1/q‐1/\tilde p)}B_{\tilde p} ^{2^{m'(n)}k_{m'(n)}}, \, l_q ^{2^{m'(n)}k_{m'(n)}}) \underset{\mathfrak{Z}}{\gtrsim}\\ \gtrsim (\log n)^{‐\beta} d_n(2^{‐m''(n)(\tilde \alpha +1/q‐1/\tilde p)}B_{\tilde p} ^{2^{m''(n)}}, \, l_q ^{2^{m''(n)}}),
\end{array}
\end{align}
where $\beta = \beta(\mathfrak{Z}) > 0$; the last order inequality follows from \eqref{km_md1}, \eqref{a_a}, \eqref{2mpr_log} and Theorems \ref{glus}, \ref{p_s}.

We set
\begin{align}
\label{tilde_phi_def}
\tilde \varphi_i(t) = \begin{cases} \varphi_i(t), & i\in J, \\ \alpha_it, & i\in I, \\ (\alpha_i‐1/p_i)t+1/2, & i\in K.\end{cases}
\end{align}

Consider case 1, i.e., $\tilde p = p_i$, $i\notin J\cap\{i_*, \, j_*\}$. Then, by Theorems \ref{glus}, \ref{p_s},
\begin{align}
\label{s_lognb}
(\log n)^{‐\beta} d_n(2^{‐m''(n)(\tilde \alpha +1/q‐1/\tilde p)}B_{\tilde p} ^{2^{m''(n)}}, \, l_q ^{2^{m''(n)}}) \stackrel{\eqref{phi_j_def}, \eqref{2mstn_nt}, \eqref{2mbis_mst}, \eqref{a_a}}{\underset{\mathfrak{Z}}{\asymp}} (\log n)^{‐\beta} n^{‐ \tilde\varphi_i(t_{l_*})} =: S.
\end{align}
From Remark \ref{jjljl1} and assertions 3, 4, 5 of Proposition \ref{strict_ineq} it follows that there is $c_1 = c_1(\mathfrak{Z}) >0$ such that $\tilde\varphi_i(t_{l_*}) \le h(t_{l_*}) ‐ c_1$; hence $$S \ge (\log n)^{‐\beta} n^{‐ h(t_{l_*}) + c_1}\underset{\mathfrak{Z}}{\gtrsim} n^{‐h(t_{l_*})+c}$$
for some $c = c(\mathfrak{Z})>0$; this together with \eqref{anis_is}, \eqref{s_lognb} implies the assertion of Lemma \ref{och} in case 1.

Consider case 2, i.e.,
\begin{align}
\label{case2}
\frac{1}{\tilde p} = \frac{1‐\lambda}{p_i} + \frac{\lambda}{p_j}, \quad 0<\lambda < 1, \quad i< j,
\end{align}
where $(i, \, j) \ne (i_*, \, j_*)$ or $(i, \, j)=(i_*, \, j_*)$, $\tilde p\notin [2, \, q]$. Notice that $1/p_i<1/p_j$ by \eqref{p1ps}. We split the interval $[1/p_i, \, 1/p_j]$ by points $1/q$ and $1/2$ (or, to be precise, by those points which lie in its interior). Then $1/\tilde p$ lies on one segment of this partition; we denote this segment by $[1/p_{ij}^1, \, 1/p_{ij}^2]$ (here $1/p_{ij}^1<1/p_{ij}^2$). We get that
\begin{align}
\label{til_p_pij1}
\frac{1}{\tilde p} = \frac{1‐\tilde \lambda}{p_{ij}^1} + \frac{\tilde \lambda}{p_{ij}^2},
\end{align}
where $\tilde \lambda \in [0, \, 1]$. From Theorems \ref{glus}, \ref{p_s} and \eqref{a_a} it follows that
$$
d_n(B_{\tilde p} ^{2^{m''(n)}}, \, l_q ^{2^{m''(n)}}) \underset{\mathfrak{Z}}{\asymp} [d_n(B_{p_{ij}^1} ^{2^{m''(n)}}, \, l_q ^{2^{m''(n)}})] ^{1‐\tilde \lambda} [d_n(B_{p_{ij}^2} ^{2^{m''(n)}}, \, l_q ^{2^{m''(n)}})]^{\tilde \lambda}.
$$
We have $\frac{1}{p_{ij}^1} = \frac{1‐\lambda_1}{p_i} + \frac{\lambda_1}{p_j}$, $\frac{1}{p_{ij}^2} = \frac{1‐\lambda_2}{p_i} + \frac{\lambda_2}{p_j}$, where $0\le \lambda_1<\lambda_2\le 1$. We set 
$\alpha_{ij}^1 = (1‐\lambda_1)\alpha_i + \lambda_1 \alpha_j$, $\alpha_{ij}^2 = (1‐\lambda_2)\alpha_i + \lambda_2 \alpha_j$. Then $\tilde \alpha = (1‐\tilde \lambda) \alpha_{ij}^1 + \tilde \lambda \alpha_{ij}^2$,
\begin{align}
\label{lnndnl1l}
\begin{array}{c}
(\log n)^{‐\beta} d_n(2^{‐m''(n)(\tilde \alpha +1/q‐1/\tilde p)}B_{\tilde p} ^{2^{m''(n)}}, \, l_q ^{2^{m''(n)}}) \underset{\mathfrak{Z}}{\asymp}
\\
\asymp (\log n)^{‐\beta} [d_n(2^{‐m''(n)(\alpha_{ij}^1 +1/q‐1/p_{ij}^1)}B_{p_{ij}^1} ^{2^{m''(n)}}, \, l_q ^{2^{m''(n)}})] ^{1‐\tilde \lambda} \times
\\
\times [d_n(2^{‐m''(n)(\alpha_{ij}^2 +1/q‐1/p_{ij}^2)}B_{p_{ij}^2} ^{2^{m''(n)}}, \, l_q ^{2^{m''(n)}})]^{\tilde \lambda}.
\end{array}
\end{align}

By Theorems \ref{glus}, \ref{p_s}, \eqref{lam_ij_def}, \eqref{phi_j_def}, \eqref{hij_tilhij_def}, \eqref{2mstn_nt}, \eqref{2mbis_mst}, \eqref{a_a} and \eqref{tilde_phi_def}, the magnitude
\begin{align}
\label{d1_def}
D_1:= d_n(2^{‐m''(n)(\alpha_{ij}^1 +1/q‐1/p_{ij}^1)}B_{p_{ij}^1} ^{2^{m''(n)}}, \, l_q ^{2^{m''(n)}})
\end{align}
has the order $n^{‐\tilde\varphi_i(t_{l_*})}$, $n^{‐h_{i,j}(t_{l_*})}$ or $n^{‐\tilde h_{i,j}(t_{l_*})}$ (if $p_{ij}^1$ is equal to $p_i$, $q$ or $2$, respectively); the magnitude
\begin{align}
\label{d2_def}
D_2:=d_n(2^{‐m''(n)(\alpha_{ij}^2 +1/q‐1/p_{ij}^2)}B_{p_{ij}^2} ^{2^{m''(n)}}, \, l_q ^{2^{m''(n)}})
\end{align}
has the order $n^{‐\tilde\varphi_j(t_{l_*})}$, $n^{‐h_{i,j}(t_{l_*})}$ or $n^{‐\tilde h_{i,j}(t_{l_*})}$ (if $p_{ij}^2$ ie equal to $p_j$, $q$ or $2$, respectively). From Proposition \ref{strict_ineq}, \eqref{i01j01}‐‐\eqref{h_def_012} and \eqref{tilde_phi_def} it follows that
\begin{align}
\label{d12_com} D_1 \underset{\mathfrak{Z}}{\gtrsim} n^{‐h(t_{l_*})}, \quad D_2 \underset{\mathfrak{Z}}{\gtrsim} n^{‐h(t_{l_*})}.
\end{align}

We will obtain sharper estimates of these values and show that, for some $\tilde c = \tilde c(\mathfrak{Z})>0$,
\begin{align}
\label{d1ld2l} (\log n)^{‐\beta} D_1^{1‐\tilde\lambda} D_2^{\tilde\lambda} \underset{\mathfrak{Z}}{\gtrsim} n^{‐h(t_{l_*})+\tilde c}.
\end{align}
This together with \eqref{anis_is} and \eqref{lnndnl1l} implies the assertion of the Lemma in case 2.

Let $(i, \, j) \ne (i_*, \, j_*)$.

We show that there is $c_1 = c_1(\mathfrak{Z})>0$ such that
\begin{align}
\label{d1_est_c1}
\begin{array}{c}
D_1 \underset{\mathfrak{Z}}{\gtrsim} n^{‐h(t_{l_*})+c_1} \; \text{for } p_{ij}^1=q, \; p_{ij}^1 =2 \text{ and }p_{ij}^1 = p_i, \; i \notin J\cap \{i_*, \, j_*\}, \\ 
D_2 \underset{\mathfrak{Z}}{\gtrsim} n^{‐h(t_{l_*})+c_1} \; \text{for } p_{ij}^2=q, \; p_{ij}^2 =2 \text{ and }p_{ij}^2 = p_j, \; j \notin J\cap \{i_*, \, j_*\}.
\end{array}
\end{align}

We prove the first estimate (the second one is similar).

Let $p^1_{ij}=q$. Since $i<j$, by \eqref{p1ps}, we have $p_i>q$, $p_j<q$. We claim that $n^{‐h_{i,j}(t_{l_*})} \ge n^{‐h(t_{l_*}) + c_1}$. Indeed, if $(i, \, j)\ne (i_0, \, j_0)$, the inequality follows from assertion 1 of Proposition \ref{strict_ineq}. If $(i, \, j)=(i_0, \, j_0)$, then, by the condition $(i, \, j) \ne (i_*, \, j_*)$, we get $(i_*, \, j_*)\ne (i_0, \, j_0)$; therefore, $l_*<L‐1$ by assertion 5 of Proposition \ref{h_prop} and the definition of $i_*$, $j_*$; hence $h_{i_0,j_0}(t_{l_*}) < h(t_{l_*})$.

Let $p^1_{ij}=2$. Then $p_i>2$, $p_j<2$. We claim that $n^{‐\tilde h_{i,j}(t_{l_*})} \ge n^{‐h(t_{l_*}) + c_1}$. Indeed, if $(i, \, j)\ne (i_1, \, j_1)$, the inequality follows from assertion 2 of Proposition \ref{strict_ineq}. If $(i, \, j)=(i_1, \, j_1)$, then, by the condition $(i, \, j) \ne (i_*, \, j_*)$, we get $(i_*, \, j_*)\ne (i_1, \, j_1)$; hence $l_*>1$ by assertion 3 of Proposition \ref{h_prop} and the definition of $i_*$, $j_*$. Therefore, $\tilde h_{i,j}(t_{l_*}) < h(t_{l_*})$.

Let $p^1_{ij}=p_i$. If $i \notin J\cap\{i_*, \, j_*\}$, then $n^{‐\tilde\varphi_i(t_{l_*})} \ge n^{‐h(t_{l_*}) + c_1}$ (see \eqref{tilde_phi_def}, Remark \ref{jjljl1} and assertions 3, 4, 5 of Proposition \ref{strict_ineq}).

This completes the proof of \eqref{d1_est_c1}.

Since $(i, \, j) \ne (i_*, \, j_*)$ and $i<j$, $i_*<j_*$ (see \eqref{case2} and Remark \ref{abcde}), it follows from \eqref{d1_est_c1} that $D_1 \underset{\mathfrak{Z}}{\gtrsim} n^{‐h(t_{l_*}) + c_1}$ or $D_2 \underset{\mathfrak{Z}}{\gtrsim} n^{‐h(t_{l_*}) + c_1}$. Hence, if $\tilde \lambda \in (0, \, 1)$, we get \eqref{d1ld2l}. Let $\tilde \lambda = 0$ (the case $\tilde \lambda=1$ is similar). If $p_{ij}^1\in \{2, \, q\}$, then $D_1 \underset{\mathfrak{Z}}{\gtrsim} n^{‐h(t_{l_*})+c_1}$. If $p_{ij}^1 = p_i$, then by \eqref{til_p_pij1} we get $\tilde p=p_i$, which contradicts with the condition $\lambda \in (0, \, 1)$ in \eqref{case2}.

Let $(i, \, j)=(i_*, \, j_*)$, $\tilde p\notin [2, \, q]$. Then $p^1_{ij}=p_i\notin [2, \, q]$ or $p^2_{ij} = p_j \notin [2, \, q]$. By \eqref{tilde_phi_def} and assertions 3, 4 of Proposition \ref{strict_ineq}, 
\begin{align}
\label{d1111}
D_1=n^{‐\tilde \varphi_i(t_{l_*})}\ge n^{‐h(t_{l_*})+c_1}
\end{align}
or, respectively,
\begin{align}
\label{d2222}
D_2=n^{‐\tilde \varphi_j(t_{l_*})}\ge n^{‐h(t_{l_*})+c_1},
\end{align}
where $c_1 = c_1(\mathfrak{Z})>0$. Let, without loss of generality, $p^1_{ij}=p_i\notin [2, \, q]$ (the second case is similar). If $p_i<2$, then from the inequality $1/p_{ij}^2>1/p_{ij}^1$ we get $p^2_{ij}=p_j<2$; then the inequalities \eqref{d1111} and \eqref{d2222} hold simultaneously, which implies \eqref{d1ld2l}. If $p_i>q$, then either $p^2_{ij}=p_j>q$ (hence the inequalities \eqref{d1111} and \eqref{d2222} hold simultaneously), or $p_{ij}^2= q$ and $\tilde \lambda <1$; in the last case \eqref{d1ld2l} follows from \eqref{d1111} and the second estimate in \eqref{d12_com}.

Now we consider case 3. Then
\begin{align}
\label{case3_al_p} \tilde \alpha =\lambda_i\alpha_i+\lambda_j\alpha_j+\lambda_k\alpha_k, \quad \frac{1}{\tilde p} = \frac{\lambda_i}{p_i} +\frac{\lambda_j}{p_j} + \frac{\lambda_k}{p_k},
\end{align}
where $\lambda_i>0$, $\lambda_j>0$, $\lambda_k>0$, $\lambda_i+\lambda_j+\lambda_k=1$,
\begin{align}
\label{aa1}
\begin{array}{c}
(\log n)^{‐\beta} d_n(2^{‐m''(n)(\tilde \alpha +1/q‐1/\tilde p)}B_{\tilde p} ^{2^{m''(n)}}, \, l_q ^{2^{m''(n)}}) =\\= (\log n)^{‐\beta} d_n(2^{‐m''(n)\tilde \alpha }B_q ^{2^{m''(n)}}, \, l_q ^{2^{m''(n)}})
\end{array}
\end{align}
or
\begin{align}
\label{aa2}
\begin{array}{c}
(\log n)^{‐\beta} d_n(2^{‐m''(n)(\tilde \alpha +1/q‐1/\tilde p)}B_{\tilde p} ^{2^{m''(n)}}, \, l_q ^{2^{m''(n)}}) =\\= (\log n)^{‐\beta} d_n(2^{‐m''(n)(\tilde \alpha +1/q‐1/2)}B_2 ^{2^{m''(n)}}, \, l_q ^{2^{m''(n)}}).
\end{array}
\end{align}

Let \eqref{aa1} hold. Consider the intersection of the triangle $\Delta_{i,j,k}$ (its vertices are $(1/p_i, \, 1/\theta_i)$, $(1/p_j, \, 1/\theta_j)$, $(1/p_k, \, 1/\theta_k)$) and the segment $[(1/q, \, 0), \, (1/q, \, 1)]$. It is a segment whose endpoints lie on the sides of the triangle; without loss of generality, these sides are $[(1/p_i, \, 1/\theta_i), \, (1/p_j, \, 1/\theta_j)]$ and $[(1/p_i, \, 1/\theta_i), \, (1/p_k, \, 1/\theta_k)]$. Consider the case $p_i\ge q$ (the case $p_i\le q$ is similar).

From \eqref{p_ne_2q}, \eqref{lam_ij_def} we get that $1/q = (1‐\lambda_{i,j})/p_i + \lambda_{i,j}/p_j = (1‐\lambda_{i,k})/p_i + \lambda_{i,k}/p_k$, where $\lambda_{i,j}\in (0, \, 1)$, $\lambda_{i,k}\in (0, \, 1)$. Hence
$$1/q = (1‐\tilde \lambda)((1‐\lambda_{i,j})/p_i + \lambda_{i,j}/p_j) + \tilde \lambda ((1‐\lambda_{i,k})/p_i + \lambda_{i,k}/p_k),$$ $$1/\sigma = (1‐\tilde \lambda)((1‐\lambda_{i,j})/\theta_i + \lambda_{i,j}/\theta_j) + \tilde \lambda ((1‐\lambda_{i,k})/\theta_i + \lambda_{i,k}/\theta_k),$$ where $\tilde \lambda \in (0, \, 1)$, since $(1/q, \, 1/\sigma)$ is the interior point of $\Delta_{i,j,k}$.

Since the vertices of the triangle $\Delta_{i,j,k}$ are affinely independent, we have $$\lambda_i = (1‐\tilde \lambda)(1‐\lambda_{i,j}) + \tilde \lambda (1‐\lambda_{i,k}),\quad \lambda_j =  (1‐\tilde \lambda)\lambda_{i,j}, \quad \lambda_k = \tilde \lambda \lambda_{i,k}.$$
Hence, by Theorem \ref{p_s} and \eqref{case3_al_p},
$$
(\log n)^{‐\beta} d_n(2^{‐m''(n)\tilde \alpha}B_q ^{2^{m''(n)}}, \, l_q ^{2^{m''(n)}}) = 
$$
$$
= (\log n)^{‐\beta} (2^{‐m''(n)((1‐\lambda_{i,j})\alpha_i + \lambda_{i,j}\alpha_j)})^{1‐\tilde \lambda} (2^{‐m''(n)((1‐\lambda_{i,k})\alpha_i + \lambda_{i,k}\alpha_k)})^{\tilde \lambda} \underset{\mathfrak{Z}}{\asymp}
$$
$$
\asymp (\log n)^{‐\beta} [n^{‐h_{i,j}(t_{l_*})}]^{1‐\tilde \lambda} [n^{‐h_{i,k}(t_{l_*})}]^{\tilde \lambda}=: S;
$$
the last order equality follows from \eqref{2mstn_nt}, \eqref{2mbis_mst} and \eqref{hij_tilhij_def}.
We have $\{i, \, j, \, k\}\not \subset \{i_0, \, j_0\}$. Hence, by assertion 1 of Proposition \ref{strict_ineq} and \eqref{i01j01}, \eqref{h0_def}, \eqref{h_def_012}, we get $n^{‐h_{i,j}(t_{l_*})} \ge n^{‐h(t_{l_*}) + c_1}$, $n^{‐h_{i,k}(t_{l_*})} \ge n^{‐h(t_{l_*})}$ or $n^{‐h_{i,k}(t_{l_*})} \ge n^{‐h(t_{l_*}) + c_1}$, $n^{‐h_{i,j}(t_{l_*})} \ge n^{‐h(t_{l_*})}$, where $c_1=c_1(\mathfrak{Z})>0$. Since $\tilde \lambda \in (0, \, 1)$, we have $S \underset{\mathfrak{Z}}{\gtrsim} n^{‐h(t_{l_*})+c}$ for some $c=c(\mathfrak{Z})>0$.

The case \eqref{aa2} is similar; here we use assertion 2 of Proposition \ref{strict_ineq}, \eqref{a_a} and Theorem \ref{glus}.
\end{proof}

We define the numbers $\lambda_{i_*,j_*}$, $\mu_{i_*,j_*}$, $\tilde \lambda_{i_*,j_*}$, $\tilde \mu_{i_*,j_*}$, $\hat \lambda_{i_*,j_*}$, $\theta_{i_*,j_*}$, $p_{i_*,j_*}$, $\tilde \theta_{i_*,j_*}$, $\tilde p_{i_*,j_*}$, $\hat \theta_{i_*,j_*}$, $\hat p_{i_*,j_*}$ according to \eqref{n1}‐‐\eqref{n5} for $(\alpha, \, \beta) = (i_*, \, j_*)$; notice that the notation for $\lambda_{i_*,j_*}$ and $\tilde \lambda_{i_*,j_*}$ is consistent with \eqref{lam_ij_def}.

It follows from Lemma \ref{och} that it remains to obtain the lower estimates for
\begin{gather}
\label{dn1}
d_n(2^{‐m'(n)(\alpha _{i_*} + 1/q‐1/p_{i_*})} B_{p_{i_*}, \theta_{i_*}}^{2^{m'(n)}, k_{m'(n)}}, \, l_{q,\sigma} ^{2^{m'(n)}, k_{m'(n)}}) \; \text{for}\;i_*\in J,
\\
\label{dn2}
d_n(2^{‐m'(n)(\alpha _{j_*} + 1/q‐1/p_{j_*})} B_{p_{j_*}, \theta_{j_*}}^{2^{m'(n)}, k_{m'(n)}}, \, l_{q,\sigma} ^{2^{m'(n)}, k_{m'(n)}}) \; \text{for}\;j_*\in J,
\\
\label{dn3}
2^{‐m'(n)((1‐\lambda_{i_*,j_*})\alpha _{i_*} +\lambda_{i_*,j_*}\alpha_{j_*})} d_n(B_{q, \theta_{i_*,j_*}}^{2^{m'(n)}, k_{m'(n)}}, \, l_{q,\sigma} ^{2^{m'(n)}, k_{m'(n)}}) \; \text{for}\; i_*\in I,
\\
\label{dn4}
2^{‐m'(n)((1‐\mu_{i_*,j_*})\alpha _{i_*} +\mu_{i_*,j_*}\alpha_{j_*}+1/q‐1/p_{i_*,j_*})} d_n(B_{p_{i_*,j_*}, \sigma}^{2^{m'(n)}, k_{m'(n)}}, \, l_{q,\sigma} ^{2^{m'(n)}, k_{m'(n)}}) \; \text{for}\; p_{i_*,j_*}\in [2, \, q],
\\
\label{dn5}
2^{‐m'(n)((1‐\tilde\lambda_{i_*,j_*})\alpha _{i_*} +\tilde\lambda_{i_*,j_*}\alpha_{j_*}+1/q‐1/2)} d_n(B_{2, \tilde\theta_{i_*,j_*}}^{2^{m'(n)}, k_{m'(n)}}, \, l_{q,\sigma} ^{2^{m'(n)}, k_{m'(n)}}) \; \text{for}\; j_*\in K,
\\
\label{dn6}
2^{‐m'(n)((1‐\tilde\mu_{i_*,j_*})\alpha _{i_*} +\tilde\mu_{i_*,j_*}\alpha_{j_*}+1/q‐1/\tilde p_{i_*,j_*})} d_n(B_{\tilde p_{i_*,j_*}, 2}^{2^{m'(n)}, k_{m'(n)}}, \, l_{q,\sigma} ^{2^{m'(n)}, k_{m'(n)}}) \; \text{for}\; \tilde p_{i_*,j_*}\in [2, \, q],
\\
\label{dn7}
\begin{array}{c}
2^{‐m'(n)((1‐\hat\lambda_{i_*,j_*})\alpha _{i_*} +\hat\lambda_{i_*,j_*}\alpha_{j_*}+1/q‐1/\hat p_{i_*,j_*})} d_n(B_{\hat p_{i_*,j_*}, \hat \theta_{i_*,j_*}}^{2^{m'(n)}, k_{m'(n)}}, \, l_{q,\sigma} ^{2^{m'(n)}, k_{m'(n)}})
\\
\text{for }(\omega' _{p_{i_*},q} ‐\omega' _{\theta_{i_*},\sigma})(\omega' _{p_{j_*},q} ‐\omega' _{\theta_{j_*},\sigma})<0, \quad \omega' _{\hat p_{i_*,j_*},q}\in (0, \, 1).
\end{array}
\end{gather}
\begin{Sta}
\label{sta_min}
The minimum of \eqref{dn1}‐‐\eqref{dn7} has the order $n^{‐\alpha_*} (\log n)^{(d‐1)\beta_*}$. 
\end{Sta}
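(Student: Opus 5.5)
The plan is to evaluate each of \eqref{dn1}--\eqref{dn7} with Theorem \ref{1mixed}, using $t=2^{m'(n)}$, $k=k_{m'(n)}$ and \eqref{2m0n_2mprn}, so that $m'(n)$ may be replaced by $m_0(n)$ up to constants. Each quantity then becomes an explicit expression of the form $2^{-a m_0(n)}k_{m_0(n)}^{b}n^{c}$.

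The first step is to express every quantity through one of the two ``reference'' expressions, namely the left and right sides of \eqref{m0n1} and of \eqref{m0n2}. By construction (see \eqref{h_neighborhood} and the definition of $\hat\alpha_i,\hat p_i,\hat\theta_i$), the pair $(i_*,j_*)$ produces exactly the interpolated parameters $(\hat\alpha_1,\hat p_1,\hat\theta_1)$ and $(\hat\alpha_2,\hat p_2,\hat\theta_2)$:
\begin{itemize}
\item \eqref{dn1} and \eqref{dn2} correspond to $\hat p_2=p_{i_*}$ and $\hat p_1=p_{j_*}$ when these indices lie in $J$;
\item \eqref{dn3} corresponds to $\hat p_2=q$;
\item \eqref{dn5} corresponds to $\hat p_1=2$.
\end{itemize}
For \eqref{dn4}, \eqref{dn6} and \eqref{dn7}, the point $(1/\tilde p,1/\tilde\theta)$ lies on the segment between $(1/\hat p_1,1/\hat\theta_1)$ and $(1/\hat p_2,1/\hat\theta_2)$, with $\tilde\alpha$ the matching convex combination. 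Since $2\le\tilde p\le q$, the relevant branch of $\Phi$ in \eqref{phi3}--\eqref{phi5} is a product of powers of $2^m$, $k_m$, $n$ whose exponents are affine in $(1/\tilde p,1/\tilde\theta)$ on each region $\omega'_{\tilde p,q}\gtrless\omega'_{\tilde\theta,\sigma}$. Consequently, by log-convexity the quantity is, up to constants, a weighted geometric mean of the values at the segment endpoints, or at the point $(\hat p,\hat\theta)$ of \eqref{hat_p_th_def} when the segment crosses the line $\omega'_{p,q}=\omega'_{\theta,\sigma}$. This is the identity \eqref{www}.

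The second step is the upper bound for the minimum. It suffices to exhibit one term of the right order. In case 1 of Theorem \ref{main6} I take the term equal to either side of \eqref{m0n1} (by Remark \ref{rem_t7} its value is $n^{-\alpha_*}(\log n)^{(d-1)\beta^1_*}$); case 2 is the same with \eqref{m0n2}. In case 3 I take the $\hat\lambda$-term \eqref{dn7} or an endpoint term, according to which of $m_1(n),m_2(n)$ equals $m_0(n)$ (Remark \ref{rem_m0m1_7071}).

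The third step is the lower bound, which is the main obstacle: every one of \eqref{dn1}--\eqref{dn7} must be $\gtrsim n^{-\alpha_*}(\log n)^{(d-1)\beta_*}$. For the endpoint terms I would use that at $m=m_0(n)$ the two reference expressions coincide in case 1 or 2, and that each endpoint quantity is at least the minimum over the $\min$ in $\Phi$ of the corresponding branch. The key point is that the additional $\min\{\cdot\}$ branches (for instance the ``1'' and $d_0$ branches) only matter when $n\le 2^{2m/q}k^{2/\sigma}$, which is excluded by \eqref{22m0qkm02s}; in the opposite regime $n\ge 2^mk^{2/\sigma}$ they give larger values. For interior points of the segment I would use log-convexity. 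On each piece the quantity is a geometric mean of endpoint values, so it is at least the minimum of those values. When the crossing point $\hat p$ is involved, the comparison reduces to the sign of $\zeta$ in \eqref{zeta_def} together with $m_1(n)<m_2(n)$ (Remark \ref{m1m2}).

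I expect the delicate part to be checking, in case 3, that the choice of $\beta_*^1$ versus $\beta^2_*$ dictated by the sign of $(\omega'_{\hat p_1,q}-\omega'_{\hat\theta_1,\sigma})\zeta$ indeed gives the smaller log power. Here I would compare the exponents $\tilde\beta_1,\tilde\beta_2$, using $\gamma>0$ from \eqref{str_min_h}. The remaining comparisons are routine exponent arithmetic using Lemma \ref{x_log_eq}.
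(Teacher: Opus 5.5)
Your architecture matches the paper's. At the level $m'(n)\approx m_0(n)$ each of \eqref{dn1}--\eqref{dn7} is one branch of $\Phi$, fixed by the side of the line $\omega'_{p,q}=\omega'_{\theta,\sigma}$. The exponents are affine on each side, and log-linearity reduces everything to $\varkappa_n(0)$, $\varkappa_n(1)$ and, in case~3, $\varkappa_n(\hat\lambda)$. However, the branch selection is justified incorrectly. For a lower bound each width must \emph{equal} the branch you use, and \eqref{22m0qkm02s} is too weak for that.
\begin{itemize}
\item In \eqref{phi3} and \eqref{phi5}, the branch $k^{1/\sigma-1/\theta}(n^{-1/2}t^{1/q}k^{1/2})^{\omega_{p,q}}$ is the minimum only for $n\ge t^{2/q}k$. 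For $t^{2/q}k^{2/\sigma}\le n<t^{2/q}k$, the term $k^{1/\sigma-1/\theta}$, resp.\ $(n^{-1/2}t^{1/q}k^{1/\sigma})^{\omega_{\theta,\sigma}}$, is smaller.
\item In \eqref{phi4}, the ratio of the third term to $(n^{-1/2}t^{1/q}k^{1/\sigma})^{\omega_{p,q}}$ is $(n^{-1/2}t^{1/2}k^{1/\sigma})^{\omega_{\theta,\sigma}-\omega_{p,q}}$. So for $n\ge tk^{2/\sigma}$ the third term is \emph{smaller}, the opposite of what you claim.
\end{itemize}
The needed input is the window $2^{2m'(n)/q}k_{m'(n)}<n<2^{m'(n)}k_{m'(n)}^{2/\sigma}$. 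It follows from $2^{m'(n)}\asymp n^{t_{l_*}}(\log n)^{\kappa}$ with $1<t_{l_*}<q/2$ (see \eqref{2m0n_ntlsk}, \eqref{2m0n_2mprn}). This window gives \eqref{dn_pth_55}, and it is exactly where the paper's proof starts.

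In case~3 the step you single out as delicate is false as stated. Set $a_i=\hat\alpha_i-\omega_{\hat p_i,q}/2$ and $\delta_i=\omega'_{\hat p_i,q}-\omega'_{\hat\theta_i,\sigma}$. Then \eqref{a1_def}--\eqref{beta_st_i_def} give $\beta_*^1-\beta_*^2=(\frac12-\frac1\sigma)(a_2\delta_1-a_1\delta_2)/B=(\frac12-\frac1\sigma)\zeta(\delta_1-\delta_2)/B$. This has the sign of $\zeta\delta_1$, so the rule in Theorem~\ref{main6} picks the \emph{larger} log power.

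More importantly, no comparison of the two candidate values is needed. They are attained at the different levels $m_1(n)<m_2(n)$, while the proposition concerns the single level $m_0(n)$. The sign of $\zeta$ only decides which level is $m_0$:
\begin{itemize}
\item for $\zeta>0$, $m_0=m_1$ and the far endpoint term is $\varkappa_n(1)$;
\item for $\zeta<0$, $m_0=m_2$ and the far endpoint term is $\varkappa_n(0)$.
\end{itemize}
The other two reduced terms equal the target by the definition of $m_0(n)$, so what must be shown is that the far endpoint term dominates.

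The paper does this as follows. The ratio of the far-endpoint branch to the $\hat\lambda$-branch is $2^{-m(a_2-\zeta)}$, resp.\ $2^{-m(a_1-\zeta)}$, times $n^{c}k_m^{c'}$, with $k_{m_1(n)}\asymp k_{m_2(n)}$. Next, $a_1<\zeta<a_2$ by \eqref{str_min_h} and $\hat\lambda\in(0,1)$. This ratio equals $1$ at the other crossing level, and $m_1(n)<m_2(n)$. This slope inequality is the ingredient missing from your sketch.
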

\begin{proof}
From the condition $1<t_{l_*}<q/2$, \eqref{km_md1} and \eqref{2m0n_ntlsk}, \eqref{2m0n_2mprn} it follows that $2^{2m'(n)/q}k_{m'(n)} < n < 2^{m'(n)} k_{m'(n)}^{2/\sigma}$ for large $n$. Therefore, for $2\le p\le q$, $1\le \theta\le \infty$ we have
\begin{align}
\label{dn_pth_55} \begin{array}{c} d_n(B_{p,\theta}^{2^{m'(n)},k_{m'(n)}}, \, l_{q,\sigma}^{2^{m'(n)},k_{m'(n)}}) \stackrel{\eqref{phi3}, \eqref{phi4}, \eqref{phi5}}{\underset{q,\sigma}{\asymp}} \\ \asymp \begin{cases} k_{m'(n)}^{1/\sigma‐1/\theta} (n^{‐1/2}2^{m'(n)/q}k_{m'(n)}^{1/2}) ^{\omega_{p,q}}, & \text{if }\omega'_{\theta,\sigma}\le \omega'_{p,q}, \\ (n^{‐1/2}2^{m'(n)/q}k_{m'(n)}^{1/\sigma})^{\omega_{p,q}}, & \text{if }\omega'_{\theta,\sigma}\ge \omega'_{p,q}.\end{cases} \end{array}
\end{align}

Given $\mu\in [0, \, 1]$, we define the numbers $p(\mu)$, $\theta(\mu)$ and $\alpha(\mu)$ by the equations $\frac{1}{p(\mu)} = \frac{1‐\mu}{\hat p_1} + \frac{\mu}{\hat p_2}$, $\frac{1}{\theta(\mu)} = \frac{1‐\mu}{\hat \theta_1} + \frac{\mu}{\hat \theta_2}$, $\alpha(\mu) = (1‐\mu)\hat \alpha_1 + \mu \hat \alpha_2$ (see notation before Theorem \ref{main6}).

Taking into account the definition of $\hat p_1$ and $\hat p_2$ and of the indices $i_*$, $j_*$, we get that the magnitudes \eqref{dn1}‐‐\eqref{dn7} have the form
$$
\varkappa_n(\lambda) := 2^{‐m'(n)(\alpha(\lambda)+1/q‐1/p(\lambda))} d_n(B_{p(\lambda),\theta(\lambda)}^{2^{m'(n)},k_{m'(n)}}, \, l_{q,\sigma}^{2^{m'(n)},k_{m'(n)}}),
$$
where the number $\lambda\in [0, \, 1]$ is such that $(1/p(\lambda), \, 1/\theta(\lambda))$ is either an endpoint of the interval $\Delta:= [(1/\hat p_1, \, 1/\hat \theta_1), \, (1/\hat p_2, \, 1/\hat \theta_2)]$ or the point of its intersection with one of the straigt lines $L_1=\{(x, \, y)\in \R^2:\, y=1/\sigma\}$, $L_2=\{(x, \, y)\in \R^2:\; y=1/2\}$ or $L_3= \Bigl\{(x, \, y)\in \R^2:\; \frac{x‐1/q}{1/2‐1/q} = \frac{y‐1/\sigma}{1/2‐1/\sigma}\Bigr\}$, which are not collinear with this interval.

Let $(1/p(\lambda), \, 1/\theta(\lambda))$ be the intersection point of $\Delta$ with $L_1$ or $L_2$. Then $\frac{1}{p(\lambda)} = \frac{1‐\tau}{p(\lambda_1)}+\frac{\tau}{p(\lambda_2)}$, $\frac{1}{\theta(\lambda)} = \frac{1‐\tau}{\theta(\lambda_1)}+\frac{\tau}{\theta(\lambda_2)}$,  where $\tau\in [0, \, 1]$, and the numbers $\lambda_1$, $\lambda_2\in [0, \, 1]$ are such that $(1/p(\lambda_i), \, 1/\theta(\lambda_i))$ is either an endpoint of the segment $\Delta$ or is the point of its intersection  
with $L_3$ ($i=1, \, 2$); here, $(\omega'_{p(\lambda_1),q} ‐ \omega'_{\theta(\lambda_1),\sigma})(\omega'_{p(\lambda_2),q} ‐ \omega'_{\theta(\lambda_2),\sigma})\ge 0$. This together with \eqref{dn_pth_55} implies that $\varkappa_n(\lambda) \underset{\mathfrak{Z}}{\asymp} \varkappa_n(\lambda_1)^{1‐\tau} \varkappa_n(\lambda_2) ^\tau$. Therefore, the minimum of the magnitudes \eqref{dn1}‐‐\eqref{dn7} is estimated from below by the minimum of $\varkappa_n(0)$, $\varkappa_n(1)$ and $\varkappa_n(\hat \lambda)$ (the last one is considered in case 3 of Theorem \ref{main6}; recall that $\hat \lambda\in (0, \, 1)$ was defined in the formulation of the theorem).

In case 1 of Theorem \ref{main6} the number $m_0(n)$ is the solution of \eqref{m0n1} (see Remark \ref{rem_t7}), the orders of the widths for $\varkappa_n(0)$ and $\varkappa_n(1)$ are given by the upper part of formula \eqref{dn_pth_55} for $(p, \, \theta) = (\hat p_1, \, \hat \theta_1)$ and $(p, \, \theta) = (\hat p_2, \, \hat \theta_2)$, respectively; by \eqref{2m0n_2mprn}, $\varkappa_n(0)$ and $\varkappa_n(1)$ have the same order as the left‐ and the right‐hand sides of \eqref{m0n1} for $m=m_0(n)$. We again apply Remark \ref{rem_t7} and obtain that $\varkappa_n(0) \underset{\mathfrak{Z}}{\asymp} \varkappa_n(1) \underset{\mathfrak{Z}}{\asymp} n^{‐\alpha_*} (\log n)^{(d‐1)\beta_*^1}$.

In case 2 the number $m_0(n)$ is the solution of \eqref{m0n2}; we similarly obtain that $\varkappa_n(0) \underset{\mathfrak{Z}}{\asymp} \varkappa_n(1) \underset{\mathfrak{Z}}{\asymp} n^{‐\alpha_*} (\log n)^{(d‐1)\beta_*^2}$.

Case 3 will be considered in details for $\zeta>0$, $(\omega'_{\hat p_1,q}‐\omega'_{\hat \theta_1,\sigma})\zeta >0$. Then the number $m_0(n)$ is the solution of \eqref{m0n1}. We apply \eqref{dn_pth_55} to $(p, \, \theta) := (p(0), \, \theta(0)) = (\hat p_1, \, \hat \theta_1)$, $(p, \, \theta) := (p(\hat \lambda), \, \theta(\hat \lambda)) = (\hat p, \, \hat \theta)$ and $(p, \, \theta) := (p(1), \, \theta(1)) = (\hat p_2, \, \hat \theta_2)$. In the first case we use the upper part of the formula, in the third case, the lower one, and in the second case, we can use both parts. Hence $\varkappa_n(0)$ and $\varkappa_n(\hat \lambda)$ have the same order as the left‐ and the right‐hand sides of the equation \eqref{m0n1} at the point $m_0(n)$; i.e., by Remark \ref{rem_t7}, $\varkappa_n(0) \underset{\mathfrak{Z}}{\asymp} \varkappa_n(\hat \lambda) \underset{\mathfrak{Z}}{\asymp} n^{‐\alpha_*} (\log n)^{(d‐1)\beta_*^1}$. We claim that
\begin{align}
\label{kappa_n_1} \varkappa_n(1) \underset{\mathfrak{Z}}{\gtrsim} \varkappa_n(\hat \lambda).
\end{align}
Indeed, we set
$$
f_{n,p,\theta,\alpha}(m) = 2^{‐m(\alpha+1/q‐1/p)} (n^{‐1/2} 2^{m/q}k_m^{1/\sigma})^{\omega_{p,q}}.
$$
Then $\varkappa_n(1)\underset{\mathfrak{Z}}{\asymp} f_{n,\hat p_2,\hat \theta_2,\hat \alpha_2}(m_0(n))$, $\varkappa_n(\hat \lambda)\underset{\mathfrak{Z}}{\asymp} f_{n,\hat p,\hat \theta,\hat \alpha}(m_0(n))$. In order to prove \eqref{kappa_n_1}, we need to check that
\begin{align}
\label{kappa_n_2} f_{n,\hat p_2,\hat \theta_2,\hat \alpha_2}(m_0(n)) \underset{\mathfrak{Z}}{\gtrsim} f_{n,\hat p,\hat \theta,\hat \alpha}(m_0(n)).
\end{align}

By Remark \ref{rem_m0m1_7071}, $m_0(n)=m_1(n)$, and $m_2(n)$ is the solution of \eqref{m0n2}; hence $$f_{n,\hat p_2,\hat \theta_2,\hat \alpha_2}(m_2(n))= f_{n,\hat p,\hat \theta,\hat \alpha}(m_2(n)).$$ In addition, $m_1(n)<m_2(n)$ (see Remark \ref{m1m2}) and $\hat \alpha_2 ‐ \frac{\omega_{\hat p_2,q}}{2} > \hat \alpha ‐ \frac{\omega_{\hat p,q}}{2}$ by \eqref{str_min_h}. Therefore, $f_{n,\hat p_2,\hat \theta_2,\hat \alpha_2}(m_1(n))> f_{n,\hat p,\hat \theta,\hat \alpha}(m_1(n))$ for large $n$. This completes the proof of \eqref{kappa_n_2}.

The other cases (when $\zeta<0$ or $(\omega'_{\hat p_1,q}‐\omega'_{\hat \theta_1,\sigma})\zeta <0$) can be considered similarly.
\end{proof}

Applying Lemma \ref{och} and Proposition \ref{sta_min}, we complete the proof of Theorem \ref{main6}.

\vskip 0.3cm

Now we write the analogues of Lemma \ref{och} for Theorems \ref{main1}‐‐\ref{main_non_emb}. They yield lower estimates for the widths.

We start from Theorems \ref{main3}‐‐\ref{main_non_emb}; then cases 1--3 are the same as for Theorem \ref{main6}.

\begin{Lem}
\label{och5}
Let the conditions of Theorem {\rm \ref{main3}} hold, let $m_0(n)$ be as in proof of the upper estimate, let $m'(n)$ be the nearest to $m_0(n)$ natural number such that $2^{m'(n)}k_{m'(n)} \ge 2n$. Then \eqref{dddd} holds with $l_*=0$ in cases {\rm 2}, {\rm 3}, as well as in case {\rm 1} for $i\ne s$.
\end{Lem}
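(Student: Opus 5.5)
We follow the proof of Lemma \ref{och} with $t_{l_*}$ replaced by $t_0=1$. Under the conditions of Theorem \ref{main3} we have $K=\varnothing$, so by assertion 2 of Proposition \ref{h_prop} the index $s$ lies in $J$ and $h|_{[t_0,\,t_1]}=\varphi_s|_{[t_0,\,t_1]}$; in particular $h(1)=\varphi_s(1)=\alpha_s$. By Remark \ref{rem_t3}, $2^{m_0(n)}\underset{\mathfrak{Z}}{\asymp} n(\log n)^{\kappa}$, hence $2^{m'(n)}\underset{\mathfrak{Z}}{\asymp}2^{m_0(n)}$. We let $m_*(n)$ be defined by $2^{m_*(n)}=n^{2}$, or more conveniently $2^{m_*(n)}=2n$, and take $m''(n)$ the natural number with $2^{m''(n)}\asymp 2n$, $2^{m''(n)}\ge 2n$. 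Then \eqref{2mpr_log} holds with some $\gamma=\gamma(\mathfrak{Z})$. The reduction \eqref{anis_is} (passing from $l^{t,k}_{q,\sigma}$ to $l_q^{tk}$ at the cost of $k_{m'}^{-2}$, then from $2^{m'}k_{m'}$ to $2^{m''}$ at the cost of a power of $\log n$ via Theorems \ref{glus}, \ref{p_s}) goes through verbatim. The only care needed is that at $t=1$ we are at the edge $N\asymp n$; Theorems \ref{glus}, \ref{p_s} still apply since $N\ge 2n$. We then get a lower bound of the form $(\log n)^{-\beta}d_n(2^{-m''(n)(\tilde\alpha+1/q-1/\tilde p)}B_{\tilde p}^{2^{m''(n)}},\,l_q^{2^{m''(n)}})$, and the width of each ``elementary'' ball has order $n^{-\tilde\varphi_i(1)}$, $n^{-h_{i,j}(1)}$ or $n^{-\tilde h_{i,j}(1)}$, in the notation \eqref{hij_tilhij_def}, \eqref{tilde_phi_def}.

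Case 1 with $i\neq s$ is handled as follows. For $i\in I$, assertion 3 of Proposition \ref{strict_ineq} with $l=0$ gives $\tilde\varphi_i(1)<h(1)$. For $i\in J\setminus\{s\}$, assertion 5 with $l=0$, $K=\varnothing$ gives the same inequality. Since there are finitely many indices, we obtain a uniform gap $c_1$, which absorbs the $(\log n)^{-\beta}$ factor.

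Case 2 is treated as in Lemma \ref{och}. We split $[1/p_i,1/p_j]$ by $1/q$ and $1/2$ and interpolate as in \eqref{lnndnl1l} to get $D_1^{1-\tilde\lambda}D_2^{\tilde\lambda}$. We then show that one factor carries a strict gap while the other is $\gtrsim n^{-h(1)}$, using \eqref{d12_com}. Since $K=\varnothing$, the point $p^{\cdot}_{ij}=2$ never occurs. If $p^{\cdot}_{ij}=q$, then $i\in I$, and assertion 1 of Proposition \ref{strict_ineq} gives a gap unless $(i,j)=(i_0,j_0)$. In that case we use that $h$ equals $\varphi_s$, not $h_0$, near $t=1$ (assertion 5 of Proposition \ref{h_prop} forces $h=h_0$ only on $[t_{L-1},t_L]$ with $L\ge2$, or else $t_{L-1}>1$), so that $h_{i_0,j_0}(1)<h(1)$. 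Here the inequality $h_0(1)<h(1)$ follows from Lemma \ref{h_kriterii}, part 2, since \eqref{alp_jj0} fails at $t=1$ because $\alpha_{i_0}<\alpha_{j_0}$. Endpoints $p_i$, $p_j$ distinct from $p_s$ carry a gap by case 1. Since $i<j$, at most one endpoint equals $p_s$, namely $j=s$. If $\tilde\lambda\in(0,1)$, the product therefore carries a gap. If $\tilde\lambda\in\{0,1\}$, the relevant endpoint is $q$ or $p_i$ with $i\ne s$, because $\lambda\in(0,1)$ excludes $\tilde p=p_j$.

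Case 3 with $(\tilde p,\tilde\theta)=(q,\sigma)$ follows the triangle argument of Lemma \ref{och}. We write the interior point as a convex combination of two points $(i,j)$, $(i,k)$ on the line $1/p=1/q$, and note that at most one of these pairs equals $(i_0,j_0)$. Moreover, even for $(i_0,j_0)$ we have $h_0(1)<h(1)$, as shown above, so both factors carry gaps. The case $(2,2)$ does not arise, since $K=\varnothing$ means no $p_\alpha<2$.

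The main obstacle is the endpoint issue at $t=1$. It consists of two parts: verifying that the discretization \eqref{anis_is} and the estimates \eqref{a_a} remain valid when $2^{m''}\asymp n$, and establishing the strict inequality $h_0(1)<h(1)$, which has to replace the appeal to $l_*<L-1$ used in Lemma \ref{och}.
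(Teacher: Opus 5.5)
Your proposal is correct and follows the paper's proof essentially step by step. It uses the same reduction via \eqref{anis_is} with $2^{m''(n)}\asymp n$; your first option $2^{m_*(n)}=n^2$ is a slip and should be $n^{t_0}=n$, which is what you actually use. It then handles case 1 with assertions 3 and 5 of Proposition \ref{strict_ineq}, case 2 by splitting $[1/p_i,1/p_j]$ at $1/q$ only (since $K=\varnothing$), and case 3 by the triangle argument. The one minor difference is that you derive $h_{i_0,j_0}(1)=h_0(1)<h(1)$ directly from part 2 of Lemma \ref{h_kriterii}: \eqref{alp_jj0} fails at $t=1$ because $\alpha_{i_0}<\alpha_{j_0}$. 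This is a slightly cleaner justification than the paper's appeal to assertions 2 and 5 of Proposition \ref{h_prop}, and it shows at once that both factors in case 3 carry a gap.
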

\begin{proof}
The arguments are the same as in proof of Lemma \ref{och}. The number $m_*(n)$ is defined by the equation $2^{m_*(n)}=n$. Let $m''(n)$ be the nearest to $m_0(n)$ natural number such that $2^{m''(n)} \ge 2n$. Relations \eqref{a_a} and \eqref{2mpr_log} follow from Remark \ref{rem_t3}; this implies \eqref{anis_is}.

In case 1 we get \eqref{s_lognb} with $l_*=0$. Further we use assertions 3, 4, 5 of Proposition \ref{strict_ineq} (assertion 5 is considered for the case $l=0$, $K=\varnothing$, $i\in J\backslash \{s\}$).

Consider case 2. As in Lemma \ref{och}, we get \eqref{lnndnl1l}. The magnitudes $D_1$ and $D_2$ are defined by formulas \eqref{d1_def} and \eqref{d2_def}, respectively. Since $K=\varnothing$, the number $p_{ij}^1$ can be equal to $p_i$ or $q$, and $D_1$ has the same order as $n^{‐\tilde \varphi_i(1)}$ or $n^{‐h_{i,j}(1)}$, respectively; in addition, $i<j\le s$. In the first case, by assertions 3, 4, 5 of Proposition \ref{strict_ineq} we get $\tilde \varphi_i(1)<h(1)$. In the second case for $(i, \, j)\ne (i_0, \, j_0)$ we use assertion 1 of Proposition \ref{strict_ineq}, and for $(i, \, j)= (i_0, \, j_0)$, we use that $h|_{[t_0, \, t_1]}=\varphi_s|_{[t_0, \, t_1]}$ and $h|_{[t_{L‐1}, \, t_L]} = h_0|_{[t_{L‐1}, \, t_L]}$ (here we apply assertions 2, 5 of Proposition \ref{h_prop} and the conditions $K=\varnothing$, $I\ne \varnothing$); we get $h_{i_0,j_0}(1) < h(1)$. Hence $D_1 \underset{\mathfrak{Z}}{\gtrsim} n^{‐h(1)+c_1}$. Also $p_{ij}^2$ can be equal to $p_j$ or $q$, and $D_2$ has the same order as $n^{‐\tilde \varphi_j(1)}$ or $n^{‐h_{i,j}(1)}$. If $p_{ij}^2=q$, we have $D_2 \underset{\mathfrak{Z}}{\gtrsim} n^{‐h(1)+c_1}$ (this estimate can be proved similarly as for $D_1$). If $p_{ij}^2=p_j$, then $D_2 \underset{\mathfrak{Z}}{\gtrsim} n^{‐h(1)}$; in addition, $\tilde \lambda < 1$ (otherwise, $\tilde p = p_{ij}^2 = p_j$, and we arrivw to a contradiction with the condition $\lambda \in (0, \, 1)$).

Case 3 is the same as in Lemma \ref{och}.
\end{proof}

Thus, in order to prove Theorem \ref{main3}, it remains to estimate from below the magnitude
\begin{align}
\label{dn_al_s}
d_n(2^{‐m'(n)(\alpha_s+1/q ‐1/p_s)} B_{p_s,\theta_s} ^{2^{m'(n)},k_{m'(n)}}, \, l_{q,\sigma} ^{2^{m'(n)},k_{m'(n)}});
\end{align}
to this end, we use the second assertion from Remark \ref{rem_t3}.

\begin{Lem}
\label{och6}
Let the conditions of Theorem {\rm\ref{main4}} hold, let $m_0(n)$ be as in proof of the upper estimate, let $m'(n)$ be the nearest to $m_0(n)$ natural number such that $2^{m'(n)}k_{m'(n)} \ge 2n$. Then \eqref{dddd} holds with $l_*=0$ in cases {\rm 1}, {\rm 3}, as well as in case {\rm 2} for $(i, \, j)\ne (i_1, \, j_1)$ or $(i, \, j) = (i_1, \, j_1)$, $\tilde p\ne 2$.
\end{Lem}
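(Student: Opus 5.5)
I will follow the scheme of the proof of Lemma \ref{och} and Lemma \ref{och5}, with $l_*=0$, i.e. $t_{l_*}=t_0=1$. First I set $m_*(n)$ by $2^{m_*(n)}=n$ and let $m''(n)$ be the nearest natural number to $m_*(n)$ with $2^{m''(n)}\ge 2n$. By Remark \ref{rem_t4}, $2^{m_0(n)}\asymp n(\log n)^{\kappa}$, so \eqref{a_a} and \eqref{2mpr_log} hold, and \eqref{anis_is} reduces everything to $(\log n)^{-\beta}d_n(2^{-m''(n)(\tilde\alpha+1/q-1/\tilde p)}B^{2^{m''(n)}}_{\tilde p},\,l_q^{2^{m''(n)}})$. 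Here $h|_{[t_0,t_1]}=h_1$ (assertion 3 of Proposition \ref{h_prop}, since $K\ne\varnothing$), so $h(1)=\tilde h_{i_1,j_1}(1)=\alpha_*$. By assertion 5 of Proposition \ref{strict_ineq} with $l=0$, $K\ne\varnothing$, we have $\varphi_i(1)<h(1)$ for \emph{all} $i\in J$; together with assertions 3, 4, every $\tilde\varphi_i(1)<h(1)$. This already settles case 1 for every $i$, as in \eqref{s_lognb}.

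For case 2, write $1/\tilde p$ as in \eqref{case2}, split $[1/p_i,1/p_j]$ at $1/q$, $1/2$, and obtain \eqref{lnndnl1l} with $D_1$, $D_2$ from \eqref{d1_def}, \eqref{d2_def}. Endpoints $p_{ij}^{1,2}=p_i,p_j$ give $n^{-\tilde\varphi(1)}$ and hence a gain $n^{c_1}$ by the previous paragraph. Endpoints equal to $q$ give $n^{-h_{i,j}(1)}$; for $(i,j)\ne(i_0,j_0)$ use assertion 1 of Proposition \ref{strict_ineq}, and for $(i,j)=(i_0,j_0)$ use that $h(1)=h_1(1)>h_0(1)$ strictly unless the two coincide at $t=1$, which is impossible because $t_{**}>1$ (equality $h_0=h_1$ occurs only at $t_1>1$ by assertions 3, 5 of Proposition \ref{h_prop}). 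Endpoints equal to $2$ give $n^{-\tilde h_{i,j}(1)}$, strictly better by assertion 2 of Proposition \ref{strict_ineq} when $(i,j)\ne(i_1,j_1)$. So when $(i,j)\ne(i_1,j_1)$ both $D_1,D_2$ gain $n^{c_1}$ and \eqref{d1ld2l} follows. When $(i,j)=(i_1,j_1)$ and $\tilde p\ne 2$: $1/\tilde p$ lies in a segment of the partition one of whose endpoints is $p_{i_1}$ or $p_{j_1}$ (gain $n^{c_1}$) and whose other endpoint is $2$ or $q$ (at worst $n^{-h(1)}$, by \eqref{d12_com}); since $\tilde p\ne2$, the weight on the gaining endpoint is positive (if $\tilde\lambda\in\{0,1\}$ then $\tilde p$ equals an endpoint, which is $2$ — excluded — or $q$ — gain — or $p_i,p_j$ — contradicting $\lambda\in(0,1)$), giving \eqref{d1ld2l}.

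Case 3 is verbatim as in Lemma \ref{och}: for $(\tilde p,\tilde\theta)=(q,\sigma)$ decompose through two pairs $h_{i,j}$, $h_{i,k}$, and since $\{i,j,k\}\not\subset\{i_0,j_0\}$ at least one factor gains (using the strict inequality at $t=1$ just discussed for $(i_0,j_0)$); for $(2,2)$ use $\tilde h_{i,j},\tilde h_{i,k}$ and $\{i,j,k\}\not\subset\{i_1,j_1\}$ with assertion 2 of Proposition \ref{strict_ineq} and Theorem \ref{glus}. The main obstacle I expect is the bookkeeping in case 2 for $(i_1,j_1)$ and the $(i_0,j_0)$ subcase, i.e. making sure every endpoint other than $p=2$ along the segment $[1/p_{i_1},1/p_{j_1}]$ carries a strict gain at $t=1$; the rest is a repetition of Lemma \ref{och}.
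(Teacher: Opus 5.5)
Your proposal is correct and follows the paper's proof essentially step for step: case 1 via assertions 3--5 of Proposition \ref{strict_ineq} at $l=0$, $K\ne\varnothing$; case 2 via the strict gain at every partition endpoint except $p=2$ for $(i_1,j_1)$, using $h_0(1)<h_1(1)=h(1)$ for $(i_0,j_0)$; case 3 as in Lemma \ref{och}. One small bookkeeping slip: when $i_1\in I$ and $1/\tilde p\in(1/q,\,1/2)$, the relevant segment is $[1/q,\,1/2]$, which has no endpoint $p_{i_1}$ or $p_{j_1}$, so your claim that every segment has such an endpoint is false; the argument still goes through, because you already showed that the endpoint $q$ carries a gain, and $\tilde p\ne 2$ forces $\tilde\lambda<1$.
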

\begin{proof}
The numbers $m_*(n)$ and $m''(n)$ are defined as in the previous lemma. Relations \eqref{a_a} and \eqref{2mpr_log} follow from Remark \ref{rem_t4}; this implies \eqref{anis_is}.

In case 1 we get \eqref{s_lognb} with $l_*=0$; further we use assertions 3, 4, 5 of Proposition \ref{strict_ineq} (assertion 5 is considered for $i\in J$, $l=0$, $K\ne \varnothing$).

In case 3 we argue as in proof of Lemma \ref{och}.

In case 2, as in Lemma \ref{och}, we obtain \eqref{lnndnl1l} and \eqref{d12_com}. Let us prove \eqref{d1ld2l}. For $(i, \, j)\ne (i_1, \, j_1)$ we argue as in Lemma \ref{och}, we only need to modify the proof of analogue of \eqref{d1_est_c1}, i.e.,
\begin{align}
\label{d1_est_c100} \begin{array}{c}
D_1 \underset{\mathfrak{Z}}{\gtrsim} n^{‐h(t_{l_*})+c_1} \; \text{for } p_{ij}^1=q, \; p_{ij}^1 =2 \text{ and }p_{ij}^1 = p_i, \\ 
D_2 \underset{\mathfrak{Z}}{\gtrsim} n^{‐h(t_{l_*})+c_1} \; \text{for } p_{ij}^2=q, \; p_{ij}^2 =2 \text{ and }p_{ij}^2 = p_j
\end{array}
\end{align}
(here we do not need the conditions $i$, $j\notin J\cap\{i_*, \, j_*\}$). Then \eqref{d1ld2l} immediately follows from \eqref{d1_est_c100}. For $p_{ij}^1=q$ and $p_{ij}^2=q$ in the case $(i, \, j)\ne (i_0, \, j_0)$ we use assertion 1 of Proposition \ref{strict_ineq}; in case $(i, \, j)=(i_0, \, j_0)$ we use the equalities $h|_{[t_0, \, t_1]} = h_1|_{[t_0, \, t_1]}$, $h|_{[t_{L‐1}, \, t_L]} = h_0|_{[t_{L‐1}, \, t_L]}$ (they follow from assertions 3, 5 of Proposition \ref{h_prop} and the conditions $I\ne \varnothing$, $K\ne \varnothing$); for $p_{ij}^1=2$ or $p_{ij}^2=2$, since $(i, \, j)\ne (i_1, \, j_1)$, we use assertion 2 of Proposition \ref{strict_ineq}; for $p_{ij}^1=p_i$ $(i\in J)$ or $p_{ij}^2=p_j$ $(j\in J)$ we apply assertions 3, 4, 5 of Proposition \ref{strict_ineq} (assertion 5 is considered for $l=0$, $K\ne \varnothing$).

Now we consider case 2 with $(i, \, j) = (i_1, \, j_1)$, $\tilde p \ne 2$.

If $\tilde p < 2$, then $p_{ij}^2=p_j<2$, and by assertion 4 of Proposition \ref{strict_ineq}, $D_2 \underset{\mathfrak{Z}}{\gtrsim} n^{‐h(t_{l_*})+c_1}$. If $p_{ij}^1<2$, then similarly $D_1 \underset{\mathfrak{Z}}{\gtrsim} n^{‐h(t_{l_*})+c_1}$. If $p_{ij}^1 = 2$, then $D_1 \stackrel{\eqref{d12_com}}{\underset{\mathfrak{Z}}{\gtrsim}} n^{‐h(t_{l_*})}$ and $\tilde \lambda >0$.

Let $\tilde p>2$. From Proposition \ref{strict_ineq} it follows that $D_1 \underset{\mathfrak{Z}}{\gtrsim} n^{‐h(t_{l_*})+c_1}$ (if $p_{ij}^1\in (2, \, q)$, then we use assertion 5 for $l=0$, $K\ne \varnothing$; if $p_{ij}^1=q$, then we use assertion 1 and, for $(i, \, j) = (i_0, \, j_0)$, the equalities $h|_{[t_0, \, t_1]} = h_1|_{[t_0, \, t_1]}$, $h|_{[t_{L‐1}, \, t_L]} = h_0|_{[t_{L‐1}, \, t_L]}$; if $p_{ij}^1>q$, then we use assertion 3). If $p_{ij}^2>2$, then similarly $D_2 \underset{\mathfrak{Z}}{\gtrsim} n^{‐h(t_{l_*})+c_1}$; if $p_{ij}^2=2$, then $D_2 \stackrel{\eqref{d12_com}}{\underset{\mathfrak{Z}}{\gtrsim}} n^{‐h(t_{l_*})}$ and $\tilde \lambda<1$.
\end{proof}

Thus, in order to prove Theorem \ref{main4}, it remains to estimate from below the value
$$
d_n(2^{‐m'(n)(\alpha_*+1/q ‐1/2)} B_{2,\theta_*} ^{2^{m'(n)},k_{m'(n)}}, \, l_{q,\sigma} ^{2^{m'(n)},k_{m'(n)}});
$$
to this end, we apply the second assertion of Remark \ref{rem_t4}.

\begin{Lem}
Let the conditions of Theorem {\rm \ref{main5}} hold, let the number $m_0(n)$ be as in proof of the upper estimate, let $m'(n)$ be the nearest to $m_0(n)$ natural number such that $2^{m'(n)}k_{m'(n)} \ge 2n$. Then \eqref{dddd} holds with $l_*=L$ in cases {\rm 2}, {\rm 3}, as well as in case {\rm 1} for $j\ne 1$.
\end{Lem}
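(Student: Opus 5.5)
We mirror the proof of Lemma \ref{och5}, now at the right endpoint $t_L=q/2$. Under the conditions of Theorem \ref{main5} we have $I=\varnothing$, so by assertion 4 of Proposition \ref{h_prop} $1\in J$ and $h|_{[t_{L-1},\,t_L]}=\varphi_1|_{[t_{L-1},\,t_L]}$. We define $m_*(n)$ by $2^{m_*(n)}=n^{q/2}$ and let $m''(n)$ be the natural number nearest to $m_*(n)$ with $2^{m''(n)}\ge 2n$. Remark \ref{rem_t5} gives $2^{m_0(n)}\asymp n^{q/2}(\log n)^{\kappa}$, and hence, since $m'(n)$ is the natural number nearest to $m_0(n)$, $2^{m'(n)}\asymp n^{q/2}(\log n)^{\kappa}$ as well. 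From this we get the analogue of \eqref{2mpr_log}, and a version of \eqref{a_a} in which the bound $n^{q/2}$ is relaxed by a power of $\log n$; that power is harmless in Theorems \ref{glus}, \ref{p_s}, where $N\ge 2n$ is all that is required. Therefore \eqref{anis_is} holds, with only a $(\log n)^{-\beta}$ loss, and the problem reduces to the one-parameter balls $B_{\tilde p}^{2^{m''(n)}}$ in $l_q^{2^{m''(n)}}$, whose widths are $n^{-\tilde\varphi(q/2)}$ for the corresponding linear functions.

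In case 1 with $i\ne1$ we obtain \eqref{s_lognb} with $t_{l_*}$ replaced by $q/2$. Since $I=\varnothing$, either $i\in K$, and we use assertion 4 of Proposition \ref{strict_ineq}, or $i\in J\setminus\{1\}$, and we use assertion 5 with $l=L$, $I=\varnothing$. Either way $\tilde\varphi_i(q/2)\le h(q/2)-c_1$, and the $(\log n)^{-\beta}$ factor is absorbed by the power gain.

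In case 2 we again derive \eqref{lnndnl1l} with $D_1,D_2$ as in \eqref{d1_def}, \eqref{d2_def}. Since $I=\varnothing$, all $p_i<q$, so $p_{ij}^1,p_{ij}^2\in\{p_i,p_j,2\}$. If an endpoint equals $2$, the corresponding width is $n^{-\tilde h_{i,j}(q/2)}$.
\begin{itemize}
\item For $(i,j)\ne(i_1,j_1)$, assertion 2 of Proposition \ref{strict_ineq} gives the strict gain.
\item For $(i,j)=(i_1,j_1)$ we use that $h=\varphi_1$ near $q/2$ while $h=h_1$ near $1$. We expect that $\tilde h_{i_1,j_1}(q/2)<h(q/2)$ strictly. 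This should follow from \eqref{frac_aj} and Proposition \ref{h_prop}: equality would force the affine functions $h_1$ and $\varphi_1$ to coincide at $q/2$ with $h_1\le h$, contradicting the corner structure, in the same way as in case 6 of the proof of Proposition \ref{strict_ineq}.
\end{itemize}
If an endpoint equals $p_i$ or $p_j$, then:
\begin{itemize}
\item if that index is $\ne1$, we get a gain as in case 1;
\item if $p_{ij}^1=p_i$ with $i=1$, we only have $D_1\gtrsim n^{-h(q/2)}$ from \eqref{d12_com}, but then $\tilde\lambda>0$, because $\tilde\lambda=0$ would give $\tilde p=p_1$, contradicting $\lambda\in(0,1)$. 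Since $j>1$, the other factor $D_2$ gains $n^{c_1}$, and \eqref{d1ld2l} follows.
\end{itemize}
Case 3 is handled exactly as in Lemma \ref{och}. The variant \eqref{aa1} cannot occur, since it needs some $p_i\ge q$ and $I=\varnothing$. In the variant \eqref{aa2} we use assertion 2 of Proposition \ref{strict_ineq} together with the strict inequality for $(i_1,j_1)$ just discussed; at least one of the two pairs $(i,j)$, $(i,k)$ differs from $(i_1,j_1)$, and $\tilde\lambda\in(0,1)$.

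The main obstacle is the strict inequality $\tilde h_{i_1,j_1}(q/2)<h(q/2)$ when $K\ne\varnothing$, since Proposition \ref{strict_ineq} does not state it directly. I would prove it by contradiction: suppose $\tilde h_{i_1,j_1}(q/2)=\varphi_1(q/2)$. By the definition of $t_*$ we have $h_1\le h$ on $[1,\,q/2]$, with $h_1=h$ only on $[1,\,t_1]$, and $t_1<q/2$ because $h=\varphi_1$ on $[t_{L-1},\,t_L]$. Since $h_1=\tilde h_{i_1,j_1}$ is affine, equality at $q/2$ would make the convex function $h-h_1$ vanish at $q/2$ as well as on $[1,\,t_1]$, hence identically, contradicting $h>h_1$ near $q/2$.
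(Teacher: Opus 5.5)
Your proposal is correct and follows the paper's route. The paper proves this lemma by mirroring Lemma \ref{och5} at the right endpoint $t_L=q/2$: it sets $2^{m_*(n)}=n^{q/2}$ and notes, via Remark \ref{rem_t5}, that \eqref{anis_is} survives the possible failure of $2^{m'(n)}k_{m'(n)}\le n^{q/2}$. Your check that $\tilde h_{i_1,j_1}(q/2)<h(q/2)$ is the mirror image of the paper's step $h_{i_0,j_0}(1)<h(1)$ in Lemma \ref{och5}; it also follows at once from Lemma \ref{h_kriterii}(3), because \eqref{alp_jj1} fails at $t=q/2$ by \eqref{non_emb}.
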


The proof is the same as for Lemma \ref{och5}; here we apply Remark \ref{rem_t5}. The number $m_*(n)$ is defined by the equation $2^{m_*(n)} = n^{q/2}$, $m''(n)$ is the nearest to $m_*(n)$ natural number such that $2n\le 2^{m''(n)}\le n^{q/2}$. The inequality $2^{m'(n)}k_{m'(n)} \le n^{q/2}$ from \eqref{a_a} may fail, but \eqref{anis_is} holds by Remark \ref{rem_t5}.

Thus, in order to prove Theorem \ref{main5}, it remains to estimate the value
\begin{align}
\label{dn_al_1}
d_n(2^{‐m'(n)(\alpha_1+1/q ‐1/p_1)} B_{p_1,\theta_1} ^{2^{m'(n)},k_{m'(n)}}, \, l_{q,\sigma} ^{2^{m'(n)},k_{m'(n)}});
\end{align}
to this end, we use the second assertion from Remark \ref{rem_t5}.

\begin{Lem}
Let the conditions of Theorem {\rm \ref{main_non_emb}} hold, let $m_0(n) = \frac q2 \log n$, and let $m'(n)$ be the nearest to $m_0(n)$ natural number such that $m'(n)\le m_0(n)$. Then \eqref{dddd} holds with $l_*=L$ in cases {\rm 1}, {\rm 3}, as well as in case {\rm 2} for $(i, \, j)\ne (i_0, \, j_0)$ or $(i, \, j) = (i_0, \, j_0)$, $\tilde p\ne q$.
\end{Lem}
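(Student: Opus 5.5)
We follow the scheme of Lemma \ref{och} and Lemma \ref{och5}, now anchored at the right endpoint $t_L=q/2$ of $[1,\,q/2]$. Set $m_*(n)=m_0(n)=\frac q2\log n$, so that $2^{m_*(n)}=n^{q/2}$, and take $m''(n)=m'(n)$. Because $m'(n)\le m_0(n)$ and $m'(n)\ge m_0(n)-1$, we have $2^{m'(n)}\asymp n^{q/2}$. Then $k_{m'(n)}\asymp(\log n)^{d-1}$, and for large $n$ also $2^{m'(n)}\ge 2n$. The first reduction in \eqref{anis_is} (passing from the mixed norm to $l_q^{2^{m'(n)}k_{m'(n)}}$ at the cost of $k_{m'(n)}^{-2}$) still holds. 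The second step, comparing $d_n$ in dimension $2^{m'(n)}k_{m'(n)}$ with dimension $2^{m''(n)}$, can be done directly with Theorems \ref{glus}, \ref{p_s}. The point is that we only need a lower bound: $d_n(B_p^N,l_q^N)$ is nondecreasing in $N$, so dropping the factor $k_{m'(n)}$ loses nothing. The regime $n\le N\le n^{q/2}$ is exactly where Theorem \ref{glus} gives $(n^{-1/2}N^{1/q})^{\omega_{p,q}}$ with the base at least $1$. Every width will therefore be bounded below by $(\log n)^{-\beta}$ times the corresponding quantity for $N=2^{m'(n)}\asymp n^{q/2}$, which produces the exponents $n^{-\tilde\varphi_i(q/2)}$, $n^{-h_{i,j}(q/2)}$ and $n^{-\tilde h_{i,j}(q/2)}$ as before.

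In case 1 we obtain \eqref{s_lognb} with $t_{l_*}$ replaced by $q/2$. Since $I\ne\varnothing$, assertion 5 of Proposition \ref{strict_ineq} with $l=L$ gives $\varphi_i(q/2)<h(q/2)$ for every $i\in J$. Assertions 3 and 4 cover $i\in I$ and $i\in K$. Hence $\tilde\varphi_i(q/2)\le h(q/2)-c_1$ for all $i$, and no index is exceptional, which matches the statement.

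Case 3 is handled verbatim as in Lemma \ref{och}, using assertions 1 and 2 of Proposition \ref{strict_ineq} at $l=L$.

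In case 2 we again obtain \eqref{lnndnl1l} and \eqref{d12_com}, and we need to prove \eqref{d1ld2l}.

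For $(i,j)\ne(i_0,j_0)$ we prove the full version \eqref{d1_est_c100}, which gives \eqref{d1ld2l} immediately:
\begin{itemize}
\item if $p^\nu_{ij}=q$, use assertion 1 of Proposition \ref{strict_ineq};
\item if $p^\nu_{ij}=2$ and $(i,j)\ne(i_1,j_1)$, use assertion 2;
\item if $(i,j)=(i_1,j_1)$, we know $h|_{[t_{L-1},t_L]}=h_0$ by assertion 5 of Proposition \ref{h_prop}, and $(i_1,j_1)\ne(i_0,j_0)$; Lemma \ref{ij01} then gives $\tilde h_{i_1,j_1}(q/2)<h(q/2)$ unless $L=2$, in which case $\tilde h_{i_1,j_1}$ only touches $h$ at $t_1<q/2$;
\item if $p^\nu_{ij}\in\{p_i,p_j\}$, use assertions 3, 4, 5 of Proposition \ref{strict_ineq}.
\end{itemize}

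For $(i,j)=(i_0,j_0)$ with $\tilde p\ne q$ we split by the side of $q$ on which $\tilde p$ lies.
\begin{itemize}
\item If $\tilde p>q$, then $p^1_{ij}=p_{i_0}>q$, so $D_1\gtrsim n^{-h(q/2)+c_1}$ by assertion 3. Moreover $D_2\gtrsim n^{-h(q/2)}$ either with $\tilde\lambda<1$ (when $p^2_{ij}=q$) or with a strict gain as well.
\item If $\tilde p<q$, then $p^2_{ij}\in\{2,p_{j_0}\}$ (or $p^2_{ij}=q$ paired with $\tilde\lambda>0$), and we conclude symmetrically with assertions 2, 4, 5 at $l=L$.
\end{itemize}

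The main obstacle is this last bookkeeping, specifically the subcase $p^2_{ij}=2$ where $(i_0,j_0)=(i_1,j_1)$ (which forces $L=2$). There I would use that $\tilde h_{i_1,j_1}$ agrees with $h$ only on $[t_0,t_1]$, so $\tilde h_{i_1,j_1}(q/2)<h(q/2)$. Throughout, the $(\log n)^{-\beta}$ loss is absorbed into the power gain $n^{c_1}$.
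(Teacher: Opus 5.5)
Your proposal is correct and follows the paper's route: the paper proves this lemma by repeating the argument of Lemma~\ref{och6} with $m_*(n):=m_0(n)$ and $m''(n):=m'(n)$. Your treatment of cases 1--3 is exactly that argument, with the roles of the pairs $(i_0,j_0)$ and $(i_1,j_1)$, and of the exponents $q$ and $2$, interchanged, and with monotonicity of $d_n$ in $N$ replacing the failed upper bound in \eqref{a_a}. One small correction: the inequality $\tilde h_{i_1,j_1}(q/2)<h(q/2)$ does not follow from Lemma~\ref{ij01}; as you say at the end, it follows from $h=h_1$ on $[t_0,t_1]$ with $t_1<q/2$ and $h=h_0\ne h_1$ near $q/2$, by convexity of $h$. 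Also, for $N\le n^{q/2}$ the base $n^{-1/2}N^{1/q}$ is at most $1$, not at least $1$.
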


The proof is the same as for Lemma \ref{och6}; here $m_*(n):=m_0(n)$, $m''(n):=m'(n)$.

Hence, in order to prove Theorem \ref{main_non_emb}, it remains to estimate the value
$$
d_n:=d_n(2^{‐m'(n)\alpha_*} B_{q,\theta_*} ^{2^{m'(n)},k_{m'(n)}}, \, l_{q,\sigma} ^{2^{m'(n)},k_{m'(n)}}),
$$
where $\alpha_*=(1‐\lambda_{i_0,j_0})\alpha_{i_0}+ \lambda_{i_0,j_0}\alpha_{j_0}$, $\frac{1}{\theta_*} = \frac{1‐\lambda_{i_0,j_0}}{\theta_{i_0}} + \frac{\lambda_{i_0,j_0}}{\theta_{j_0}}$. By conditions of the theorem, $\alpha_*<0$. Therefore, $d_n \underset{n\to \infty}{\to} \infty$ by \eqref{phi1}, \eqref{phi2}.

Under the conditions of Theorems \ref{main1}, \ref{main2} the set of triples $(i, \, j, \, k)$ from case 3 is empty, and, in case 2, the set of pairs $(i, \, j)$ with $\tilde p\in \{2, \, q\}$ and $\omega_{\tilde p,q} = \omega _{\tilde \theta,\sigma}\in (0, \, 1)$ is empty.

In order to prove Theorem \ref{main1}, we use

\begin{Lem}
\label{och9}
Let the conditions of Theorem {\rm \ref{main1}} hold, let the number $m_0(n)$ be the same as in proof of the upper estimate, let $m'(n)$ be the nearest to $m_0(n)$ natural number such that $2^{m'(n)}k_{m'(n)} \ge 2n$. Then
$$
d_n(2^{‐m'(n)(\tilde \alpha +1/q‐1/\tilde p)}B_{\tilde p,\tilde \theta} ^{2^{m'(n)},k_{m'(n)}}, \, l_{q,\sigma} ^{2^{m'(n)},k_{m'(n)}}) \underset{\mathfrak{Z}}{\gtrsim} n^{‐\alpha_*+c}
$$
holds in case {\rm 1} for $j\ne s$ and in case {\rm 2}; here $c=c(\mathfrak{Z})>0$.
\end{Lem}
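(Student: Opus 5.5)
We follow the scheme of the proofs of Lemmas \ref{och} and \ref{och5}. In Theorem \ref{main1} all $p_j\ge q$, so $I=\{1,\dots,s\}$ (or $p_j=q$). By Remark \ref{rem_t1}, $2^{m_0(n)}\asymp n(\log n)^\kappa$, and hence $2^{m'(n)}\asymp 2^{m_0(n)}$. Take $m_*(n)=\log n$ and let $m''(n)$ be the nearest natural number with $2^{m''(n)}\ge 2n$. Then \eqref{a_a} and \eqref{2mpr_log} hold. Passing from the mixed norm to the ordinary norm, as in \eqref{anis_is}, at a logarithmic cost gives
$$
d_n(2^{-m'(n)(\tilde\alpha+1/q-1/\tilde p)}B^{2^{m'(n)},k_{m'(n)}}_{\tilde p,\tilde\theta},\, l^{2^{m'(n)},k_{m'(n)}}_{q,\sigma}) \underset{\mathfrak{Z}}{\gtrsim} (\log n)^{-\beta} d_n(2^{-m''(n)(\tilde\alpha+1/q-1/\tilde p)}B^{2^{m''(n)}}_{\tilde p},\, l_q^{2^{m''(n)}}).
$$
Since the logarithmic factor is absorbed by any positive power of $n$, it suffices to show that the right-hand width is at least $n^{-\alpha_s+c_1}$.

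\emph{Case 1} ($i\ne s$). Since $\tilde p=p_i\ge q$ and $2^{m''(n)}\ge 2n$, Theorem \ref{p_s} gives
$$
d_n(B_{p_i}^{2^{m''}},l_q^{2^{m''}})\asymp 2^{m''(1/q-1/p_i)}.
$$
The width above is therefore $\asymp 2^{-m''(n)\alpha_i}\asymp n^{-\alpha_i}$. By \eqref{non_emb}, $\alpha_i<\alpha_s$, so $c_1=\alpha_s-\alpha_i>0$ works.

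\emph{Case 2.} Here $1/\tilde p=(1-\lambda)/p_i+\lambda/p_j$ with $i<j$ and $\lambda\in(0,1)$, so $\tilde p\ge q$ and $\tilde\alpha=(1-\lambda)\alpha_i+\lambda\alpha_j$. Again by Theorem \ref{p_s} the width is $\asymp n^{-\tilde\alpha}$. Since $\alpha_j\le\alpha_s$, $\alpha_i<\alpha_s$ and $\lambda<1$, we get $\tilde\alpha<\alpha_s$, uniformly over the finitely many admissible pairs and over the relevant $\lambda$. As the remark before the lemma notes, only finitely many $\lambda$ occur: $\tilde p\ne 2$ and $\tilde p$ cannot lie in $(2,q)$, so these are only the $\lambda$ with $\tilde\theta\in\{2,\sigma\}$, together with $\tilde p=q$ when some $p_i=q$. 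Each such pair gives a positive gap $\alpha_s-\tilde\alpha$, and we take $c_1$ to be the minimum over them. The constant $c$ is then any number in $(0,c_1)$, which absorbs $(\log n)^{-\beta}$.

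The only delicate point is checking \eqref{a_a} so that Theorem \ref{p_s} applies with $n\le N$. This follows from $2^{m''(n)}\ge 2n$ and $2^{m'(n)}k_{m'(n)}\ge 2n$. No use of $h$ or Proposition \ref{strict_ineq} is needed, because the monotonicity of $\alpha_j$ in \eqref{non_emb} already separates every term except the $s$-th from $n^{-\alpha_*}$.
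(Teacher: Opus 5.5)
Your proof is correct and essentially the paper's argument: with $2^{m_*(n)}=n$ you reduce via \eqref{anis_is} to non-mixed widths, where Theorem \ref{p_s} gives the order $n^{-\tilde\alpha}$, and the chain $\alpha_1<\dots<\alpha_s$ from \eqref{non_emb} gives a uniform gap $\alpha_s-\tilde\alpha>0$ over the finitely many admissible $(i,j,\lambda)$. The subcase $\tilde p=q$ you allow in case 2 is actually empty, since $p_1>\dots>p_s\ge q$ forces $1/\tilde p<1/q$ for $\lambda\in(0,1)$, but including it does no harm.
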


In order to prove Theorem \ref{main2}, we use

\begin{Lem}
\label{och10}
Let the conditions of Theorem {\rm \ref{main2}} hold, let the number $m_0(n)$ be the same as in proof of the upper estimate, let $m'(n)$ be the nearest to $m_0(n)$ natural number such that $2^{m'(n)}k_{m'(n)} \ge 2n$. Then
$$
d_n(2^{‐m'(n)(\tilde \alpha +1/q‐1/\tilde p)}B_{\tilde p,\tilde \theta} ^{2^{m'(n)},k_{m'(n)}}, \, l_{q,\sigma} ^{2^{m'(n)},k_{m'(n)}}) \underset{\mathfrak{Z}}{\gtrsim} n^{‐\alpha_*+c}
$$
holds in case {\rm 1} for $j\ne 1$ and in case {\rm 2}; here $c=c(\mathfrak{Z})>0$.
\end{Lem}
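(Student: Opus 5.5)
Under the conditions of Theorem \ref{main2} all $p_j\le 2$, so $p_j<2$ except possibly equality; in particular $I=J=\varnothing$, and by \eqref{non_emb} the indices satisfy $p_1>p_2>\dots>p_s$ and $\alpha_1<\alpha_j$, $\alpha_1-1/p_1>\alpha_j-1/p_j$ for $j\ge 2$. The plan is to mimic the proof of Lemma \ref{och}: define $m_*(n)$ by $2^{m_*(n)}=n$ if $\alpha_1>1/p_1$ and by $2^{m_*(n)}=n^{q/2}$ if $\alpha_1<1/p_1$, let $m''(n)$ be the nearest natural number with $2n\le 2^{m''(n)}\le n^{q/2}$, and use Remark \ref{rem_t2} to get the analogues of \eqref{a_a}, \eqref{2mpr_log}, hence \eqref{anis_is}. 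This reduces the problem, up to a factor $(\log n)^{-\beta}$, to the one-parameter widths $d_n(2^{-m''(n)(\tilde\alpha+1/q-1/\tilde p)}B_{\tilde p}^{2^{m''(n)}},\,l_q^{2^{m''(n)}})$, which by Theorem \ref{glus} equal $n^{-g_{\tilde\alpha,\tilde p}(t)}$ with $t\in\{1,q/2\}$ and $g_{\tilde\alpha,\tilde p}(t)=(\tilde\alpha-1/\tilde p)t+1/2$ for $\tilde p\le 2$. So it suffices to show $g_{\tilde\alpha,\tilde p}(t)\le \alpha_*-2c$.

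In case 1 with $j\ne 1$ this is $(\alpha_j-1/p_j)t+1/2<(\alpha_1-1/p_1)t+1/2$, which follows from \eqref{non_emb}; note $\alpha_*=(\alpha_1-1/p_1)t+1/2$ in both regimes (for $t=1$ it is $\alpha_1+1/2-1/p_1$, for $t=q/2$ it is $\frac q2(\alpha_1+1/q-1/p_1)$). In case 2 the pair $(i,j)$, $i<j$, with $\lambda\in(0,1)$ gives $\tilde p\le 2$ and, since the map $(\alpha,1/p)\mapsto (\alpha-1/p)t$ is affine, $g_{\tilde\alpha,\tilde p}(t)=(1-\lambda)g_i(t)+\lambda g_j(t)$ with $g_j(t)<g_1(t)$ strictly and $g_i(t)\le g_1(t)$; since $\lambda>0$ we get a strict gap uniform in the finitely many admissible pairs (the admissible $\lambda$ are finite in number: they are determined by $\tilde\theta\in\{2,\sigma\}$, as noted above the other possibilities are empty). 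Case 3 is empty. Absorbing $(\log n)^{-\beta}$ into $n^{c}$ finishes the proof.

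The main obstacle is checking that \eqref{anis_is} really holds in the regime $\alpha_1<1/p_1$, where $2^{m'(n)}k_{m'(n)}$ may exceed $n^{q/2}$, so that Theorem \ref{glus} (requiring $n\le N/2$) and the comparison of $2^{m'(n)}$ with $2^{m''(n)}$ must be done with the $\log$-power discrepancy from Remark \ref{rem_t2}; I would handle it as in the lemma for Theorem \ref{main5}, using that for $\tilde p\le 2$ the width $n^{-1/2}N^{1/q}$ (or $1$ when $N\ge n^{q/2}$) changes only by a power of $\log n$.
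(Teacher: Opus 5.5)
Your proposal is correct and follows the paper's own (very brief) argument: choose $m_*(n)$ with $2^{m_*(n)}=n$ or $n^{q/2}$ according to Remark~\ref{rem_t2}, take $m''(n)$ as the nearest natural number with $2n\le 2^{m''(n)}\le n^{q/2}$, obtain \eqref{anis_is}, and conclude from \eqref{non_emb}. The affine-combination argument you give for case~2 (only $\tilde\theta\in\{2,\sigma\}$ survives, so $\lambda$ takes finitely many values) and the power-of-$\log n$ treatment of the possibility $2^{m'(n)}k_{m'(n)}>n^{q/2}$ are exactly the details that the paper leaves implicit.
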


In proof of Lemmas \ref{och9}, \ref{och10} the number $m_*(n)$ is defined by the equation $2^{m_*(n)} = n$, if $2^{m_0(n)} \underset{\mathfrak{Z}}{\asymp} n(\log n)^\kappa$, and by equation $2^{m_*(n)} = n^{q/2}$, if $2^{m_0(n)} \underset{\mathfrak{Z}}{\asymp} n^{q/2}(\log n)^\kappa$ (see Remarks \ref{rem_t1}, \ref{rem_t2}).
As $m''(n)$ we take the nearest to $m_*(n)$ natural number such that $2n\le 2^{m''(n)}\le n^{q/2}$. After that we obtain \eqref{anis_is} and use \eqref{non_emb}.

Thus, in order to prove Theorem \ref{main1}, by Lemma \ref{och9}, it suffices to estimate the value \eqref{dn_al_s} (to this end, we apply Remark \ref{rem_t1}).
In order to prove Theorem \ref{main2}, by Lemma \ref{och10}, it suffices to estimate the value \eqref{dn_al_1} (to this end, we apply Remark \ref{rem_t2}).

This completes the proof of the main results.

\vskip 0.3cm

The author expresses her sincere gratitude to D.B. Bazarkhanov for useful discussion and references.

\begin{Biblio}
\bibitem{galeev1} E.M.~Galeev, ``The Kolmogorov diameter of the intersection of classes of periodic
functions and of finite-dimensional sets'', {\it Math. Notes},
{\bf 29}:5 (1981), 382--388.

\bibitem{galeev2} E.M. Galeev,  ``Kolmogorov widths of classes of periodic functions of one and several variables'', {\it Math. USSR-Izv.},  {\bf 36}:2 (1991),  435--448.

\bibitem{vas_int_sob} A. A. Vasil'eva, ``Kolmogorov widths of an intersection of a finite family of Sobolev classes'', {\it  Izv. Math.}, {\bf 88}:1 (2024), 18--42.

\bibitem{vas_mix_sev} A. A. Vasil'eva, ``Kolmogorov widths of an intersection of a family of balls in a mixed norm'', {\it J. Appr. Theory}, {\bf 301} (2024), article 106046.

\bibitem{amanov} T. I. Amanov, ``Representation and embedding theorems for function spaces $S^{(r)} _{p,\theta}B(\R^n)$ and $S^{(r)}_{p^*,\theta}B$ ($0\le x_j\le 2\pi$; $j=1,\, \dots,\, n$)'', {\it Trudy Mat. Inst. Steklov.}, {\bf 77} (1965),  5‐‐34.

\bibitem{amanov1} T.I. Amanov, {\it Spaces of differentiable functions with dominating mixed derivative}, {\it Nauka Kazakh. SSR}, Alma Ata, 1976 (in Russian). 

\bibitem{besov_space} O. V. Besov, ``A Study of a Family of Function Spaces in Connection with Embedding and Extension Theorems'', {\it Tr. Mat. Inst. Steklova Akad. Nauk SSSR}, {\bf 60} (1961), 42‐‐81. [Engl. transl.: {\it AMS Transl., Ser. 2}, {\bf 40} (1964), 85‐‐126.]

\bibitem{itogi_nt} V.M. Tikhomirov, ``Theory of approximations''. In: {\it Current problems in
mathematics. Fundamental directions.} vol. 14. ({\it Itogi Nauki i
Tekhniki}) (Akad. Nauk SSSR, Vsesoyuz. Inst. Nauchn. i Tekhn.
Inform., Moscow, 1987), pp. 103--260 [Encycl. Math. Sci. vol. 14,
1990, pp. 93--243].

\bibitem{tr_scm} H.-J. Schmeisser, H. Triebel, {\it Topics in Fourier analysis and function spaces}, Chichester: J. Wiley \& Sons, 1987.

\bibitem{liz_nik} P. I. Lizorkin, S. M. Nikol'skii, ``Functional spaces of mixed smoothness from decompositional point of view'', {\it Proc. Steklov Inst. Math.}, {\bf 187} (1990), 163‐‐184.

\bibitem{galeev_besov} E. M. Galeev, ``Widths of the Besov Classes $B^r_{p,\theta}(\mathbb{T}^d)$'', {\it Math. Notes}, {\bf 69}:5 (2001), 605‐‐613.

\bibitem{romanyuk} A. S. Romanyuk, ``Kolmogorov and trigonometric widths of the Besov classes $B^r_{p,\theta}$ of multivariate periodic functions'', {\it Sb. Math.}, {\bf 197}:1 (2006), 69‐‐93.

\bibitem{bazarkh2} D. B. Bazarkhanov, ``Wavelet approximation and Fourier widths of classes of periodic functions of several variables. I'', {\it Proc. Steklov Inst. Math.}, {\bf 269} (2010), 2‐‐24.

\bibitem{haroske_skr} D.D. Haroske, L. Skrzypczak,
``Entropy and approximation numbers of function spaces with Muckenhoupt weights'',
{\it Rev. Mat. Complut.}, {\bf 21} (1) (2008), 135‐‐177.

\bibitem{besov_iljin_nik} O. V. Besov, V. P. Il'in, S. M. Nikol'skii, {\it Integral representations of functions and imbedding theorems}, Nauka, Moscow, 1975 (Russian). [V. I, II, V. H. Winston \& Sons, Washington, D.C.; Halsted Press, New York--Toronto, Ont.--London, 1978, 1979.]

\bibitem{nikolski_sm} S. M. Nikol'skii, {\it Priblizhenie funktsiĭ mnogikh peremennykh i teoremy vlozheniya} (Russian) [Approximation of functions of several variables and imbedding theorems]. Izdat. ``Nauka'', Moscow, 1969.

\bibitem{zigmund} A. Zygmund, {\it Trigonometric Series}. Cambridge University Press, 1959.

\bibitem{babenko} K. I. Babenko, ``Approximation of periodic functions of many variables by trigonometric polynomials'', {\it Dokl. Akad. Nauk SSSR} {\bf 132}, 247‐‐250; translation in
{\it Soviet Math. Dokl.} {\bf 1} (1960), 513‐‐516.

\bibitem{tikh60} V. M. Tikhomirov, ``Diameters of sets in function spaces and the theory of best approximations'', {\it Russian Math. Surveys}, {\bf 15}:3 (1960), 75--111.

\bibitem{mityagin} B. S. Mityagin, ``Approximation of functions in $L_p$ and $C$ spaces on the torus'', {\it Mat. Sb.}, {\bf 58(100)}:4 (1962), 397‐‐414 (in Russian).

\bibitem{makovoz} Yu. I. Makovoz, ``On a method for estimation from below of diameters of sets in Banach spaces'', {\it Math. USSR-Sb.}, {\bf 16}:1 (1972), 139‐‐146.

\bibitem{ismagilov} R.S. Ismagilov, ``Diameters of sets in normed linear spaces and the approximation of functions by trigonometric polynomials'',
{\it Russian Math. Surveys}, {\bf 29}:3 (1974), 169--186.

\bibitem{majorov} V. E. Maiorov, ``Discretization of the problem of diameters'', {\it Uspehi Mat. Nauk}, {\bf 30}:6(186) (1975), 179‐‐180 (in Russian).

\bibitem{bib_kashin} B.S. Kashin, ``The widths of certain finite-dimensional
sets and classes of smooth functions'', {\it Math. USSR-Izv.},
{\bf 11}:2 (1977), 317--333.

\bibitem{kashin_sma} B. S. Kashin, ``Widths of Sobolev classes of small-order smoothness'', Moscow Univ. Math. Bull., {\bf 36}:5 (1981), 62--66.

\bibitem{kulanin1} E. D. Kulanin, ``Estimates for diameters of Sobolev classes of small-order smoothness'', {\it Vestnik Moskov. Univ. Ser. I Mat. Mekh.}, 1983, no. 2,  24‐‐30.

\bibitem{kulanin2} E. D. Kulanin, ``On the diameters of a class of functions of bounded variation in the space $L_q(0,\, 1)$, $2<q<\infty$'', {\it Russian Math. Surveys}, {\bf 38}:5 (1983), 146‐‐147.

\bibitem{galeev85} E.M. Galeev, ``Kolmogorov widths in the space $\widetilde{L}_q$ of the classes $\widetilde{W}_p^{\overline{\alpha}}$ and $\widetilde{H}_p^{\overline{\alpha}}$ of periodic functions of several variables'', {\it Math. USSR-Izv.}, {\bf 27}:2 (1986), 219--237.

\bibitem{galeev87} E.M. Galeev, ``Estimates for widths, in the sense of Kolmogorov, of classes of periodic functions of several variables with small-order smoothness'',
{\it Vestnik Moskov. Univ. Ser. I Mat. Mekh.} no. 1 (1987), 26--30 (in Russian).

\bibitem{teml3} V. N. Temlyakov, ``Approximation of periodic functions of several variables by
trigonometric polynomials, and widths of some classes of functions'', {\it Math. USSR-Izv.}, {\bf 27}:2 (1986), 285--322.

\bibitem{teml4} V. N. Temlyakov, ``Approximations of functions with bounded mixed derivative'', {\it Proc. Steklov Inst. Math.}, {\bf 178} (1989), 1--121.

\bibitem{mal25} Yu. V. Malykhin, ``Kolmogorov widths of the class $W_1^1$'', {\it Math. Notes}, {\bf 117}:6 (2025), 1034‐‐1039. 

\bibitem{kniga_pinkusa} A. Pinkus, {\it $n$-widths
in approximation theory.} Berlin: Springer, 1985.

\bibitem{teml_book} V. Temlyakov, {\it Multivariate approximation}. Cambridge Univ. Press, 2018. 534 pp.

\bibitem{alimov_tsarkov} A.R. Alimov, I.G. Tsarkov, {\it Geometric Approximation Theory.} Springer Monographs in Mathematics, 2021. 508 pp.

\bibitem{galeev0} E. M. Galeev, ``Approximation for Besov classes of periodic functions of several variables'', {\it Proceedings of intern. conf. on appr. theory} (Kecskem\'{e}t, Hungary), 1990, 20.

\bibitem{romanyuk0} A. S. Romanyuk, ``Approximation of Besov classes of periodic functions of several variables in the space $L_q$.
{\it Ukrain. Mat. Zh.} {\bf 43}:10 (1991), 1398‐‐1408; translation in
{\it Ukrainian Math. J.} {\bf 43}:10 (1991), 1297‐‐1306.

\bibitem{bazarkh1} D. B. Bazarkhanov, ``Estimates for the widths of classes of periodic functions of several variables ‐‐ I'', {\it Eurasian Math. J.}, {\bf 1}:3 (2010), 11‐‐26.

\bibitem{galeev11} E. M. Galeev, ``Widths of function classes and finite‐dimensional sets'', {\it Vladikavk. Matem. Zhurnal}, {\bf 13}:2 (2011), 3‐‐14 (in Russian).

\bibitem{mal21} Yu. V. Malykhin, ``Kolmogorov Widths of the Besov Classes $B^1_{1,\theta}$ and Products of Octahedra'', {\it Proc. Steklov Inst. Math.}, {\bf 312} (2021), 215‐‐225.

\bibitem{stasyuk} S. A. Stasyuk, ``Approximation by Fourier sums and Kolmogorov widths of the classes ${\bf MB}^{\Omega}_{p,\theta}$ of periodic functions of several variables'', {\it Tr. Inst. Mat. Mekh.} {\bf 20}:1 (2014), 247‐‐257.

\bibitem{akishev} G. A. Akishev, ``Estimates for Kolmogorov widths of the Nikol'skii--Besov--Amanov classes in the Lorentz space'', {\it Proc. Steklov Inst. Math. (Suppl.)}, {\bf 296}, suppl. 1 (2017), 1--12.

\bibitem{nguen} Van Kien Nguyen, ``Gelfand Numbers of Embeddings of Mixed Besov Spaces'', {\it J. Complex.}, {\bf 41} (2017), 35‐‐57.

\bibitem{vkn_b} Van Kien Nguyen, ``Bernstein Numbers of Embeddings of Isotropic and Dominating Mixed Besov Spaces'', {\it Math. Nachr.}, {\bf 288}:14‐‐15 (2015), 1694‐‐1717.

\bibitem{pietsch1} A. Pietsch, ``$s$-numbers of operators in Banach space'', {\it Studia Math.},
{\bf 51} (1974), 201--223.

\bibitem{stesin} M.I. Stesin, ``Aleksandrov diameters of finite-dimensional sets
and of classes of smooth functions'', {\it Dokl. Akad. Nauk SSSR},
{\bf 220}:6 (1975), 1278--1281 [Soviet Math. Dokl.].

\bibitem{gluskin1} E.D. Gluskin, ``On some finite-dimensional problems of the theory of diameters'', {\it Vestn. Leningr. Univ.}, {\bf 13}:3 (1981), 5--10 (in Russian).

\bibitem{bib_gluskin} E.D. Gluskin, ``Norms of random matrices and diameters
of finite-dimensional sets'', {\it Math. USSR-Sb.}, {\bf 48}:1
(1984), 173--182.

\bibitem{garn_glus} A.Yu. Garnaev and E.D. Gluskin, ``On widths of the Euclidean ball'', {\it Dokl.Akad. Nauk SSSR}, {bf 277}:5 (1984), 1048--1052 [Sov. Math. Dokl. 30 (1984), 200--204]

\bibitem{vas_ball_inters} A. A. Vasil'eva, ``Kolmogorov widths of intersections of finite‐dimensional balls'', {\it J. Compl.}, {\bf 72} (2022), article 101649.

\bibitem{vas_dif_der} A. A. Vasil'eva, ``Kolmogorov widths of a Sobolev class with constraints on derivatives in different metrics'', {\it Sb. Math.}, {\bf 215}:11 (2024), 1468‐‐1498.

\bibitem{vas_besov} A.A. Vasil'eva, ``Kolmogorov and linear widths of the weighted Besov classes with singularity at the origin'', {\it J. Approx. Theory}, {\bf 167} (2013), 1--41.

\bibitem{mal_rjut} Yu.V. Malykhin, K. S. Ryutin, ``The Product of Octahedra is Badly Approximated in the $l_{2,1}$-Metric'', {\it Math. Notes}, {\bf 101}:1 (2017), 94--99.

\bibitem{mal_rjut1} Yu.V. Malykhin, K. S. Ryutin, ``Widths and rigidity of unconditional sets and random vectors'', {\it Izv. Math.}, {\bf 89}:2 (2025), 261--273.

\bibitem{mal_rjut2} Yu.V. Malykhin, K. S. Ryutin, ``Kolmogorov widths of balls in mixed norms: the case of rigidity'', {\it Proc. Steklov Inst. Math.} (to appear).

\bibitem{dir_ull} S. Dirksen, T. Ullrich, ``Gelfand numbers related to structured sparsity and Besov space embeddings with small mixed smoothness'', {\it J. Compl.}, {\bf 48} (2018), 69‐‐102.

\bibitem{vas_width} A. A. Vasil'eva, ``Widths of function classes on sets with tree-like structure'', {\it J. Approx. Theory}, {\bf 192} (2015), 19‐‐59.
\end{Biblio}
\end{document}